\newtheorem{dfn}{Definition}[subsection]
\newtheorem{prop}[dfn]{Proposition}
\newtheorem{thm}[dfn]{Theorem}
\newtheorem{lem}[dfn]{Lemma}
\newtheorem{cor}[dfn]{Corollary}
\newcommand\abel{{{\rm abel}}}
\newcommand\ASE{{A(S,E)}}
\newcommand\Aut{{\rm Aut}}
\newcommand\CC{{\mathcal{C}}}
\newcommand\CSE{{\mathcal{C}(S,E)}}
\newcommand\Der{{\rm Der}}
\newcommand\GG{{\mathcal{G}}}
\newcommand\Hom{{\rm Hom}}
\newcommand\ISE{{\mathcal{I}(S,E)}}
\newcommand\Ker{{\rm Ker}}
\newcommand\Ob{{\rm Ob}}
\newcommand\RR{{\mathcal{R}}}
\begin{document}

\title
{Groupoid-theoretical methods in the mapping class groups of surfaces}
\author{Nariya Kawazumi and Yusuke Kuno}
\maketitle

\begin{abstract} 
We provide some language for algebraic study of the mapping class groups 
for surfaces with non-connected boundary, where the Goldman Lie algebra
plays a central role. As applications, we generalize
our previous results on Dehn twists in \cite{KK1} and \cite{Ku2} to any
compact connected oriented surfaces with non-empty boundary. Moreover we
embed the `smallest' Torelli group in the sense of Putman \cite{P} into a
pro-nilpotent group coming from the Goldman Lie algebra. The graded
quotients of the embedding equal  the Johnson homomorphisms of all
degrees if the boundary is connected. 
\end{abstract}

\maketitle

\begin{center}
{\large Introduction}
\end{center}

In algebraic study of the mapping class group of a surface,
its action on the fundamental group of the surface plays an 
essential role. Let $\Sigma_{g,r}$ be a compact connected 
oriented surface of genus $g$ with $r$ boundary components,
$r \geq 1$, $\mathcal{M}_{g,r}$ the mapping class group of 
$\Sigma_{g,r}$. Choose a basepoint $* \in \partial\Sigma_{g,r}$.
Then the action induces a group homomorphism
$$
{\sf DN}\colon \mathcal{M}_{g,r} \to \Aut\pi_1(\Sigma_{g,r}, *).
$$
The Dehn-Nielsen theorem says ${\sf DN}$ is injective if $r=1$. 
If $r \geq 2$, it is {\it not} injective. In fact, the right handed 
Dehn twist along a boundary component without the basepoint $*$ 
is a non-trivial element in the kernel of the homomorphism ${\sf DN}$. 
To study the case $r \geq 2$, we have to consider the action of 
$\mathcal{M}_{g,r}$ on a groupoid $\CC$, more precisely, a full
subcategory of  the fundamental groupoid of the surface $\Sigma_{g,r}$, 
$\Pi\Sigma_{g,r}$, whose object set has at least one point in 
each boundary component. Then, as will be shown in \S \ref{sec:3-1},
the natural homomorphism ${\sf DN}\colon \mathcal{M}_{g,r} \to \Aut\CC$ is injective.

The purpose of this paper is to provide some language for studying 
the action of the mapping class group on such groupoids. 
In our previous paper \cite{KK1}, we found that the Goldman Lie algebra of a
surface acts on the group ring of the fundamental group of the surface by
derivations.  This action plays a central role in our language. As a
consequence, we come to the notion of the completed
Goldman Lie algebra for any oriented surface. From the results in
\cite{KK1}, in the case for $\Sigma_{g,1}$, this Lie algebra includes
Kontsevich's `associative' and `Lie' as Lie subalgebras.

The first application of our method is about Dehn twists and their generalization.
In \cite{KK1} the authors gave some description 
of Dehn twists on the surface $\Sigma_{g,1}$, which led us to 
the definition of a {\it generalized Dehn twist} along a non-simple closed 
curve as an automorphism of the completed group ring of the fundamental 
group of $\Sigma_{g,1}$. In \cite{Ku2}, the second-named author proved 
that the generalized Dehn twist along a figure eight is {\it not} realized by any element 
of $\mathcal{M}_{g,1}$. We generalize all these
results to the case $r \geq 2$. In fact, Theorem 5.2.1 is a generalization
of our description of Dehn twists (\cite{KK1} Theorem 1.1.1) to any oriented surfaces,
and Theorem 5.4.2 is a generalization of the non-realizability as a diffeomorphism
of the generalized Dehn twist along a figure eight (\cite{Ku2} Theorem 5.1.1)
to any oriented surfaces of finite type with non-empty boundary.

As was shown in \cite{Ku2} Theorem 3.3.2, 
the generalized Dehn twist along a closed curve $C$ is localized inside 
a regular neighborhood of the curve $C$. In almost all cases, the regular 
neighborhood has a non-connected boundary. This also leads us to 
studying groupoids. Our groupoid-theoretical methods make all 
the arguments on (generalized) Dehn twists much shorter than
those in \cite{KK1} and \cite{Ku2}.
In our paper \cite{KK4}, we prove that the generalized Dehn twists
along non-simple closed curves in wider classes are not realized by any diffeomorphisms.
In our previous papers \cite{KK1} \cite{Ku2}, the notions as
Magnus expansions or symplectic expansions played a crucial role.
The main theorems and constructions of this paper are
basically free from them, although we have used them in several arguments.
Recently Massuyeau and Turaev \cite{MT} developed
a theory on generalized Dehn twists without use of Magnus expansions.
Comparing their approach and ours seems interesting. 

The second application of our method is about the Johnson homomorphisms on the Torelli groups.
The higher Johnson homomorphism of the Torelli group is an important
tool to study the algebraic structure of the mapping class group
$\mathcal{M}_{g,1}$. If $r \geq 2$, theory of higher Johnson
homomorphisms  for $\mathcal{M}_{g,r}$ has not been established,  
since the map ${\sf DN}$ is not injective. 
Moreover it has not been clarified what Lie algebra should be an
appropriate target of the higher Johnson homomorphism in these cases.
In \S \ref{sec:6}, we discuss the Johnson homomorphisms via our groupoid-theoretical methods.
The completed Goldman Lie algebra we introduce here gives a geometric
interpretation of a completion of a Lie algebra introduced by Morita
\cite{MoPJA} \cite{MoICM} as an appropriate target of the higher
Johnson homomorphism for $r=1$, which is also Kontsevich's `Lie' \cite{Kon}.
By generalizing this construction and using Putman's result
on generators of the Torelli groups \cite{P}, we 
embed the `smallest' Torelli group in the sense of Putman \cite{P} into a
pro-nilpotent group coming from the Goldman Lie algebra in the case
$r$ is positive. If $r=1$, the graded quotients of the embedding
equal the Johnson homomorphisms of all degrees.
Moreover, our construction has a compatibility with respect
to an inclusion of surfaces (Proposition \ref{tau-compat}).
Recently Church \cite{Chu} introduced the first Johnson homomorphism for all kinds of
Putman's partitioned Torelli groups. It would be interesting to
describe an explicit relation between Church's homomorphisms and ours. 

The results in this paper will be fundamental in our subsequent paper \cite{KK4},
where we discuss further applications of our method.

\medskip
\noindent \textbf{Acknowledgments.}
The authors would like to thank Gw\'ena\"el Massuyeau for informing
us about the paper \cite{MT}.
The first-named author is grateful to
Atsushi Matsuo for helpful discussions on giving SAC's a name, and 
Masatoshi Sato for valuable discussions. 
The first-named author is partially supported by the Grant-in-Aid for
Scientific Research (A) (No.18204002) and (B) (No.24340010) from the
Japan Society for Promotion of Sciences. The second-named
author is supported by JSPS Research Fellowships
for Young Scientists (22$\cdot$4810).

\tableofcontents

\section{Special additive categories (SAC's)}
\label{sec:1}

In this and the next sections we develop some algebraic
machineries which do not need any surface topology.
What we want to consider is {\it a groupoid version} of several
constructions attached to a group, such as the group ring
and derivations on the group ring (\S \ref{11def}),
the completed group ring with respect to
the augumentation ideal and its powers
(\S \ref{sec:1-2} and \S \ref{21filtration}),
the Hopf algebra structure on the group ring (\S \ref{21filtration}),
and the abelianization (\S \ref{sec:2-2}).

Let $\RR$ be a small category. 
Throughout this paper we denote by $\Ob\RR$ the set of
objects in $\RR$ and write $\RR(p_0, p_1)=\Hom_\RR(p_0, p_1)$
for any objects $p_0$ and $p_1 \in \Ob\RR$. 
The composition $\circ$ in $\RR$ gives a multiplication $\cdot$ in $\RR$ by 
$$
\cdot\colon \RR(p_0, p_1) \times \RR(p_1, p_2) \to \RR(p_0, p_2), \quad
\gamma_1\gamma_2 = \gamma_1\cdot\gamma_2 := \gamma_2\circ\gamma_1.
$$

\subsection{Definition of special additive categories}
\label{11def}

Let $K$ be a commutative ring with unit.
\begin{dfn} A small additive category $\RR$ is called a {\it $K$-special 
additive category} {\rm (}$K$-SAC{\rm )} if it satisfies
\begin{enumerate}
\item[{\rm (i)}] For any $p_0, p_1$ and $p_2 \in \Ob\RR$, the additive group 
$\RR(p_0, p_1)$ is a $K$-module, and the multiplication $\cdot\colon \RR(p_0,
p_1)\times \RR(p_1, p_2) \to \RR(p_0, p_2)$ is $K$-bilinear. 
\item[{\rm (ii)}] For any $p_0$ and $p_1 \in \Ob\RR$ with $\RR(p_0, p_1) 
\neq 0$, there exists an isomorphism in $\RR(p_0, p_1)$.
\end{enumerate}
\end{dfn}

We denote by $\pi_0\RR$ the set of isomorphism classes in $\Ob\RR$.
For any $q \in \Ob\RR$ the additive group $\RR(q, q)$
is an associative $K$-algebra with unit by the condition (i).
Further, $\RR(p_0, p_1)$ is a left $\RR(p_0, p_0)$- and right
$\RR(p_1, p_1)$- module.

A typical example of a $K$-SAC is the free $K$-module over a groupoid 
$\GG$. For any $p_0$ and $p_1 \in \Ob\GG$, we define $(K\GG)(p_0, p_1)$ 
to be the free $K$-module over the set $\GG(p_0, p_1) = \Hom_\GG(p_0, p_1)$. 
Clearly $K\GG$ is a $K$-SAC, and $\pi_0K\GG = \pi_0\GG$.

A family of $K$-endomorphisms $D = D^{(p_0, p_1)}\colon \RR(p_0, p_1) \to
\RR(p_0, p_1)$, $p_0, p_1 \in \Ob\RR$, is called a {\it derivation} of $\RR$, 
if it satisfies Leibniz' rule
$$
D(uv) = (Du)v + u(Dv)
$$
for any $p_0, p_1, p_2 \in \Ob\RR$, $u \in \RR(p_0, p_1)$ and 
$v \in \RR(p_1, p_2)$. Then, for any $q \in \Ob\RR$, $D = D^{(q,q)}$ 
is a derivation of the associative $K$-algebra $\RR(q,q)$ in a usual
sense. It should be remarked that a derivation is not a covariant functor. 
We denote by $\Der\RR$ the $K$-Lie algebra consisting of all derivations of $\RR$.

Let $\RR$ and $\RR'$ be $K$-SAC's with the same object set. 
We denote 
$(\RR\otimes\RR')(p_0,p_1) := \RR(p_0,p_1)\otimes_K\RR'(p_0,p_1)$
for $p_0, p_1 \in \Ob\RR = \Ob\RR'$. The tensor product of 
multiplications in $\RR$ and $\RR'$ makes $\RR\otimes\RR'$ a $K$-SAC
whose object set is $\Ob\RR = \Ob\RR'$, which we call the {\it tensor
product} of $K$-SAC's $\RR$ and $\RR'$.

We call a covariant functor $\mathcal{F}\colon \RR \to \RR'$ a {\it
homomorphism} of $K$-SAC's, if $\mathcal{F}(p) = p$ for any 
$p \in \Ob\RR = \Ob\RR'$, and 
$\mathcal{F}\colon \RR(p_0, p_1) \to \RR'(p_0, p_1)$ is $K$-linear
for any $p_0$ and $p_1 \in \Ob\RR$. 
We denote by $\Hom(\RR, \RR')$ the $K$-vector space consisting of all
homomorphisms of $K$-SAC's from $\RR$ to $\RR'$. 
A homomorphism $U\in \Hom(\RR, \RR)$ is called an {\it
automorphism} of $\RR$, if $U\colon \RR(p_0, p_1) \to \RR(p_0, p_1)$ is a $K$-linear
isomorphism for any $p_0$ and $p_1 \in \Ob\RR$. 
We denote by $\Aut\RR$ the group consisting of all automorphisms of $\RR$. 

\subsection{Filtered SAC's}
\label{sec:1-2}

\begin{dfn} \label{12def}
A $K$-SAC $\RR$ is called {\it filtered}, if each $\RR(p_0, p_1)$, $p_0,p_1 \in \Ob\RR$,
has a sequence of $K$-submodules $\{F_n\RR(p_0, p_1)\}_{n\geq 0}$ such that
\begin{enumerate}
\item[{\rm (i)}] $F_0\RR(p_0, p_1) = \RR(p_0, p_1)$ and 
$F_n\RR(p_0, p_1) \supset F_{n+1}\RR(p_0, p_1)$ for any $n \geq 0$.
\item[{\rm (ii)}] $F_{n_1}\RR(p_0, p_1)\cdot F_{n_2}\RR(p_1, p_2) \subset
F_{n_1+n_2}\RR(p_0, p_2)$ for any $p_0, p_1, p_2 \in \Ob\RR$ and 
$n_1, n_2 \geq 0$. 
\end{enumerate}
\end{dfn}

Suppose $p_0, p_1, q_0$ and $q_1 \in \Ob\RR$ satisfy $[p_0] = [q_0]$ and 
$[p_1] = [q_1] \in \pi_0\RR$. Let $\gamma \in \RR(p_0, q_0)$ and 
$\delta \in \RR(q_1, p_1)$ be isomorphisms. Then, from the condition (ii) 
with $n_1$ or $n_2 = 0$, we have 
\begin{equation}
F_n\RR(p_0, p_1) = \gamma(F_n\RR(q_0, q_1))\delta
\label{12filter}
\end{equation}
for any $n \geq 0$. 
If $n < 0$, we define $F_n\RR(p_0,p_1) = \RR(p_0,p_1)$. 

If we define
$$
{\rm gr}\RR(p_0, p_1) := \bigoplus^\infty_{n=0}{\rm gr}_n\RR(p_0, p_1), 
\quad {\rm gr}_n\RR(p_0, p_1) = F_n\RR(p_0, p_1)/ F_{n+1}\RR(p_0, p_1)
$$
for $p_0$ and $p_1 \in \Ob\RR$, then ${\rm gr}\RR$ is a $K$-SAC 
with $\Ob{\rm gr}\RR = \Ob\RR$.

Now, in view of \cite{Qui} p.265, we consider the following four
conditions about a filtered $K$-SAC $\RR$.
\begin{enumerate}
\item[(C1)] $\RR(q, q)/F_1\RR(q,q) \cong K$ for any $q \in \Ob\RR$. 
\item[(C2)] The algebra ${\rm gr}\RR(q,q)$ is generated by ${\rm
gr}_1\RR(q,q)$ for any $q \in \Ob\RR$. In other words, the sum of the
multiplication and the inclusion
$F_1\RR(q,q)^{\otimes n}\oplus F_{n+1}\RR(q,q) \to F_{n}\RR(q,q)$
is surjective for any $q \in \Ob\RR$ and $n\geq 1$. 
\item[(C3)] $\RR(q,q) \cong \varprojlim_{n\to\infty}\RR(q,q)/F_n\RR(q,q)$ 
for any $q \in \Ob\RR$. 
\item[(C4)] The multiplication 
$F_1\RR(q,q)^{\otimes n} \to F_{n}\RR(q,q)$
is surjective for any $q \in \Ob\RR$ and $n\geq 1$. 
\end{enumerate}

\begin{lem}
These conditions are equivalent to the followings, respectively.
\begin{enumerate}
\item[{\rm (C'1)}] $\RR(p_0, p_1)/F_1\RR(p_0,p_1) \cong K$ for any $p_0, p_1 \in
\Ob\RR$ with $[p_0] = [p_1] \in \pi_0\RR$. 
\item[{\rm (C'2)}] If $[p_0] = [p_1] \in \pi_0\RR$, then the sum of the
multiplication and the inclusion
$\bigotimes^{n}_{i=1}F_1\RR(q_{i-1},q_i)\oplus F_{n+1}\RR(p_0,p_1) \to
F_{n}\RR(p_0,p_1)$
is surjective for any $q_1, \cdots, q_{n-1} \in
[p_0] \subset \Ob\RR$ and $n\geq 1$ with $q_0 = p_0$ and $q_n=p_1$. 
\item[{\rm (C'3)}] $\RR(p_0,p_1) \cong
\varprojlim_{n\to\infty}\RR(p_0,p_1)/F_n\RR(p_0,p_1)$ for any $p_0, p_1
\in\Ob\RR$. 
\item[{\rm (C'4)}] If $[p_0] = [p_1] \in \pi_0\RR$, then the 
multiplication 
$\bigotimes^{n}_{i=1}F_1\RR(q_{i-1},q_i)\to F_{n}\RR(p_0,p_1)$
is surjective for any $q_1, \cdots, q_{n-1} \in
[p_0] \subset \Ob\RR$ and $n\geq 1$ with $q_0 = p_0$ and $q_n=p_1$. 
\end{enumerate}
\end{lem}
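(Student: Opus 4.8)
The plan is to transport each of (C1)--(C4), which is a statement about a single object $q$, to the off-diagonal $\RR$-sets by means of the identity \eqref{12filter}; the converse implications will be immediate specializations. For the setup, fix a base object in each isomorphism class of $\pi_0\RR$, and for each object $p$ choose an isomorphism $a_p\in\RR(p,q)$ to the base object $q=q_{[p]}$ of its class, normalized by $a_q=\mathrm{id}_q$; let $a_p^{-1}\in\RR(q,p)$ be its inverse (such isomorphisms exist because $[p]=[q]$ forces $\RR(p,q)\neq 0$, cf.\ axiom (ii)). Applying \eqref{12filter} with base pair $(q,q)$ gives, for all $[p_0]=[p_1]=[q]$ and all $n$,
\[
F_n\RR(p_0,p_1)=a_{p_0}\,\bigl(F_n\RR(q,q)\bigr)\,a_{p_1}^{-1},
\]
so that $x\mapsto a_{p_0}\,x\,a_{p_1}^{-1}$ is a $K$-linear isomorphism $\RR(q,q)\xrightarrow{\ \sim\ }\RR(p_0,p_1)$, with inverse $y\mapsto a_{p_0}^{-1}\,y\,a_{p_1}$, which is strict for the filtrations (it carries $F_n$ onto $F_n$ in both directions). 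When verifying a primed condition for a fixed pair $(p_0,p_1)$ it is harmless to take the base object of $[p_0]$ to be $p_0$ itself, so that $a_{p_0}=\mathrm{id}_{p_0}$ and the isomorphism above becomes right multiplication by $a_{p_1}^{-1}$.

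For (C1)$\Leftrightarrow$(C'1) and (C3)$\Leftrightarrow$(C'3): the implication $\Leftarrow$ is the case $p_0=p_1=q$. For $\Rightarrow$, take $[p_0]=[p_1]$ (when $[p_0]\neq[p_1]$ both sides of (C'3) vanish by axiom (ii), so nothing is needed). The strict isomorphism $\RR(p_0,p_0)\to\RR(p_0,p_1)$, $x\mapsto x\,a_{p_1}^{-1}$, induces an isomorphism on the quotients modulo $F_1$, whence (C1) gives (C'1); and it commutes with the transition maps $\RR/F_{n+1}\to\RR/F_n$, hence induces an isomorphism of the towers and of their inverse limits, fitting into a commuting square with the canonical maps $\RR(-)\to\varprojlim\RR(-)/F_n$, whence (C3) gives (C'3).

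For (C2)$\Leftrightarrow$(C'2) and (C4)$\Leftrightarrow$(C'4): again $\Leftarrow$ is the specialization $p_0=p_1=q_0=\cdots=q_n=q$. For $\Rightarrow$, fix $[p_0]=[p_1]$, objects $q_1,\dots,q_{n-1}\in[p_0]$, and put $q_0=p_0$, $q_n=p_1$, with $p_0$ the base object of its class. By \eqref{12filter}, $F_1\RR(q_{i-1},q_i)=a_{q_{i-1}}\bigl(F_1\RR(p_0,p_0)\bigr)a_{q_i}^{-1}$, so the $i$-th factor of an elementary tensor in $\bigotimes_{i=1}^n F_1\RR(q_{i-1},q_i)$ can be written $a_{q_{i-1}}y_i a_{q_i}^{-1}$ with $y_i\in F_1\RR(p_0,p_0)$, and the multiplied-out product telescopes, using $a_{q_i}^{-1}a_{q_i}=\mathrm{id}_{p_0}$ for $1\le i\le n-1$ and $a_{q_0}=a_{p_0}=\mathrm{id}_{p_0}$, to $(y_1\cdots y_n)\,a_{p_1}^{-1}$. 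Hence the image of the multiplication map $\bigotimes_{i=1}^n F_1\RR(q_{i-1},q_i)\to\RR(p_0,p_1)$ equals $V\,a_{p_1}^{-1}$, where $V\subseteq F_n\RR(p_0,p_0)$ denotes the image of the multiplication $F_1\RR(p_0,p_0)^{\otimes n}\to F_n\RR(p_0,p_0)$. Since also $F_n\RR(p_0,p_1)=F_n\RR(p_0,p_0)\,a_{p_1}^{-1}$ and $F_{n+1}\RR(p_0,p_1)=F_{n+1}\RR(p_0,p_0)\,a_{p_1}^{-1}$ by \eqref{12filter}, right multiplication by the isomorphism $a_{p_1}^{-1}$ turns the equality $V+F_{n+1}\RR(p_0,p_0)=F_n\RR(p_0,p_0)$ — which is (C2) at $q=p_0$ — into $V\,a_{p_1}^{-1}+F_{n+1}\RR(p_0,p_1)=F_n\RR(p_0,p_1)$, i.e.\ (C'2). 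Deleting the $F_{n+1}$-summands throughout yields (C4)$\Rightarrow$(C'4) verbatim.

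The only genuinely delicate point is the composability bookkeeping in the telescoping step: one must keep track of which $\RR(-,-)$ each isomorphism $a_{q_i}^{\pm1}$ belongs to, so that the product $x_1\cdots x_n\in\RR(p_0,p_1)$ is well-formed and the cancellations $a_{q_i}^{-1}a_{q_i}=\mathrm{id}$ fall between the right factors; everything else is a direct application of \eqref{12filter} — in its conjugation form for the intermediate factors, and in its one-sided form $F_m\RR(p_0,p_1)=F_m\RR(p_0,p_0)\,a_{p_1}^{-1}$ for $m=n,n+1$. No surface topology and no tool beyond \eqref{12filter} and the SAC axioms enters the argument.
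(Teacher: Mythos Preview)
Your proof is correct and follows essentially the same approach as the paper's: both use \eqref{12filter} to transport the diagonal conditions (C1)--(C4) to off-diagonal hom-sets via chosen isomorphisms, with the telescoping identity $F_1\RR(q_0,q_1)\cdots F_1\RR(q_{n-1},q_n)=\gamma_0^{-1}\,F_1\RR(q,q)^n\,\gamma_n$ doing the work for (C2) and (C4). The paper is terser and takes its base object $q$ arbitrary with isomorphisms $\gamma_i\in\RR(q,q_i)$, whereas you fix $q=p_0$ and isomorphisms $a_{q_i}\in\RR(q_i,p_0)$, but this is only a notational difference.
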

\begin{proof} (C1) $\Leftrightarrow$ (C'1) and (C3) $\Leftrightarrow$
(C'3) follow immediately from (\ref{12filter}). 
(C'2) $\Rightarrow$ (C2) and (C'4) $\Rightarrow$ (C4) are clear. \par
Choose an object $q \in [p_0] \subset \Ob\RR$, and isomorphisms 
$\gamma_i \in \RR(q, q_i)$, $0 \leq i \leq n$. By (\ref{12filter}), 
we have $F_n\RR(p_0, p_1) = {\gamma_0}^{-1}F_n\RR(q,q)\gamma_n$. On the
other hand, we have 
\begin{eqnarray*}
&& {\gamma_0}^{-1}\overbrace{F_1\RR(q,q)\cdot\cdots\cdot
F_1\RR(q,q)}^{n}\gamma_n \\
&=&
{\gamma_{0}}^{-1}F_1\RR(q,q)\gamma_{1}\cdot
{\gamma_{1}}^{-1}F_1\RR(q,q)\gamma_{2}\cdot \cdots \cdot
{\gamma_{n-1}}^{-1}F_1\RR(q,q)\gamma_{n}\\
&=& 
F_1\RR(q_{0},q_{1})\cdot
F_1\RR(q_{1},q_{2})\cdot \cdots \cdot
F_1\RR(q_{n-1},q_{n}).
\end{eqnarray*}
Hence we obtain (C2) $\Rightarrow$ (C'2) and (C4) $\Rightarrow$ (C'4). 
\end{proof}

Let $\RR$ be a filtered $K$-SAC. We define 
\begin{eqnarray*}
&& \widehat{\RR}(p_0, p_1) := \varprojlim_{m\to\infty}\RR(p_0,
p_1)/F_m\RR(p_0, p_1), \quad\mbox{and}\\
&& F_n\widehat{\RR}(p_0, p_1) := \varprojlim_{m\to\infty}F_n\RR(p_0,
p_1)/F_m\RR(p_0, p_1) \subset \widehat{\RR}(p_0, p_1)
\end{eqnarray*}
for $p_0, p_1 \in \Ob\RR$ and $n \geq 0$. Then $\widehat{\RR}$ is a filtered
$K$-SAC  satisfying the condition (C3). We call it the {\it completion} 
of $\RR$. There is a natural homomorphism $\RR \to \widehat{\RR}$
of $K$-SAC's.

Let $\RR$ and $\RR'$ be filtered $K$-SAC's with the same object set. 
We define 
$$
(\RR\widehat{\otimes}\RR')(p_0,p_1) := \varprojlim_{n\to\infty}
(\RR\otimes\RR')(p_0,p_1)/\textstyle\sum_{n_1+n_2=n}F_{n_1}\RR(p_0,p_1)\otimes 
F_{n_2}\RR'(p_0,p_1)
$$
for $p_0, p_1\in \Ob\RR = \Ob\RR'$. Then $\RR\widehat{\otimes}\RR'$ is a
filtered $K$-SAC whose object set is $\Ob\RR = \Ob\RR'$ in an obvious
way, which we call the {\it completed tensor product} of filtered
$K$-SAC's $\RR$ and $\RR'$. If $a\otimes b \in (\RR \otimes \RR^{\prime})(p_0,p_1)$,
then its image under the homomorphism $\RR \otimes \RR^{\prime} \to
\RR\widehat{\otimes}\RR^{\prime}$ is denoted by $a \widehat{\otimes} b$.

\subsection{Derivations of a filtered SAC}
\label{sec:1-3}

Let $\RR$ be a filtered $K$-SAC. We define 
$$
F_n\Der\RR := \{D \in \Der\RR; 
\forall p_0, \forall p_1 \in \Ob\RR, \forall l \geq 0, 
D(F_l\RR(p_0, p_1)) \subset F_{l+n}\RR(p_0, p_1)\}
$$
for any $n \in \mathbb{Z}$. It is clear that 
\begin{equation}
[F_{n_1}\Der\RR, F_{n_2}\Der\RR] \subset F_{n_1+n_2}\Der\RR
\label{13filter}
\end{equation}
for any $n_1$ and $n_2 \in \mathbb{Z}$. 

\begin{lem}\label{13der} 
Let $\RR$ be a filtered $K$-SAC with the condition {\rm (C4)}, and $n \in
\mathbb{Z}$.  If a derivation $D \in \Der\RR$ satisfies
$$
D(\RR(p_0, p_1)) \subset F_n\RR(p_0, p_1)
$$
for any $p_0$ and $p_1 \in \Ob\RR$, then we have $D \in F_{n-1}\Der\RR$. 
In particular, we obtain
$$
\Der\RR = F_{-1}\Der\RR.
$$
\end{lem}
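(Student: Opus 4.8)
The plan is to exploit condition (C4) to control how $D$ acts on each filtration layer $F_l\RR(p_0,p_1)$. Fix objects $p_0,p_1\in\Ob\RR$ and an integer $l\geq 1$; we want to show $D(F_l\RR(p_0,p_1))\subset F_{l+n-1}\RR(p_0,p_1)$. If $[p_0]\neq[p_1]$ then $\RR(p_0,p_1)=0$ and there is nothing to prove, so assume $[p_0]=[p_1]\in\pi_0\RR$. By the equivalence of (C4) with (C'4) established in the preceding lemma, any element of $F_l\RR(p_0,p_1)$ is a finite sum of products $u_1u_2\cdots u_l$ with $u_i\in F_1\RR(q_{i-1},q_i)$ for suitable intermediate objects $q_1,\dots,q_{l-1}\in[p_0]$, $q_0=p_0$, $q_l=p_1$. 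Since $D$ is $K$-linear it suffices to treat a single such product.

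Next I would apply the Leibniz rule iteratively to $D(u_1\cdots u_l)$, obtaining the standard expansion $\sum_{i=1}^{l}u_1\cdots u_{i-1}(Du_i)u_{i+1}\cdots u_l$. For each term, the hypothesis gives $Du_i\in F_n\RR(q_{i-1},q_i)$ (applied with the pair $(q_{i-1},q_i)$), while each remaining factor $u_j$ lies in $F_1\RR(q_{j-1},q_j)$. By repeated use of axiom (ii) of Definition~\ref{12def} (compatibility of the filtration with multiplication), the $i$-th term lies in
\[
F_1^{\,i-1}\cdot F_n\cdot F_1^{\,l-i}\subset F_{(i-1)+n+(l-i)}\RR(p_0,p_1)=F_{l+n-1}\RR(p_0,p_1).
\]
Summing over $i$ and over the finitely many products, we get $D(F_l\RR(p_0,p_1))\subset F_{l+n-1}\RR(p_0,p_1)$ for all $l\geq 1$.

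The case $l=0$ is exactly the hypothesis $D(\RR(p_0,p_1))=D(F_0\RR(p_0,p_1))\subset F_n\RR(p_0,p_1)$, and since $n\geq n-1$ this is contained in $F_{0+n-1}\RR(p_0,p_1)$ by monotonicity (condition (i) of Definition~\ref{12def}); for $l<0$ we have $F_l\RR=\RR$ by convention and the same bound applies a fortiori once we note $F_{l+n-1}\RR\supset F_{n-1}\RR$ is not quite right, so instead observe that for $l\leq 0$ the displayed chain of inclusions still reads $D(F_l\RR)=D(\RR)\subset F_n\RR\subset F_{l+n-1}\RR$ because $l+n-1\leq n-1<n$. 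Hence $D\in F_{n-1}\Der\RR$. Finally, every derivation trivially satisfies $D(\RR(p_0,p_1))\subset\RR(p_0,p_1)=F_0\RR(p_0,p_1)$, so taking $n=0$ yields $D\in F_{-1}\Der\RR$, proving $\Der\RR=F_{-1}\Der\RR$.

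The only genuinely delicate point is the first step: without (C4)/(C'4) one cannot write an arbitrary element of $F_l\RR(p_0,p_1)$ as a sum of length-$l$ products of degree-one elements across intermediate objects, and the Leibniz estimate would not close. So the main obstacle is really bookkeeping — making sure the intermediate objects $q_i$ produced by (C'4) are handled uniformly and that the hypothesis on $D$ is invoked for the correct Hom-spaces $\RR(q_{i-1},q_i)$ rather than only for $\RR(p_0,p_1)$.
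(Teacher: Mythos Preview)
Your argument is correct and follows essentially the same route as the paper: reduce to $[p_0]=[p_1]$, invoke (C'4) to express elements of $F_l\RR(p_0,p_1)$ as sums of length-$l$ products of degree-one morphisms through intermediate objects, expand via the Leibniz rule, and use the filtration axiom (ii) to land each summand in $F_{l+n-1}$. The paper's proof treats only $l\geq 1$ explicitly (the case $l=0$ being immediate from the hypothesis together with $F_n\subset F_{n-1}$), while you spell out $l\leq 0$ as well; your exposition there is a bit tangled but the reasoning is sound.
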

\begin{proof} Let $p_0$ and $p_1$ be objects in $\RR$. 
We may assume $[p_0] = [p_1] \in\pi_0\RR$, since 
$\RR(p_0, p_1) = 0$ if $[p_0] \neq [p_1]$. 
Let $l$ be a positive integer. Choose $q_1, \dots, q_{l-1}
\in [p_0] = [p_1] \subset \Ob\RR$, and denote $q_0 = p_0$
and $q_l=p_1$. From the condition (C4), the multiplication 
$\bigotimes^{l}_{i=1}F_1\RR(q_{i-1},q_i)\to F_{l}\RR(p_0,p_1)$
is surjective. Hence it suffices to show $D(u_1u_2\dots u_l) 
\in F_{n+l-1}\RR(p_0, p_1)$ for any $u_i \in F_1\RR(q_{i-1},q_i)$. 
Now we have 
$$
D(u_1u_2\cdots u_l) = \sum^l_{i=1} u_1\cdots u_{i-1}(Du_i) u_{i+1}\cdots
u_l,
$$
and $u_1\cdots u_{i-1}(Du_i) u_{i+1}\cdots u_l \in F_{n+l-1}\RR(p_0,p_1)$
from the assumption. This proves the lemma.
\end{proof}

Now we study an analytic function of a derivation $D \in \Der\RR$. 

\begin{lem}\label{13conv}
Let $\RR$ be a filtered $K$-SAC with the conditions {\rm (C2)} and {\rm (C3)}, 
and $f(t) = \sum^\infty_{k=0}a_kt^k \in K[[t]]$ a formal power series. 
Suppose a derivation $D\in\Der\RR$ satisfies the following three
conditions. 
\begin{enumerate}
\item[{\rm (i)}] $D \in F_0\Der\RR$.
\item[{\rm (ii)}] For any $p_0$ and $p_1 \in \Ob\RR$, there exists a positive
integer $\nu$ such that $D^\nu = 0$ on ${\rm gr}_1\RR(p_0, p_1)$. 
\item[{\rm (iii)}] For any $p_0$ and $p_1 \in \Ob\RR$, $D(\RR(p_0,p_1))
\subset F_1\RR(p_0, p_1)$.
\end{enumerate}
Then the series
$
f(D) = \sum^\infty_{k=0}a_kD^k \in {\rm End}(\RR(p_0, p_1))
$
converges for any $p_0$ and $p_1 \in \Ob\RR$. 
\end{lem}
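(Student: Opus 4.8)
The plan is to leverage condition (C3): by the equivalence of (C3) and (C'3) shown above, each $K$-module $\RR(p_0,p_1)$ is complete with respect to its filtration $\{F_m\RR(p_0,p_1)\}_{m\geq 0}$. Hence the series $f(D)=\sum_{k\geq 0}a_kD^k$ will converge on $\RR(p_0,p_1)$ as soon as I show that, for each $x\in\RR(p_0,p_1)$, the terms $D^k(x)$ tend to $0$; that is, for every $n$ there is an $N$ with $D^k(x)\in F_n\RR(p_0,p_1)$ for all $k\geq N$. Granting this, the partial sums $\sum_{k=0}^K a_kD^k(x)$ form a Cauchy sequence in $\RR(p_0,p_1)$ (note $a_kD^k(x)\in F_n\RR(p_0,p_1)$ whenever $D^k(x)$ is, since $F_n\RR(p_0,p_1)$ is a $K$-submodule), hence converge; the limit is $K$-linear in $x$ and is the sought endomorphism $f(D)$.

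To obtain $D^k(x)\to 0$ I would first use (iii) to reduce to the case $x\in F_1\RR(p_0,p_1)$, since $D(x)$ always lies there, and then prove the key claim: for every $m\geq 1$ there is an integer $\mu_m\geq 1$ with $D^{\mu_m}(F_m\RR(p_0,p_1))\subset F_{m+1}\RR(p_0,p_1)$. Granting the claim, and using that $D\in F_0\Der\RR$ preserves every $F_n\RR(p_0,p_1)$ by (i), composition gives $D^k(F_1\RR(p_0,p_1))\subset F_n\RR(p_0,p_1)$ for all $k\geq\mu_1+\cdots+\mu_{n-1}$, which is what is needed.

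For the key claim I may assume $[p_0]=[p_1]$, since otherwise $\RR(p_0,p_1)=0$. The case $m=1$ is exactly condition (ii): the action of $D$ on ${\rm gr}_1\RR(p_0,p_1)=F_1\RR(p_0,p_1)/F_2\RR(p_0,p_1)$ is well defined by (i), and $D^\nu=0$ on it by (ii), so $\mu_1=\nu$ works. For $m\geq 2$ I would invoke condition (C2), in the form (C'2) with all intermediate objects chosen equal to $p_0$: this shows that, modulo $F_{m+1}\RR(p_0,p_1)$, every element of $F_m\RR(p_0,p_1)$ is a $K$-linear combination of $m$-fold products $u_1\cdots u_m$ with $u_1,\dots,u_{m-1}\in F_1\RR(p_0,p_0)$ and $u_m\in F_1\RR(p_0,p_1)$. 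Expanding $D^N(u_1\cdots u_m)$ by the iterated Leibniz rule as a $K$-linear combination of terms $(D^{j_1}u_1)\cdots(D^{j_m}u_m)$ with $j_1+\cdots+j_m=N$, each factor stays in the $F_1$ of its own object-pair by (i); and once $N$ is large enough for the pigeonhole principle to force some $j_i$ to be at least the nilpotency exponent $\nu_i$ supplied by (ii) for that pair, the corresponding factor falls into $F_2$, so the whole term lies in $F_{m+1}\RR(p_0,p_1)$. Only the pairs $(p_0,p_0)$ (appearing $m-1$ times) and $(p_0,p_1)$ are involved, so the threshold is finite; a value such as $\mu_m=(m-1)\nu(p_0,p_0)+\nu(p_0,p_1)-m+1\geq 1$ works, and since $D^N$ also preserves $F_{m+1}\RR(p_0,p_1)$ by (i), the claim follows.

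The main obstacle is this step for $m\geq 2$: one must convert an arbitrary element of $F_m\RR(p_0,p_1)$, via (C2), into a sum of $m$-fold products of elements of $F_1$ through a fixed chain of objects in $[p_0]$, and then organize the multinomial Leibniz expansion and the pigeonhole bookkeeping while keeping track of the fact that the nilpotency exponents in (ii) depend on the (finitely many relevant) object-pairs and are not a priori uniform over $\RR$.
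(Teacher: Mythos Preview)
Your proposal is correct and follows essentially the same route as the paper's proof: use (C'2) to write elements of $F_m$ modulo $F_{m+1}$ as $m$-fold products of elements of $F_1$, apply the multinomial Leibniz rule, and use pigeonhole together with (ii) to force one factor into $F_2$, then invoke completeness (C3). Your treatment is in one respect more careful than the paper's: you explicitly fix all intermediate objects equal to $p_0$ so that only the two exponents $\nu(p_0,p_0)$ and $\nu(p_0,p_1)$ from (ii) are needed, whereas the paper tacitly uses a single $\nu$ without commenting on this dependence.
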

\begin{proof}
We have 
\begin{equation}
D^{n(\nu-1)+1}(F_n\RR(p_0,p_1)) \subset F_{n+1}\RR(p_0,p_1)
\label{13increase}
\end{equation}
for any $n \geq 1$. In fact, we may assume $[p_0] = [p_1] \in \pi_0\RR$,
since it is trivial in the case $[p_0] \neq [p_1]$. 
Then choose $q_1, \dots, q_{n-1}
\in [p_0] = [p_1] \subset \Ob\RR$, and denote $q_0 = p_0$
and $q_n=p_1$. From the condition (C2), the multiplication induces 
a surjection $\bigotimes^{n}_{i=1}F_1\RR(q_{i-1},q_i) \oplus
F_{n+1}\RR(p_0,p_1)\to F_{n}\RR(p_0,p_1)$. By the condition (ii) 
we have $D^\nu u_i \in F_2\RR(q_{i-1}, q_i)$ for any $u_i \in
F_1\RR(q_{i-1}, q_i)$. Hence $D^{n(\nu-1)+1}(u_1u_2\cdots u_n) \in 
F_{n+1}\RR(p_0,p_1)$, while $DF_{n+1}\RR(p_0,p_1) \subset
F_{n+1}\RR(p_0,p_1)$ from the condition (i). This proves
(\ref{13increase}).\par
For any $u \in \RR(p_0,p_1)$, we have $Du \in F_1\RR(p_0,p_1)$ by the
condition (iii). Hence, by (\ref{13increase}), we have $D^mu \in
F_n\RR(p_0,p_1)$ if $m \geq 1+\sum^{n-1}_{k=1}k(\nu-1)+1 =
n+\frac12n(n-1)(\nu-1)$. By the condition (C3) the series $f(D)$
converges as an element of ${\rm End}(\RR(p_0,p_1))$. This completes the
proof.
\end{proof}

If $K$ includes the rationals $\mathbb{Q}$,
then we may consider $\exp(t)$ and $\frac{1}{t}(\exp(t)-1) \in K[[t]]$. 

\begin{prop}\label{13exp}
Suppose $K$ includes $\mathbb{Q}$ and
let $\RR$ be a filtered $K$-SAC with the conditions {\rm (C2)} and {\rm (C3)}, 
and $D$ and $D'$ derivations of $\RR$ satisfying the three conditions 
{\rm (i)-(iii)} in {\rm Lemma \ref{13conv}}. Then
\begin{enumerate}
\item[{\rm (1)}] If $[D, D'] = 0$, then the sum $D+D'$ satisfies the three
conditions {\rm (i)-(iii)}, and $\exp(D+D') = \exp (D)\exp (D')$.
\item[{\rm (2)}] $\exp (D) \in {\rm Aut}\RR$.
\item[{\rm (3)}] If $\exp(D) = \exp(D') \in {\rm Aut}\RR$, then we have $D = D' \in
\Der\RR$. 
\end{enumerate}
\end{prop}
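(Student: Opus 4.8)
The plan is to prove the three assertions in turn, using Lemma \ref{13conv} to guarantee that every series below converges in $\mathrm{End}(\RR(p_0,p_1))$, and then to import the classical formal identities from $\mathbb{Q}[[t]]$-power series manipulations, checked one pair of objects $(p_0,p_1)$ at a time. For (1), I would first verify that $D+D'$ satisfies (i)--(iii): condition (i) is immediate from $F_0\Der\RR$ being a $K$-submodule, condition (iii) is immediate since $(D+D')u = Du+D'u \in F_1\RR(p_0,p_1)$, and condition (ii) follows because on the $K$-module $\mathrm{gr}_1\RR(p_0,p_1)$ the operators induced by $D$ and $D'$ commute (as $[D,D']=0$) and are each eventually zero, so their sum is nilpotent by the binomial theorem. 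Then, on each $\RR(p_0,p_1)$, $\exp(D)$, $\exp(D')$ and $\exp(D+D')$ are honest endomorphisms, and the identity $\exp(D+D')=\exp(D)\exp(D')$ is the usual power-series identity valid whenever the two operators commute; I would spell out that the manipulation is legitimate because, by (\ref{13increase}) in the proof of Lemma \ref{13conv}, the partial sums stabilize modulo each $F_m\RR(p_0,p_1)$, so rearrangement of the absolutely-convergent-in-the-filtration-topology double series is allowed.

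For (2), I would produce the inverse explicitly as $\exp(-D)$. Observe that $-D$ also satisfies (i)--(iii) (same argument as above, or the special case $D'=-D$... actually more simply: (i),(iii) are about $K$-submodules and negation, and (ii) is about nilpotence which is insensitive to sign), and $[D,-D]=0$, so part (1) gives $\exp(D)\exp(-D)=\exp(D-D)=\exp(0)=\mathrm{id}$ and likewise $\exp(-D)\exp(D)=\mathrm{id}$ on every $\RR(p_0,p_1)$. Thus $\exp(D)$ is $K$-linear and bijective on each $\RR(p_0,p_1)$; it is moreover a homomorphism of $K$-SAC's, i.e.\ multiplicative, which is exactly where Leibniz' rule enters: by induction one gets $D^k(uv)=\sum_{i+j=k}\binom{k}{i}(D^iu)(D^jv)$, and summing against $1/k!$ yields $\exp(D)(uv)=(\exp(D)u)(\exp(D)v)$ — the same convergence remark as in (1) justifies the rearrangement. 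Since $\exp(D)$ fixes each object and is a linear isomorphism on each hom-module, $\exp(D)\in\Aut\RR$.

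For (3), the idea is to recover $D$ from $\exp(D)$ by a logarithm. Set $U:=\exp(D)-\mathrm{id}$; condition (iii) gives $U(\RR(p_0,p_1))\subset F_1\RR(p_0,p_1)$, and then (\ref{13increase}) shows $U^m\to 0$ in the filtration topology, so $\log(\mathrm{id}+U)=\sum_{m\ge 1}(-1)^{m-1}U^m/m$ converges on each $\RR(p_0,p_1)$; the formal identity $\log(\exp X)=X$ in $\mathbb{Q}[[t]]$ (applied with the single operator $D$, whose powers all lie in $F_0\Der\RR$ and push into higher filtration) gives $\log(\exp(D))=D$, and similarly $\log(\exp(D'))=D'$. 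Hence $\exp(D)=\exp(D')$ forces $D=D'$ as families of endomorphisms, and since each is a derivation this is the desired equality in $\Der\RR$. The main obstacle I anticipate is purely bookkeeping rather than conceptual: making the ``rearrangement of convergent series'' step rigorous in the groupoid/filtered setting — one must consistently work modulo $F_m\RR(p_0,p_1)$, note that all the relevant series become finite sums there by (\ref{13increase}), and only then pass to the inverse limit; once that template is set up, identities (1)--(3) reduce to their standard $\mathbb{Q}[[t]]$ counterparts. A secondary point to handle carefully is that $\log(\exp(D))=D$ must be verified as an identity of operators on each hom-module, not merely formally, which again is the same finite-truncation argument.
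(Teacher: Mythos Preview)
Your proposal is correct and follows essentially the same route as the paper: binomial expansion for (1), Leibniz' rule plus $\exp(-D)$ as inverse for (2), and $\log(\exp D)=D$ for (3). The one place where the paper is slightly more explicit than you is the convergence of the logarithm in (3): rather than invoking (\ref{13increase}) directly for $U=\exp(D)-1$ (which is not a derivation, so (\ref{13increase}) does not literally apply to it), the paper writes $(\exp D-1)^m = D^m f(D)^m$ with $f(t)=t^{-1}(e^t-1)$, so that the filtration estimate for $D^m$ established in the proof of Lemma~\ref{13conv} transfers to $(\exp D-1)^m$; you should make this factorization explicit.
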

\begin{proof}
(1) The conditions (i) and (iii) are clear. By $[D, D'] = 0$, we have 
\begin{equation}
(D+D')^m = \sum^m_{k=0}\begin{pmatrix} m\\k\end{pmatrix} D^k{D'}^{m-k}.
\label{13bino} 
\end{equation}
Hence, if $D^\nu = {D'}^{\nu'}=0$ on ${\rm gr}_1\RR(p_0,p_1)$, 
then $(D+D')^{\nu+\nu'} = 0$ on ${\rm gr}_1\RR(p_0,p_1)$. 
This implies $D+D'$ satisfies the condition (ii). 
By (\ref{13bino}) we compute $\exp(D+D') = \exp (D)\exp (D')$. \par
(2) It is clear that $(\exp D)(1_q) = 1_q$ for any $q \in \Ob\RR$.
Here $1_q$ is the unit of the object $q$.
Leibniz' formula implies $D^k(uv) = \sum^k_{j=0}\begin{pmatrix} k\\j\end{pmatrix}
(D^ju)(D^{k-j}v)$ for any $p_0, p_1, p_2 \in \Ob\RR$, $u \in
\RR(p_0,p_1)$ and $v \in \RR(p_1,p_2)$. Hence $(\exp D)(uv) = (\exp
D)(u)(\exp D)(v)$, which means that $\exp (D)$ is a covariant functor from $\RR$
to $\RR$ itself. By (1), $\exp(-D)$ is the inverse of $\exp (D)$. 
Hence we obtain $\exp (D) \in {\rm Aut}\RR$.\par
(3) We denote $f(t) := \frac{1}{t}(\exp t -1) =
\sum^\infty_{k=0}\frac{1}{(k+1)!}t^k \in K[[t]]$. We have $(\exp(D) - 1)^n
= D^n f(D)^n$. Hence, as was proved in the proof of Lemma \ref{13conv},
we have $(\exp(D) - 1)^mu \in F_n\RR(p_0, p_1)$ for any $p_0, p_1 \in
\Ob\RR$ and $u \in \RR(p_0,p_1)$, if $m \geq n+\frac{1}{2}n(n-1)(\nu-1)$. 
This implies $\log(\exp (D)) =
\sum^\infty_{n=1}\frac{(-1)^{n-1}}{n}(\exp(D) - 1)^n$ converges as an
element of ${\rm End}\RR(p_0,p_1)$. On the other hand, $\log(\exp (D))
\equiv D \pmod {D^m}$ for any $m \geq 1$. Hence, from the condition (C3),
we have $\log(\exp (D)) = D$. In particular, if $\exp (D)= \exp (D')$, then 
we have $D = D'$.
\end{proof}

\section{Groupoids}
\label{sec:2}

Let $K$ be a commutative ring with unit, and $\GG$ a groupoid. 
As was stated in \S\ref{11def}, $K\GG$, the free $K$-module over $\GG$, 
is a $K$-SAC with $\Ob K\GG = \Ob\GG$. 
A homomorphism of $K$-SAC's $\Delta\colon K\GG \to K\GG\otimes K\GG$ is
defined by $\Delta\gamma := \gamma\otimes\gamma$ for any $\gamma \in
\GG(p_0,p_1)$, $p_0, p_1 \in \Ob\GG$, which we call the {\it coproduct} of
$K\GG$. If $\Ob\GG$ is a singleton, namely, $\GG$ is a group, then 
$\Delta$ defines the standard Hopf algebra structure on the group ring
$K\GG$. 

\subsection{Filtration on $K\GG$}
\label{21filtration}

We have the augmentation $\varepsilon\colon K\GG(q,q) \to K$ and the
augmentation ideal $I\GG(q,q) := \Ker\varepsilon \subset K\GG(q,q)$ for
any $q \in \Ob\GG$. We remark that the power $I\GG(q,q)^n$ is a two-sided
ideal of the group ring $K\GG(q,q)$ for any $n \geq 0$. 

\begin{prop} \label{21filter}
The $K$-SAC $K\GG$ has a filration
$\{F_nK\GG(p_0,p_1)\}_{n\geq 0}$, $p_0, p_1 \in \Ob\GG$, such that 
$F_nK\GG(q,q) = I\GG(q,q)^n$ for any $n \geq 0$. The filtered $K$-SAC
$K\GG$ satisfies the conditions {\rm (C1)} and {\rm (C4)}. 
\end{prop}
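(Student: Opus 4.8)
The plan is to build the filtration by inducing it up from the loop algebras $K\GG(q,q)$ and then to verify the two filtration axioms together with (C1) and (C4). Almost everything is formal; the one point of substance is a left--right symmetry of the filtration.

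First I would \emph{define} the filtration. For $p_0,p_1\in\Ob\GG$ with $[p_0]\neq[p_1]$ one has $K\GG(p_0,p_1)=0$, so I set $F_nK\GG(p_0,p_1)=0$; for $[p_0]=[p_1]$ I set
\[
F_nK\GG(p_0,p_1):=I\GG(p_0,p_0)^n\cdot K\GG(p_0,p_1),
\]
the image of the action of the two-sided ideal $I\GG(p_0,p_0)^n\subseteq K\GG(p_0,p_0)$ on the left $K\GG(p_0,p_0)$-module $K\GG(p_0,p_1)$. Since $1_q\in K\GG(q,q)$ this gives $F_nK\GG(q,q)=I\GG(q,q)^n$, while $F_0K\GG(p_0,p_1)=K\GG(p_0,p_1)$ and $F_{n+1}\subseteq F_n$ are immediate, so condition (i) of Definition \ref{12def} holds. (By (\ref{12filter}) this is moreover the only filtration with the prescribed loop part.)

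The key step is the \emph{symmetry} $F_nK\GG(p_0,p_1)=K\GG(p_0,p_1)\cdot I\GG(p_1,p_1)^n$. To see it, fix $\gamma\in\GG(p_0,p_1)$, which is non-empty when $[p_0]=[p_1]$; then $K\GG(p_0,p_1)=K\GG(p_0,p_0)\cdot\gamma=\gamma\cdot K\GG(p_1,p_1)$ is free of rank one over $K\GG(p_0,p_0)$ on the left and over $K\GG(p_1,p_1)$ on the right, and conjugation $x\mapsto\gamma^{-1}x\gamma$ in the multiplication $\cdot$ is a $K$-algebra isomorphism $K\GG(p_0,p_0)\to K\GG(p_1,p_1)$ carrying the group $\GG(p_0,p_0)$ onto $\GG(p_1,p_1)$; hence it intertwines the augmentations and sends $I\GG(p_0,p_0)^n$ onto $I\GG(p_1,p_1)^n$. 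Using $\gamma\gamma^{-1}=1_{p_0}$ one gets $I\GG(p_0,p_0)^n\gamma=\gamma\,(\gamma^{-1}I\GG(p_0,p_0)^n\gamma)=\gamma\,I\GG(p_1,p_1)^n$, which is exactly the asserted symmetry. The same computation shows the definition does not depend on the choice of $\gamma$, since two choices differ by a unit of $K\GG(p_0,p_0)$ and multiplication by a unit preserves the two-sided ideal $I\GG(p_0,p_0)^n$.

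With the symmetry available, condition (ii) of Definition \ref{12def} is routine. We may assume $[p_0]=[p_1]=[p_2]$, since otherwise a factor vanishes, and then
\begin{align*}
F_{n_1}K\GG(p_0,p_1)\cdot F_{n_2}K\GG(p_1,p_2)
&=\bigl(I\GG(p_0,p_0)^{n_1}K\GG(p_0,p_1)\bigr)\bigl(K\GG(p_1,p_2)\,I\GG(p_2,p_2)^{n_2}\bigr)\\
&\subseteq I\GG(p_0,p_0)^{n_1}\,K\GG(p_0,p_2)\,I\GG(p_2,p_2)^{n_2}\\
&=I\GG(p_0,p_0)^{n_1}\,I\GG(p_0,p_0)^{n_2}\,K\GG(p_0,p_2)=F_{n_1+n_2}K\GG(p_0,p_2),
\end{align*}
where the equalities use the definition of the filtration and the symmetry (for the pairs $(p_0,p_1)$, $(p_1,p_2)$ and $(p_0,p_2)$) and the inclusion uses the composition in $K\GG$. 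Hence $K\GG$ is a filtered $K$-SAC. Finally (C1) is the augmentation isomorphism $K\GG(q,q)/F_1K\GG(q,q)=K\GG(q,q)/I\GG(q,q)\cong K$, and (C4) is immediate from the definition of the powers of an ideal: $F_nK\GG(q,q)=I\GG(q,q)^n$ is the image of the multiplication map $F_1K\GG(q,q)^{\otimes n}\to F_nK\GG(q,q)$. The only place demanding care is the symmetry, which rests on the two-sidedness of the ideal powers $I\GG(q,q)^n$ and on conjugation by an isomorphism being augmentation-preserving — and one has to keep the reversed convention $\gamma_1\gamma_2=\gamma_2\circ\gamma_1$ straight so as not to swap left and right modules.
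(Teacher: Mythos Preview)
Your proof is correct and follows essentially the same approach as the paper: both arguments rest on the fact that conjugation by an isomorphism $\gamma\in\GG(p_0,p_1)$ is an augmentation-preserving algebra isomorphism and hence carries $I\GG(p_0,p_0)^n$ onto $I\GG(p_1,p_1)^n$. The paper packages this as a well-definedness statement for $\gamma\,I\GG(q,q)^n\,\delta$ with an intermediate object $q$, whereas you write the filtration as a one-sided action $I\GG(p_0,p_0)^n\cdot K\GG(p_0,p_1)$ and isolate the left--right symmetry as a separate step; the two arrangements are equivalent, and your verification of condition (ii) via the symmetry is the same computation as the paper's, just reorganized.
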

\begin{proof} Let $p_0$ and $p_1$ be objects in $\GG$ with $[p_0] = [p_1]
\in \pi_0\GG = \pi_0K\GG$. Then, for any $q, q_1 \in [p_0]$, $\gamma \in
\GG(p_0, q)$, $\gamma_1 \in\GG(p_0, q_1)$, $\delta \in\GG(q, p_1)$,  
$\delta_1 \in\GG(q_1, p_1)$ and $n \geq 0$, we have 
\begin{equation}
\gamma I\GG(q,q)^n \delta = \gamma_1 I\GG(q_1,q_1)^n \delta_1
\subset K\GG(p_0,p_1)
\label{21welldef}
\end{equation}
In fact, since the map $\GG(q,q) \to \GG(q_1, q_1)$, $x \mapsto
{\gamma_1}^{-1}\gamma x \gamma^{-1}\gamma_1$, is an isomorphism of
groups, we have ${\gamma_1}^{-1}\gamma I\GG(q,q)^n \gamma^{-1}\gamma_1
= I\GG(q_1,q_1)^n$. Since $I\GG(q_1,q_1)^n$ is a two-sided ideal of
$K\GG(q_1,q_1)$ and $\delta_1\delta^{-1}\gamma^{-1}\gamma_1 \in
\GG(q_1,q_1)$ is an invertible element of $K\GG(q_1,q_1)$, we have 
$I\GG(q_1,q_1)^n =
I\GG(q_1,q_1)^n\delta_1\delta^{-1}\gamma^{-1}\gamma_1$. 
Hence ${\gamma_1}^{-1}\gamma I\GG(q,q)^n \gamma^{-1}\gamma_1
= I\GG(q_1,q_1)^n\delta_1\delta^{-1}\gamma^{-1}\gamma_1$, and so 
$\gamma I\GG(q,q)^n \delta = \gamma_1 I\GG(q_1,q_1)^n \delta_1$. \par
Hence we may define $$
F_nK\GG(p_0, p_1) := \gamma I\GG(q,q)^n \delta, \quad n \geq 0,
$$ 
if $[p_0] = [p_1]$. In the case $[p_0] \neq [p_1]$, we define
$F_nK\GG(p_0, p_1) := 0 \subset K\GG(p_0, p_1) = 0$. \par
Next we prove the condition (ii) in Definition \ref{12def}
$$
F_{n_1}K\GG(p_0, p_1)\cdot F_{n_2}K\GG(p_1, p_2) \subset
F_{n_1+n_2}K\GG(p_0, p_2)
$$ 
for any $p_0, p_1, p_2 \in \Ob\GG$ and $n_1, n_2 \geq 0$. 
Choose $\gamma \in \GG(p_0,p_1)$ and $\delta \in \GG(p_1,p_2)$. 
Then $\gamma^{-1}F_{n_1}K\GG(p_0,p_1) = I\GG(p_1,p_1)^{n_1}$ and 
$F_{n_2}K\GG(p_1,p_2)\delta^{-1}= I\GG(p_1,p_1)^{n_2}$, so that 
$\gamma^{-1}F_{n_1}K\GG(p_0,p_1) \cdot F_{n_2}K\GG(p_1,p_2)\delta^{-1}
\subset I\GG(p_1,p_1)^{n_1+n_2}$. This proves that $K\GG$ is a filtered $K$-SAC.

The conditions (C1) and (C4) are clear from the definition of the power
of the augmentation ideal $I\GG(q,q)^n$, $n\geq 0$, $q \in \Ob\GG$.
\end{proof}

Following \S \ref{sec:1-2} we can define the completion of the filtered $K$-SAC $K\GG$, 
$\widehat{K\GG}$, which satisfies the conditions (C1), (C2) and (C3).
Since $K\GG$ satisfies the condition (C4), we have $\Der K\GG =
F_{-1}\Der K\GG$ from Lemma \ref{13der}. In particular, any derivation 
$D \in \Der K\GG$ induces a derivation of the completion $\widehat{K\GG}$
in a natural way. In other words, we have a natural homomorphism of
$K$-Lie algebras, $\Der K\GG \to \Der\widehat{K\GG}$. \par

The coproduct $\Delta$ on $K\GG$ satisfies
\begin{equation}
\Delta F_nK\GG(p_0,p_1) \subset\sum_{n_1+n_2=n}F_{n_1}K\GG(p_0,p_1)
\otimes F_{n_2}K\GG(p_0,p_1)
\label{21coprod}
\end{equation}
for any $p_0, p_1 \in \Ob\GG$ and $n \geq 0$. 
In fact, it is easy to see
$\Delta(I\GG(q,q)^n) \subset\sum_{n_1+n_2=n}
I\GG(q,q)^{n_1}\otimes I\GG(q,q)^{n_2}$ for any $q \in \Ob\GG$. 
For $\gamma \in \GG(p_0,q)$ and $\delta \in \GG(q, p_1)$, $p_0, p_1 \in
[q]$, we have $\Delta F_nK\GG(p_0,p_1) = \Delta(\gamma I\GG(q,q)^n\delta)
= (\gamma\otimes\gamma)\Delta(I\GG(q,q)^n)(\delta\otimes\delta) 
\subset \sum_{n_1+n_2=n}
\gamma I\GG(q,q)^{n_1}\delta\otimes\gamma I\GG(q,q)^{n_2}\delta
= \sum_{n_1+n_2=n}F_{n_1}K\GG(p_0,p_1)
\otimes F_{n_2}K\GG(p_0,p_1)$, as was to be shown. \par
Hence $\Delta$ induces a homomorphism of $K$-SAC's
$$
\Delta\colon \widehat{K\GG} \to \widehat{K\GG}\widehat{\otimes}\widehat{K\GG},
$$
which we call the coproduct of $\widehat{K\GG}$. We denote by 
$\Der_\Delta\widehat{K\GG}$ the Lie subalgebra of $\Der\widehat{K\GG}$ 
consisting of all continuous derivations $D$ stabilizing the coproduct $\Delta$, 
namely, satisfying 
$$
\Delta D = (D\widehat{\otimes}1 + 1\widehat{\otimes}D)\Delta\colon 
\widehat{K\GG}(p_0,p_1) \to
(\widehat{K\GG}\widehat{\otimes}\widehat{K\GG})(p_0,p_1)
$$
for any $p_0, p_1 \in \Ob\GG$.

\subsection{Abelianization $\GG^\abel$}
\label{sec:2-2}

In this subsection we introduce the abelianization of a groupoid $\GG$, 
$\GG^\abel$. Before defining the abelianization, we remark that, for any
filered $K$-SAC $\RR$, the quotient $\RR/F_n\RR$ given by
$\Ob\RR/F_n\RR := \Ob\RR$ and $(\RR/F_n\RR)(p_0,p_1) :=
\RR(p_0,p_1)/F_n\RR(p_0,p_1)$, $p_0, p_1 \in \Ob\RR$, is also a filtered 
$K$-SAC. Any derivation $D \in F_0\Der\RR$ defines a derivation of the
quotient $\RR/F_n\RR$ in a natural way. In other words, one can define a
natural homomorphism of $K$-Lie algebras $F_0\Der\RR \to
F_0\Der(\RR/F_n\RR)$. Moreover we remark that, for any group $G$, we have
a natural isomorphism
$$
IG/(IG)^2 \cong G^\abel, \quad x-1 \mapsto x\bmod [G,G].
$$
Here $IG$ is the augmentation ideal of the integral group ring
$\mathbb{Z}G$. The unit $1 \in \mathbb{Z}G$ gives a decomposition 
$\mathbb{Z}G/(IG)^2 = \mathbb{Z}\oplus IG/(IG)^2 = \mathbb{Z}\oplus
G^\abel$. The multiplication of any two elements in $G^\abel$ vanishes 
in the ring $\mathbb{Z}G/(IG)^2$. In particular, $\mathbb{Z}G/(IG)^2$ is
a commutative ring. The conjugate action of $G$ on the ring
$\mathbb{Z}G/(IG)^2$ is trivial. \par 
For any groupoid $\GG$, we call the
$\mathbb{Z}$-SAC
$$
\GG^\abel := \mathbb{Z}\GG/F_2\mathbb{Z}\GG
$$
the {\it abelianization} of the groupoid $\GG$. The conjugate
action of $\GG(q,q)$ on the ring $\GG^\abel(q,q)$ is trivial. Hence, 
for any $\lambda \in \pi_0\GG$ and $q_1, q_2 \in \lambda$, the
isomorphism 
$$
\GG^\abel(q_1,q_1) \overset\cong\to \GG^\abel(q_2,q_2), \quad
x \mapsto \gamma x\gamma^{-1}
$$
does not depend on the choice of $\gamma \in \GG(q_2,q_1)$. 
Under this identification we define
$$
H\GG(\lambda) := \GG^\abel(q,q), \quad q \in \lambda.
$$
$H\GG(\lambda)$ is a commutative ring. If $p_0, p_1 \in \lambda$, then 
$\GG^\abel(p_0,p_1)$ is a left and right $H\GG(\lambda)$-module. 
We remark $\gamma x = x\gamma \in \GG^\abel(p_0,p_1)$ for any $\gamma \in
\GG(p_0, p_1)$ and $x \in H\GG(\lambda)$. 

\section{Oriented surfaces}
\label{sec:3}

Let $S$ be an oriented surface, or equivalently, an oriented $2$-dimensional $C^\infty$-manifold.
Throughout this paper, a {\it simple closed curve} (SCC) on $S$ means 
a smooth submanifold of the interior $S \setminus \partial S$ which 
is diffeomorphic to the circle $S^1$. It can be regarded as an unoriented free loop in $S$. 

Let $E$ be a non-empty closed subset of the surface $S$.
What we actually have in mind is the case $E$ is a disjoint union of
finitely many simple closed curves and finitely many points.
Then we introduce a groupoid $\CC = \CSE$ by setting $\Ob\CC := E$ and 
$$
\CC(p_0, p_1) = \Hom_\CC(p_0,p_1) := \Pi S(p_0, p_1) = [([0,1], 0, 1),
(S, p_0, p_1)],  
$$
the homotopy set of continuous paths on $S$ from $p_0$ to $p_1$,
for $p_0, p_1 \in E = \Ob\CC$. As usual a path and its homotopy class
will be denoted by the same letter, if there is no fear of confusion.
The multiplication
$\gamma_1\gamma_2=\gamma_1 \cdot \gamma_2$, where $\gamma_1\in \CC(p_0,p_1)$,
$\gamma_2 \in \CC(p_1,p_2)$ and $p_0,p_1,p_2 \in E$, means the
(homotopy class of) conjunction, which traverses $\gamma_1$ first.
The natural map $\pi_0\CSE \to \pi_0S$ is injective. 

Let $K$ be a commutative ring including the rationals $\mathbb{Q}$. 
Then,  if $G$ is a finitely generated free group or a surface group, the
completion map $KG \to \widehat{KG} = \varprojlim_{n\to\infty}KG/(IG)^n$
is injective \cite{Bou} \cite{Ch}. Hence, if the fundamental group of any
connected component of $S$ is finitely generated, then the completion map
$K\CSE \to
\widehat{K\CSE}$ is injective. 

\subsection{Dehn-Nielsen homomorphism}
\label{sec:3-1}

We define {\it the mapping class group of the pair $(S,E)$},
which is denoted by $\mathcal{M}(S,E)$, to
be the set of orientation preserving diffeomorphisms of
$S$ that fix $E \cup \partial S$ pointwise, modulo
isotopies relative to $E \cup \partial S$.
A diffeomorphism and its class in $\mathcal{M}(S,E)$
will be denoted by the same letter, if there is no fear of confusion.
Adopting the functional notation, the product $\varphi_1\varphi_2$
for $\varphi_1, \varphi_2 \in \mathcal{M}(S,E)$ means the (class of)
composition $\varphi_1 \circ \varphi_2$.
Then we can define a group homomorphism of Dehn-Nielsen type
$$
{\sf DN}\colon \mathcal{M}(S, E) \to {\rm Aut}(K\CSE),
$$
by ${\sf DN}(\varphi)(\gamma):=\varphi(\gamma)$, for
$\gamma \in \mathcal{C}(p_0,p_1)$, $p_0,p_1\in E$.
For any groupoid $\GG$ we denote by $\Aut\GG$ the group consisting of all
covariant functors $\mathcal{F}\colon \GG\to\GG$ satisfying the conditions
$\mathcal{F}(p_0) = p_0$ and $\mathcal{F}\colon \GG(p_0,p_1) \to \GG(p_0,p_1)$
is an isomorphism for any $p_0$ and $p_1 \in \Ob\GG$. By definition the Dehn-Nielsen
homomorphism ${\sf DN}\colon \mathcal{M}(S, E) \to {\rm Aut}(K\CSE)$
factors through the group $\Aut\CSE$.

We say $S$ is {\it of finite type}, if $S$ is a compact
connected oriented surface, or a surface obtained from
a compact connected oriented surface by removing
finitely many points in the interior.

\begin{thm} \label{31DN}
Suppose $S$ is of finite type with non-empty boundary,
$E\subset \partial S$, and
any connected component of $\partial S$ has an element of $E$.
Then the homomorphism ${\sf DN}\colon \mathcal{M}(S, E) \to {\rm Aut}(K\CSE)$
is injective.
\end{thm}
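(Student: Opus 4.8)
The plan is to reduce the injectivity of ${\sf DN}$ to the classical Dehn--Nielsen theorem for surfaces with connected boundary, by a capping-off argument. Suppose $\varphi \in \mathcal{M}(S,E)$ acts trivially on $K\CSE$; I want to conclude that $\varphi$ is isotopic to the identity relative to $E \cup \partial S$. First I would reduce to the case $S$ compact: if $S$ is obtained from a compact surface $\overline{S}$ by deleting finitely many interior points, then $\mathcal{M}(S,E)$ and the groupoid $\CSE$ differ from their counterparts for $\overline{S}$ in a controlled way (the punctures only add to the fundamental groupoid), and triviality of the action on the richer groupoid for $S$ forces triviality on a groupoid that detects the punctures; so it suffices to treat $S$ compact with nonempty boundary and $E \subset \partial S$ meeting every boundary component.

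Next, for the compact case, the key step is to relate $\mathcal{M}(S,E)$ to the mapping class group $\mathcal{M}_{g,1}$ of a surface with a single boundary component, where ${\sf DN}$ is injective by the usual Dehn--Nielsen theorem. Concretely, pick one basepoint $* \in E$ lying on some boundary component. Choosing for each other point $p \in E$ a path $\ell_p \in \CC(*,p)$ lets one transport the groupoid action to an action on the group $\pi = \pi_1(S,*)$ together with the distinguished elements given by the boundary loops based (via the $\ell_p$'s) at $*$; triviality of the $K$-linearized groupoid action implies, since the completion map $K\CSE \to \widehat{K\CSE}$ is injective (as recalled in the excerpt, the relevant fundamental groups being finitely generated free groups) and $KG \hookrightarrow \mathbb{Z}G \otimes K$ is faithful enough to detect group elements, that $\varphi_*$ is the identity on $\pi$ and fixes each such boundary-loop element. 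A diffeomorphism of a surface with boundary acting trivially on $\pi_1$ and fixing the (free homotopy classes, with basepoint data, of) boundary components is isotopic to a composition of Dehn twists along boundary-parallel curves; but our $\varphi$ fixes $\partial S$ pointwise, and the groupoid $\CSE$ — unlike $\pi_1(S,*)$ alone — sees the twisting around \emph{each} boundary component separately, because a boundary Dehn twist along a component $B$ acts nontrivially on a path from a point of $E$ on $B$ to a point of $E$ on a different component. Thus triviality on $\CSE$ rules out all such boundary twists, forcing $\varphi$ to be isotopic to the identity.

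More precisely I would argue: let $B_1,\dots,B_r$ be the boundary components and $T_i$ the right-handed Dehn twist along a curve parallel to $B_i$. The kernel of the ordinary Dehn--Nielsen map $\mathcal{M}(S,\partial S \text{-pointwise}) \to \Aut \pi_1$ is freely generated by $T_1,\dots,T_r$ modulo the single relation $T_1\cdots T_r$ coming from the closed-up surface (one has to be a little careful here about which mapping class group one uses; with $E$ nonempty on each $B_i$, pointwise fixing of $E$ rigidifies things so that the kernel of ${\sf DN}$ restricted to the subgroup fixing $\pi_1$ is exactly spanned by these boundary twists). Then for each $i$, choosing $p \in E \cap B_i$ and $q \in E \cap B_j$ with $j \neq i$ (possible because every component contains a point of $E$, and if $r=1$ there is nothing to prove beyond classical Dehn--Nielsen), I compute that $T_i$ sends a generator $\gamma \in \CC(p,q)$ to $(\partial_i)^{\pm 1}\gamma$ where $\partial_i$ is the boundary loop at $p$; since $\partial_i \neq 1$ in $\pi_1$, this is nontrivial already in $K\CSE$. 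Hence no nonzero combination of boundary twists acts trivially, so $\Ker({\sf DN}) = 1$.

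The main obstacle I expect is the bookkeeping in the second step: making precise the statement that the groupoid action being trivial forces both $\varphi_* = \mathrm{id}$ on $\pi_1$ \emph{and} the vanishing of all boundary-twist contributions, and correctly identifying the kernel of the classical Dehn--Nielsen homomorphism for the relevant ``$\partial$ fixed pointwise, $E$ fixed pointwise'' mapping class group (there are several variants in the literature, and the chain-relation $T_1\cdots T_r = $ (boundary twist of the capped surface) must be invoked with the right normalization). The passage from ``trivial on $K\CSE$'' to ``trivial on the underlying groupoid $\CSE$'', used implicitly, is where injectivity of $K\CSE \to \widehat{K\CSE}$ and the faithfulness of $\mathbb{Z}\GG \to K\GG$ on group-like elements get used; this part is routine but should be stated. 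Everything else is an application of standard surface topology plus the classical Dehn--Nielsen theorem cited in the introduction.
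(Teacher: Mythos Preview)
Your route differs from the paper's. The paper argues directly by the Alexander method: it exhibits a system of proper arcs (including one for each puncture) that cuts $S$ into a disk together with once-punctured disks, invokes Epstein's theorem to pass from ``homotopic rel endpoints'' to ``isotopic'' for each arc, straightens $\varphi$ to the identity on the arc system, and finishes via the contractibility of ${\rm Diff}(D^2,\partial D^2)$. It never cites Dehn--Nielsen for $\Sigma_{g,1}$ as a black box. Your approach instead packages the hard step as the identification of $\Ker\bigl(\mathcal{M}(S,\partial)\to\Aut\pi_1(S,*)\bigr)$ and then uses paths between distinct boundary components to kill that kernel; this makes transparent exactly why one basepoint is not enough.

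There is, however, a concrete error in your description of that kernel. The twist $T_1$ along the component $B_1$ containing $*$ is \emph{not} in the kernel: any based loop leaving a collar of $B_1$ crosses a parallel curve twice, and $T_1$ sends it to its conjugate by the boundary word $\partial_1$, which is a nontrivial inner automorphism of the free group $\pi_1(S,*)$. Nor is there a relation $T_1\cdots T_r=1$ in $\mathcal{M}(S,\partial)$; the boundary twists generate a free abelian group of rank $r$, and you are perhaps conflating this with the homological relation $[\partial_1]+\cdots+[\partial_r]=0$. The correct statement is that the kernel is free abelian on $T_2,\dots,T_r$. One clean way to obtain it: cap $B_2,\dots,B_r$ with disks, observe that $\varphi$ descends to an element of $\mathcal{M}(\Sigma_{g,1})$ acting trivially on the quotient $\pi_1(\Sigma_{g,1},*)$ and hence trivial by the classical theorem, and then identify the kernel of the capping map with $\langle T_2,\dots,T_r\rangle$ via the standard capping exact sequence. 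With this correction your final step goes through. Two minor points: your treatment of punctures is vague (the paper handles them with extra arcs rather than a separate reduction), and the detour through completions is unnecessary --- a diffeomorphism already permutes the basis $\CSE(p_0,p_1)$ of $K\CSE(p_0,p_1)$, so triviality on $K\CSE$ is immediately triviality on $\CSE$.
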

\begin{proof}
Let $\varphi \in \mathcal{M}(S,E)$ and suppose ${\sf DN}(\varphi)=1$.
Since $\varphi$ is identity on $\partial S$, for any $p,q \in \partial S$
and $\gamma \in \Pi S(p,q)$, we have $\varphi(\gamma)=\gamma$. Moreover,
by \cite{EPS} Theorem 3.1, $\varphi(\gamma)$ is isotopic to $\gamma$.
Now we take a system of proper arcs $\alpha_1,\beta_1,\ldots,\alpha_g,
\beta_g$, $\gamma_1,\ldots,\gamma_{r-1}$, and
$\delta_1,\ldots, \delta_n$ in $S$ ($g$ is the genus of $S$,
$r$ is the number of components of $\partial S$, and $n$ is
the number of punctures), such that the surface
obtained from $S$ by cutting along these arcs is the union of a disk
and $n$ punctured disks. See Figure 1.
\begin{figure}
\begin{center}
\unitlength 0.1in
\begin{picture}( 34.0000, 20.0000)(  4.0000,-24.0000)
%
{\color[named]{Black}{%
\special{pn 13}%
\special{ar 3600 600 100 200  0.0000000 6.2831853}%
}}%
%
{\color[named]{Black}{%
\special{pn 13}%
\special{ar 3600 2000 200 400  0.0000000 6.2831853}%
}}%
%
{\color[named]{Black}{%
\special{pn 13}%
\special{ar 1400 1400 1000 1000  1.5707963 4.7123890}%
}}%
%
{\color[named]{Black}{%
\special{pn 13}%
\special{pa 1400 2400}%
\special{pa 3600 2400}%
\special{fp}%
}}%
%
{\color[named]{Black}{%
\special{pn 13}%
\special{pa 3600 400}%
\special{pa 1400 400}%
\special{fp}%
}}%
%
{\color[named]{Black}{%
\special{pn 13}%
\special{ar 1400 1400 400 400  0.0000000 6.2831853}%
}}%
%
{\color[named]{Black}{%
\special{pn 13}%
\special{ar 3600 1500 200 100  1.5707963 4.7123890}%
}}%
%
{\color[named]{Black}{%
\special{pn 13}%
\special{ar 3600 900 200 100  1.5707963 4.7123890}%
}}%
%
{\color[named]{Black}{%
\special{pn 4}%
\special{sh 1}%
\special{ar 3500 600 26 26 0  6.28318530717959E+0000}%
}}%
%
{\color[named]{Black}{%
\special{pn 4}%
\special{sh 1}%
\special{ar 3500 1200 26 26 0  6.28318530717959E+0000}%
}}%
%
{\color[named]{Black}{%
\special{pn 8}%
\special{ar 1400 1400 800 800  1.5707963 6.2831853}%
}}%
%
{\color[named]{Black}{%
\special{pn 8}%
\special{pa 1400 2200}%
\special{pa 3440 2200}%
\special{fp}%
}}%
%
{\color[named]{Black}{%
\special{pn 4}%
\special{sh 1}%
\special{ar 3430 2200 26 26 0  6.28318530717959E+0000}%
}}%
%
{\color[named]{Black}{%
\special{pn 8}%
\special{ar 2800 1400 600 600  1.5707963 3.1415927}%
}}%
%
{\color[named]{Black}{%
\special{pn 8}%
\special{pa 2800 2000}%
\special{pa 3400 2000}%
\special{fp}%
}}%
%
{\color[named]{Black}{%
\special{pn 4}%
\special{sh 1}%
\special{ar 3400 2000 26 26 0  6.28318530717959E+0000}%
}}%
%
{\color[named]{Black}{%
\special{pn 8}%
\special{ar 2500 1400 700 700  1.5707963 3.1415927}%
}}%
%
{\color[named]{Black}{%
\special{pn 8}%
\special{pa 2500 2100}%
\special{pa 3420 2100}%
\special{fp}%
}}%
%
{\color[named]{Black}{%
\special{pn 4}%
\special{sh 1}%
\special{ar 3410 2100 26 26 0  6.28318530717959E+0000}%
}}%
%
{\color[named]{Black}{%
\special{pn 8}%
\special{pa 1800 1400}%
\special{pa 1800 800}%
\special{da 0.070}%
}}%
%
{\color[named]{Black}{%
\special{pn 8}%
\special{ar 2200 800 400 400  3.1415927 3.2915927}%
\special{ar 2200 800 400 400  3.3815927 3.5315927}%
\special{ar 2200 800 400 400  3.6215927 3.7715927}%
\special{ar 2200 800 400 400  3.8615927 4.0115927}%
\special{ar 2200 800 400 400  4.1015927 4.2515927}%
\special{ar 2200 800 400 400  4.3415927 4.4915927}%
\special{ar 2200 800 400 400  4.5815927 4.7123890}%
}}%
%
{\color[named]{Black}{%
\special{pn 8}%
\special{ar 2200 800 400 400  4.7123890 6.2831853}%
}}%
%
{\color[named]{Black}{%
\special{pn 8}%
\special{pa 2600 800}%
\special{pa 2600 1600}%
\special{fp}%
}}%
%
{\color[named]{Black}{%
\special{pn 8}%
\special{ar 2900 1600 300 300  1.5707963 3.1415927}%
}}%
%
{\color[named]{Black}{%
\special{pn 8}%
\special{pa 2900 1900}%
\special{pa 3410 1900}%
\special{fp}%
}}%
%
{\color[named]{Black}{%
\special{pn 4}%
\special{sh 1}%
\special{ar 3410 1900 26 26 0  6.28318530717959E+0000}%
}}%
%
{\color[named]{Black}{%
\special{pn 4}%
\special{sh 1}%
\special{ar 3430 1800 26 26 0  6.28318530717959E+0000}%
}}%
%
{\color[named]{Black}{%
\special{pn 8}%
\special{ar 3200 1600 200 200  1.5707963 3.1415927}%
}}%
%
{\color[named]{Black}{%
\special{pn 8}%
\special{pa 3000 1600}%
\special{pa 3000 800}%
\special{fp}%
}}%
%
{\color[named]{Black}{%
\special{pn 8}%
\special{ar 3200 800 200 200  3.1415927 4.7123890}%
}}%
%
{\color[named]{Black}{%
\special{pn 8}%
\special{pa 3200 600}%
\special{pa 3500 600}%
\special{fp}%
}}%
%
{\color[named]{Black}{%
\special{pn 8}%
\special{pa 3200 1800}%
\special{pa 3430 1800}%
\special{fp}%
}}%
\put(14.6000,-21.5000){\makebox(0,0)[lb]{$\alpha_1$}}%
\put(23.9000,-7.5000){\makebox(0,0)[lb]{$\beta_1$}}%
\put(28.3000,-11.5000){\makebox(0,0)[lb]{$\gamma_1$}}%
\put(31.9000,-10.7000){\makebox(0,0)[lb]{$\delta_1$}}%
%
{\color[named]{Black}{%
\special{pn 13}%
\special{ar 3600 1200 100 200  4.7123890 6.2831853}%
\special{ar 3600 1200 100 200  0.0000000 1.5707963}%
}}%
%
{\color[named]{Black}{%
\special{pn 4}%
\special{sh 1}%
\special{ar 3450 1730 26 26 0  6.28318530717959E+0000}%
}}%
%
{\color[named]{Black}{%
\special{pn 4}%
\special{sh 1}%
\special{ar 3500 1660 26 26 0  6.28318530717959E+0000}%
}}%
%
{\color[named]{Black}{%
\special{pn 8}%
\special{pa 3450 1730}%
\special{pa 3300 1730}%
\special{fp}%
}}%
%
{\color[named]{Black}{%
\special{pn 8}%
\special{ar 3300 1630 100 100  1.5707963 3.1415927}%
}}%
%
{\color[named]{Black}{%
\special{pn 8}%
\special{pa 3490 1660}%
\special{pa 3400 1660}%
\special{fp}%
}}%
%
{\color[named]{Black}{%
\special{pn 8}%
\special{ar 3400 1560 100 100  1.5707963 3.1415927}%
}}%
%
{\color[named]{Black}{%
\special{pn 8}%
\special{pa 3500 1300}%
\special{pa 3400 1300}%
\special{fp}%
}}%
%
{\color[named]{Black}{%
\special{pn 8}%
\special{ar 3400 1400 100 100  3.1415927 4.7123890}%
}}%
%
{\color[named]{Black}{%
\special{pn 8}%
\special{pa 3300 1400}%
\special{pa 3300 1560}%
\special{fp}%
}}%
%
{\color[named]{Black}{%
\special{pn 8}%
\special{ar 3500 1200 100 100  4.7123890 6.2831853}%
\special{ar 3500 1200 100 100  0.0000000 1.5707963}%
}}%
%
{\color[named]{Black}{%
\special{pn 8}%
\special{pa 3500 1100}%
\special{pa 3300 1100}%
\special{fp}%
}}%
%
{\color[named]{Black}{%
\special{pn 8}%
\special{ar 3300 1200 100 100  3.1415927 4.7123890}%
}}%
%
{\color[named]{Black}{%
\special{pn 8}%
\special{pa 3200 1200}%
\special{pa 3200 1630}%
\special{fp}%
}}%
\end{picture}%

\caption{the case $g=1$, $r=2$, and $n=1$}
\end{center}
\end{figure}
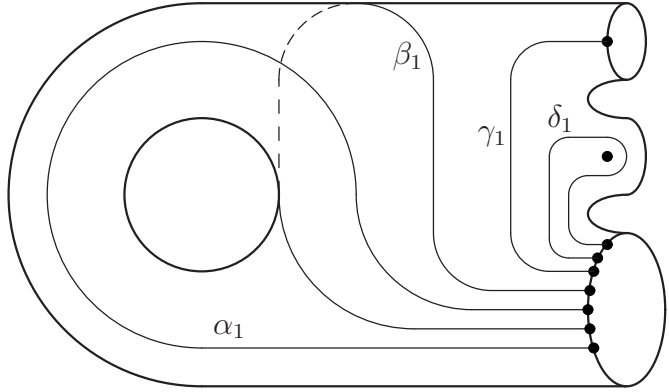

Applying \cite{FM} Proposition 2.8, we may assume $\varphi$ is identity
on these arcs. Finally, by the fact that the group ${\rm Diff}(D^2,\partial D^2)$
is contractible \cite{Smale}, we conclude that $\varphi$ is isotopic to the identity. 
\end{proof}

If $K$ includes $\mathbb{Q}$ and $S$ is of finite type, 
the completion map $K\CSE \to\widehat{K\CSE}$ is injective, as was stated
in the beginning of this section. Hence the natural homomorphism
$\widehat{{\sf DN}}\colon \mathcal{M}(S, E) \to {\rm Aut}(\widehat{K\CSE})$ is
also injective under the assumption of Theorem \ref{31DN}.\par
It would be very interesting if one could find a characterization of the
image of the homomorphism ${\sf DN}$ for any $(S, E)$. \par

Next we consider the case $E\not\subset \partial S$ and $E\setminus\partial S$ is
finite. We number the elements of the set $E\setminus\partial S$, as
$E\setminus\partial S = \{q^0_1, q^0_2,\dots, q^0_s\}$, where $s =
\sharp(E\setminus\partial S) \geq 1$. 
\begin{lem}\label{31ker} Assume $S$ is connected and $E\cap\partial S\neq\emptyset$. Then the
kernel of the forgetful homomorphism $\Aut\CSE \to
\Aut\CC(S,E\cap\partial S)$ is isomorphic to the fundamental group 
$\pi_1(S^s, (q^0_1,\dots, q^0_s)) = \prod^s_{i=1}\pi_1(S,q^0_i)$. 
\end{lem}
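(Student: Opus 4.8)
The plan is to identify the kernel $N$ as a kind of ``point-pushing'' subgroup, analogous to the kernel in the Birman exact sequence. Since $S$ is connected, the groupoid $\CC:=\CSE$ is connected, and so is its full subgroupoid $\CC(S,E\cap\partial S)$ because $E\cap\partial S\neq\emptyset$. The forgetful homomorphism is just restriction of an automorphism to this full subgroupoid, so $N$ consists of those $\Phi\in\Aut\CSE$ with $\Phi(\gamma)=\gamma$ for every morphism $\gamma\in\CC(p_0,p_1)$ having $p_0,p_1\in E\cap\partial S$. Conceptually, automorphisms of a connected groupoid fixing the objects form a semidirect product $\Aut(\pi_1)\ltimes\prod_{p}\pi_1$ and the forgetful map should kill exactly the factors indexed by the forgotten (interior) objects; I would, however, carry this out by hand.

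First I would fix a basepoint $*\in E\cap\partial S$ and, for every $p\in E$, a morphism $\eta_p\in\CC(*,p)$ with $\eta_*=1_*$ (these exist as $S$ is connected). Setting $c_p:=1_p$ whenever $p\in E\cap\partial S$, the claim is that $N$ consists precisely of the automorphisms
\[
\Phi_{(c_i)}\colon\ \gamma\longmapsto c_{p_0}^{-1}\,\gamma\,c_{p_1}\quad\text{for }\gamma\in\CC(p_0,p_1),
\]
one for each tuple $(c_1,\dots,c_s)$ with $c_i\in\CC(q^0_i,q^0_i)=\pi_1(S,q^0_i)$. A direct check — using only the composition law $\gamma_1\gamma_2=\gamma_2\circ\gamma_1$ — shows each $\Phi_{(c_i)}$ is a well-defined object-fixing covariant functor, bijective on every $\CC(p_0,p_1)$ with inverse $\Phi_{(c_i^{-1})}$, hence lies in $\Aut\CSE$, and it belongs to $N$ because $c_{p_0}=c_{p_1}=1$ when $p_0,p_1\in E\cap\partial S$. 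This yields a map $\prod_{i=1}^s\pi_1(S,q^0_i)\to N$, which is injective since $\Phi_{(c_i)}(\eta_i)=\eta_i\,c_i$ recovers $c_i$.

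The substantive step is surjectivity, i.e.\ that every $\Phi\in N$ equals some $\Phi_{(c_i)}$. Given $\Phi\in N$, I would put $c_i:=(\eta_i)^{-1}\,\Phi(\eta_i)\in\CC(q^0_i,q^0_i)$, so that $\Phi(\eta_p)=\eta_p\,c_p$ for every $p\in E$ — by definition when $p=q^0_i$, and because $\eta_p$ joins two points of $E\cap\partial S$ (with $c_p=1$) otherwise. Then for an arbitrary $\gamma\in\CC(p_0,p_1)$ I would factor it through the basepoint, $\gamma=(\eta_{p_0})^{-1}\,\delta\,\eta_{p_1}$ with $\delta:=\eta_{p_0}\,\gamma\,(\eta_{p_1})^{-1}\in\CC(*,*)$; since $\delta$ is a loop at $*\in E\cap\partial S$ we have $\Phi(\delta)=\delta$, and applying $\Phi$ together with $\Phi(\eta_p)=\eta_p c_p$ gives $\Phi(\gamma)=c_{p_0}^{-1}\,\gamma\,c_{p_1}=\Phi_{(c_i)}(\gamma)$. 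Hence $N=\{\Phi_{(c_i)}\}$ and the above map is a bijection.

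It remains to match group structures. Computing $\Phi_{(c_i)}\circ\Phi_{(c'_i)}=\Phi_{(c'_i c_i)}$ from the defining formula shows that $(c_i)\mapsto\Phi_{(c_i)}$ reverses the order of composition; composing it with inversion in each factor (a harmless modification accounting for the composition convention) produces a group isomorphism $N\cong\prod_{i=1}^s\pi_1(S,q^0_i)=\pi_1(S^s,(q^0_1,\dots,q^0_s))$. The two points I expect to require genuine care are (a) the systematic factorization of an arbitrary morphism through the boundary basepoint $*$ — precisely where ``$S$ connected'' and ``$E\cap\partial S\neq\emptyset$'' enter — and (b) keeping track of the reversed composition convention, which is what forces the inversion in the last step.
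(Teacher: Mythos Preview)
Your argument is correct and follows essentially the same route as the paper: define the ``inner'' automorphisms $\Phi_{(c_i)}$ explicitly, check they lie in the kernel, and recover $(c_i)$ from an arbitrary $\Phi\in N$ by comparing $\Phi$ to the identity along chosen reference paths to a fixed boundary basepoint. The only cosmetic difference is that the paper uses the convention $\mathcal{F}(x_1,\dots,x_s)\colon \gamma\mapsto x_{i_0}\gamma x_{i_1}^{-1}$, which makes $(x_i)\mapsto\mathcal{F}(x_i)$ a genuine homomorphism and avoids the final inversion you need for $\Phi_{(c_i)}$; your $c_i$ are simply the paper's $x_i^{-1}$.
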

\begin{proof}
For any $x_i \in \pi_1(S,q_i^0)$, $1 \leq i\leq s$, we define
$\mathcal{F}=\mathcal{F}(x_1,\dots,x_s) \in \Aut\CSE$ by 
$$
\mathcal{F}\gamma :=
\begin{cases}
\gamma, &\mbox{if $p_0, p_1 \in E\cap\partial S$,}\\
x_{i_0}\gamma, &\mbox{if $p_0 = q^0_{i_0}$ and $p_1 \in E\cap\partial
S$,}\\
\gamma{x_{i_1}}^{-1}, &\mbox{if $p_0 \in E\cap\partial
S$ and $p_1 = q^0_{i_1}$,}\\
x_{i_0}\gamma{x_{i_1}}^{-1}, &\mbox{if $p_0 = q^0_{i_0}$ and $p_1 =q^0_{i_1}$,}
\end{cases}
$$
for $\gamma \in \mathcal{C}(p_0,p_1)$. It is clear that the map 
$$
\prod^s_{i=1}\pi_1(S,q^0_i) \to \Aut\CSE, \quad
(x_1,\dots, x_s) \mapsto \mathcal{F}(x_1,\dots,x_s)
$$
is an injective group homomorphism, and its image is in the kernel of the
forgetful homomorphism. Hence it suffices to show that the kernel is included in the image.

Let $U$ be an element of the kernel of the forgetful homomorphism. 
Choose a point $* \in E\cap\partial S$ and paths $\gamma_i \in \Pi
S(p_i,*)$, $1\leq i \leq s$. Define $x_i := (U\gamma_i){\gamma_i}^{-1}
\in \Pi S(p_i,p_i) = \pi_1(S,p_i)$, $1 \leq i\leq s$. Then $x_i$ does not
depend on the choice of $*$ and $\gamma_i$.
In fact, for another $*^{\prime} \in E\cap \partial S$ and
$\gamma'_i \in \Pi S(p_i,*^{\prime})$, take some $\delta \in \Pi S(*^{\prime},*)$.
Then we have $(U\gamma_i)^{-1}(U\gamma'_i)\delta = U({\gamma_i}^{-1}\gamma'_i\delta) =
{\gamma_i}^{-1}\gamma'_i\delta \in \pi_1(S,*)$ since $U$ is an element of the
kernel of the forgetful homomorphism. Hence we have
$(U\gamma'_i){\gamma'_i}^{-1} = x_i$. This means $U =
\mathcal{F}(x_1,\dots,x_s)$ and proves the lemma. 
\end{proof}

Let $F_s(S\setminus\partial S)$ be the configuration space of ordered
distinct $s$ points
$$
F_s(S\setminus\partial S) := \{
(q_1,q_2,\dots,q_s) \in (S\setminus\partial S)^s; \forall i\neq\forall
j,\,  q_i\neq q_j\}.
$$
Then we have a natural exact sequence
\begin{equation*}
1 \to \pi_1(F_s(S\setminus\partial S), (q^0_1,\dots, q^0_s)) \to 
\mathcal{M}(S,E) \to \mathcal{M}(S,E\cap\partial S)\to 1.
\end{equation*}
See \cite{Bir} Theorem 4.3, or \cite{FM} Theorem 4.6.

\begin{thm} Assume $S$ is of finite type with non-empty boundary,
any component of $\partial S$ has an element of $E$, and
$E\setminus\partial S$ is a non-empty finite set. 
Then the kernel
of the Dehn-Nielsen homomorphism ${\sf DN}\colon \mathcal{M}(S,E) \to 
\Aut K\CSE$ is isomorphic to the kernel of the inclusion homomorphism 
$\pi_1(F_s(S\setminus\partial S), (q^0_1,\dots, q^0_s)) \to \pi_1(S^s,
(q^0_1,\dots, q^0_s))$. 
In particular, the homomorphism ${\sf DN}$ is injective if and only if
$\sharp (E\setminus\partial S) = 1$.
\end{thm}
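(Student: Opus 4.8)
The plan is to compare the Dehn--Nielsen homomorphism for $(S,E)$ with the one for $(S,E\cap\partial S)$, organizing the two exact sequences already at hand into a commutative ladder. On the mapping-class-group side we have the Birman-type exact sequence
\[
1 \to \pi_1(F_s(S\setminus\partial S), (q^0_1,\dots,q^0_s)) \to \mathcal{M}(S,E) \to \mathcal{M}(S,E\cap\partial S) \to 1,
\]
and on the groupoid side Lemma \ref{31ker} provides the exact sequence
\[
1 \to \pi_1(S^s,(q^0_1,\dots,q^0_s)) \to \Aut\CSE \to \Aut\CC(S,E\cap\partial S),
\]
the first term being $\prod_{i=1}^s\pi_1(S,q^0_i)$. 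Restriction of an automorphism of $\CSE$ to the full subcategory on the objects $E\cap\partial S$ gives the right-hand vertical arrow, and by the very definition of ${\sf DN}$ the square
\[
\begin{CD}
\mathcal{M}(S,E) @>{\sf DN}>> \Aut\CSE \\
@VVV @VVV \\
\mathcal{M}(S,E\cap\partial S) @>{\sf DN}>> \Aut\CC(S,E\cap\partial S)
\end{CD}
\]
commutes.

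First I would note that the hypotheses make Theorem \ref{31DN} applicable to $(S,E\cap\partial S)$: indeed $E\cap\partial S\subset\partial S$, and every component of $\partial S$ still meets $E\cap\partial S$. Hence the lower ${\sf DN}$ is injective. A diagram chase then shows that any $\varphi\in\mathcal{M}(S,E)$ with ${\sf DN}(\varphi)=1$ maps to an element of $\mathcal{M}(S,E\cap\partial S)$ lying in the kernel of the lower ${\sf DN}$, hence to $1$; so $\Ker{\sf DN}\subset\pi_1(F_s(S\setminus\partial S))$. Commutativity also shows that ${\sf DN}$ carries $\pi_1(F_s(S\setminus\partial S))$ into $\Ker(\Aut\CSE\to\Aut\CC(S,E\cap\partial S))=\pi_1(S^s)$, and since this last inclusion is injective, $\Ker{\sf DN}$ is exactly the kernel of the restricted homomorphism
\[
\rho\colon \pi_1(F_s(S\setminus\partial S),(q^0_1,\dots,q^0_s)) \longrightarrow \pi_1(S^s,(q^0_1,\dots,q^0_s)).
\]
It then remains to identify $\rho$ with the map induced by the inclusion $F_s(S\setminus\partial S)\hookrightarrow (S\setminus\partial S)^s$ followed by the homotopy equivalence $(S\setminus\partial S)^s\hookrightarrow S^s$, which is the ``inclusion homomorphism'' in the statement.

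This identification is the heart of the argument. An element of $\pi_1(F_s(S\setminus\partial S))$ is represented by a loop of configurations $t\mapsto(q_1(t),\dots,q_s(t))$; under the Birman sequence it corresponds to the point-pushing class $\varphi$ obtained by realizing this loop as an ambient isotopy of $S$ fixing $E\cap\partial S\cup\partial S$. I would compute the action of $\varphi$ on a path issuing from (or ending at) a marked point $q^0_i$: dragging the path along the trajectory shows that $\varphi$ acts precisely by the formula defining $\mathcal{F}(x_1,\dots,x_s)$ in the proof of Lemma \ref{31ker}, with $x_i:=[t\mapsto q_i(t)]\in\pi_1(S,q^0_i)$, up to the orientation conventions of \S\ref{sec:3}. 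Hence ${\sf DN}(\varphi)=\mathcal{F}(x_1,\dots,x_s)$, i.e. $\rho$ sends the class of the configuration loop to $(x_1,\dots,x_s)$, which is visibly its image under the inclusion-induced map. I expect the main obstacle to be exactly this step: tracking the point-pushing construction through both exact sequences and matching basepoint and orientation conventions so that ${\sf DN}$ restricted to $\pi_1(F_s(S\setminus\partial S))$ is literally the map $\mathcal{F}$ of Lemma \ref{31ker}.

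Finally, the last assertion drops out. When $s=1$ we have $F_1(S\setminus\partial S)=S\setminus\partial S$, and $S\setminus\partial S\hookrightarrow S$ is a homotopy equivalence, so $\rho$ is an isomorphism and ${\sf DN}$ is injective. When $s\geq 2$, $\rho$ is not injective: inside a small disk in $S\setminus\partial S$ containing $q^0_1$ and $q^0_2$ but no other marked point or puncture, the loop that drags $q^0_1$ once around $q^0_2$ represents a nontrivial element of $\pi_1(F_s(S\setminus\partial S))$ — it maps to a nontrivial surface pure-braid generator under the forgetful map to $\pi_1(F_2(S\setminus\partial S))$, the nontriviality following from asphericity of $S\setminus\partial S$ via the Fadell--Neuwirth fibration — while its image in $\pi_1(S^s)$ is trivial because the underlying loop of $q^0_1$ bounds a disk in $S$. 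Hence $\Ker{\sf DN}\neq 1$, which completes the proof.
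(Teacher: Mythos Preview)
Your proposal is correct and follows essentially the same approach as the paper: the paper sets up exactly the same morphism of exact sequences you describe, invokes Theorem \ref{31DN} for the injectivity of the right vertical arrow, and then says the result follows by diagram chasing. You have in fact supplied more detail than the paper does, in particular the explicit identification of the left vertical arrow with the inclusion-induced map via the point-pushing description, and the explicit treatment of the $s=1$ versus $s\geq 2$ cases; the paper leaves both of these to the reader.
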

\begin{proof} Consider the morphism of exact sequences
$$
\begin{CD}
1 @>>> \pi_1(F_s(S\setminus\partial S)) @>>> \mathcal{M}(S,E) @>>> 
\mathcal{M}(S,E\cap\partial S) @>>> 1\\
@. @VVV @VVV @VVV @.\\
1 @>>> \pi_1(S^s) @>>> \Aut\CSE @>>> \Aut\CC(S,E\cap\partial S).
\end{CD}
$$
The right vertical arrow is injective from Theorem \ref{31DN}. 
The theorem follows from chasing the diagram. 
\end{proof}

\subsection{van Kampen theorem}
\label{sec:3-2}

In this subsection we prove the easier half of the van Kampen theorem 
for the groupoid $\CSE$. Let $S_1$ and $S_2$ be oriented surfaces, 
$\partial'S_1$ and $\partial'S_2$ sums of some connected components of
the boundary $\partial S_1$ and $\partial S_2$, respectively, and 
$\varphi\colon \partial'S_1 \overset\cong\to \partial'S_2$ an
orientation-reversing diffeomorphism. Moreover let $E_1 \subset S_1$ and
$E_2\subset S_2$ be non-empty closed subsets. We assume the
condition 
\begin{quote}
Any connected component of $\partial'S_i$ has some point
in $E_i$ for $i =
1,2$, and $\varphi$ maps the set $E_1\cap \partial'S_1$ onto the
set $E_2\cap
\partial'S_2$.
\end{quote}
Then we define $S_3 := S_1\cup_\varphi S_2$, $E_3 := E_1\cup_\varphi
E_2$ and $E_3^\partial \subset E_3$ the image of $E_1\cap \partial'S_1$ 
and $E_2\cap\partial'S_2$. We write simply $\CC_i := \CC(S_i, E_i)$ for
$i=1,2,3$. We have the inclusion map $\iota_i\colon \CC_i \to \CC_3$ for $i=
1,2$. \par
The van Kampen theorem says $\CC_3$ is ``generated" by $\CC_1$ and
$\CC_2$.  In order to formulate it in a rigorous way, we prepare some
notations.  Let $p_0$ and $p_1$ be points in $E_3$. Then we denote by
$\overline{\mathcal{E}}(p_0, p_1)$ the set of finite sequences of points
in $E_3$, $\lambda = (q_0, q_1,\dots, q_n) \in {E_3}^{n+1}$, $n \geq 0$, 
satisfying the conditions
\begin{enumerate}
\item[(i)] $q_0 = p_0$ and $q_n = p_1$. 
\item[(ii)] For $1 \leq j\leq n$, either 
$\{q_{j-1}, q_j\} \subset S_1$ or $\{q_{j-1}, q_j\} \subset S_2$. 
\end{enumerate}
Further we denote by $\mathcal{E}(p_0,p_1)$ the set of pairs $(\lambda,
\mu)$, $\lambda = (q_0, q_1,\dots, q_n) \in
\overline{\mathcal{E}}(p_0, p_1)$, $\mu = (\mu_1, \dots, \mu_n) \in
\{1,2\}^n$ such that $\{q_{j-1}, q_j\} \subset S_{\mu_j}$ for any $1 \leq
j\leq n$. For $(\lambda,\mu) \in \mathcal{E}(p_0,p_1)$, we denote 
$K\CC(\lambda, \mu) := \bigotimes^n_{j=1}K\CC_{\mu_j}(q_{j-1}, q_j)$. 
One can define the multiplication map $K\CC(\lambda, \mu)\to
K\CC_3(p_0,p_1)$ in an obvious way. 
\begin{prop}\label{32vKT}
The multiplication map
$$
\bigoplus_{(\lambda, \mu) \in \mathcal{E}(p_0, p_1)}K\CC(\lambda, \mu)\to
K\CC_3(p_0,p_1)
$$
is surjective for any $p_0$ and $p_1 \in E_3$. 
\end{prop}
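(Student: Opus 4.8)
The plan is to reduce the statement to the classical van Kampen theorem for fundamental groupoids. First I would fix $p_0, p_1 \in E_3$ and take an arbitrary path $\gamma \colon [0,1] \to S_3$ representing a class in $\CC_3(p_0, p_1)$. Since $S_3 = S_1 \cup_\varphi S_2$ and the common boundary $\partial' S_1 \cong \partial' S_2$ (call its image $B \subset S_3$) is a one-dimensional submanifold, after a homotopy rel endpoints I may assume $\gamma$ is transverse to $B$, so that $\gamma^{-1}(B)$ is a finite set of points $0 < t_1 < \cdots < t_{k} < 1$ in the interior of $[0,1]$; here I also use that $p_0, p_1 \notin B \setminus E_3$ may fail, but since every component of $B$ contains a point of $E_3$ we can arrange the endpoints to lie in $E_3$ to begin with and, if an endpoint lies on $B$, treat it as one of the subdivision points. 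On each subinterval $[t_{j-1}, t_j]$ (with $t_0 = 0$, $t_{k+1} = 1$) the path $\gamma$ lies entirely in one of $S_1$ or $S_2$; record that index as $\mu_j \in \{1,2\}$.

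The subtlety is that the subdivision points $\gamma(t_j)$ need not lie in $E_3$ — they lie on $B$, but possibly not at a marked point. To fix this I would, for each $j$, slide $\gamma(t_j)$ along the component of $B$ on which it lies to a point $q_j \in E_3 \cap B$ (possible by the hypothesis that each component of $\partial' S_i$ meets $E_i$, hence each component of $B$ contains a point of $E_3^\partial$). Concretely, insert a short arc $\sigma_j$ inside $B$ from $\gamma(t_j)$ to $q_j$ and its reverse $\overline{\sigma_j}$; since $\sigma_j \subset B \subset S_1 \cap S_2$, it lies in both $S_1$ and $S_2$, so it can be absorbed into whichever subinterval we like without changing the index $\mu_j$ of either adjacent piece. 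After this modification $\gamma$ is homotopic rel endpoints to a concatenation $\gamma_1 \cdot \gamma_2 \cdots \gamma_{k+1}$ where each $\gamma_j$ is a path in $S_{\mu_j}$ from $q_{j-1}$ to $q_j$, with $q_0 = p_0$, $q_{k+1} = p_1$, and all $q_j \in E_3$. Setting $\lambda = (q_0, \ldots, q_{k+1})$ and $\mu = (\mu_1, \ldots, \mu_{k+1})$, we get $(\lambda, \mu) \in \mathcal{E}(p_0, p_1)$, and $\gamma$ is the image of $\gamma_1 \otimes \cdots \otimes \gamma_{k+1} \in K\CC(\lambda, \mu)$ under the multiplication map. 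Since the classes of paths span $K\CC_3(p_0, p_1)$ as a $K$-module, surjectivity follows.

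The main obstacle is the transversality-and-subdivision argument: one must be careful that the homotopy making $\gamma$ transverse to $B$ is rel endpoints and stays inside $S_3$, that the number of intersection points is finite, and that the endpoint-adjustment via arcs in $B$ is compatible with the bookkeeping in the definition of $\mathcal{E}(p_0, p_1)$ — in particular that a short arc lying in $B$ can always be attached to an adjacent piece in $S_1$ or in $S_2$ as needed, since $B \subset S_1 \cap S_2$. None of this requires the harder ``relations'' half of van Kampen, only the generation statement, so the argument is essentially the standard one adapted to paths between marked points rather than loops at a single basepoint.
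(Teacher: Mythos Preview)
Your argument is correct and follows the same overall strategy as the paper: subdivide an arbitrary path into pieces each lying in $S_1$ or $S_2$, then slide the subdivision points along the common boundary to marked points in $E_3^\partial$. The only difference is in the mechanism for producing the subdivision: you make the path transverse to the one-dimensional submanifold $B$ and cut at the finitely many intersection points, whereas the paper thickens $B$ to an open tubular neighborhood $U$, uses the Lebesgue number of the open cover $\{S_1\cup U,\, S_2\cup U\}$ to subdivide $[0,1]$ into equal pieces, and then deformation-retracts $S_i\cup U$ onto $S_i$. Your transversality route is slightly more geometric and cuts exactly where needed, while the paper's open-cover route avoids any smoothing of the path and any special handling of endpoints on $B$; either way the adjustment step (inserting arcs in $B$ to reach points of $E_3^\partial$) is identical, and neither approach requires the relations half of van Kampen.
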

\begin{proof} Take an open tubular neighborhood $U$ of
$\partial'S_1\cup_\varphi\partial'S_2$ in $S_3$. Then $\{S_1\cup U,
S_2\cup U\}$ is an open covering of $S_3$, and $S_i$ is a
deformation retract of $S_i\cup U$ for $i=1,2$. For any path 
$\ell\colon ([0,1], 0,1) \to (S, p_0, p_1)$, there exists a positive integer
$n \geq 1$ such that $\ell([\frac{j-1}{n}, \frac{j}{n}])$ is included in
$S_1\cup U$ or $S_2\cup U$ for any $1 \leq j \leq n$. Then we may
assume $\ell(\frac{j}{n})\in E_3$ for $1 \leq j \leq n-1$. In
fact, if $\ell(\frac{j}{n}) \in U$, the connected component of
$\ell(\frac{j}{n})$ in $U$ contains a point in $E_3^\partial$
by the assumption on $E_i$ and $\partial'S_i$. We insert a path
connecting $\ell(\frac{j}{n})$ to a point in $E_3^\partial$ inside $U$
into the path $\ell$, and deform it to obtain a new path homotopic to the
original path $\ell$ with $\ell(\frac{j}{n}) \in E_3^\partial$. On the
other hand, if $\ell(\frac{j}{n}) \in S_i \setminus U$, where $i=1$ or $2$, then we
can deform the path $\ell$ inside $S_i\cup U$ such that
$\ell(\frac{j}{n}) \in \{p_0, p_1\}\cup E_3^\partial \subset E_3$. 
Moreover, using the deformation retraction of $S_i\cup U$ onto $S_i$, 
we deform the path $\ell([\frac{j-1}{n}, \frac{j}{n}])$ inside $S_1$ or
$S_2$. Hence we obtain a new path $\ell$ such that 
\begin{itemize}
\item For $1\le j \le n$, either
$\ell([\frac{j-1}{n}, \frac{j}{n}])\subset S_1$
or $\ell([\frac{j-1}{n}, \frac{j}{n}])\subset S_2$,
\item $\ell(\frac{j}{n})$ is in $E_3$ for $1 \leq j \leq n-1$, and
\item $\ell$ is homotopic to the original path relative to $\{0,1\}$.
\end{itemize}
Then we have  
$\ell\vert_{[\frac{0}{n},\frac{1}{n}]}\otimes
\ell\vert_{[\frac{1}{n},\frac{2}{n}]}\otimes\cdots\otimes 
\ell\vert_{[\frac{n-1}{n},\frac{n}{n}]} \in K\CC(\lambda, \mu)$
for some $(\lambda, \mu) \in \mathcal{E}(p_0, p_1)$. 
This proves the proposition.
\end{proof}

\subsection{Automorphisms}
\label{sec:3-3}

From now until the end of \S \ref{sec:5} we suppose $K$ is a commutative
ring including the rationals $\mathbb{Q}$.

Let $S$ be an oriented surface, and $E$ and $E'$ closed subsets, as
before.  If $E \subset E'$, the inclusion $E \hookrightarrow E'$ induces a
homomorphism of filtered SAC's $\widehat{K\CC(S, E)} \to
\widehat{K\CC(S,E')}$ and the forgetful homomorphism 
$\phi\colon \Aut\widehat{K\CC(S,E')} \to \Aut\widehat{K\CSE}$. In this
subsection we study this forgetful homomorphism. 
For topological study of surfaces, the group $\Aut\widehat{K\CSE}$ is 
too large, so that we begin with introducing an appropriate subgroup 
of $\Aut\widehat{K\CSE}$.

\begin{dfn}\label{32ASE}
We define $\ASE$ to be the subgroup of $\Aut\widehat{K\CSE}$ consisting of all
automorphisms $U$ satisfying the following four conditions.
\begin{enumerate}
\item[{\rm (i)}] $U$ is a homeomorphism with respect to the filtration
$\{F_p\widehat{K\CSE}\}_{p\geq 0}$.
\item[{\rm (ii)}] If $\gamma \in \Pi S(p_0, p_1)$, $p_0, p_1 \in E$, is
represented by a path included in $E$, then $U\gamma = \gamma$. 
\item[{\rm (iii)}] $\varepsilon U = \varepsilon\colon \widehat{K\CC}(p_0, p_1) \to
K$ for any $p_0, p_1 \in E$. Here $\varepsilon$ is the augmentation,
which is induced by the $K$-linear map $K\CC(p_0,p_1) \to K$,
$\CC(p_0,p_1)\ni \gamma \mapsto 1$.
\item[{\rm (iv)}] $\Delta U = (U\widehat{\otimes}U)\Delta\colon
\widehat{K\CC}(p_0,p_1) \to
(\widehat{K\CC}\widehat{\otimes}\widehat{K\CC})(p_0,p_1)$ 
for any $p_0, p_1 \in E$. Here $\Delta$ is the coproduct of
$\widehat{K\CC}$ as in {\rm \S \ref{21filtration}}.
\end{enumerate}
\end{dfn}
Clearly the image of the Dehn-Nielsen homomorphism $\widehat{\sf DN}\colon 
\mathcal{M}(S,E) \to \Aut\widehat{K\CSE}$ is included in $\ASE$, 
so that we obtain $\widehat{\sf DN}\colon \mathcal{M}(S,E) \to \ASE$. 
We have the forgetful homomorphism $\phi\colon A(S, E') \to \ASE$. 
\par
For the rest of this subsection we suppose
each component of $S$ is a surface of finite type and not closed
(hence its fundamental group is a finitely generated free group).
Let $C_i\subset S\setminus (E\cup \partial S)$, $1 \leq i \leq n$,
be disjoint simple closed curves 
which are {\it not} null-homotopic in $S$. Choose a basepoint $*_i \in
C_i$ and a simple loop $\eta_i\colon ([0,1], \{0,1\}) \to (C_i, *_i)$ going
around $C_i$. We also denote its homotopy class by 
$\eta_i \in \pi_1(S, *_i)$. We can define ${\eta_i}^a :=
\exp(a\log\eta_i) \in \widehat{K\pi_1(S,*_i)}$ for $a \in K$. 
For any $p \in C_i$, we have some $t \in [0,1]$ such that $\eta_i(t) =
p$. We define $\eta^a_{i,p} :=
(\eta_i\vert_{[0,t]})^{-1}{\eta_i}^a(\eta_i\vert_{[0,t]}) \in
\widehat{K\pi_1(S,p)}$, which is independent of the choice of the path 
$\eta_i\vert_{[0,t]}$. 
We denote $E_1 :=
\bigcup^n_{i=1} C_i$. 
\begin{prop}\label{32forget}
Suppose $U\in A(S, E\cup E_1)$ is in the kernel of the forgetful homomorphism
$\phi\colon A(S, E\cup E_1) \to \ASE$. Then
there exist some $a_i = a^U_i \in K$, $1 \leq i \leq n$, such that 
$$
Uv = 
\begin{cases}
v, &\mbox{if $p_0, p_1 \in E$,}\\
\eta^{a_{i_0}}_{i_0,p_0}v, &\mbox{if $p_0 \in C_{i_0}$,
$p_1\in E$,}\\
v(\eta^{a_{i_1}}_{i_1,p_1})^{-1}, &\mbox{if $p_0\in E$,
$p_1 \in C_{i_1}$,}\\
\eta^{a_{i_0}}_{i_0,p_0}v(\eta^{a_{i_1}}_{i_1,p_1})^{-1}, &\mbox{if $p_0
\in C_{i_0}$,
$p_1 \in C_{i_1}$,}
\end{cases}
$$
for any $v \in \widehat{K\CC}(p_0,p_1)$, $p_0, p_1 \in E \cup E_1$. 
\end{prop}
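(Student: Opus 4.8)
The plan is to follow the pattern of Lemma \ref{31ker}, with the arbitrary elements of $\pi_1(S,q^0_i)$ there replaced by the one-parameter families $\{\eta^a_{i,p}\}_{a\in K}$. First I would record what $U\in A(S,E\cup E_1)$ with $\phi(U)=1$ gives for free. By condition (ii) of Definition \ref{32ASE}, $U$ fixes every path contained in $E_1$, and since $U$ restricts on each $\widehat{K\CC}(q,q)$ to a continuous $K$-algebra automorphism, this yields $U(\eta_i)=\eta_i$, hence $U(\eta^a_i)=\eta^a_i$ and $U(\eta^a_{i,p})=\eta^a_{i,p}$ for all $a\in K$ and $p\in C_i$. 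By conditions (iii) and (iv) of Definition \ref{32ASE}, $U$ preserves the augmentation and the coproduct, so it sends group-like elements to group-like elements and satisfies $U(\xi^{-1})=(U\xi)^{-1}$ for invertible $\xi$. Since $\phi(U)=1$ we have $Uv=v$ whenever $p_0,p_1\in E$. As $U$ is $K$-linear and continuous it suffices to compute $U\gamma$ for a single homotopy class of paths $\gamma$; and passing to inverses and to products, it is enough to treat the case $p_0\in C_{i_0}$, $p_1\in E$: the case $p_0,p_1\in E$ is done, the case $p_0\in E$, $p_1\in C_{i_1}$ is obtained by applying the main case to $\gamma^{-1}$ and inverting, the case $p_0\in C_{i_0}$, $p_1\in C_{i_1}$ by factoring a path $p_0\to p_1$ through a point of $E$ and combining, and $\widehat{K\CC}(p_0,p_1)=0$ when $p_0,p_1$ lie in different components of $S$.

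For the main case, fix $p_0\in C_{i_0}$ and work in its connected component, which meets $E$ in a point $q$. For any path $\gamma$ from $p_0$ to a point of $E$, set $x_\gamma:=(U\gamma)\gamma^{-1}\in\widehat{K\pi_1(S,p_0)}$. Applying $\phi(U)=1$ to the path $\gamma^{-1}\gamma'$ (which joins two points of $E$) gives $U\gamma'=x_\gamma\gamma'$, so $x_\gamma$ is independent of $\gamma$; write $x:=x_\gamma$. It is group-like, being a product of group-like elements. Since $\eta_{i_0,p_0}\gamma$ is again a path from $p_0$ to a point of $E$, the independence just proved gives $U(\eta_{i_0,p_0}\gamma)=x\eta_{i_0,p_0}\gamma$, while multiplicativity of $U$ and $U(\eta_{i_0,p_0})=\eta_{i_0,p_0}$ give $U(\eta_{i_0,p_0}\gamma)=\eta_{i_0,p_0}x\gamma$; cancelling $\gamma$ shows $x$ commutes with $\eta_{i_0,p_0}$. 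Write $x=\exp\xi$ with $\xi$ a primitive element, and put $\zeta:=\log\eta_{i_0,p_0}$; then $\zeta\neq 0$ because $C_{i_0}$ is not null-homotopic and $\pi_1(S,p_0)$ embeds into $\widehat{K\pi_1(S,p_0)}$. The relation $x\eta_{i_0,p_0}=\eta_{i_0,p_0}x$ says $\operatorname{Ad}_x$ fixes $\zeta$, and since $\operatorname{Ad}_x=\exp(\operatorname{ad}\xi)$ with $\operatorname{ad}\xi$ topologically nilpotent (as $\xi$ has positive valuation), this forces $[\xi,\zeta]=0$. Because $\pi_1(S,p_0)$ is a finitely generated free group, the primitive elements of $\widehat{K\pi_1(S,p_0)}$ form a completed free Lie algebra, in which the centralizer of the nonzero element $\zeta$ is $K\zeta$; hence $\xi\in K\zeta$ and $x=\eta_{i_0,p_0}^{a_{i_0}}$ for a unique $a_{i_0}\in K$. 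A short conjugation computation with an arc of $C_{i_0}$ joining $p_0$ to another point of $C_{i_0}$ (which $U$ fixes by condition (ii)) shows $a_{i_0}$ does not depend on $p_0$. Feeding this back through the reductions of the previous paragraph, together with $K$-linearity and continuity of $U$, produces the four displayed formulas.

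The step I expect to be the main obstacle is the centralizer computation: recognizing a group-like element of $\widehat{K\pi_1(S,p_0)}$ commuting with $\eta_{i_0,p_0}$ as a $K$-power of $\eta_{i_0,p_0}$. This rests on the standard fact that the centralizer of a nonzero element of a free Lie algebra is one-dimensional, applied to the completed free Lie algebra of primitives; the essentialness of $C_{i_0}$ enters exactly to guarantee $\zeta=\log\eta_{i_0,p_0}\neq 0$. Everything else is bookkeeping with the groupoid multiplication and with the fact that $U$ fixes each $C_i$ pointwise.
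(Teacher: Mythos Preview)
Your proof is correct and follows the same overall architecture as the paper's: define the twist element $x=(U\gamma)\gamma^{-1}$, show it is group-like and independent of the chosen path, show it commutes with the boundary loop, and then identify it as a $K$-power of that loop. The difference lies entirely in the identification step. The paper isolates this as a stronger standalone result (Proposition~\ref{32simple}): the full centralizer $Z(\eta)$ in the associative algebra $\widehat{K\pi_1(S,*)}$ equals $K[[\eta-1]]$. Proving this requires constructing a group-like expansion adapted to the simple closed curve (Proposition~\ref{32expn}), with separate treatments of the non-separating and separating cases, together with centralizer computations in the completed tensor algebra (Lemmas~\ref{32center1} and~\ref{32center2}) and the auxiliary Lemma~\ref{32series}. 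You instead pass to logarithms and invoke the fact that the centralizer of a nonzero element in a completed free Lie algebra is one-dimensional, which follows by a leading-term argument from the classical (uncompleted) statement. Your route is more economical for the present purpose and avoids the separating/non-separating case split; the paper's route yields the stronger associative-algebra statement, which it reuses later (notably in the proof of Lemma~\ref{N62inj}).
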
 
\begin{proof}
For each $1\le i \le n$, choose a point $* \in E$ which is in the connected
component containing $*_i$, and a path $\gamma_i \in \Pi S(*,*_i)$.
Consider $u_i := {\gamma_i}^{-1}(U\gamma_i) \in \widehat{K\pi_1(S,*_i)}$. We have
$(\gamma_i\widehat{\otimes}\gamma_i)(u_i\widehat{\otimes}u_i) 
= (U\gamma_i)\widehat{\otimes}(U\gamma_i) = (U\widehat{\otimes}U)
\Delta\gamma_i = \Delta(\gamma_iu_i) =
(\gamma_i\widehat{\otimes}\gamma_i) \Delta u_i$, and so $u_i$ is
group-like. Moreover $u_i$ does not depend on the choice of $*$ and $\gamma_i$.
In fact, for another $*^{\prime} \in E$ and $\gamma'_i \in \Pi S(*^{\prime},*_i)$,
take some $\delta \in \Pi S(*,*^{\prime})$. Then
$\gamma'_i{\gamma_i}^{-1}\delta = (U\gamma'_i){(U\gamma_i)}^{-1}\delta$ since 
$\gamma'_i{\gamma_i}^{-1}\delta \in \pi_1(S, *^{\prime})$ and $\phi(U) = 1$.
Hence we have ${\gamma'_i}^{-1}(U\gamma'_i) = {\gamma_i}^{-1}(U\gamma_i) =
u_i$. In particular, if $\gamma'_i = \gamma_i\eta_i$, we obtain
${\eta_i}^{-1}u_i\eta_i = u_i$, since $U\eta_i = \eta_i$ from the condition
(ii) in Definition \ref{32ASE}. Now we have the following.
\begin{prop}\label{32simple}
Let $S$ be a surface of finite type and not closed,
and $C$ a simple closed curve in $S$ which
is {\rm not} null-homotopic in $S$. Choose a basepoint $*\in C$ and a
simple loop $\eta\colon ([0,1], \{0,1\}) \to (C, *)$ going around $C$. We also 
denote its homotopy class by $\eta \in \pi_1(S, *)$. Then the
subalgebra 
$$
Z(\eta) := \{u \in \widehat{K\pi_1(S,*)}; \eta u = u\eta\}
$$
of $\widehat{K\pi_1(S,*)}$ equals the ring of formal power series in 
$\eta-1$, $K[[\eta-1]] = K[[\log\eta]]$.
\end{prop}
The proof will be given in the second half of this subsection. 
From this proposition we obtain $u_i \in K[[\eta_i-1]]$. 
Since the inclusion homomorphism $K[[\eta_i-1]] \to
\widehat{K\pi_1(S,*_i)}$ is injective, $u_i$ is group-like 
also in $K[[\eta_i-1]]$. Hence we have $u_i = {\eta_i}^{-a_i}$ 
for some $a_i \in K$. 
We have $U\gamma_i =
\gamma_iu_i$ and $U{\gamma_i}^{-1} =
(u_i)^{-1}{\gamma_i}^{-1}$. Since $u_i$ does not depend on the choice of
$\gamma_i$, we have 
\begin{equation}
\forall v \in \widehat{K\CC}(*,*_i), Uv = vu_i, \quad 
\forall v' \in \widehat{K\CC}(*_i,*), Uv' = (u_i)^{-1}v'.
\label{32vv}
\end{equation}
Now let $p_0, p_1 \in E \cup E_1$ and $v \in \widehat{K\CC}(p_0,p_1)$. 
\par
(i) If $p_0, p_1 \in E$, then $Uv = v$ since $\phi(U) = 1$. \par

(ii)
Suppose $p_0 = \eta_{i_0}(t_0) \in C_{i_0}$ and $p_1 \in E$. 
Choose $\delta_1 \in \Pi S(p_1, *)$. Then, since $U\delta_1 = \delta_1$
and $U(\eta_{i_0}\vert_{[0,t_0]}) = \eta_{i_0}\vert_{[0,t_0]}$, we have
$(\eta_{i_0}\vert_{[0,t_0]})(Uv)\delta_1 = U((\eta_{i_0}\vert_{[0,t_0]})
v\delta_1)
= (u_{i_0})^{-1}(\eta_{i_0}\vert_{[0,t_0]})v\delta_1$, and so 
$Uv = \eta^{a_{i_0}}_{i_0,p_0}v$.\par

(iii)
Suppose $p_0 \in E$ and $p_1 = \eta_{i_1}(t_1) \in C_{i_1}$. 
Choose $\delta_0 \in \Pi S(*, p_0)$. Then, since $U\delta_0 = \delta_0$
and $U(\eta_{i_1}\vert_{[0,t_1]})^{-1} = (\eta_{i_1}\vert_{[0,t_1]})^{-1}$, we
have
$\delta_0(Uv)(\eta_{i_1}\vert_{[0,t_1]})^{-1} =
U(\delta_0v(\eta_{i_1}\vert_{[0,t_1]})^{-1})
=\delta_0v(\eta_{i_1}\vert_{[0,t_1]})^{-1}u_{i_1}$, and so $Uv =
v(\eta^{a_{i_1}}_{i_1,p_1})^{-1}$.\par

(iv)
Suppose $p_0 = \eta_{i_0}(t_0) \in C_{i_0}$ and $p_1 = \eta_{i_1}(t_1)
\in C_{i_1}$. Then 
$\gamma_{i_0}(\eta_{i_0}\vert_{[0,t_0]})v(\eta_{i_1}\vert_{[0,t_1]})^{-1} \in
\widehat{K\CC}(*, *_{i_1})$. Hence we have 
$\gamma_{i_0}u_{i_0}
(\eta_{i_0}\vert_{[0,t_0]})(Uv)(\eta_{i_1}\vert_{[0,t_1]})^{-1}
=
U(\gamma_{i_0}(\eta_{i_0}\vert_{[0,t_0]})v
(\eta_{i_1}\vert_{[0,t_1]})^{-1})
=
\gamma_{i_0}(\eta_{i_0}\vert_{[0,t_0]})v
(\eta_{i_1}\vert_{[0,t_1]})^{-1}u_{i_1}$, 
and so $Uv = \eta^{a_{i_0}}_{i_0,p_0}v(\eta^{a_{i_1}}_{i_1,p_1})^{-1}$. 
\par
This completes the proof of Proposition \ref{32forget}.
\end{proof}

As a corollary, we have

\begin{prop}\label{32subs}
Let $N \subset S\setminus (E \cup \partial S)$ be a connected compact
subsurface with non-empty boundary,
which is {\rm not} diffeomorphic to the disk $D^2$. Assume
the inclusion homomorphism of fundamental groups $\pi_1(N) \to \pi_1(S)$ is injective. 
Let $i\colon \widehat{K\CC(N,\partial N)} \to \widehat{K\CC(S,E\cup\partial
N)}$ be the inclusion homomorphism.
Numbering the boundary components of $\partial N$ as
$\partial N=\coprod_{i=1}^n \partial_iN$, we choose $*_i
\in \partial_iN$ and $\eta_i \in \pi_1(N, *_i)$ as in {\rm Proposition \ref{32forget}}.

Suppose $U \in A(N,\partial N)$ and 
$\tilde U \in  A(S, E\cup\partial N)$ satisfy ${\tilde U}\circ i = i\circ
U\colon \widehat{K\CC(N,\partial N)} \to \widehat{K\CC(S,E\cup\partial
N)}$ and $\tilde U$ is in the kernel of the forgetful homomorphism $\phi\colon
A(S, E\cup\partial N) \to A(S, E)$. Then 
there exist some $a_i = a^U_i \in K$, $1 \leq i \leq n$, such that 
\begin{equation}
Uv = \eta^{a_{i_0}}_{i_0,p_0}v(\eta^{a_{i_1}}_{i_1,p_1})^{-1}
\label{33eq}\end{equation}
for any $v \in \widehat{K\CC(N, \partial N)}(p_0,p_1)$, $p_0 \in
\partial_{i_0} N$,  and $p_1 \in \partial_{i_1} N$. 
\end{prop}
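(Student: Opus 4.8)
The plan is to deduce the formula (\ref{33eq}) from Proposition~\ref{32forget} applied to $\tilde U$ on $S$, and then to pull the result back to $N$ along $i$. First I would set $E_1:=\partial N=\coprod_{i=1}^n\partial_iN$ and verify that $\{\partial_iN\}$ meets the hypotheses of Proposition~\ref{32forget} inside $S$: the curves $\partial_iN$ are disjoint, lie in $S\setminus(E\cup\partial S)$ because $\partial N\subset N$, and none of them is null-homotopic in $S$. For the last point, a boundary component of a connected compact surface with non-empty boundary is null-homotopic in that surface only when the surface is the disk, so $N\ne D^2$ gives $[\partial_iN]\ne 1$ in $\pi_1(N)$, whence $[\partial_iN]\ne 1$ in $\pi_1(S)$ since $\pi_1(N)\hookrightarrow\pi_1(S)$ is injective. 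I would take the basepoints $*_i$ and loops $\eta_i$ required by Proposition~\ref{32forget} to be the ones already fixed in $N$: their images under $\pi_1(N,*_i)\to\pi_1(S,*_i)$ are represented by the same simple loops around $\partial_iN$, so they are admissible choices, and moreover $i$ sends the $N$-version of each $\eta^a_{i,p}$ to the $S$-version, since $i$ is a continuous $K$-algebra homomorphism compatible with $\log$, $\exp$ and conjugation.

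Next, $\tilde U\in A(S,E\cup E_1)$ lies in the kernel of $\phi\colon A(S,E\cup E_1)\to A(S,E)$, so Proposition~\ref{32forget} produces elements $a_i=a_i^{\tilde U}\in K$ realizing the four-case description of $\tilde U$ on $\widehat{K\CC(S,E\cup\partial N)}$; I put $a_i^U:=a_i^{\tilde U}$. Given $v\in\widehat{K\CC(N,\partial N)}(p_0,p_1)$ with $p_0\in\partial_{i_0}N$ and $p_1\in\partial_{i_1}N$, the element $i(v)$ is of the type covered by the fourth case, so
$$ i(Uv)=\tilde U(i(v))=\eta^{a_{i_0}}_{i_0,p_0}i(v)(\eta^{a_{i_1}}_{i_1,p_1})^{-1}=i(\eta^{a_{i_0}}_{i_0,p_0}v(\eta^{a_{i_1}}_{i_1,p_1})^{-1}), $$
using $\tilde U\circ i=i\circ U$ for the first equality and the compatibility of $i$ noted above for the last. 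It then remains to cancel $i$. Fixing a path $\gamma_0\in\Pi N(p_0,p_1)$ identifies $\widehat{K\CC(N,\partial N)}(p_0,p_1)$ with $\gamma_0\widehat{K\pi_1(N,p_1)}$ and $\widehat{K\CC(S,E\cup\partial N)}(p_0,p_1)$ with $\gamma_0\widehat{K\pi_1(S,p_1)}$, so injectivity of $i$ reduces to injectivity of the completion map $\widehat{K\pi_1(N,p_1)}\to\widehat{K\pi_1(S,p_1)}$ induced by the inclusion of the finitely generated free group $\pi_1(N)$ into the free group $\pi_1(S)$; granting this, the displayed identity forces $Uv=\eta^{a_{i_0}}_{i_0,p_0}v(\eta^{a_{i_1}}_{i_1,p_1})^{-1}$, which is (\ref{33eq}).

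I expect the only real difficulty to be this last injectivity statement; everything else is bookkeeping around Proposition~\ref{32forget}. It is the place where the incompressibility hypothesis $\pi_1(N)\hookrightarrow\pi_1(S)$ is genuinely used, and I would prove it by means of Magnus-type expansions: a Magnus expansion of $\pi_1(S)$ restricts, on a chosen free basis of $\pi_1(N)$, to a faithful expansion of $\pi_1(N)$, so the induced map on completed group rings is injective — a standard fact about completed group rings of free groups and their finitely generated subgroups.
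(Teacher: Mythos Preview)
Your approach coincides with the paper's: verify that each $\partial_iN$ is non-null-homotopic in $S$ (from $N\ne D^2$ and $\pi_1(N)\hookrightarrow\pi_1(S)$), apply Proposition~\ref{32forget} to $\tilde U$ to obtain the $a_i$, and pull the formula back along $i$. The paper organises the pull-back slightly differently---it records that $K\Pi N(p_0,p_1)\hookrightarrow K\Pi S(p_0,p_1)$ and $K\Pi S(p_0,p_1)\hookrightarrow\widehat{K\Pi S}(p_0,p_1)$ are injective, obtains~(\ref{33eq}) first for $v\in K\Pi N(p_0,p_1)$, and then invokes density and continuity of $U$---but this rests on the same point you isolate.

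The one genuine weakness is your sketch of why $i\colon\widehat{K\pi_1(N)}\to\widehat{K\pi_1(S)}$ is injective. Restricting a group-like (Magnus) expansion $\theta\colon\pi_1(S)\to\widehat{T}(H_1(S;K))$ to $\pi_1(N)$ does not produce a group-like expansion of $\pi_1(N)$: the target is $\widehat{T}(H_1(S;K))$ rather than $\widehat{T}(H_1(N;K))$, and the first-order condition $\theta(x)\equiv 1+[x]$ can fail because $H_1(N;K)\to H_1(S;K)$ need not be injective (for instance when $N$ is a boundary-parallel annulus in $\Sigma_{1,1}$). So the ``standard fact'' you invoke is not the one you have sketched; it requires a separate argument, of the kind the paper later gives in Lemma~\ref{N62inj}(1) for a special case via a tailored double filtration.
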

\begin{proof}
Since $N\neq D^2$, the inclusion homomorphism $\pi_1(\partial_iN) \to
\pi_1(N)$ is injective. It follows from the assumption the simple closed
curve $\partial_iN$ is {\it not} null-homotopic in $S$. Hence we can
apply Proposition \ref{32forget} to $\partial N$, from which it follows
that there exist some $a_i = a^U_i \in K$, $1 \leq i \leq n$, such that 
$$
{\tilde U}v = \eta^{a_{i_0}}_{i_0,p_0}v(\eta^{a_{i_1}}_{i_1,p_1})^{-1}
$$
for any $v \in \widehat{K\CC(S, E\cup\partial N)}(p_0,p_1)$, $p_0 \in
\partial_{i_0} N$,  and $p_1 \in \partial_{i_1} N$. \par
From the assumption the inclusion homomorphism $K\Pi N(p_0, p_1) \to
K\Pi S(p_0,p_1)$ is injective for any $p_0, p_1 \in \partial N$. Since
$\pi_1(S)$ is a finitely generated free group, the completion map
${K\Pi S}(p_0,p_1) \to \widehat{K\Pi S}(p_0,p_1)$ is also
injective.  Hence the equation (\ref{33eq}) holds for any $p_0, p_1 \in
\partial N$ and $v \in K\Pi N(p_0,p_1)$, while $K\Pi N(p_0,p_1)$ is dense
in $\widehat{K\Pi N}(p_0,p_1)$ and $U$ is continuous. Hence the equation
(\ref{33eq}) holds for any for any $v \in \widehat{K\CC(N, \partial N)}(p_0,p_1)$, $p_0 \in
\partial_{i_0} N$,  and $p_1 \in \partial_{i_1} N$. This proves the
proposition.
\end{proof}

The rest of this subsection is devoted to the proof of Proposition
\ref{32simple}. \par
First of all we need some algebraic facts. 
Let $H_{\mathbb{Z}}$ be a $\mathbb{Z}$-free module of finite rank.
We denote $H:=H_{\mathbb{Z}} \otimes_{\mathbb{Z}} K$ and
$\widehat{T}:=\prod_{m=0}^{\infty} H^{\otimes m}$, the completed
tensor algebra generated by $H$. Throughout this paper we omit the symbol $\otimes$ as a
multiplication in the algebra $\widehat{T}$.

\begin{lem}\label{32center1}
Let $X^{\prime} \in H_{\mathbb{Z}}$ be a primitive element,
and $X:=X^{\prime}\otimes 1 \in H$. Then we have 
$$
\{u \in \widehat{T}; Xu = uX\} = K[[X]].
$$
\end{lem}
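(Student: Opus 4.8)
The plan is to compute the centralizer of $X$ in the completed tensor algebra $\widehat{T}$ by working one graded piece at a time. Since $X' \in H_{\mathbb{Z}}$ is primitive, it is part of a $\mathbb{Z}$-basis $X' = e_1, e_2, \dots, e_d$ of $H_{\mathbb{Z}}$, so after tensoring with $K$ we get a $K$-basis $X = x_1, x_2, \dots, x_d$ of $H$, and $\widehat{T}$ is the completed free associative algebra on these generators. An element $u \in \widehat{T}$ is a formal sum $u = \sum_{m \geq 0} u_m$ with $u_m \in H^{\otimes m}$ a homogeneous noncommutative polynomial of degree $m$; the condition $Xu = uX$ is equivalent to $X u_m = u_m X$ for every $m$, since multiplication by $X$ preserves the grading. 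So it suffices to show that for each $m$, the homogeneous solutions of $Xu_m = u_m X$ in $H^{\otimes m}$ are exactly the scalar multiples of $X^m$; then $\{u : Xu = uX\} = \prod_m K\cdot X^m = K[[X]]$, noting that $K[[X]]$ is precisely the set of formal sums $\sum_m a_m X^m$, $a_m \in K$, sitting inside $\widehat{T}$ (the map $K[[t]] \to \widehat{T}$, $t \mapsto X$, is injective because $X \neq 0$ and the powers $X^m$ lie in distinct graded pieces).

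For the homogeneous step, I would use the standard basis of $H^{\otimes m}$ given by words $w = x_{i_1} x_{i_2} \cdots x_{i_m}$ with each $i_k \in \{1, \dots, d\}$. Write $u_m = \sum_w c_w\, w$. Then $X u_m = \sum_w c_w\, (x_1 w)$ and $u_m X = \sum_w c_w\, (w x_1)$, and comparing coefficients of the basis of $H^{\otimes (m+1)}$ gives: the coefficient of a word $x_1 v$ (with $v$ a word of length $m$) on the left equals $c_v$, while on the right the coefficient of the word $x_1 v$ is $c_{w}$ where $x_1 v = w x_1$, i.e. it is nonzero only when $v = v' x_1$ for some word $v'$ and then $w = x_1 v'$. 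Carefully bookkeeping the coefficient identities this way, one finds $c_w$ can be nonzero only for $w = x_1^m$: any word containing some letter $x_j$ with $j \neq 1$ is forced to have coefficient $0$ by propagating the constraint from the position of that letter. Concretely, if $c_w \neq 0$ and $w$ is not $x_1^m$, pick the leftmost occurrence of a non-$x_1$ letter and run the identity; this is the routine calculation I will not grind through, but it is an elementary combinatorial induction on word length, and it yields $u_m = c_{x_1^m} X^m$.

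The main obstacle — really the only place that needs care — is making the coefficient-comparison argument in the homogeneous case fully rigorous, i.e. checking that the equation $Xu_m = u_m X$ in $H^{\otimes(m+1)}$ truly forces all non-$X^m$ coefficients to vanish, rather than merely constraining them. One clean way to package this: observe that $X u_m = u_m X$ says $u_m$ lies in the kernel of the map $\mathrm{ad}(X) = L_X - R_X$ on $H^{\otimes m}$, where $L_X, R_X$ are left and right multiplication by $X$; since $\widehat T$ is a free algebra on $x_1,\dots,x_d$, one can filter words by (say) their leading letter or by where $x_1$-runs end, and show $\mathrm{ad}(X)$ is injective modulo the line $K X^m$. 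I expect this to go through without surprises. Everything else — reducing to homogeneous pieces, identifying $K[[X]]$ with $\prod_m K X^m$, and assembling the final answer — is immediate from the grading.
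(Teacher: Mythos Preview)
Your proposal is correct and follows essentially the same approach as the paper: reduce to homogeneous pieces and exploit a $K$-basis $X=X_1,\dots,X_d$ of $H$ extending $X$. The paper's execution is slightly cleaner than your coefficient chase: it argues by induction on $n$, writing $u\in H^{\otimes n}$ uniquely as $u=\sum_i X_i u_i$ with $u_i\in H^{\otimes(n-1)}$, and then comparing first letters in $Xu=uX$ immediately gives $u_i=0$ for $i\ge 2$ together with $Xu_1=u_1X$, so the inductive hypothesis applies to $u_1$---this packages your ``propagate from the leftmost non-$x_1$ letter'' argument in one line and avoids the word-by-word bookkeeping you left unfinished.
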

\begin{proof}
It suffices to prove $\{u \in H^{\otimes n}; 
Xu = uX\} \subset K[[X]]\cap H^{\otimes n}$ by induction on $n \geq 0$. 
It is clear for the case $n=0$. \par

Choose a $\mathbb{Z}$-free basis $\{ X^{\prime}_i \}_{i=1}^{{\rm rank}H_{\mathbb{Z}}}$
of $H_{\mathbb{Z}}$ with $X^{\prime}_1=X^{\prime}$. The set
$\{ X_i \}_{i=1}^{{\rm rank}H_{\mathbb{Z}}}$ defined by $X_i:=X^{\prime}_i \otimes 1 \in H$
is a $K$-free basis of $H$.
Assume $n \geq 1$. For any $u \in H^{\otimes n}$, there exist unique
elements $u_i \in H^{\otimes (n-1)}$, $1 \leq i \leq {\rm rank}H_{\mathbb{Z}}$, such that 
$u = Xu_1 + \sum_{i\geq 2}X_iu_i$. If $Xu = uX$, then $X^2u_1 +
\sum_{i\geq 2}XX_iu_i = Xu_1X + \sum_{i\geq 2}X_iu_iX$. Since 
$\{X_i\}^{{\rm rank}H_{\mathbb{Z}}}_{i=1}$ is linearly independent, we have $u_i = 0$ 
for $i \geq 2$, and $Xu_1 = u_1X$. Hence, by the inductive assumption, 
$u_1 \in K[[X]]$, and so $u = Xu_1 \in K[[X]]$. This completes the
induction. 
\end{proof}

We remark Lemma \ref{32center1} holds for any commutative ring with unit.
We identify $\Lambda^2 H_{\mathbb{Z}}$ with the $\mathbb{Z}$-submodule
of ${H_{\mathbb{Z}}}^{\otimes 2}$ generated by the set
$\{ X^{\prime}Y^{\prime}-Y^{\prime}X^{\prime};\ X^{\prime}, Y^{\prime}
\in H_{\mathbb{Z}} \}$.

\begin{lem}\label{32center2}
Let $v^{\prime}_0 \in \Lambda^2 H_{\mathbb{Z}}$ be primitive as an
element of ${H_{\mathbb{Z}}}^{\otimes 2}$, and $v_0:=v^{\prime}_0 \otimes 1
\in \Lambda^2 H=\Lambda^2 H_{\mathbb{Z}} \otimes_{\mathbb{Z}} K$.
Then we have 
$$
\{u \in \widehat{T}; v_0u = uv_0\} = K[[v_0]].
$$
\end{lem}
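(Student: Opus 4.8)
The plan is to reduce the statement to the primitive case handled in Lemma \ref{32center1} by a suitable algebra automorphism of $\widehat{T}$. Since $v'_0 \in \Lambda^2 H_{\mathbb{Z}}$ is primitive in ${H_{\mathbb{Z}}}^{\otimes 2}$, a standard symplectic-type normal form argument lets us choose a $\mathbb{Z}$-free basis $X'_1, Y'_1, X'_2, Y'_2, \dots$ of $H_{\mathbb{Z}}$ such that $v'_0 = X'_1 Y'_1 - Y'_1 X'_1$; the primitivity is exactly what guarantees that one can normalize the coefficient to $1$ rather than some integer $>1$. Writing $X_i := X'_i \otimes 1$, $Y_i := Y'_i \otimes 1$, we are reduced to describing the centralizer of $v_0 = X_1 Y_1 - Y_1 X_1$ in $\widehat{T}$.

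Next I would introduce the graded automorphism $\Phi$ of $\widehat{T}$ determined on generators by $X_1 \mapsto X_1$, $Y_1 \mapsto Y_1$, and the identity on the remaining basis vectors, composed with a ``logarithmic'' change of variables — more precisely, I expect the right move is to use the exponential/logarithm machinery and write $\exp(Z)$ for a suitable degree-$2$ element $Z$ so that conjugation by $\exp(\operatorname{ad} Z)$ (or an explicit filtered automorphism) sends the commutator $X_1 Y_1 - Y_1 X_1$ to a \emph{primitive} element of $H_{\mathbb{Z}}^{\otimes 2}$ of the form treated in Lemma \ref{32center1}, or alternatively to reduce to a $2$-generator free algebra and compute the centralizer of $[X_1, Y_1]$ there directly. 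The cleanest route may in fact be direct: show by the same inductive, basis-expansion argument as in Lemma \ref{32center1} — expanding $u \in H^{\otimes n}$ along a leading tensor factor and comparing coefficients in $v_0 u = u v_0$ — that every homogeneous solution lies in $K[[v_0]] \cap H^{\otimes n}$, using that $v_0$ begins with the two independent ``words'' $X_1 Y_1$ and $-Y_1 X_1$ to force the unwanted components to vanish.

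The main obstacle is the bookkeeping in that induction: unlike Lemma \ref{32center1}, where $X$ is a single basis vector and one can read off one coefficient at a time, here $v_0$ is a sum of two length-$2$ words, so multiplying $u$ on the left versus the right by $v_0$ mixes several coefficient functions of $u$ at once, and one must set up the linear recursion carefully (probably filtering by the number of $X_1$'s and $Y_1$'s appearing) to see that the only surviving solutions are powers of $v_0$. I would organize this by first establishing the inclusion $K[[v_0]] \subset \{u : v_0 u = u v_0\}$ (trivial), then proving the reverse inclusion homogeneous degree by homogeneous degree, isolating at each stage the ``top'' contribution and invoking the inductive hypothesis on $H^{\otimes(n-2)}$. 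Throughout, the hypothesis that $v'_0$ is primitive in $H_{\mathbb{Z}}^{\otimes 2}$ is used precisely to rule out a nontrivial integer factor that would spoil the normal form and hence the identification of the centralizer with $K[[v_0]]$ rather than a larger ring.
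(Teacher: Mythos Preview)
Your proposal has a genuine gap at the very first step, and it also misses the key simplifying idea that makes the paper's argument short.

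First, the normal form claim is false: a primitive element of $\Lambda^2 H_{\mathbb{Z}}$ need \emph{not} be $GL(H_{\mathbb{Z}})$-equivalent to a single commutator $X'_1Y'_1 - Y'_1X'_1$. The Smith normal form for alternating bilinear forms gives $v'_0 = \sum_i d_i(X'_iY'_i - Y'_iX'_i)$ with $d_1\mid d_2\mid\cdots$, and primitivity in $H_{\mathbb{Z}}^{\otimes 2}$ only forces $d_1=1$. So the reduction to a $2$-generator situation, and the subsequent ``logarithmic change of variables'' idea, do not get off the ground. (The latter idea is in any case unworkable: a filtered algebra automorphism of $\widehat T$ cannot send a degree-$2$ element to a degree-$1$ one.)

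Second, the direct induction you sketch can be made to work, but the paper's trick eliminates exactly the bookkeeping you are worried about: instead of expanding $u\in H^{\otimes n}$ along a single leading factor in a basis of $H$, extend $v'_0$ to a $\mathbb{Z}$-basis $\{v'_i\}$ of $H_{\mathbb{Z}}^{\otimes 2}$ (this is where primitivity of $v'_0$ in $H_{\mathbb{Z}}^{\otimes 2}$ is used) and expand the first \emph{two} factors of $u$ in the corresponding $K$-basis $\{v_i\}$ of $H^{\otimes 2}$. Writing $u = v_0u_1 + \sum_{i\ge 2} v_iu_i$ with $u_i\in H^{\otimes(n-2)}$, the equation $v_0u=uv_0$ forces $u_i=0$ for $i\ge 2$ and $v_0u_1=u_1v_0$ by exactly the one-line linear-independence argument of Lemma~\ref{32center1}; induction then finishes $n\ge 2$.

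Finally, you do not address the base case $n=1$, which is not vacuous: one must show that no nonzero $Y\in H$ commutes with $v_0$. The paper handles this separately by showing that if such a $Y$ existed one could take it primitive in $H_{\mathbb{Z}}$, whence Lemma~\ref{32center1} over $\mathbb{Z}$ would force $v'_0\in\mathbb{Z}\cdot {Z'}^2$ for some primitive $Z'$, contradicting $v'_0\in\Lambda^2 H_{\mathbb{Z}}\setminus\{0\}$.
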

\begin{proof}
It is clear $K[[v_0]]$ is in the LHS. We prove $\{u \in H^{\otimes n}; 
v_0u = uv_0\} \subset K[[v_0]]\cap H^{\otimes n}$ by induction on $n \geq 1$.

To prove the case $n=1$, we consider the adjoint map 
${\rm ad}v'_0: H_{\mathbb{Z}} \to {H_{\mathbb{Z}}}^{\otimes 3}$, 
$Y' \mapsto v'_0Y' - Y'v'_0$. Since the image 
$({\rm ad}v'_0)(H_{\mathbb{Z}})$ is $\mathbb{Z}$-free, 
$\Ker ({\rm ad}v'_0)$ is a direct summand of $H_{\mathbb{Z}}$, 
and we have $\{Y \in H; v_0Y = Yv_0\} = \Ker ({\rm ad}v'_0)
\otimes_{\mathbb{Z}} K$. Assume $\{Y \in H; v_0Y = Yv_0\} \neq 0$. 
Then $\Ker ({\rm ad}v'_0) \neq 0$. In particular, we have 
some primitive element $Z' \in H_{\mathbb{Z}}$ such that 
$v'_0Z' = Z'v'_0$. From Lemma \ref{32center1} for 
$K = \mathbb{Z}$, we have $v'_0 = \lambda {Z'}^2$ 
for some $\lambda \in \mathbb{Z}$. Since 
$v'_0 \in \Lambda^2H_{\mathbb{Z}}$, this implies 
$v'_0 = 0$, which contradicts the assumption $v'_0$ 
is primitive, and proves the case $n=1$.

Choose a $\mathbb{Z}$-free basis 
$\{v'_i\}_{i=1}^{({\rm rank}H_{\mathbb{Z}})^2}$ of 
${H_{\mathbb{Z}}}^{\otimes 2}$ with $v'_1 = v'_0$. 
The subset $\{v_i\}_{i=1}^{({\rm rank}H_{\mathbb{Z}})^2}$ 
defined by $v_i := v'_i\otimes 1 \in \Lambda^2H$ 
is a $K$-free basis of $H^{\otimes 2}$.
Assume $n \geq 2$. For any $u \in H^{\otimes n}$, there exist
unique elements $u_i \in H^{\otimes (n-2)}$, $1 \leq i \leq ({\rm rank}H_{\mathbb{Z}})^2$,
such that 
$u = v_0u_1 + \sum_{i\geq 2}v_iu_i$. If $v_0u = uv_0$, then ${v_0}^2u_1 +
\sum_{i\geq 2}v_0v_iu_i = v_0u_1v_0 + \sum_{i\geq 2}v_iu_iv_0$. Since 
$\{v_i\}^{({\rm rank}H_{\mathbb{Z}})^2}_{i=1}$ is linearly independent, we have $u_i = 0$ 
for $i \geq 2$, and $v_0u_1 = u_1v_0$. Hence, by the inductive
assumption, 
$u_1 \in K[[v_0]]$, and so $u = v_0u_1 \in K[[v_0]]$. This completes the
induction. 
\end{proof}

Further we need some general result on a filtered $\mathbb{Q}$-vector
space. Let $M=F_0 M \supset F_1 M \supset \cdots$
be a filtered $\mathbb{Q}$-vector space. Assume the filtration
$\{F_pM\}_{p\geq0}$ is separated $\bigcap^\infty_{p=0}F_pM = 0$, and 
complete $M = \varprojlim_{p\to\infty}M/F_pM$. Let $\{p_q\}^\infty_{q=0}$
be a sequence of natural numbers with $\lim_{q\to\infty}p_q = +\infty$. 
If a sequence $\{a_q\}^\infty_{q=0} \subset M$ satisfies $a_q \in
F_{p_q}M$ for each $q \geq 0$, then the series $\sum^\infty_{q=0}a_qx^q$
converges as an element of $M =  \varprojlim_{p\to\infty}M/F_pM$ for any
$x \in \mathbb{Q}$. 
\begin{lem}\label{32series}
If there exists an infinite subset $X \subset \mathbb{Q}$ such that 
$$
\forall x\in X, \quad\sum^\infty_{q=0}a_qx^q = 0,
$$
then we have $a_q = 0$ for any $q \geq 0$. 
\end{lem}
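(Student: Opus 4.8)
The plan is to turn the infinite series identity into an honest polynomial identity by reducing modulo each filtration term $F_pM$, and then to apply the invertibility of the Vandermonde matrix over $\mathbb{Q}$.

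First I would fix $p\geq 0$ and set $Q_p:=\{q\geq 0 : p_q<p\}$. Since $\lim_{q\to\infty}p_q=+\infty$, the set $Q_p$ is finite; choose an integer $N=N(p)$ with $q\leq N$ for every $q\in Q_p$. For $q\notin Q_p$ we have $a_q\in F_{p_q}M\subset F_pM$, so the image $\bar a_q$ of $a_q$ in the quotient $\mathbb{Q}$-vector space $M/F_pM$ is zero. Because the series $\sum_{q}a_qx^q$ converges in $M=\varprojlim_{p}M/F_pM$, its image in $M/F_pM$ is $\sum_{q}\bar a_qx^q=\sum_{q=0}^{N}\bar a_qx^q$ (the tail vanishes), i.e.\ a polynomial in $x$ of degree at most $N$ with coefficients in $M/F_pM$. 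By hypothesis this polynomial is $0$ for every $x\in X$.

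Next, since $X$ is infinite I can pick $N+1$ pairwise distinct elements $x_0,\dots,x_N\in X$. From $\sum_{q=0}^{N}\bar a_qx_k^q=0$ for $0\leq k\leq N$ and the fact that the Vandermonde matrix $(x_k^q)_{0\leq k,q\leq N}$ is invertible over $\mathbb{Q}$, one solves for the coefficients and gets $\bar a_q=0$ in $M/F_pM$ for all $0\leq q\leq N$; combined with $\bar a_q=0$ for $q\notin Q_p$ this gives $a_q\in F_pM$ for every $q\geq 0$. As $p$ was arbitrary and the filtration is separated, $\bigcap_{p}F_pM=0$, we conclude $a_q=0$ for all $q$.

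The only step needing a little care is the passage from the convergent series to a finite polynomial, namely that reduction modulo $F_pM$ annihilates all but finitely many terms and commutes with the limit defining $\sum_q a_qx^q$; this is immediate from the description of convergence in $\varprojlim_p M/F_pM$ together with $p_q\to\infty$. Everything else is the classical ``infinitely many roots force a polynomial to vanish'' principle, which is valid here even though the coefficients lie in a vector space rather than a field, because $\mathbb{Q}$ is infinite and Vandermonde inversion is coordinate-free.
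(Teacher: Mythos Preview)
Your proof is correct and is essentially identical to the paper's argument: both reduce modulo $F_pM$ (the paper writes $F_rM$) to turn the series into a polynomial of bounded degree, then invoke the invertibility of the Vandermonde matrix at $N+1$ distinct points of $X$ to conclude that every coefficient lies in $F_pM$, and finish by separation of the filtration. The only cosmetic difference is your explicit naming of the finite index set $Q_p$.
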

\begin{proof}
It suffices to show $a_q \in F_rM$ for any $r \geq 0$. 
There exists some $n=n(r)$ such that $a_q \in F_rM$ for any $q > n$. 
This implies $\sum^n_{q=0}a_qx^q = 0 \in M/F_rM$ for any $x \in X$. 
Since $X$ is infinite, we have some distinct $n+1$ elements $x_0, x_1,
\dots, x_n$ in $X$. Because of the Vandermonde determinants, the
$(n+1)\times(n+1)$-matrix $({x_i}^j)_{0\leq i,j\leq n}$ has an inverse
matrix. Hence we obtain $a_q \in F_rM$, $0 \leq \forall q\leq n$. 
This proves the lemma.
\end{proof}

In order to deduce Proposition \ref{32simple} from these algebraic facts, 
we need the notion of a {\it group-like expansion} of a free group \cite{Mas}.
This notion will be also used in \S \ref{sec:4-3} and \S \ref{aglg}.
Let $\pi$ be a finitely generated free group, and $H$ the $K$-first
homology group of $\pi$,
$$
H := H_1(\pi; K) = \pi^\abel\otimes_{\mathbb{Z}}K.
$$
We denote by $[x] \in H$ the homology class of $x \in \pi$.
Let $\widehat{\mathcal{L}}$ be the space of all Lie-like elements in the
completed tensor algebra $\widehat{T} := \prod^\infty_{m=0}H^{\otimes
m}$. The image of the exponential 
$$
\exp\colon \widehat{\mathcal{L}} \to \widehat{T},\quad
u \mapsto \exp(u) := \sum^\infty_{k=0}\frac{1}{k!}u^k
$$
is a subgroup of the multiplicative group of the algebra $\widehat{T}$.  
We denote by $u*v \in \widehat{\mathcal{L}}$ the Hausdorff series of $u$
and $v \in \widehat{\mathcal{L}}$. 
By definition, we have $\exp(u*v) =
(\exp u)(\exp v)$. 

\begin{dfn}[Massuyeau \cite{Mas}]\label{33Mas}
A map $\theta\colon \pi \to \widehat{T}$ is called
a group-like expansion, if $\theta$ is a group homomorphism of $\pi$
into the multiplicative group $\exp\widehat{\mathcal{L}}$, and $\theta(x) 
\equiv 1+[x] \pmod{\prod^\infty_{m=2}H^{\otimes m}}$ for any $x \in \pi$. 
\end{dfn}

Any group-like expansion $\theta$ induces a filter-preserving isomorphism of Hopf algebras
\begin{equation}
\label{expansion}
\theta\colon \widehat{K\pi}\overset\cong\to \widehat{T}
\end{equation}
(see \cite{Ka} \cite{Mas}).
Here the algebra $\widehat{T}$ is filtered by the ideals
$\widehat{T}_p:=\prod_{m=p}^{\infty}H^{\otimes m}$, $p\ge 1$.

\begin{prop}\label{32expn}
Let $S$ be a surface of finite type and not closed,
and $C$ a simple closed curve in $S$.
Choose a basepoint $* \in C$ and let $\eta \in \pi_1(S,*)$ be a simple
loop around $C$. Then there exists a group-like expansion $\theta$ of
the free group $\pi_1(S, *)$ such that 
\begin{enumerate}
\item[{\rm (i)}] $\theta(\eta) = \exp([\eta])$, if $[\eta] \neq 0 \in H =
H_1(S; K)$,
\item[{\rm (ii)}]
$\theta(\eta) = \exp(\eta'_0\otimes 1)$ for some $\eta'_0 
\in \Lambda^2H_{\mathbb{Z}}$, if $[\eta] = 0 \in H$. 
Here $\eta'_0$ is primitive as an element of ${H_{\mathbb{Z}}}^{\otimes 2}$.
\end{enumerate}
\end{prop}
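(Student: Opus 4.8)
The plan is to construct the desired group-like expansion $\theta$ by first choosing a convenient free generating set for $\pi_1(S,*)$ adapted to the simple closed curve $C$, and then defining $\theta$ on generators so that $\eta$ (or rather a conjugate thereof, but the basepoint is already on $C$) is sent to the prescribed exponential. Since $S$ is of finite type and not closed, $\pi := \pi_1(S,*)$ is a finitely generated free group; say of rank $N$. The key geometric input is that a non-separating simple closed curve through $*$, or more generally the simple loop $\eta$, can be completed to a system of free generators of $\pi$: in the case $[\eta]\neq 0$ one can take $\eta$ itself as one of $N$ free generators $x_1=\eta, x_2,\dots,x_N$; in the case $[\eta]=0$ the curve $C$ is separating in $S$, and standard surface topology (cutting $S$ along $C$) produces free generators $a_1,b_1,\dots$ such that $\eta$ is conjugate to a word whose homology class is zero but whose leading quadratic term is a symplectic-type element $a_1b_1-b_1a_1$ of $\Lambda^2H_{\mathbb{Z}}$, primitive in $H_{\mathbb{Z}}^{\otimes 2}$.

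First I would handle case (i). Pick free generators $x_1=\eta,x_2,\dots,x_N$ of $\pi$. Then $[\eta]=[x_1]$ is part of a $K$-basis of $H$, and in particular $[\eta]\neq 0$. Define $\theta$ on generators by $\theta(x_1):=\exp([\eta])$ and $\theta(x_i):=\exp([x_i])$ for $i\geq 2$; since $\pi$ is free, this extends uniquely to a group homomorphism $\pi\to\exp\widehat{\mathcal L}$, and each $\theta(x_i)\equiv 1+[x_i]$ modulo $\widehat T_2$, so $\theta$ is a group-like expansion by Definition \ref{33Mas}. By construction $\theta(\eta)=\exp([\eta])$, giving (i).

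Next, case (ii). Here $[\eta]=0$, so $C$ separates $S$ into two pieces (or bounds after the relevant cut); choosing free generators so that $\eta$ reads, up to conjugation by a word $w$ with the basepoint convention, as a standard product of commutators $[a_1,b_1]\cdots$ supported on one side, one gets $\eta\equiv (a_1b_1a_1^{-1}b_1^{-1})\cdots$. One then defines a group-like expansion $\theta_0$ by sending each free generator to the exponential of its homology class, and corrects it: since any two group-like expansions differ by an automorphism of $\widehat T$ respecting the filtration (via (\ref{expansion})), and the group of such automorphisms acts transitively enough on the degree-two data, one can post-compose $\theta_0$ with a filtered Hopf-algebra automorphism of $\widehat T$ carrying $\log\theta_0(\eta)$ — whose lowest term is $[a_1,b_1]=a_1b_1-b_1a_1\in\Lambda^2H$ and which is Lie-like — to exactly $\eta'_0\otimes 1$ for a suitable primitive $\eta'_0\in\Lambda^2H_{\mathbb Z}$. (Concretely: the lowest-degree part being a nonzero element of $\Lambda^2H_{\mathbb Z}$ that is primitive in $H_{\mathbb Z}^{\otimes 2}$ — which one checks directly since $a_1b_1-b_1a_1$ is part of a basis — one can absorb all higher-degree corrections by an inner-type automorphism, exactly as in the standard construction of symplectic expansions; cf.\ \cite{Mas}, \cite{Ka}.) The resulting $\theta$ is again a group-like expansion with $\theta(\eta)=\exp(\eta'_0\otimes 1)$, and $\eta'_0$ primitive in $H_{\mathbb Z}^{\otimes2}$, giving (ii).

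The main obstacle is the second case: one must verify both that $C$ with $[\eta]=0$ genuinely yields a free generating system in which $\eta$ is (conjugate to) a product of commutators — a surface-topology statement requiring that a separating simple closed curve's homotopy class has this normal form in a free group — and that the "higher-order correction" automorphism of $\widehat T$ exists and can be taken to preserve the Hopf structure and the filtration while killing all terms of $\log\theta_0(\eta)$ above degree $2$; this is where the group-like-expansion machinery of \cite{Mas} (and the action of filtered automorphisms on expansions) does the real work. The transitivity/realizability statement for the degree-$2$ leading term — that any primitive $\eta'_0\in\Lambda^2H_{\mathbb Z}$ arising this way is the image of $[a_1,b_1]$ under a suitable linear automorphism, and that $[a_1,b_1]$ is itself primitive — is a short linear-algebra check but must be stated carefully. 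Lemmas \ref{32center1} and \ref{32center2} are not needed for this proposition itself; they enter afterward, in deducing Proposition \ref{32simple} from the existence of $\theta$, by identifying the centralizer $Z(\eta)$ with $K[[\eta-1]]$ via the isomorphism (\ref{expansion}).
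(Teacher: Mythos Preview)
Your outline is on the right track and ends up invoking the same ingredients as the paper, but there is one genuine gap and one place where your phrasing obscures what is actually needed.

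\textbf{The gap (case (i), separating curves).} You assert that whenever $[\eta]\neq 0$ the simple loop $\eta$ can be taken as a member of a free basis of $\pi$. This is \emph{not} automatic: a separating simple closed curve with $[\eta]=0$ gives a counterexample to the naive statement (e.g.\ $\eta=[\alpha_1,\beta_1]$ in $\pi_1(\Sigma_{1,1})$ is not primitive), and you do not explain why the hypothesis $[\eta]\neq 0$ rules this out. In fact it is true---after writing $\eta^{\pm1}=\gamma_{k+1}\cdots\gamma_r[\alpha_1,\beta_1]\cdots[\alpha_h,\beta_h]$ via the classification of surfaces, a sequence of Nielsen moves absorbs the commutators and the $\gamma_j$'s into a single new generator---but this argument is missing from your proposal. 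The paper avoids the issue entirely: it keeps the standard free basis and instead defines $\log\theta(\gamma_r)$ inductively, degree by degree, so as to \emph{force} $\theta(\eta)=\exp([\eta])$. That inductive construction is the content of the paper's case~(2), which your uniform treatment of case~(i) skips over.

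\textbf{Case (ii).} Here your approach and the paper's coincide in substance: both reduce to the existence of a symplectic expansion on the genus-$h$ piece cut off by $C$. The paper makes this explicit by writing $\eta^{\pm1}=[\alpha_1,\beta_1]\cdots[\alpha_h,\beta_h]$, taking a symplectic expansion $\theta'$ of the free subgroup $\langle\alpha_i,\beta_i:1\le i\le h\rangle\cong\pi_1(\Sigma_{h,1})$ (so that $\theta'(\eta)=\exp(\sum_i[\alpha_i]\wedge[\beta_i])$), and extending by $\theta(x)=\exp([x])$ on the remaining free generators. Your ``post-compose $\theta_0$ with a filtered Hopf automorphism to kill the higher terms of $\log\theta_0(\eta)$'' is a correct but less transparent rephrasing of exactly this step; note that the correcting automorphism only needs to move the variables $[\alpha_i],[\beta_i]$ for $i\le h$, so one is really just building a symplectic expansion on that sub-tensor-algebra. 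Also, the leading term is $\sum_{i=1}^h([\alpha_i][\beta_i]-[\beta_i][\alpha_i])$, not a single commutator, and since $*\in C$ no conjugation is needed.
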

\begin{proof}
From the assumption on $S$, the fundamental group $\pi_1(S, *)$ has a
presentation 
\begin{equation}
\langle\alpha_i, \beta_i (1 \leq i\leq g), 
\gamma_j (1 \leq j\leq r); 
[\alpha_1, \beta_1]\cdots[\alpha_g, \beta_g]\gamma_1\cdots\gamma_r = 1
\rangle
\label{32pres}
\end{equation}
for some $g \geq 0$ and $r\geq 1$. It suffices to consider the following
three cases.
\begin{enumerate}
\item[(1)] $C$ is non-separating.
\item[(2)] $C$ is separating, and $[\eta] \neq 0 \in H$.
\item[(3)] $C$ is separating, and $[\eta] = 0 \in H$.
\end{enumerate}
\par
(1) By the classification theorem of surfaces, we may take $\eta =
\alpha_1$. Since $r \geq 1$, $\pi_1(S,*)$ is freely generated by 
$\{\alpha_i, \beta_i (1 \leq i\leq g), 
\gamma_j (2 \leq j\leq r)\}$. We can define $\theta$ by 
$\theta(\alpha_i) := \exp([\alpha_i])$, 
$\theta(\beta_i) := \exp([\beta_i])$, $1 \leq i \leq g$, and
$\theta(\gamma_j) := \exp([\gamma_j])$, $2 \leq j\leq r$. 
Clearly we have $\theta(\eta) = \exp([\eta])$. 
\par
(2) The complement $S\setminus C$ has two connected components, 
so that we have $r \geq 2$. By the classification theorem of surfaces, we
may take $\eta^{\pm1} = \gamma_{k+1}\cdots\gamma_r[\alpha_1,
\beta_1]\cdots[\alpha_h, \beta_h]$ for some $1\leq k \leq r-1$ and $0
\leq h \leq g$. The fundamental group $\pi_1(S,*)$ is freely generated by 
$\{\alpha_i, \beta_i (1 \leq i\leq g), 
\gamma_j (2 \leq j\leq r)\}$. We define 
$\theta(\alpha_i) := \exp([\alpha_i])$, 
$\theta(\beta_i) := \exp([\beta_i])$, $1 \leq i \leq g$, and
$\theta(\gamma_j) := \exp([\gamma_j])$, $2 \leq j\leq r-1$. 
We denote $\log\theta(x) = \sum^\infty_{p=1}\ell_p(x)$, $\ell_p(x) \in
\widehat{\mathcal{L}}\cap H^{\otimes p}$. We define $\ell_p(\gamma_r)$ by
induction on $p \geq 1$. $\ell_1(\gamma_r)$ must be $[\gamma_r]$. Assume
$p\geq2$ and $\ell_q(\gamma_r)$ is defined for $q \leq p-1$. Then we
define $\ell_p(\gamma_r)$ to be minus the $H^{\otimes p}$-component of 
$[\gamma_{k+1}]*\cdots*[\gamma_{r-1}]*
\left(\sum^{p-1}_{q=1}\ell_q(\gamma_r)\right)*
[\alpha_1]*[\beta_1]*(-[\alpha_1])*(-[\beta_1])*\cdots*
[\alpha_h]*[\beta_h]*(-[\alpha_h])*(-[\beta_h])$. From this definition we
have $\ell_p(\eta^{\pm1}) = \pm\ell_p(\eta) = 0$ for $p \geq 2$, namely
$\theta(\eta) = \exp([\eta])$.\par
(3) By the classification theorem of surfaces, we may take 
$\eta^{\pm1} = [\alpha_1,\beta_1]\cdots[\alpha_h, \beta_h]$ for some
$0\leq h \leq g$.  There exists a group-like expansion $\theta'$ of the
free group 
$\langle\alpha_i, \beta_i (1 \leq i\leq h)\rangle$ satisfying 
$
\theta'([\alpha_1, \beta_1]\cdots[\alpha_h, \beta_h])
= \exp(\sum^h_{i=1}[\alpha_i]\wedge[\beta_i])
$, c.f., Definition \ref{def:symp-exp} which is originally due to \cite{Mas}.
See also \cite{Ku1}. The fundamental group $\pi_1(S,*)$ is 
freely generated by $\{\alpha_i, \beta_i (1 \leq i\leq g), 
\gamma_j (2 \leq j\leq r)\}$. We can define $\theta$ by 
$\theta(\alpha_i) := \theta'(\alpha_i)$, 
$\theta(\beta_i) := \theta'(\beta_i)$, if $1 \leq i \leq h$, 
$\theta(\alpha_i) := \exp([\alpha_i])$, 
$\theta(\beta_i) := \exp([\beta_i])$, if $h+1 \leq i \leq g$, and
$\theta(\gamma_j) := \exp([\gamma_j])$, $2 \leq j\leq r$. 
Then we have $\theta(\eta) =
\exp(\pm\sum^h_{i=1}[\alpha_i]\wedge[\beta_i])$.
It is easy to show that $\eta'_0 := \pm\sum^h_{i=1}[\alpha_i]\wedge 
[\beta_i] \in \Lambda^2H_1(S; \mathbb{Z})$ is primitive 
as an element of $H_1(S; \mathbb{Z})^{\otimes 2}$.
\end{proof}

We remark $[\eta] \in H_1(S; \mathbb{Z})$ is primitive in the case (i), 
and $\eta'_0 \in \Lambda^2H_1(S; \mathbb{Z})$ is also primitive 
as an element of $H_1(S; \mathbb{Z})^{\otimes 2}$ in the case (ii).
Hence one can apply Lemmas \ref{32center1} and \ref{32center2} 
to these primitive elements, respectively.

\begin{proof}[Proof of Proposition \ref{32simple}] It is clear $K[[\eta-1]]
\subset Z(\eta)$. Let $u \in Z(\eta)$. For any $n \in
\mathbb{Z}_{\geq0}$ we have $0 = u\eta^n - \eta^nu = u\exp(n\log\eta) -
\exp(n\log\eta)u$. It follows from Lemma \ref{32series} that 
$u(\log\eta) = (\log\eta)u$. Apply the group-like expansion $\theta$ in
Proposition \ref{32expn} to this equation, we obtain 
$
\theta(u)\theta(\log\eta) = \theta(\log\eta)\theta(u) \in\widehat{T}.
$
Here we remark $\theta(\log\eta) \in H$ or $\theta(\log\eta) \in
\Lambda^2H$. Since $C$ is {\it not} null-homotopic in $S$, we have 
$\theta(\log\eta) \neq 0$. Hence, from Lemmas \ref{32center1} and
\ref{32center2}, $\theta(u) \in K[[\theta(\log\eta)]]$. 
Since $\theta\colon \widehat{K\pi_1(S,*)} \to \widehat{T}$ is an isomorphism
of algebras, we have $u \in K[[\log\eta]] = K[[\eta-1]]$. This completes
the proof.
\end{proof}

\section{Completion of the Goldman Lie algebra}
\label{sec:4}

Let $K$ be a commutative ring including the rationals $\mathbb{Q}$.
Let $S$ be an oriented surface, $E$ a non-empty closed subset
with the property that $E\setminus\partial S$ is closed in $S$.
In this section we recall the Goldman Lie algebra of an oriented surface,
and look at its action by derivations on the $K$-SAC $K\CC(S,E)$.
This action turns out to be compatible with the filtrations,
and we are naturally lead to the definition of the completed Goldman Lie algebra
and its action on the completion $\widehat{K\CC(S,E)}$.

We denote by $\hat{\pi}(S) = [S^1, S]$ the homotopy set of free loops on
$S$.  The free $K$-module over the set
$\hat{\pi}(S)$, $K\hat{\pi}(S)$, has a natural structure of a $K$-Lie
algebra, called {\it the Goldman Lie algebra of $S$}, as follows \cite{Go}. 
For any $q \in S$ we denote by $|\ |\colon \pi_1(S,q)\to
\hat{\pi}(S)$ the natural map forgetting the basepoint $q$. 
For a loop $\alpha\colon S^1\to S$ and a simple point $p\in \alpha$,
let $\alpha_p$ be the oriented loop $\alpha$ based at $p$.
Let $\alpha$ and $\beta$ be immersed loops in $S$ such that
$\alpha \cup \beta \colon S^1\cup S^1 \to S$
is an immersion with at worst transverse double points.
For each intersection $p\in \alpha \cap \beta$,
the conjunction $\alpha_p\beta_p\in \pi_1(S,p)$ is defined.
Let $\varepsilon(p;\alpha,\beta)\in \{ \pm 1 \}$ be
the local intersection number of $\alpha$ and $\beta$ at $p$ and set
$$
[\alpha,\beta]:=\sum_{p\in \alpha \cap \beta}
\varepsilon(p;\alpha,\beta)|\alpha_p\beta_p| 
\in K\hat{\pi}(S).
$$
This bracket makes the vector space $K\hat{\pi}(S)$ a $K$-Lie algebra.

We denote $S^* := S \setminus (E\setminus\partial S)$. Let $\alpha\colon S^1\to S^*$ be
an immersed loop and $\beta\colon ([0,1], 0,1) \to (S, *_0,*_1)$
an immersed path from $*_0\in E$ to $*_1 \in E$, and suppose 
$\alpha \cup \beta$ has at worst transverse double points.
For each intersection $p\in \alpha \cap \beta$, let $\alpha_p$
and $\varepsilon(p;\alpha,\beta)$ be the same as before and
let $\beta_{*_0p}$ (resp. $\beta_{p*_1}$)
be the path along $\beta$ from $*_0$ to $p$ (resp. $p$ to $*_1$).
Then the conjunction $\beta_{*_0p}\alpha_p\beta_{p*_1}\in 
\Pi S(*_0,*_1)$ is defined.
For such $\alpha$ and $\beta$, define
\begin{equation*}
\sigma(\alpha)\beta:=\sum_{p\in \alpha \cap \beta}
\varepsilon(p;\alpha,\beta) \beta_{*_0p}\alpha_p\beta_{p*_1}
\in K\Pi S(*_0,*_1) = K\CSE(*_0,*_1).
\label{40sigma*}
\end{equation*}
Then, by a similar way to Proposition 3.2.2 \cite{KK1},  
we obtain a well-defined homomorphism of $K$-Lie algebras
$$
\sigma\colon K\hat{\pi}(S^*) \to \Der K\CSE.
$$
\par
On the other hand, in \S\ref{21filtration} we introduced 
a natural homomorphism $\Der K\CSE \to \Der \widehat{K\CSE}$. 
Hence we have a natural homomorphism of $K$-Lie algebras
$$
\sigma\colon K\hat{\pi}(S^*) \to \Der \widehat{K\CSE}. 
$$
\par

\subsection{Filtration on the Goldman Lie algebra}
\label{sec:4-1}

We denote by $S_\lambda$ the connected
component corresponding to $\lambda \in\pi_0S$. Then we have a direct sum
decomposition of $K$-Lie algebras
\begin{equation}
K\hat{\pi}(S) = \bigoplus_{\lambda \in \pi_0S}K\hat{\pi}(S_\lambda). 
\label{33decomp}
\end{equation}
Now we introduce a $K$-submodule $K\hat{\pi}(S)(n) \subset 
K\hat{\pi}(S)$ for any $n \geq 1$. We begin by considering the case $S$ is
connected. Then $\hat{\pi}(S)$ is the set of all conjugacy
classes in the fundamental group $\pi_1(S, q)$ for any $q \in S$, namely, 
the forgetful map $\vert\ \vert\colon \pi_1(S, q) \to \hat{\pi}(S)$ is
surjective.  We denote by $1_q \in \pi_1(S, q)$ the constant loop based
at $q$. We have 
$$
\vert K1_{q_1} + (I\pi_1(S, q_1))^n\vert = 
\vert K1_{q} + (I\pi_1(S, q))^n\vert
$$
for any $n \geq 1$ and any other $q_1 \in S$. In fact, if we choose
$\gamma\in\Pi S(q_1,q)$, then $\gamma(K1_{q} + (I\pi_1(S,
q))^n)\gamma^{-1} = K1_{q_1} + (I\pi_1(S, q_1))^n$. Hence we may define 
$$
K\hat{\pi}(S)(n) := \vert K1_{q} + (I\pi_1(S, q))^n\vert. 
$$
In the general case, we define 
$$
K\hat{\pi}(S)(n) := \bigoplus_{\lambda \in \pi_0S}K\hat{\pi}(S_\lambda)(n)
$$
for any $n \geq 1$. In any cases, we have $K\hat{\pi}(S)(1) =K\hat{\pi}(S)$.

\begin{thm}\label{33filt}
In the situation stated at the beginning of this section, 
we have 
$$
\sigma(K\hat{\pi}(S^*)(n))\subset F_{n-2}\Der(K\CSE)
$$
for any $n \geq 1$. 
\end{thm}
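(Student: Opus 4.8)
The plan is to reduce, via Lemma \ref{13der}, to a single ``degree shift by one'' statement about the effect of $\sigma(\xi)$ on a path, and then to prove that by a direct count of intersection points combined with an alternating-sum cancellation which is the geometric shadow of the fact that $(x_1-1)\cdots(x_n-1)$ lies in the $n$-th power of an augmentation ideal. The reductions go as follows. The case $n=1$ is immediate, since $K\hat{\pi}(S^*)(1)=K\hat{\pi}(S^*)$ and $\Der(K\CSE)=F_{-1}\Der(K\CSE)$ by the last assertion of Lemma \ref{13der}, which applies because $K\CSE$ satisfies (C4) by Proposition \ref{21filter}. For $n\ge2$, applying Lemma \ref{13der} once more with $n-1$ in place of its $n$, it suffices to prove that $\sigma(\xi)\bigl(K\CSE(p_0,p_1)\bigr)\subset F_{n-1}K\CSE(p_0,p_1)$ for all $p_0,p_1\in E$ and all $\xi\in K\hat{\pi}(S^*)(n)$. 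Since $\sigma$ is $K$-linear, $\sigma(|1_q|)=0$ (the constant loop has a representative disjoint from any given path), and $K\hat{\pi}(S^*)(n)$ is --- by its componentwise definition and the surjectivity of $|\ |\colon K\pi_1(S^*_\mu,q)\to K\hat{\pi}(S^*_\mu)$ for a component $S^*_\mu$ --- spanned by $|1_q|$ together with the classes $|(x_1-1)\cdots(x_n-1)|$ with $x_1,\dots,x_n\in\pi_1(S^*,q)$ based at one point $q$ of $S^*_\mu$, it is enough to treat $\xi=|z|$ with $z=(x_1-1)(x_2-1)\cdots(x_n-1)$.

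For the main computation, fix $p_0,p_1\in E$, a path $\beta\in\Pi S(p_0,p_1)$, and the basepoint $q$ with $q\notin\{p_0,p_1\}$ and $q\notin\beta$; if $p_0,p_1$ lie in a different component of $S$ from $q$ then every loop in $S^*_\mu$ misses $\beta$ and $\sigma(|z|)\beta=0$, so assume not. I would represent each $x_i$ by an immersed loop $\gamma_i$ in $S^*$ based at $q$ with $\gamma_1\cup\cdots\cup\gamma_n\cup\beta$ in general position and $\beta$ avoiding $q$ and every intersection point among the $\gamma_i$; for $T=\{i_1<\cdots<i_k\}$ I would represent $x_T:=x_{i_1}\cdots x_{i_k}$ by the concatenation $\gamma_T:=\gamma_{i_1}\cdots\gamma_{i_k}$ at $q$, smoothed and perturbed inside a small disc about $q$ disjoint from $\beta$ to make it immersed with transverse double points. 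Then $\gamma_T\cap\beta=\bigsqcup_{i\in T}(\gamma_i\cap\beta)$, the local signs agree with those for $\gamma_i$, and each such point is a simple point of $\gamma_T$. Writing $z=\sum_T(-1)^{n-|T|}x_T$ (with $x_\emptyset=1_q$), expanding the defining formula for $\sigma$, and interchanging the order of summation yields
$$
\sigma(|z|)\beta=\sum_{i=1}^n\,\sum_{p\in\gamma_i\cap\beta}\varepsilon(p;\gamma_i,\beta)\,\beta_{p_0p}\Bigl(\sum_{T\ni i}(-1)^{n-|T|}(\gamma_T)_p\Bigr)\beta_{pp_1}.
$$

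The cancellation is now purely algebraic. For fixed $i$ and $p\in\gamma_i\cap\beta$, write $\gamma_i=uv$ with $u\in\Pi S(q,p)$ the part of $\gamma_i$ up to $p$ and $v\in\Pi S(p,q)$ the part after; then $(\gamma_T)_p=v\,w_T\,u$, where $w_T\in\pi_1(S,q)$ is the ordered product of the $x_j$ with $j\in T$, $j>i$, followed by the ordered product of the $x_j$ with $j\in T$, $j<i$. Because the contributions of the indices above and below $i$ are independent, $\sum_{T\ni i}(-1)^{n-|T|}w_T$ factors as a product of two alternating sums, which collapse to $(x_{i+1}-1)\cdots(x_n-1)$ and $(x_1-1)\cdots(x_{i-1}-1)$ respectively; hence $\sum_{T\ni i}(-1)^{n-|T|}w_T\in(I\pi_1(S,q))^{n-1}$. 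Since $\beta_{p_0p}v\in\Pi S(p_0,q)$ and $u\beta_{pp_1}\in\Pi S(q,p_1)$ are paths, the corresponding summand of $\sigma(|z|)\beta$ lies in $(\beta_{p_0p}v)\,(I\pi_1(S,q))^{n-1}\,(u\beta_{pp_1})$, which equals $F_{n-1}K\CSE(p_0,p_1)$ by Proposition \ref{21filter} --- the identification (\ref{21welldef}) goes through verbatim with the intermediate point $q$, even when $q\notin E$ (conjugate by a path from $q$ to $p_0$). Summing over the finitely many $i$ and $p$ gives $\sigma(|z|)\beta\in F_{n-1}K\CSE(p_0,p_1)$, and Lemma \ref{13der} then yields $\sigma(|z|)\in F_{n-2}\Der(K\CSE)$, as required.

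The main obstacle I anticipate is the geometric bookkeeping in the middle step: one must choose the representatives so that $\gamma_T\cap\beta$, the local signs, and the based loops $(\gamma_T)_p$ all depend on $T$ only through the displayed disjoint-union and product formulas, and so that each point of $\gamma_T\cap\beta$ is a simple point of $\gamma_T$ --- which is what makes $(\gamma_T)_p$ meaningful. Basing all the $\gamma_i$ at a single point $q$ taken off $\beta$, and confining the only non-generic behaviour of $\gamma_T$ to a disc about $q$ disjoint from $\beta$, makes the gluing invisible to $\beta$ and isolates the whole content in the algebraic telescoping; once the representatives are fixed this way, the remaining verifications are routine.
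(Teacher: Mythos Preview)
Your proof is correct and follows essentially the same route as the paper: reduce via Lemma~\ref{13der} to showing $\sigma(|z|)\beta\in F_{n-1}K\CSE(p_0,p_1)$ for a single path $\beta$, represent the factors by loops based at a common point $q$ in general position with $\beta$, and observe that at each intersection point the cyclic rebasing produces a product of the remaining $n-1$ augmentation-ideal factors. The only cosmetic differences are that the paper uses generators $(x_1-y_1)\cdots(x_n-y_n)$ rather than your special case $y_i=1$, and writes the cyclic-product formula directly instead of expanding over subsets $T$ and then telescoping; your subset expansion is just the paper's computation unpacked.
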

\begin{proof} We may assume $S$ is connected. 
Choose $q \in S\setminus (E\cup\partial S)$. 
By Lemma \ref{13der}, it suffices to show
$\sigma(|u|)(\gamma) \in F_{n-1}K\CC(p_0,p_1)$ for any $n\geq
1$, $u \in I\pi_1(S, q)^n$, $p_0, p_1 \in E$, and $\gamma \in \CC(p_0,
p_1)$. The $K$-module $I\pi_1(S, q)^n$ is generated by the set
$\{(x_1-y_1)(x_2-y_2)\cdots(x_n-y_n); x_i, y_i \in \pi_1(S, q)\}$. Putting 
$x_1, x_2, \dots, x_n$ and $\gamma$ in general position, we may assume $(x_i\cap
\gamma) \cap (x_j\cap \gamma) = \emptyset$ if $i\neq j$. Then we have 
$$
\sigma(\vert x_1 x_2\cdots x_n\vert)(\gamma) 
= \sum^n_{i=1}\sum_{p \in x_i\cap\gamma}\varepsilon(p; x_i,
\gamma)\gamma_{p_0p}(x_i)_{pq}x_{i+1}\cdots x_nx_1\cdots
x_{i-1}(x_i)_{qp}\gamma_{pp_1}, 
$$
and so
\begin{eqnarray*}
&& \sigma(\vert (x_1-y_1)(x_2-y_2)\cdots(x_n-y_n)\vert)(\gamma)\\
&=& \sum^n_{i=1}\sum_{p \in x_i\cap\gamma}\varepsilon(p; x_i,
\gamma)\gamma_{p_0p}(x_i)_{pq}(x_{i+1}-y_{i+1})\cdots
(x_{i-1}-y_{i-1})(x_i)_{qp}\gamma_{pp_1}\\
&&- \sum^n_{i=1}\sum_{p \in y_i\cap\gamma}\varepsilon(p; y_i,
\gamma)\gamma_{p_0p}(y_i)_{pq}(x_{i+1}-y_{i+1})\cdots
(x_{i-1}-y_{i-1})(y_i)_{qp}\gamma_{pp_1},
\end{eqnarray*}
which is in $F_{n-1}K\CC(p_0, p_1)$. 
This completes the proof.
\end{proof}

As a by-product we see that the filtration of $K\hat{\pi}(S)$ is
compatible with the bracket.

\begin{thm}\label{33bracket} Let $S$ be an oriented surface. Then 
we have 
$$
[K\hat{\pi}(S)(n_1), K\hat{\pi}(S)(n_2)] \subset 
K\hat{\pi}(S)(n_1+n_2-2)
$$
for any $n_1, n_2 \geq 1$. 
\end{thm}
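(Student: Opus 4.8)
The plan is to deduce the statement from Theorem~\ref{33filt}, via the standard relation between the action $\sigma$ and the Goldman bracket; first I would reduce to a situation where that theorem applies.

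\emph{Reductions.} By the decomposition (\ref{33decomp}) we may assume $S$ connected. If $\partial S=\emptyset$ I would remove an open disk $D$ from the interior, setting $S':=S\setminus D$ (connected, with $\partial S'\neq\emptyset$), and let $\iota\colon S'\hookrightarrow S$ be the inclusion. For $q\in S'$ the map $\iota_\#\colon\pi_1(S',q)\to\pi_1(S,q)$ is surjective (van Kampen on $S=S'\cup D$), so the ring homomorphism $K\pi_1(S',q)\to K\pi_1(S,q)$ carries $I\pi_1(S',q)^n$ onto $I\pi_1(S,q)^n$ for every $n$; hence $\iota_*\colon K\hat{\pi}(S')\to K\hat{\pi}(S)$ is a surjective homomorphism of $K$-Lie algebras (naturality of the Goldman bracket under inclusions of surfaces, cf.\ \cite{Go}) with $\iota_*(K\hat{\pi}(S')(n))=K\hat{\pi}(S)(n)$ for $n\geq 1$, because $|\iota_\# w|=\iota_*|w|$ and $K1_q+I\pi_1(S',q)^n$ goes onto $K1_q+I\pi_1(S,q)^n$. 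Lifting $a,b$ to $\tilde a\in K\hat{\pi}(S')(n_1)$, $\tilde b\in K\hat{\pi}(S')(n_2)$ reduces the statement for $S$ to that for $S'$, since $[a,b]=\iota_*[\tilde a,\tilde b]$. So it suffices to treat $S$ connected with $\partial S\neq\emptyset$.

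\emph{The main argument.} Now I would pick $*\in\partial S$ and put $E:=\{*\}$, so that $S^*=S\setminus(E\setminus\partial S)=S$; then $\sigma$ is a homomorphism $K\hat{\pi}(S)\to\Der K\CSE$, and by Proposition~\ref{21filter} the $K$-SAC $K\CSE$ has the single object $*$ with $K\CSE(*,*)=K\pi_1(S,*)$ and $F_nK\CSE(*,*)=I\pi_1(S,*)^n$. The crucial identity to establish is
$$
|\sigma(a)(w)|=[a,|w|]\in K\hat{\pi}(S)\qquad(a\in K\hat{\pi}(S),\ w\in K\pi_1(S,*)),
$$
which by $K$-linearity reduces to the case $a=|\gamma|$, $w=\beta\in\pi_1(S,*)$: in general position $|\sigma(|\gamma|)(\beta)|=\sum_{p\in\gamma\cap\beta}\varepsilon(p;\gamma,\beta)\,|\beta_{*p}\gamma_p\beta_{p*}|$, and $|\beta_{*p}\gamma_p\beta_{p*}|=|\gamma_p\beta_p|$ since the free homotopy class depends only on the conjugacy class, so the sum is exactly $[|\gamma|,|\beta|]$. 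Granting this, take $a\in K\hat{\pi}(S)(n_1)$ and $b\in K\hat{\pi}(S)(n_2)$, and write $b=|w|$ with $w=c1_*+u$, $c\in K$, $u\in F_{n_2}K\CSE(*,*)$. By Theorem~\ref{33filt}, $\sigma(a)\in F_{n_1-2}\Der K\CSE$, hence $\sigma(a)(u)\in F_{n_1+n_2-2}K\CSE(*,*)$; and since $\sigma(a)$ is a derivation of $K\pi_1(S,*)$ it kills $1_*$, so $[a,b]=|\sigma(a)(w)|=|\sigma(a)(u)|$. When $n_1+n_2-2\geq 1$ this lands in $|I\pi_1(S,*)^{n_1+n_2-2}|\subset K\hat{\pi}(S)(n_1+n_2-2)$; when $n_1+n_2-2\leq 1$ the claim is vacuous, as then $K\hat{\pi}(S)(n_1+n_2-2)=K\hat{\pi}(S)$.

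\emph{Main obstacle.} The delicate point is the identity $|\sigma(a)(w)|=[a,|w|]$, in particular checking that the intersection signs in the definition of $\sigma$ agree with Goldman's sign convention; this is precisely the computation behind the construction of $\sigma$ in \cite{KK1}, which I would adapt. Everything else is routine bookkeeping with the filtrations already in place.
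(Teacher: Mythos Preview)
Your proof is correct and follows the same approach as the paper: reduce to connected $S$, relate the Goldman bracket to $\sigma$ via $[u,\vert v\vert]=\vert\sigma(u)(v)\vert$, and apply Theorem~\ref{33filt}. The paper streamlines your two reductions into one by placing the basepoint in the \emph{interior} (so $S^*=S\setminus\{p\}$, and a single surjectivity $K\hat\pi(S^*)(n)\to K\hat\pi(S)(n)$ covers closed and bounded $S$ uniformly) rather than on $\partial S$.
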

\begin{proof} We may assume $S$ is connected. Choose a point $p \in
S\setminus \partial S$, and set $E = \{p\}$. 
By Theorem \ref{33filt}, $\sigma(u)(v) \in F_{n_1+n_2-2}K\CC(p,p) =
(I\pi_1(S, p))^{n_1+n_2-2}$ for any $u \in K\hat{\pi}(S^*)(n_1)$ and 
$v \in (I\pi_1(S, p))^{n_2}$. Hence $[u, \vert v\vert] =
\vert\sigma(u)(v)\vert \in K\hat{\pi}(S)(n_1+n_2-2)$. On the other hand, 
the inclusion homomorphism
$\pi_1(S^*, q) \to \pi_1(S, q)$ is surjective for any $q \in S^* =
S\setminus \{p\}$. Hence the inclusion homomorphism $K\hat{\pi}(S^*)(n)
\to K\hat{\pi}(S)(n)$ is also surjective for any $n \geq 1$.
This proves the theorem. 
\end{proof}

In particular, if $n \geq 2$, $K\hat{\pi}(S)(n)$ is a Lie subalgebra of 
$K\hat{\pi}(S)$ and an ideal of $K\hat{\pi}(S)(2)$. \par
We define 
$$
\widehat{K\hat{\pi}}(S) :=
\varprojlim_{n\to\infty}K\hat{\pi}(S)/K\hat{\pi}(S)(n),
$$
and call it {\it the completed Goldman Lie algebra} of the surface $S$. 
It is a $K$-Lie algebra from Theorem \ref{33bracket}. We define
$$\widehat{K\hat{\pi}}(S)(n):=
\varprojlim_{m\to\infty}K\hat{\pi}(S)(n)/K\hat{\pi}(S)(m)$$
for $n\ge 1$. From Theorem \ref{33bracket} we have
$$[\widehat{K\hat{\pi}}(S)(n_1),\widehat{K\hat{\pi}}(S)(n_2)]
\subset \widehat{K\hat{\pi}}(S)(n_1+n_2-2)$$
for any $n_1,n_2 \ge 1$.
For any $q \in S$ the forgetful map $\vert\,\vert\colon \pi_1(S, q) \to \hat{\pi}(S)$ induces a natural map
$$
\vert\,\vert\colon \widehat{K\pi_1(S, q)} \to \widehat{K\hat{\pi}}(S).
$$
If $E \subset S$ is a non-empty closed subset with the property that
$E\setminus\partial S$ is closed in $S$, then the homomorphism
$\sigma$ induces a natural homomorphism of $K$-Lie algebras
$$
\sigma\colon \widehat{K\hat{\pi}}(S^*) \to \Der(\widehat{K\CSE})
$$
by Theorem \ref{33filt}. Since $\sigma(K\hat{\pi}(S^*)) \subset F_{-1}\Der
K\CC$, we have $\sigma(\widehat{K\hat{\pi}}(S^*)) \subset
F_{-1}\Der\widehat{K\CC}$. This implies that the action of any element of
$\widehat{K\hat{\pi}}(S^*)$ on $\widehat{K\CSE}$ is continuous with
respect to the topology induced by the filtration
$\{F_n\widehat{K\CSE} \}_{n\geq0}$. \par
As will be shown in \S\ref{aglg}, if $S = \Sigma_{g,1}$ and $E \subset 
\partial\Sigma_{g,1}$, then the Lie algebra
$\widehat{K{\hat\pi}}(\Sigma_{g,1})$ is isomorphic to the Lie algebra 
of symplectic derivations of the completed tensor algebra 
$\prod^\infty_{m=0}H_1(\Sigma_{g,1}; K)^{\otimes m}$ and 
$\widehat{K{\hat\pi}}(\Sigma_{g,1})(2)$ is isomorphic to
the degree completion of Kontsevich's `associative' $a_g$ \cite{Kon}.
These isomorphisms are essentially due to our previous work \cite{KK1}.

\subsection{Action on $\CSE^\abel$}
\label{sec:4-2}

Let $S$ be an oriented surface, and $E$ a non-empty closed
subset of $S$ with the property that
$E\setminus\partial S$ is closed in $S$. 
We consider the abelianization of the groupoid $\CC = \CSE$ introduced in
\S \ref{sec:2-2}. From Theorem
\ref{33filt} we have
$\sigma((x-1)^2) :=\sigma(\vert (x-1)^2\vert)
\in F_0\Der\mathbb{Z}\CSE$ for any $q \in S^*$ and $x \in \pi_1(S^*, q)$.
Hence it induces a derivation of $\CSE^\abel =
\mathbb{Z}\CSE/F_2\mathbb{Z}\CSE$, which we denote
$\overline{\sigma}((x-1)^2) \in \Der\CSE^\abel$. 

\begin{lem}\label{34square}
\begin{enumerate}
\item[{\rm (1)}] We have $\overline{\sigma}((x-1)^2)(\gamma) =
2(x\cdot\gamma)\gamma(x-1)
\in \CC^\abel(p_0, p_1)$ for any $p_0, p_1 \in [q] \subset E$ and $\gamma
\in
\CC(p_0,p_1)$. Here $(x\cdot \gamma)\in K$ means the algebraic
intersection number $H_1(S^*;K) \times H_1(S,E;K)\to K$ and
we regard $x-1$ as an element of $H\CC([q])$. 
In particular, ${\sigma}((x-1)^2)(\mathbb{Z}\CC(p_0,p_1)) \subset
F_1\mathbb{Z}\CC(p_0,p_1)$. 
\item[{\rm (2)}] The square $\overline{\sigma}((x-1)^2)^2 \in {\rm
End}(\CC^\abel(p_0, p_1))$ vanishes for any $p_0, p_1 \in [q]
\subset E$.
\end{enumerate}
\end{lem}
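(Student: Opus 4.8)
The plan is to compute $\overline{\sigma}((x-1)^2)(\gamma)$ directly from the definition of $\sigma$, working modulo $F_2\mathbb{Z}\CC$. First I would fix $q \in S^*$ and $x \in \pi_1(S^*,q)$, and isotope a loop representing $x$ together with the path $\gamma \in \CC(p_0,p_1)$ into general position so that $x \cap \gamma$ consists of finitely many transverse double points, with $p_0,p_1 \notin x$. By definition of $\sigma$ applied to $|x^2| = |xx|$ (using that $x$ and $\gamma$ are in general position so the two copies of $x$ in $xx$ meet $\gamma$ along the same points), one gets $\sigma(|xx|)(\gamma) = \sum_{p \in x \cap \gamma} \varepsilon(p;x,\gamma)\, \gamma_{p_0 p} x_{pq} x x_{qp} \gamma_{p p_1}$, and consequently
\begin{equation*}
\sigma((x-1)^2)(\gamma) = \sum_{p \in x\cap\gamma} \varepsilon(p;x,\gamma)\,\gamma_{p_0p}\,x_{pq}\,(x-1)\,x_{qp}\,\gamma_{pp_1} \cdot (\text{something}),
\end{equation*}
where I must be careful to expand $(x-1)(x-1)=xx-x-x+1$ and track which factor picks up intersections; the upshot, following the bookkeeping in the proof of Theorem \ref{33filt}, is that $\sigma((x-1)^2)(\gamma)$ is a sum over $p\in x\cap\gamma$ of terms each lying in $F_1\mathbb{Z}\CC(p_0,p_1)$. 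This already proves the last sentence of (1).

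Next I would pass to the abelianization. In $\CC^\abel = \mathbb{Z}\CC/F_2\mathbb{Z}\CC$, an element $\gamma_{p_0p}\,w\,\gamma_{pp_1}$ with $w \in \pi_1(S^*,q)$ reduces, modulo $F_2$, to $\gamma$ times the class of $w$ in $H\CC([q])$ shifted appropriately; more precisely, since each summand lies in $F_1$, its class in $\mathrm{gr}_1 = F_1/F_2$ depends only on the homology classes involved. Using the identification $F_1\mathbb{Z}\CC(p_0,p_1)/F_2 = \CC^\abel(p_0,p_1)$ is a module over $H\CC([q])$ and the remark from \S\ref{sec:2-2} that $\gamma x = x\gamma$ there, each term $\varepsilon(p;x,\gamma)\,\gamma\,(x-1)$ contributes identically, so the sum becomes $\left(\sum_{p\in x\cap\gamma}\varepsilon(p;x,\gamma)\right)\gamma(x-1) = (x\cdot\gamma)\,\gamma(x-1)$, and the factor of $2$ comes from the two cross terms $-x-x$ in the expansion of $(x-1)^2$ contributing with the same sign after reduction (equivalently, the two occurrences of $x$ being acted on in $\sigma(|xx|)$). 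This gives the formula in (1). I would also check that $x\cdot\gamma$ is well-defined as the pairing $H_1(S^*;K)\times H_1(S,E;K)\to K$, which is standard since $\gamma$ is a relative cycle with boundary in $E$ and $x$ avoids $E\setminus\partial S$.

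For (2), I would simply iterate the formula from (1). Applying $\overline{\sigma}((x-1)^2)$ to $\gamma(x-1)$ — or rather to a representative and then reducing — one needs the intersection number of $x$ with the class $[\gamma(x-1)]$; but $x-1 \in H\CC([q])$ acts on $\CC^\abel$ and, crucially, multiplication of any two elements of the abelian part $H\CC(\lambda)$ vanishes in $\CC^\abel$ (this is exactly the remark in \S\ref{sec:2-2} that $\mathbb{Z}G/(IG)^2$ has trivial multiplication on $G^\abel$). Hence $\overline{\sigma}((x-1)^2)(\gamma(x-1)) = 2(x\cdot\gamma)\,\gamma(x-1)(x-1) = 0$, so $\overline{\sigma}((x-1)^2)^2 = 0$. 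The main obstacle I anticipate is part (1): getting the general-position argument and the sign/factor bookkeeping exactly right when expanding $(x-1)^2$ and reducing each term to $\mathrm{gr}_1$, in particular making sure the coefficient is $2(x\cdot\gamma)$ and not $(x\cdot\gamma)$ or $2(x\cdot\gamma)+(\text{self-intersection correction})$ — one should verify the self-intersections of $x$ with itself do not contribute after the reduction, which follows because such terms already live in the relevant filtration piece with the wrong degree or cancel by orientation.
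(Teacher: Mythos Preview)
Your approach is essentially the same as the paper's and is correct in outline, but the bookkeeping in part~(1) is muddled in a way the paper avoids. The paper simply expands $(x-1)^2 = x^2 - 2x + 1$ and uses the clean formula $\sigma(|x^n|)(\gamma) = n\sum_{p\in x\cap\gamma}\varepsilon(p;x,\gamma)\,\gamma_{p_0p}x_p^n\gamma_{pp_1}$ (the loop $x^n$ passes each $p$ exactly $n$ times, each time inserting $x_p^n$). This gives directly
\[
\sigma((x-1)^2)(\gamma) = 2\sum_{p}\varepsilon(p;x,\gamma)\,\gamma_{p_0p}(x_p^2-x_p)\gamma_{pp_1},
\]
and then one reduces using $x^2-x \equiv x-1$ in $H\CC([q])$ (since $(x-1)^2=0$ there). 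So the factor~$2$ comes from the coefficient $n=2$ in $\sigma(|x^2|)$, not from ``the two cross terms $-x-x$'' as you wrote; your parenthetical explanation (``the two occurrences of $x$ in $\sigma(|xx|)$'') is the correct one. Also, your worry about self-intersections of $x$ is a red herring: by definition $\sigma(|\alpha|)(\gamma)$ only involves $\alpha\cap\gamma$, never $\alpha\cap\alpha$.

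For part~(2) your argument agrees with the paper's: write $\gamma(x-1) = \gamma x - \gamma$, apply the formula from~(1) to each path separately, use $x\cdot(\gamma x) = x\cdot\gamma$ (since $x\cdot x=0$ by antisymmetry of the intersection pairing), and conclude $D^2\gamma = 4(x\cdot\gamma)^2\gamma(x-1)^2 = 0$.
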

\begin{proof}
(1) We remark $0 = (x-1)^2 = (x^2-x) - (x-1) \in H\CC([q])$. 
Put $x$ and $\gamma$ in general position. Then we have 
\begin{eqnarray*}
&& \sigma((x-1)^2)(\gamma) = \sigma(x^2)(\gamma) - 2\sigma(x)(\gamma)\\
&=& 2 \sum_{p\in x\cap\gamma}
\varepsilon(p;x,\gamma)\gamma_{p_0p}{x_p}^2\gamma_{pp_1} 
- 2 \sum_{p\in x\cap\gamma}
\varepsilon(p;x,\gamma)\gamma_{p_0p}{x_p}\gamma_{pp_1} \\
&=& 2 \gamma\sum_{p\in x\cap\gamma}
\varepsilon(p;x,\gamma){\gamma_{pp_1}}^{-1}({x_p}^2 - x_p)\gamma_{pp_1},
\end{eqnarray*}
and so 
$$
\overline{\sigma}((x-1)^2)(\gamma) = 2(x\cdot\gamma)\gamma(x^2-x) =
2(x\cdot\gamma)\gamma(x-1).
$$
\par
(2) We write simply $D = \overline{\sigma}((x-1)^2)$. Then we have 
\begin{eqnarray*}
&& D^2\gamma = 2(x\cdot\gamma)D(\gamma x-\gamma)\\
&=& 2(x\cdot\gamma)\cdot 2(x\cdot\gamma x)\gamma x(x-1)
- 2(x\cdot\gamma)\cdot 2(x\cdot\gamma)\gamma (x-1)\\
&=& 4(x\cdot\gamma)^2\gamma(x-1)^2 = 0.
\end{eqnarray*}
The last equality follows from $(x-1)^2 = 0 \in H\CC([q])$. 
This completes the proof.
\end{proof}

\subsection{Homological interpretation of the completed Goldman Lie algebra}
\label{sec:4-3}

Let $S$ be a surface of finite type and not closed. 
In this subsection we give a natural identification of the completed
Goldman Lie algebra of $S$ with the first homology group of the surface
with some twisted coefficients. As a corollary, we prove that
the completion map of the Goldman Lie algebra $\rho\colon K{\hat\pi}(S)/K1
\to \widehat{K{\hat\pi}}(S)$ is injective. Here $1 \in \hat\pi(S)$ is the
constant loop. For the proof of this identification we use a group-like
expansion of the fundamental group $\pi_1(S)$ \cite{Mas}. 
We adopt the same notation as in \S \ref{sec:3-3}.

First of all, we introduce two local systems $\mathcal{S}^c(S)$ and
$\widehat{\mathcal{S}}^c(S)$ on $S$. The stalks at $p \in S$ are given by 
$$
\mathcal{S}^c(S)_p := K\pi_1(S,p),\quad\mbox{and}\quad
\widehat{\mathcal{S}}^c(S)_p := \widehat{K\pi_1(S,p)},
$$
respectively. Since $\pi_1(S)$ is a free group, 
the completion map $\mathcal{S}^c(S)\to \widehat{\mathcal{S}}^c(S)$ 
is injective \cite{Bou}, and $H_2(S;
\widehat{\mathcal{S}}^c(S)/\mathcal{S}^c(S)) = 0$. 
Hence the induced homomorphism 
$H_1(S; \mathcal{S}^c(S)) \to H_1(S; \widehat{\mathcal{S}}^c(S))$ 
is injective. In the sequel we regard the former as a submodule of
the latter by this injection. \par
In \cite{KK1} \S3.4, we introduce a $K$-linear map
$$
\lambda\colon K\hat\pi(S) \to H_1(S; \mathcal{S}^c(S)).
$$
This maps $\alpha \in \hat\pi(S)$ to the homology class induced by the
section $s_\lambda(\alpha) \in \Gamma(\alpha^*\mathcal{S}^c(S))$ 
given by $s_\lambda(\alpha)(t) := \alpha_{\alpha(t)} \in
K\pi_1(S,\alpha(t))$, $t \in S^1$. The kernel of the map $\lambda$ is
spanned by the constant loop $1$, $\Ker\lambda = K1$. See \cite{KK1} Proposition 3.4.3 (1).

The $K$-bilinear map $\mathcal{B}_p\colon K\pi_1(S, p)\otimes K\pi_1(S, p)
\to K\hat\pi(S)$, $u \otimes v\mapsto \vert uv\vert$, with the
intersection form on the surface $S$ defines the pairing $\mathcal{B}(\
\cdot \ )\colon H_1(S;
\mathcal{S}^c(S))^{\otimes 2} \to K\hat\pi(S)$. As was shown in \cite{KK1}
Proposition 3.4.3 (2), we have 
\begin{equation}
[u,v] =
\mathcal{B}(\lambda(u)\cdot\lambda(v))
\label{43B}\end{equation}
for any $u$ and $v \in
K\hat\pi(S)$. Similarly we have the pairing $\widehat{\mathcal{B}}(\
\cdot \ )\colon H_1(S;\widehat{\mathcal{S}}^c(S))^{\otimes 2} \to
\widehat{K\hat\pi}(S)$ induced by the $K$-bilinear map
$\widehat{\mathcal{B}}_p\colon
\widehat{K\pi_1(S,p)}\otimes \widehat{K\pi_1(S,p)}\to
\widehat{K\hat\pi}(S)$, $u \otimes v
\mapsto \vert uv\vert$. 

\begin{thm}\label{43isom} Let $S$ be a surface of finite type
and not closed, and $K$ a commutative ring including $\mathbb{Q}$.
Then the map $\lambda$ extends to an isomorphism 
$$
\widehat{\lambda}\colon \widehat{K\hat\pi}(S) \overset\cong\to
H_1(S;\widehat{\mathcal{S}}^c(S)).
$$
This satisfies 
$$
\widehat{\lambda}([u,v]) =
\widehat{\mathcal{B}}(\widehat{\lambda}(u)\cdot\widehat{\lambda}(u))
$$
for any $u$ and $v \in \widehat{K\hat\pi}(S)$. 
\end{thm}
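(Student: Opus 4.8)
The plan is to realise $\widehat{\lambda}$ as an inverse limit of maps between finite-level quotients, and to deduce its bijectivity by a d\'evissage along the natural filtrations. On the target side, observe that $\mathcal{S}^c(S)$ carries a filtration by sub-local systems $\mathcal{S}^c(S)_m$ with stalk $F_mK\pi_1(S,p)=I\pi_1(S,p)^m$ at $p$; this is a genuine sub-local system because $I\pi^m$ is a two-sided, hence conjugation-invariant, ideal, and stalkwise $\widehat{\mathcal{S}}^c(S)=\varprojlim_m\mathcal{S}^c(S)/\mathcal{S}^c(S)_m$. Since $S$ is of finite type and not closed it is homotopy equivalent to a finite wedge of circles, so $H_i(S;\mathcal{L})=0$ for every local system $\mathcal{L}$ and every $i\ge 2$; hence the transition maps of the inverse system of chain complexes $C_*(S;\mathcal{S}^c(S)/\mathcal{S}^c(S)_m)$ are surjective, the relevant $\varprojlim^1$ vanishes, and $H_1(S;\widehat{\mathcal{S}}^c(S))\cong\varprojlim_m H_1(S;\mathcal{S}^c(S)/\mathcal{S}^c(S)_m)$. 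The theorem then reduces to producing, compatibly in $n$, isomorphisms
$$
\overline{\lambda}_n\colon K\hat\pi(S)/K\hat\pi(S)(n+1)\ \overset{\cong}{\longrightarrow}\ H_1\bigl(S;\mathcal{S}^c(S)/\mathcal{S}^c(S)_n\bigr)
$$
induced by $\lambda$; the shift of $1$ in the index is forced by the intersection form built into $\lambda$ (compare Theorem~\ref{33filt}). One then sets $\widehat{\lambda}=\varprojlim_n\overline{\lambda}_n$. This is consistent with $\ker\lambda=K1$, which maps to $0$ in $\widehat{K\hat\pi}(S)$; note however that $\lambda$ itself need not be surjective (on an annulus its image is a proper submodule of $H_1(S;\mathcal{S}^c(S))$), the missing classes being reached only in the completion, by elements such as $|\log\eta|$.

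To obtain the $\overline{\lambda}_n$ I would induct on $n$. For $n=1$ both sides are canonically $H_1(S;K)$ — on the left $K\hat\pi(S)/K\hat\pi(S)(2)\cong I\pi/I\pi^2=H$, on the right $\mathcal{S}^c(S)/\mathcal{S}^c(S)_1\cong\underline{K}$ — and $\overline{\lambda}_1$ is the identity. For the step, compare the short exact sequences $0\to\mathrm{gr}_n K\hat\pi(S)\to K\hat\pi(S)/K\hat\pi(S)(n+1)\to K\hat\pi(S)/K\hat\pi(S)(n)\to 0$ and $0\to\mathrm{gr}_n\mathcal{S}^c(S)\to\mathcal{S}^c(S)/\mathcal{S}^c(S)_{n+1}\to\mathcal{S}^c(S)/\mathcal{S}^c(S)_n\to 0$; the long exact sequence of the second (using $H_{\ge 2}=0$) is under control because conjugation acts trivially on $\mathrm{gr}_n K\pi\cong H^{\otimes n}$, so $\mathrm{gr}_n\mathcal{S}^c(S)$ is the constant system $\underline{H^{\otimes n}}$ with $H_1(S;\underline{H^{\otimes n}})=H_1(S;K)\otimes H^{\otimes n}$. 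Granting the inductive hypothesis, $\overline{\lambda}_n$ is then an isomorphism as soon as the induced map on graded pieces is, and this last point is the explicit, finite computation of how $\lambda$ acts on products of generators — essentially the computation already carried out in the proofs of Theorem~\ref{33filt} and of \cite{KK1} Proposition~3.4.3. A group-like expansion $\theta$ of $\pi_1(S)$ as in Definition~\ref{33Mas} and Proposition~\ref{32expn} may be used to make the triviality of the conjugation action on the graded pieces explicit and to control the infinite series responsible for the index shift. Taking $\varprojlim_n$ then shows $\widehat{\lambda}$ is an isomorphism of $K$-modules.

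The bracket identity passes to the limit by continuity: $[u,v]=\mathcal{B}(\lambda(u)\cdot\lambda(v))$ holds on $K\hat\pi(S)$ by (\ref{43B}); all of $\lambda$, the intersection pairing, $\mathcal{B}$ and the Lie bracket respect the filtrations (with the fixed shift), and $\widehat{K\hat\pi}(S)$ and $H_1(S;\widehat{\mathcal{S}}^c(S))$ are the corresponding completions, so one obtains $\widehat{\lambda}([u,v])=\widehat{\mathcal{B}}(\widehat{\lambda}(u)\cdot\widehat{\lambda}(v))$. I expect the main obstacle to be the inductive step, namely the index-shifted filtration compatibility of $\lambda$ together with the bijectivity of $\mathrm{gr}\,\lambda$. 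Injectivity of $\widehat{\lambda}$ is comparatively soft, because $\ker\lambda=K1$ disappears in the limit and one only needs $\lambda$ to detect the filtration; the real work is surjectivity — showing that the classes in $H_1(S;\widehat{\mathcal{S}}^c(S))$ outside the image of $\lambda$, which are measured by the connecting maps $H_1(S;\mathrm{gr}_n\mathcal{S}^c(S))\to H_0(S;\mathcal{S}^c(S)_{n+1})$, are nevertheless hit after completion. It is here that $H_{\ge 2}(S;-)=0$, the triviality of conjugation on graded pieces, and the group-like expansion carry the argument.
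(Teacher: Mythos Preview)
There is a genuine gap: the finite-level maps $\overline{\lambda}_n\colon K\hat\pi(S)/K\hat\pi(S)(n+1)\to H_1(S;\mathcal{S}^c(S)/\mathcal{S}^c(S)_n)$ that you hope to show are isomorphisms are in fact \emph{not} isomorphisms for $n\geq 2$ as soon as $\operatorname{rank}\pi_1(S)\geq 2$. Take $S$ a wedge of two circles, so $\pi=F_2$, $H=K^2$. The conjugation action on $K\pi/I\pi^2$ is trivial, as you note, so the local system is constant with three-dimensional stalk; since the chain-level differential is $\sum_i(1-x_i)\cdot$ and each $x_i$ acts trivially, one gets $H_1(S;\mathcal{S}^c/\mathcal{S}^c_2)\cong (K\pi/I\pi^2)^2\cong K^6$. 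On the other hand $K\hat\pi(S)/K\hat\pi(S)(3)\cong I\pi/\bigl(I\pi^3+[K\pi,K\pi]\bigr)$ has dimension $5$: the quotient $I\pi/I\pi^3\cong H\oplus H^{\otimes 2}$ is six-dimensional, and one kills the single relation $XY-YX$. So your $\overline{\lambda}_2$ cannot be bijective. The conceptual reason is visible in your own long exact sequence: the map $H_1(S;\mathcal{S}^c/\mathcal{S}^c_{n+1})\to H_1(S;\mathcal{S}^c/\mathcal{S}^c_n)$ is \emph{not} surjective, its cokernel sitting in $H_0(S;\mathrm{gr}_n\mathcal{S}^c)=H^{\otimes n}$, so the bottom row is not short exact and the five-lemma does not apply. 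Correspondingly the graded pieces do not match: $\mathrm{gr}_{n}K\hat\pi(S)\cong N(H^{\otimes n})$, the cyclically invariant tensors, whereas $H_1(S;\mathrm{gr}_{n-1}\mathcal{S}^c)\cong H^{\otimes n}$ is strictly larger.

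The paper avoids this by never attempting to filter $H_1(S;\widehat{\mathcal{S}}^c(S))$ by the coefficient filtration. Instead it uses a group-like expansion $\theta$ to transport everything to $\widehat{T}$: one has $\theta_*\colon H_1(S;\widehat{\mathcal{S}}^c(S))\overset{\cong}{\to} N(\widehat{T}_1)$, and the composite $\lambda_\theta=\theta_*\circ\lambda$ sends $|x|\mapsto N\theta(x)$. The decisive step is Lemma~\ref{43filter}, which shows $\lambda_\theta^{-1}(N(\widehat{T}_n))=K\hat\pi(S)(n)$; this is proved using the exact sequence $0\to K1\oplus[\widehat{T},\widehat{T}]\to\widehat{T}\overset{N}{\to}N(\widehat{T}_1)\to 0$ of Lemma~\ref{43exact}, which is exactly what accounts for the discrepancy between $H^{\otimes n}$ and $N(H^{\otimes n})$ that breaks your d\'evissage. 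From Lemma~\ref{43filter} one gets isomorphisms $K\hat\pi(S)/K\hat\pi(S)(n)\overset{\cong}{\to}N(\widehat{T}_1)/N(\widehat{T}_n)$ at every level, and $\widehat{\lambda}$ is their inverse limit composed with $\theta_*^{-1}$. In short, the correct filtration on the homology side is the one pulled back from $\{N(\widehat{T}_n)\}$, not the one induced by $\{\mathcal{S}^c(S)_n\}$; these differ precisely by the commutator subspace, and your inductive step would have to absorb the connecting homomorphisms to $H_0$ to recover it.
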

As a corollary we have

\begin{cor}\label{43inj} Let $S$ be a surface of finite type
and not closed, and $K$ a commutative ring including $\mathbb{Q}$.
Then the kernel of the completion map of the Goldman Lie
algebra $\rho\colon K{\hat\pi}(S) \to \widehat{K{\hat\pi}}(S)$ is
spanned by the constant loop $1 \in \hat\pi(S)$. 
\end{cor}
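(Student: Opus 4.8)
The plan is to read off the corollary from Theorem \ref{43isom} by a short diagram chase, using in addition that the natural map $H_1(S;\mathcal{S}^c(S)) \to H_1(S;\widehat{\mathcal{S}}^c(S))$ is injective (noted just before Theorem \ref{43isom}) and that $\Ker\lambda = K1$ (from \cite{KK1} Proposition 3.4.3 (1), recalled above; here $S$ is of finite type, hence connected, so $K1$ is the line spanned by the single constant loop $1 = \vert 1_q\vert$). I would set up the commutative square
$$
\begin{CD}
K\hat\pi(S) @>{\rho}>> \widehat{K\hat\pi}(S)\\
@V{\lambda}VV @VV{\widehat\lambda}V\\
H_1(S;\mathcal{S}^c(S)) @>>> H_1(S;\widehat{\mathcal{S}}^c(S)),
\end{CD}
$$
in which the bottom arrow is the injection identifying $H_1(S;\mathcal{S}^c(S))$ with a submodule of $H_1(S;\widehat{\mathcal{S}}^c(S))$, and the right arrow $\widehat\lambda$ is the isomorphism of Theorem \ref{43isom}. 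Commutativity of this square is exactly the statement that $\widehat\lambda$ extends $\lambda$, since $\widehat\lambda$ is obtained as the inverse limit of the maps induced by $\lambda$ on the finite quotients $K\hat\pi(S)/K\hat\pi(S)(n)$.

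Granting the square, the corollary falls out of a chain of equivalences. For $a \in K\hat\pi(S)$ one has $a \in \Ker\rho$ iff $\rho(a)=0$ iff $\widehat\lambda(\rho(a)) = 0$ (as $\widehat\lambda$ is injective) iff $\lambda(a) = 0$ in $H_1(S;\widehat{\mathcal{S}}^c(S))$ (by commutativity) iff $\lambda(a) = 0$ in $H_1(S;\mathcal{S}^c(S))$ (as the bottom map is injective) iff $a \in \Ker\lambda = K1$. Hence $\Ker\rho = K1$. As a sanity check one can also verify the inclusion $K1 \subset \Ker\rho$ directly: $\Ker\rho = \bigcap_{n\geq 1}K\hat\pi(S)(n)$ by definition of the completion, and $\vert 1_q\vert \in K\hat\pi(S)(n) = \vert K1_q + (I\pi_1(S,q))^n\vert$ for every $n$.

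I do not expect any genuine obstacle: the entire content of the corollary is already contained in Theorem \ref{43isom} (and, through it, in the group-like expansion machinery of \S\ref{sec:3-3}), while the corollary itself is the one-line chase above. The only item deserving a word of care is the compatibility of $\lambda$ with the two filtrations, which is what makes $\lambda$ descend to the finite quotients and hence makes the square commute; but this compatibility is part of the construction of $\widehat\lambda$ in the proof of Theorem \ref{43isom}, so I would simply invoke it rather than reprove anything.
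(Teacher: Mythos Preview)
Your proof is correct and follows essentially the same approach as the paper's own proof: the same commutative square (up to a transposition of rows and columns), the same three ingredients ($\widehat\lambda$ an isomorphism by Theorem \ref{43isom}, the coefficient map $H_1(S;\mathcal{S}^c(S)) \to H_1(S;\widehat{\mathcal{S}}^c(S))$ injective, and $\Ker\lambda = K1$), and the same one-line diagram chase.
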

\begin{proof} Consider the commutative diagram
$$
\begin{CD}
K\hat\pi(S) @>{\lambda}>> H_1(S;\mathcal{S}^c(S))\\
@VVV @VVV\\
\widehat{K\hat\pi}(S) @>{\widehat{\lambda}}>>
H_1(S;\widehat{\mathcal{S}}^c(S)).
\end{CD}
$$
From Theorem \ref{43isom} the map $\widehat{\lambda}$ is an
isomorphism, while the right vertial arrow is injective. 
Hence the kernel of the completion map equals $\Ker\lambda = K1$. 
\end{proof}

To prove Theorem \ref{43isom}, we use a group-like
expansion of the fundamental group $\pi_1(S)$. 
Choose a basepoint $* \in S$ and denote $\pi:=\pi_1(S, *)$. 
Let $\theta\colon
\pi \to \widehat{T}$ be a group-like expansion. See Definition
\ref{33Mas}. 
\par
We define a $K$-linear map $N\colon \widehat{T} \to \widehat{T}$ 
by $N\vert_{H^{\otimes 0}} := 0$ and 
\begin{equation}
N(X_1\cdots X_n) :=
\sum^n_{i=1}X_i\cdots X_nX_1\cdots X_{i-1},
\label{43N}\end{equation}
for $X_j \in H$, $n \geq 1$. 
Then $\theta$ induces an isomorphism 
$\theta_*\colon H_1(S;\widehat{\mathcal{S}}^c(S)) \overset\cong\to
N(\widehat{T}_1)$ (\cite{KK1} (5.3.1), Lemma 6.1.1). 
Moreover the composite $\theta_*\circ\lambda\colon K\hat\pi(S) \to
N(\widehat{T}_1)$ equals the map $\lambda_\theta\colon K{\hat\pi}(S)
\to N(\widehat{T}_1)$ defined by $\lambda_\theta(\vert x\vert) :=
N\theta(x)$, $x \in \pi$ (\cite{KK1} Lemma 6.3.2).
Here we should remark that the proofs of the lemma and the proposition in
\cite{KK1} work well for group-like expansions over a commutative ring
including $\mathbb{Q}$ as well as for symplectic
expansions over the rationals $\mathbb{Q}$.
The key to proving the injectivity of the map $\widehat{\lambda}$ 
is the following lemma. 
\begin{lem}\label{43filter} For any $n \geq 1$ we have 
$$
{\lambda_\theta}^{-1}(N(\widehat{T}_n)) = \vert K1+I\pi^n\vert 
\ (= K{\hat\pi}(S)(n)).
$$
\end{lem}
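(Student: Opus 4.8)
The plan is to establish the two inclusions separately. The inclusion $K\hat\pi(S)(n)\subseteq{\lambda_\theta}^{-1}(N(\widehat T_n))$ is formal, and for the reverse inclusion the idea is to pass to the associated graded modules of the two filtrations and show that there the map induced by $\lambda_\theta$ is, essentially, the norm map of a cyclic group, which is an isomorphism because $\mathbb Q\subseteq K$.

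First I would prove $\lambda_\theta(K\hat\pi(S)(n))\subseteq N(\widehat T_n)$. If $u\in K1+I\pi^n\subseteq K\pi$ (with $1\in\pi$ the trivial element), then since $\theta\colon K\pi\to\widehat T$ is a filtration- and augmentation-preserving algebra homomorphism we have $\theta(u)\in K1+\widehat T_n$; as $N$ annihilates $H^{\otimes 0}$, it follows that $\lambda_\theta(|u|)=N\theta(u)\in N(\widehat T_n)$. In particular, for every $k\ge 1$ the map $\lambda_\theta$ sends $K\hat\pi(S)(k)$ into $N(\widehat T_k)$ and $K\hat\pi(S)(k+1)$ into $N(\widehat T_{k+1})$, hence induces a $K$-linear map ${\rm gr}_k\colon K\hat\pi(S)(k)/K\hat\pi(S)(k+1)\to N(\widehat T_k)/N(\widehat T_{k+1})$.

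Next I would analyse ${\rm gr}_k$. Since $N$ acts degreewise and $N(H^{\otimes m})=(H^{\otimes m})^{\mathbb Z/m}$ for $m\ge 1$ (over $K\supseteq\mathbb Q$ the restriction of $N$ to $H^{\otimes m}$ is $m$ times the averaging projector onto the cyclic invariants), the target $N(\widehat T_k)/N(\widehat T_{k+1})$ is canonically identified with $(H^{\otimes k})^{\mathbb Z/k}$ via the degree-$k$ part. For the source, the constant loop $1\in\hat\pi(S)$ lies in every $K\hat\pi(S)(m)$ by definition, so its class vanishes in each graded piece; consequently $K\hat\pi(S)(k)/K\hat\pi(S)(k+1)$ is spanned by the classes of $|w|$, $w\in I\pi^k$, and the standard isomorphism ${\rm gr}^I_k K\pi\cong H^{\otimes k}$ for the free group $\pi$ (sending $[(x_1-1)\cdots(x_k-1)]$ to $[x_1]\otimes\cdots\otimes[x_k]$) yields a surjection $H^{\otimes k}\twoheadrightarrow K\hat\pi(S)(k)/K\hat\pi(S)(k+1)$. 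Applying $|ab|=|ba|$ with $a\in I\pi$ and $b\in I\pi^{k-1}$ shows this surjection kills $v-\sigma v$ for the cyclic shift $\sigma$, so it factors through the coinvariants $(H^{\otimes k})_{\mathbb Z/k}$. Finally, because $\theta(x)\equiv 1+[x]\pmod{\widehat T_2}$ we get $\theta((x_1-1)\cdots(x_k-1))\equiv[x_1]\cdots[x_k]\pmod{\widehat T_{k+1}}$, so the composite
\[
(H^{\otimes k})_{\mathbb Z/k}\twoheadrightarrow K\hat\pi(S)(k)/K\hat\pi(S)(k+1)\xrightarrow{\ {\rm gr}_k\ }(H^{\otimes k})^{\mathbb Z/k}
\]
sends the class of $[x_1]\otimes\cdots\otimes[x_k]$ to $N([x_1]\otimes\cdots\otimes[x_k])$; that is, it is the norm map $(H^{\otimes k})_{\mathbb Z/k}\to(H^{\otimes k})^{\mathbb Z/k}$, which is an isomorphism since $k$ is invertible in $K$. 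Hence ${\rm gr}_k$ is injective (and is in fact an isomorphism).

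With this I would conclude by a finite induction. Given $z\in K\hat\pi(S)$ with $\lambda_\theta(z)\in N(\widehat T_n)$, I claim $z\in K\hat\pi(S)(k)$ for $k=1,\dots,n$; this is trivial for $k=1$, and if it holds for some $k<n$ then the class of $z$ in $K\hat\pi(S)(k)/K\hat\pi(S)(k+1)$ is killed by ${\rm gr}_k$, because $\lambda_\theta(z)\in N(\widehat T_n)\subseteq N(\widehat T_{k+1})$ has zero degree-$k$ part; by injectivity of ${\rm gr}_k$ we get $z\in K\hat\pi(S)(k+1)$, and taking $k=n$ finishes the proof. The main obstacle is the identification of ${\rm gr}_k$ with a cyclic-group norm map together with the fact that this norm map is an isomorphism, which rests squarely on $\mathbb Q\subseteq K$ (so that $k=|\mathbb Z/k|$ is invertible); a minor point needing care is that the constant loop, which sits in all the $K\hat\pi(S)(m)$, must be checked to drop out of the graded pieces so that the surjection $H^{\otimes k}\twoheadrightarrow K\hat\pi(S)(k)/K\hat\pi(S)(k+1)$ is available.
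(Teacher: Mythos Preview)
Your proof is correct and takes a genuinely different route from the paper's.

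The paper proceeds by first proving the auxiliary exact sequence
\[
0\to K1\oplus[\widehat T,\widehat T]\hookrightarrow\widehat T\xrightarrow{\,N\,}N(\widehat T_1)\to 0
\]
(Lemma~\ref{43exact}), and then uses it directly: if $N\theta(u)\in N(\widehat T_n)$ then $\theta(u)\in\widehat T_n+K1+[\widehat T,\widehat T]$, so $\theta(u-\varepsilon(u)1)-\sum_i[v'_i,w'_i]\in\widehat T_n$ for finitely many $v'_i,w'_i\in\widehat T$; lifting the $v'_i,w'_i$ to $v_i,w_i\in K\pi$ modulo $\widehat T_n$ via $\widehat T/\widehat T_n\cong K\pi/I\pi^n$ gives $u-\varepsilon(u)1-\sum_i[v_i,w_i]\in I\pi^n$, whence $|u|\in|K1+I\pi^n|$ because $|[v_i,w_i]|=0$.

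You instead pass to associated graded modules and identify the induced map
\[
{\rm gr}_k\colon K\hat\pi(S)(k)/K\hat\pi(S)(k+1)\longrightarrow N(\widehat T_k)/N(\widehat T_{k+1})
\]
with the cyclic norm $(H^{\otimes k})_{\mathbb Z/k}\to(H^{\otimes k})^{\mathbb Z/k}$, which is an isomorphism since $k$ is invertible in $K$; the inclusion then follows by a finite descending induction. Both arguments rest on $\mathbb Q\subseteq K$ at essentially the same point (the paper needs $\tfrac1n$ in the identity $u-\tfrac1nNu\in[\widehat T,\widehat T]$, you need invertibility of the norm). The paper's approach is slightly more hands-on---it produces an explicit element of $K1+I\pi^n$ representing $|u|$---while yours is more structural and as a by-product yields the isomorphism $K\hat\pi(S)(k)/K\hat\pi(S)(k+1)\cong(H^{\otimes k})_{\mathbb Z/k}$ directly, without a separate lemma on $\Ker N$.
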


To prove this lemma, we need the following.
\begin{lem}\label{43exact}
$0 \to K1\oplus [\widehat{T}, \widehat{T}] \hookrightarrow \widehat{T}
\overset{N}\to N(\widehat{T}_1) \to 0$ {\rm (exact)}.
\end{lem}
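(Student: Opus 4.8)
The plan is to prove exactness one homogeneous degree at a time and then take the product over degrees, which is legitimate because $N$ preserves the grading of $\widehat{T}=\prod_{m\geq 0}H^{\otimes m}$ and is continuous. Exactness on the right is immediate: $N$ annihilates $H^{\otimes 0}$, so $N(\widehat{T})=N(\widehat{T}_1)$ and the map onto $N(\widehat{T}_1)$ is surjective by construction. Exactness in the middle amounts to computing $\Ker N$; since $\Ker(N|_{H^{\otimes 0}})=H^{\otimes 0}=K1$, everything reduces to showing that for each $n\geq 1$ the kernel of $N$ on $H^{\otimes n}$ is the degree-$n$ part $C_n$ of the commutator subspace, that is, the span of the elements $uv-vu$ with $u\in H^{\otimes i}$ and $v\in H^{\otimes(n-i)}$.

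For fixed $n\geq 1$, I would introduce the cyclic rotation $\rho\colon H^{\otimes n}\to H^{\otimes n}$, $X_1\cdots X_n\mapsto X_2\cdots X_n X_1$, which is a $K$-linear automorphism with $\rho^n=1$ and satisfies $N|_{H^{\otimes n}}=\sum_{k=0}^{n-1}\rho^k$. From $\rho N=N\rho=N$ one gets $N^2=nN$, and since $\mathbb{Q}\subset K$ the operator $e:=\tfrac1n N$ is an idempotent, so $\Ker(N|_{H^{\otimes n}})=\Ker e={\rm Im}(1-e)$. Using $1-e=\tfrac1n\sum_{k=1}^{n-1}(1-\rho^k)$ together with $1-\rho^k=(1-\rho)(1+\rho+\cdots+\rho^{k-1})$ shows ${\rm Im}(1-e)\subset{\rm Im}(1-\rho)$, while $e(1-\rho)=0$ (because $e\rho=e$) shows ${\rm Im}(1-\rho)\subset\Ker e$; hence $\Ker(N|_{H^{\otimes n}})={\rm Im}(1-\rho)={\rm span}\{w-\rho w\}$. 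It then remains to identify this with $C_n$: on a monomial, $w-\rho w=X_1(X_2\cdots X_n)-(X_2\cdots X_n)X_1=[X_1,\,X_2\cdots X_n]\in C_n$, and conversely each generator $uv-vu$ of $C_n$ equals $(1-\rho^i)(uv)\in{\rm Im}(1-\rho)$ once one notes $vu=\rho^i(uv)$ (the cases $i\in\{0,n\}$ being trivial). Assembling over all $m$ gives $\Ker N=K1\times\prod_{m\geq 1}C_m$.

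The step I expect to require the most care is the final bookkeeping: one must check that $\prod_{m\geq 1}C_m$ is exactly what $[\widehat{T},\widehat{T}]$ denotes inside the completed algebra $\widehat{T}$. This comes down to observing that $\widehat{T}$ is complete, that each $C_m$ is a direct summand of $H^{\otimes m}$ (cut out by the idempotent $e$ above), and that the degree-$m$ component of the closed span of commutators in $\widehat{T}$ is precisely $C_m$, because every generator $uv-vu$ of $C_m$ is itself a commutator of homogeneous elements and, conversely, the homogeneous components of an arbitrary commutator lie in the various $C_m$. Granting this, the product decomposition $\Ker N=K1\oplus[\widehat{T},\widehat{T}]$ together with the tautological surjectivity onto $N(\widehat{T}_1)$ yields the asserted exact sequence. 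Everything apart from this last identification is a degreewise manipulation over a $\mathbb{Q}$-algebra and is routine once $\rho$ and $e$ are in hand.
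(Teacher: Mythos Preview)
Your argument is correct and close in spirit to the paper's: both work one degree at a time and rest on the fact that $e:=\tfrac{1}{n}N$ is idempotent on $H^{\otimes n}$ (this is where $\mathbb{Q}\subset K$ enters). The paper, however, bypasses the cyclic operator $\rho$ entirely: it simply writes, for a monomial,
\[
X_1\cdots X_n-\tfrac{1}{n}N(X_1\cdots X_n)=\tfrac{1}{n}\sum_{i=2}^{n}[X_1\cdots X_{i-1},\,X_i\cdots X_n],
\]
so any $u\in\Ker N\cap H^{\otimes n}$ equals $(1-e)u\in[\widehat{T},\widehat{T}]$; this single identity replaces your chain $\Ker e={\rm Im}(1-e)={\rm Im}(1-\rho)=C_n$. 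On the other hand, your route through $\rho$ pays off for the bookkeeping you flag at the end: writing $v_m=\sum_i e_i v_{m,i}$ over a basis $\{e_i\}$ of $H$, one has $c_m=(1-\rho)v_m=\sum_i[e_i,v_{m,i}]$, so an arbitrary element of $\prod_{m\ge1}C_m$ equals $\sum_i\bigl[e_i,\sum_m v_{m,i}\bigr]$, a sum of $\dim H$ commutators in $\widehat{T}$ --- confirming that $[\widehat{T},\widehat{T}]$, taken as the algebraic span, is already closed. The paper passes over this point in silence.
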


\begin{proof} Since $N(1) = 0$ and $N([u,v]) = N(uv-vu) = 0$ for $u, v \in
\widehat{T}$, we have $K1\oplus [\widehat{T}, \widehat{T}] \subset \Ker
N$. Since $N$ is homogeneous it suffices to show 
$$
(\Ker N) \cap H^{\otimes n} \subset (K1\oplus [\widehat{T},
\widehat{T}]) \cap H^{\otimes n},
$$
for any $n\ge 0$.
It is clear in the case $n=0$. For any $X_j \in H$ and  $n\geq 1$, we
have 
\begin{eqnarray*}
X_1\cdots X_n - \frac1n N(X_1\cdots X_n) &=& 
\frac1n\sum^n_{i=1}X_1\cdots X_n - X_i\cdots X_nX_1\cdots X_{i-1}\\
&=& \frac1n\sum^n_{i=2}[X_1\cdots X_{i-1}, X_i\cdots X_n]
\in [\widehat{T},\widehat{T}].
\end{eqnarray*}
Hence $u - \frac1n Nu
\in [\widehat{T},\widehat{T}]$ for any $u \in H^{\otimes n}$.
This proves the lemma. 
\end{proof}

\begin{proof}[Proof of Lemma \ref{43filter}] Since $\theta(I\pi^n) \subset
\widehat{T}_n$,  we have $\lambda_\theta(\vert I\pi^n\vert) \subset
N(\widehat{T}_n)$. Clearly $\lambda_\theta(1) = N(1) = 0$. Hence
$\vert K1+I\pi^n\vert
\subset {\lambda_\theta}^{-1}(N(\widehat{T}_n))$. \par
Suppose $u \in K\pi$ satisfies $N\theta(u) \in N(\widehat{T}_n)$.  
From Lemma \ref{43exact}, $\theta(u) \in \widehat{T}_n + K1+
[\widehat{T},\widehat{T}]$. This means $\theta(u-\varepsilon(u)1) 
- \sum^m_{i=1}[v'_i,w'_i] \in \widehat{T}_n$ for some $v'_i,w'_i \in
\widehat{T}$. There exist $v_i$ and $w_i \in K\pi$ such that
$v'_i - \theta(v_i)$ and $w'_i-\theta(w_i) \in \widehat{T}_n$, since 
$\widehat{T}/\widehat{T}_n
\overset\theta\cong\widehat{K\pi}/\widehat{I\pi^n} = K\pi/I\pi^n$
by (\ref{expansion}).
This implies $\theta(u - \varepsilon(u)1 - \sum^m_{i=1}[v_i,w_i]) \in
\widehat{T}_n$. Since $\widehat{T}/\widehat{T}_n
\cong K\pi/I\pi^n$, $z := u - \varepsilon(u)1 - \sum^m_{i=1}[v_i,w_i]
\in I\pi^n$. Hence we have $\vert u\vert = \vert\varepsilon(u)1 + z\vert 
\in \vert K1 + I\pi^n\vert$. This completes the proof.
\end{proof}

\begin{proof}[Proof of Theorem \ref{43isom}] From Lemma \ref{43filter}
the map $\lambda_\theta$ induces an injective linear map
$
\lambda_\theta\colon K{\hat\pi}(S)/K{\hat\pi}(S)(n)
\hookrightarrow N(\widehat{T}_1)/N(\widehat{T}_n)
$
for any $n \geq 1$. This map is surjective. In fact, for any $u \in
\widehat{T}_1$, there exists some $z \in K\pi$ such that $u - \theta(z)
\in \widehat{T}_n$, and so $Nu - \lambda_\theta(z) \in N(\widehat{T}_n)$.
Hence we have an isomorphism 
$$
\lambda_\theta\colon K{\hat\pi}(S)/K{\hat\pi}(S)(n)
\overset\cong\to N(\widehat{T}_1)/N(\widehat{T}_n)
$$
for any $n \geq 1$. Taking the projective limits, we obtain an
isomorphism 
$\widehat{\lambda}_\theta\colon \widehat{K{\hat\pi}}(S) \overset\cong\to
\varprojlim_{n\to \infty}N(\widehat{T}_1)/N(\widehat{T}_n) =
N(\widehat{T}_1)$, which preserves the filtrations. 
Hence we obtain an isomorphism
$\widehat{\lambda} := {\theta_*}^{-1}\circ\widehat{\lambda}_\theta\colon 
\widehat{K{\hat\pi}}(S) \to H_1(S;\widehat{\mathcal{S}}^c(S))$, 
which is independent of the choice of a group-like expansion $\theta$.
The latter half of the theorem follows immediately from (\ref{43B}). 
This completes the proof. 
\end{proof}

In the proof of Theorem 4.3.1, we have obtained the following corollary.
\begin{cor}\label{43Ntheta} Let $S$ be a surface of finite type and not closed, 
$K$ a commutative ring including $\mathbb{Q}$, and $\theta$ a group-like expansion of $\pi_1(S)$,
the fundamental group of $S$. Then the map 
$$
\lambda_\theta: \widehat{K\hat\pi}(S) \to N(\widehat{T}_1),
$$
defined by $\lambda_\theta(\vert x\vert) := N\theta(x)$ for $x \in \pi_1(S)$, 
is an isomorphism.  
\end{cor}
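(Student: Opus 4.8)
The plan is to observe that this corollary has essentially already been established inside the proof of Theorem \ref{43isom}, and that all that remains is to make the relevant identifications explicit. Recall that in that proof one first uses Lemma \ref{43filter}, which asserts ${\lambda_\theta}^{-1}(N(\widehat{T}_n)) = K\hat\pi(S)(n)$, to obtain for each $n \geq 1$ an induced \emph{injection} $K\hat\pi(S)/K\hat\pi(S)(n) \hookrightarrow N(\widehat{T}_1)/N(\widehat{T}_n)$; this map is surjective because any $u \in \widehat{T}_1$ differs from some $\theta(z)$, $z \in K\pi$, by an element of $\widehat{T}_n$. So the first step is simply to recall these isomorphisms on the finite quotients.

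The second step is to pass to the inverse limit over $n$. On the left one obtains $\widehat{K\hat\pi}(S)$ by definition. On the right one needs $\varprojlim_n N(\widehat{T}_1)/N(\widehat{T}_n) = N(\widehat{T}_1)$; this holds because $N$ is homogeneous, so $N(\widehat{T}_n) = \prod_{m \geq n} N(H^{\otimes m})$ and the filtration $\{N(\widehat{T}_n)\}_n$ on $N(\widehat{T}_1)$ is separated and complete. This produces an isomorphism $\widehat{\lambda}_\theta \colon \widehat{K\hat\pi}(S) \overset{\cong}{\to} N(\widehat{T}_1)$, which is precisely the isomorphism $\widehat{\lambda}_\theta$ constructed in the proof of Theorem \ref{43isom}.

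The third step is to check that $\widehat{\lambda}_\theta$ is indeed the map called $\lambda_\theta$ in the present statement, i.e.\ that it sends $|x| \mapsto N\theta(x)$ for $x \in \pi_1(S)$. This is immediate from the construction: on each quotient $K\hat\pi(S)/K\hat\pi(S)(n)$ the class of $|x|$ is sent to $N\theta(x) \bmod N(\widehat{T}_n)$, and these assignments are compatible under the bonding maps, so in the limit $|x| \mapsto N\theta(x) \in N(\widehat{T}_1)$. Since the classes $|x|$, $x \in \pi_1(S)$, together with the constant loop $1$ (which is sent to $N(1) = 0$) span $K\hat\pi(S)$, and $K\hat\pi(S)$ is dense in $\widehat{K\hat\pi}(S)$, the continuous $K$-linear extension of $|x| \mapsto N\theta(x)$ to $\widehat{K\hat\pi}(S)$ is unique and equals $\widehat{\lambda}_\theta$. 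Hence $\lambda_\theta = \widehat{\lambda}_\theta$ is an isomorphism.

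There is essentially no real obstacle: this is a bookkeeping corollary distilled from the proof of Theorem \ref{43isom}. The only point deserving a moment's attention is the identification $\varprojlim_n N(\widehat{T}_1)/N(\widehat{T}_n) = N(\widehat{T}_1)$ — the completeness of $N(\widehat{T}_1)$ for its natural filtration — which, however, is transparent from the homogeneity of $N$ and was already implicitly used in the proof of Theorem \ref{43isom}.
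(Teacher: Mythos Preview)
Your proposal is correct and follows exactly the paper's approach: the paper does not give a separate proof but simply states that the corollary was obtained in the course of proving Theorem~\ref{43isom}, and you have accurately unpacked those steps (the filtered isomorphisms from Lemma~\ref{43filter}, the passage to the inverse limit, and the identification of the limiting map with the one in the statement).
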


\section{Dehn twists}
\label{sec:5}

In this section we suppose $K$ is a commutative ring including the rationals $\mathbb{Q}$.
We shall generalize results in \cite{KK1} to any
oriented surface $S$, and those in \cite{Ku2} to any surface of finite
type with non-empty boundary. Let $E$ be a non-empty closed subset of
$S$ with the property that $E\setminus\partial S$ is closed in $S$.  We
consider the completed Goldman Lie algebra
$\widehat{K\hat{\pi}}(S^*)$ of $S^* = S\setminus
(E\setminus\partial S)$ and the homomorphism of Lie algebras
$\sigma\colon \widehat{K\hat{\pi}}(S^*) \to \Der(\widehat{K\CSE})$.

\subsection{An invariant of unoriented free loops}
\label{sec:5-1}

We begin by defining an invariant of an unoriented free loop $C$ in 
$S\setminus (E\cup \partial S) = S^* \setminus \partial S$. 
\begin{lem}\label{51ft}
Let $f(t) \in K[[t-1]]$ be a formal power series in $t-1$ with $f(1) =
f'(1) = 0$.
\begin{enumerate}
\item[{\rm (1)}] For $\alpha \in {\hat\pi}(S^*)$, choose a point $q \in S^*$ and
a based loop $x \in \pi_1(S^*,q)$ representing the free loop
$\alpha$. Then 
$$
f(\alpha) := \vert f(x)\vert \in \widehat{K\hat{\pi}}(S^*)(2)
$$
is well-defined. In other words, $\vert f(x)\vert$ does not depend on the
choice of $q$ and $x$.
\item[{\rm (2)}] If $f(t) = f(t^{-1})$, then $f(\alpha) = f(\alpha^{-1})$. In
particular, we may define $f(C):= f(\alpha)\in
\widehat{K\hat{\pi}}(S^*)(2)$ if $C = \alpha^{\pm1}$, namely, 
the unoriented free loop $C$ is represented by an oriented free loop
$\alpha$. 
\item[{\rm (3)}] $\sigma(f(\alpha)) \in \Der\widehat{K\CC}$ satisfies the three
conditions {\rm (i)-(iii)} in {\rm Lemma \ref{13conv}}. In particular, we can define 
the exponential $\exp(\sigma(f(\alpha)))
\in \Aut\widehat{K\CC}$. It satisfies the conditions {\rm (i)-(iii)} in
{\rm Definition \ref{32ASE}}.  
\end{enumerate}
\end{lem}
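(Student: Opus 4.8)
The plan is to handle (1) and (2) by formal power-series manipulations inside the completed group rings, and to obtain (3) by reducing $\sigma(f(\alpha))$ to its quadratic part $\sigma(|(x-1)^2|)$ and then invoking Lemmas \ref{13conv} and \ref{34square} together with Proposition \ref{13exp}. The genuine obstacle is the nilpotency hypothesis (ii) of Lemma \ref{13conv}; everything else is bookkeeping with filtrations.

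For (1), since $f(1) = f'(1) = 0$ I would write $f(t) = \sum_{k\geq 2} a_k (t-1)^k$. Given $q \in S^*$ and $x \in \pi_1(S^*,q)$ representing $\alpha$, set $f(x) := \sum_{k\geq 2} a_k (x-1)^k$; as $(x-1)^k \in I\pi_1(S^*,q)^k$ this converges in $\widehat{K\pi_1(S^*,q)}$, and $|f(x)| = \sum_{k\geq 2} a_k |(x-1)^k|$ converges in $\widehat{K\hat\pi}(S^*)$ to an element of $\widehat{K\hat\pi}(S^*)(2)$ because $|(x-1)^k| \in |I\pi_1(S^*,q)^k| \subset K\hat\pi(S^*)(k)$. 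The map $x \mapsto |f(x)|$ is a class function on $\pi_1(S^*,q)$ (substitution into a power series commutes with conjugation, and $|uv| = |vu|$) and is compatible with change of basepoint along a path, so it depends only on $\alpha$. For (2), the assignment $t \mapsto x$ extends to a continuous $K$-algebra homomorphism $K[[t-1]] \to \widehat{K\pi_1(S^*,q)}$ carrying $t^{-1}$ to $x^{-1}$ and hence $f(t^{-1})$ to $f(x^{-1})$; therefore $f(t) = f(t^{-1})$ forces $f(x) = f(x^{-1})$, and since $x^{-1}$ represents $\alpha^{-1}$ we get $f(\alpha^{-1}) = |f(x^{-1})| = |f(x)| = f(\alpha)$, so $f(C)$ is well defined.

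For (3), write $D := \sigma(f(\alpha)) = \sum_{k\geq 2} a_k\,\sigma(|(x-1)^k|)$, which converges in $\Der\widehat{K\CC}$ by continuity of $\sigma$. By Theorem \ref{33filt}, $\sigma(|(x-1)^k|) \in F_{k-2}\Der K\CC \subset F_{k-2}\Der\widehat{K\CC}$, so the term $k=2$ lies in $F_0\Der$ and the others in $F_1\Der$; hence $D \in F_0\Der\widehat{K\CC}$, which is (i) of Lemma \ref{13conv}. For (iii) of Lemma \ref{13conv}: for $k\geq 3$ the summand sits in $F_1\Der$ and sends $\widehat{K\CC}(p_0,p_1)$ into $F_1$, while for $k=2$ the formula of Lemma \ref{34square}(1) (whose proof is valid over $K$) gives $\sigma(|(x-1)^2|)(\widehat{K\CC}(p_0,p_1)) \subset F_1$; since $F_1\widehat{K\CC}(p_0,p_1)$ is closed, $D(\widehat{K\CC}(p_0,p_1)) \subset F_1\widehat{K\CC}(p_0,p_1)$. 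The point I expect to be delicate is (ii) of Lemma \ref{13conv}. Here I would observe that $D - a_2\,\sigma(|(x-1)^2|) = \sum_{k\geq 3} a_k\,\sigma(|(x-1)^k|) \in F_1\Der\widehat{K\CC}$ induces the zero endomorphism on ${\rm gr}\,\widehat{K\CC}$, so $D$ and $a_2\,\sigma(|(x-1)^2|)$ induce the same operator on ${\rm gr}_1\widehat{K\CC}(p_0,p_1)$; this graded piece is canonically a submodule of the $K$-abelianization $\CC^\abel(p_0,p_1)\otimes_{\mathbb Z} K$, preserved by $\sigma(|(x-1)^2|) \in F_0\Der$, and Lemma \ref{34square}(2) (again valid over $K$) says $\sigma(|(x-1)^2|)^2 = 0$ on that abelianization. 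Hence $D^2 = 0$ on ${\rm gr}_1\widehat{K\CC}(p_0,p_1)$, i.e. (ii) holds with $\nu = 2$; this reduction to the quadratic term and then to the abelianization is precisely why Lemma \ref{34square} was set up beforehand.

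It remains to extract $\exp(D)$ and check Definition \ref{32ASE}(i)--(iii). Since $\widehat{K\CC}$ satisfies (C2) and (C3), Lemma \ref{13conv} gives convergence of $\exp(D)$ in ${\rm End}(\widehat{K\CC}(p_0,p_1))$ and Proposition \ref{13exp}(2) gives $\exp(D) \in \Aut\widehat{K\CC}$. For (i): each $D^k$ preserves every $F_p\widehat{K\CC}(p_0,p_1)$ as $D \in F_0\Der$, hence so do $\exp(D)$ and $\exp(-D) = \exp(D)^{-1}$ (the $F_p$ being closed), so $\exp(D)$ is a homeomorphism for the filtration topology. For (ii): representing $\alpha$ by a loop in ${\rm int}(S^*) = {\rm int}(S) \setminus (E\setminus\partial S)$, which is disjoint from $E$, one gets $\sigma(|(x-1)^k|)(\gamma) = 0$ for every $\gamma$ represented by a path inside $E$, hence $D(\gamma) = 0$ and $\exp(D)(\gamma) = \gamma$. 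For (iii): by (iii) of Lemma \ref{13conv} just proved, $D$ maps $\widehat{K\CC}(p_0,p_1)$ into $F_1 \subset \Ker\varepsilon$, so $\varepsilon\circ D^k = 0$ for $k\geq 1$ and therefore $\varepsilon\circ\exp(D) = \varepsilon$.
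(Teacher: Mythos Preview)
Your proof is correct and follows essentially the same route as the paper's: well-definedness via conjugation invariance of $f(x)$, and for (3) the decomposition $f(x)\equiv c(x-1)^2\bmod (x-1)^3$ so that on ${\rm gr}_1$ only the quadratic term survives and Lemma \ref{34square} supplies conditions (ii) and (iii) of Lemma \ref{13conv}, with Theorem \ref{33filt} giving (i). Your verification of the conditions in Definition \ref{32ASE} is likewise the same as the paper's, just spelled out in more detail.
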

\begin{proof}(1) Suppose $q_1 \in S^*$ and $x_1 \in \pi_1(S^*, q_1)$
satisfy $\vert x\vert = \vert x_1\vert$. Then we have $x_1 =
\gamma^{-1}x\gamma$ for some
$\gamma\in\Pi S^*(q,q_1)$, and so $f(x_1) =
\gamma^{-1}f(x)\gamma \in \widehat{K\Pi S^*}(q_1,q_1)$. 
This implies $\vert f(x_1)\vert = \vert  f(x)\vert \in
{\widehat{K\hat{\pi}}(S^*)}$.\par
(2) is clear. 
\par
(3) Since $f(\alpha) \in \widehat{K\hat{\pi}}(S^*)(2)$, we have 
$\sigma(f(\alpha)) \in F_0\Der\widehat{K\CC}$ by Theorem \ref{33filt}. 
On the other hand, $\sigma((x-1)^2)$ satisfies the conditions (ii) and
(iii) from Lemma \ref{34square}. 
Now we have
$$
f(x) \equiv c(x-1)^2 \bmod{(x-1)^3},
$$
for some constant $c \in K$, and any element in $(x-1)^3K[[x-1]]$ induces
an element of
$F_1\Der\widehat{K\CC}$ by Theorem \ref{33filt}. Hence 
$\sigma(f(\alpha))$ satisfies all the conditions (i)-(iii) in Lemma
\ref{13conv}. 
The condition (i) in Definition \ref{32ASE} for $\exp(\sigma(f(\alpha)))$
follows from the fact $\sigma(f(\alpha)) \in F_0\Der\widehat{K\CC}$, 
(ii) from $\alpha \cap E = \emptyset$, and (iii) from Lemma
\ref{34square}. 
\end{proof}
Now we define 
$$
L(t) := \frac12(\log t)^2 \in \mathbb{Q}[[t-1]].
$$
Here we remark $tL'(t) = \log t$. 
From Lemma \ref{51ft} we obtain $L(C) \in \widehat{K\hat{\pi}}(S^*)(2)$
and $\exp(\sigma(L(C))) \in \Aut\widehat{K\CC}$ 
for any unoriented free loop $C$ in $S^*$. Furthermore we have
\begin{lem}\label{51sLC}
The derivation $\sigma(L(C))$ stabilizes the coproduct $\Delta$,
$$\sigma(L(C)) \in {\rm Der}_{\Delta}\widehat{K\mathcal{C}}.$$
In particular, we have $\exp(\sigma(L(C))) \in \ASE$.
\end{lem}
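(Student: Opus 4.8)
The plan is to prove the first assertion, $\sigma(L(C))\in\Der_\Delta\widehat{K\CC}$; the second then follows formally. Lemma \ref{51ft}(3) already gives that $\exp(\sigma(L(C)))$ is a well-defined element of $\Aut\widehat{K\CC}$ satisfying conditions (i)--(iii) of Definition \ref{32ASE}, and $\sigma(L(C))$ is continuous since $\sigma(\widehat{K\hat{\pi}}(S^*))\subset F_{-1}\Der\widehat{K\CC}$. So once we know $D:=\sigma(L(C))$ satisfies $\Delta D=(D\widehat{\otimes}1+1\widehat{\otimes}D)\Delta$, the inductive identity $\Delta D^k=\sum_{j}\binom{k}{j}(D^{j}\widehat{\otimes}D^{k-j})\Delta$ (a binomial computation of the type used in the proof of Proposition \ref{13exp}) yields $\Delta\exp(D)=(\exp(D)\widehat{\otimes}\exp(D))\Delta$, which is precisely condition (iv) of Definition \ref{32ASE}; hence $\exp(\sigma(L(C)))\in\ASE$.

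The crux is an explicit formula for the action of $\sigma(L(C))$ on a path. Fix an orientation $\alpha$ of $C$, a point $q\in S^*$, and $x\in\pi_1(S^*,q)$ representing $\alpha$, so that $L(C)=|L(x)|$. Let $p_0,p_1\in E$ and $\gamma\in\CC(p_0,p_1)$, and put $\alpha$ and $\gamma$ in general position. First I would establish, for $j\in\mathbb{Z}$,
\[
\sigma(|x^{j}|)(\gamma)=\sum_{p\in\alpha\cap\gamma}j\,\varepsilon(p;\alpha,\gamma)\,\gamma_{p_0 p}\,\alpha_p^{\,j}\,\gamma_{p p_1},
\]
the point being that $\alpha^{j}$ meets $\gamma$ in $|j|$ copies of each $p\in\alpha\cap\gamma$ and that the loop $\alpha^{j}$ based at any such copy is $\alpha_p^{\,j}$. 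By $K$-linearity together with the elementary identity $\sum_j\binom{k}{j}(-1)^{k-j}j\,t^{j}=k\,t\,(t-1)^{k-1}$, this yields $\sigma(|g(x)|)(\gamma)=\sum_{p}\varepsilon(p;\alpha,\gamma)\,\gamma_{p_0 p}\,\bigl(\alpha_p\,g'(\alpha_p)\bigr)\,\gamma_{p p_1}$ for every polynomial $g$ with $g(1)=g'(1)=0$; since $\sigma$ is continuous and $L(x)$ is the limit of the truncations of its power series, the same formula passes to $g=L$. Using $tL'(t)=\log t$ (recorded just before the statement) I obtain
\[
\sigma(L(C))(\gamma)=\sum_{p\in\alpha\cap\gamma}\varepsilon(p;\alpha,\gamma)\,\gamma_{p_0 p}\,(\log\alpha_p)\,\gamma_{p p_1}.
\]

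With this formula the coderivation property is immediate. For each $p\in\alpha\cap\gamma$ the loop $\alpha_p$ is group-like in $\widehat{K\CC}$, so $\log\alpha_p$ is primitive, i.e. $\Delta(\log\alpha_p)=\log\alpha_p\otimes 1+1\otimes\log\alpha_p$. Since $\Delta$ is a homomorphism of SAC's and $\gamma_{p_0 p}\gamma_{p p_1}=\gamma$, multiplying out gives $\Delta\bigl(\gamma_{p_0 p}(\log\alpha_p)\gamma_{p p_1}\bigr)=\bigl(\gamma_{p_0 p}(\log\alpha_p)\gamma_{p p_1}\bigr)\otimes\gamma+\gamma\otimes\bigl(\gamma_{p_0 p}(\log\alpha_p)\gamma_{p p_1}\bigr)$. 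Summing over $p$ and using $\Delta\gamma=\gamma\otimes\gamma$ one reads off $\Delta\bigl(\sigma(L(C))(\gamma)\bigr)=\bigl(\sigma(L(C))\widehat{\otimes}1+1\widehat{\otimes}\sigma(L(C))\bigr)\Delta\gamma$. Both sides are continuous and $K$-linear in $\gamma$, and the homotopy classes of paths span $K\CC(p_0,p_1)$, which is dense in $\widehat{K\CC}(p_0,p_1)$; hence the identity $\Delta\circ\sigma(L(C))=(\sigma(L(C))\widehat{\otimes}1+1\widehat{\otimes}\sigma(L(C)))\circ\Delta$ holds on all of $\widehat{K\CC}$, as required.

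I expect the main obstacle to be the displayed formula for $\sigma(L(C))$ in the second paragraph: one must keep careful track of how the based loops transform under passing to the $j$-fold multiple of $\alpha$ and justify interchanging $\sigma$ with the limit defining $L(x)$ (both legitimate by the well-definedness of $\sigma$ on $\widehat{K\hat{\pi}}(S^*)$ and its continuity). Everything after that is formal.
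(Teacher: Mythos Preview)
Your proof is correct and follows essentially the same route as the paper: derive the explicit formula $\sigma(L(C))(\gamma)=\sum_{p\in\alpha\cap\gamma}\varepsilon(p;\alpha,\gamma)\,\gamma_{p_0p}(\log\alpha_p)\gamma_{pp_1}$ from the computation of $\sigma(|\alpha^n|)(\gamma)$, then use primitivity of $\log\alpha_p$ to check the coderivation identity on paths. Your added remarks (the binomial argument for condition (iv), the density/continuity extension at the end) make explicit steps the paper leaves implicit, but the strategy is the same.
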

\begin{proof} It suffices to prove $\sigma(L(C)) \in
\Der_\Delta\widehat{K\CC}$, see \S \ref{21filtration}.
Choose $\alpha \in {\hat\pi}(S^*)$ such that $C = \alpha^{\pm1}$. 
For any $\gamma \in \CC(*_0,*_1)$, $*_0,*_1 \in E$, and $n \geq 0$, 
we have $\sigma(\alpha^n)(\gamma) = \sum_{p\in\alpha\cap\gamma}
n\varepsilon(p;\alpha,\gamma)\gamma_{*_0p}{\alpha_p}^n\gamma_{p*_1}$, 
so that 
$$
\sigma(f(\alpha))(\gamma) =  \sum_{p\in\alpha\cap\gamma}
\varepsilon(p;\alpha,\gamma)\gamma_{*_0p}\alpha_pf'(\alpha_p)\gamma_{p*_1}
$$
for any $f(t) \in K[[t-1]]$. In particular, 
\begin{equation}
\sigma(L(C))(\gamma) =  \sum_{p\in\alpha\cap\gamma}
\varepsilon(p;\alpha,\gamma)\gamma_{*_0p}(\log{\alpha_p})\gamma_{p*_1}.
\label{51LC}
\end{equation}
On the other hand, we have $\Delta(\log\alpha_p) =
(\log\alpha_p)\widehat{\otimes}1 + 1\widehat{\otimes}(\log\alpha_p)
\in \widehat{K\pi_1(S,p)}^{\widehat{\otimes} 2}$. Hence 
\begin{eqnarray*}
&&\Delta\sigma(L(C))\gamma 
= \sum_{p\in\alpha\cap\gamma}
\varepsilon(p;\alpha,\gamma)(\gamma_{*_0p}\widehat{\otimes}\gamma_{*_0p})
(\log{\alpha_p}\widehat{\otimes}1 +
1\widehat{\otimes}\log{\alpha_p})
(\gamma_{p*_1}\widehat{\otimes}\gamma_{p*_1})\\
&=& \sigma(L(C))\gamma\widehat{\otimes}\gamma +
\gamma\widehat{\otimes}\sigma(L(C))\gamma
= (\sigma(L(C))\widehat{\otimes}1 +
1\widehat{\otimes}\sigma(L(C)))\Delta\gamma.
\end{eqnarray*}
This means $\sigma(L(C)) \in \Der_\Delta\widehat{K\CC}$, and proves the
lemma. 
\end{proof}

Let us go back to the situation of Proposition \ref{32subs}.
We take $\eta_i \in \pi_1(N, *_i)$ in the positive direction. 
Then we define $F^U \in \widehat{K\hat{\pi}}(N)$ by 
$$
F^U := \sum^n_{i=1} a^U_i L(\partial_iN).
$$
By Lemmas \ref{51ft} and \ref{51sLC}, 
$F^U \in \widehat{K\hat{\pi}}(N)(2)$ and $\exp\sigma(F^U)\in A(N,
\partial N)$. From the construction, we have 
\begin{equation}
U = \exp\sigma(F^U) \in A(N, \partial N).
\label{51subs}
\end{equation}

\subsection{The logarithm of Dehn twists}
\label{sec:5-2}

Recall the Dehn-Nielsen homomorphism $\widehat{\sf DN}\colon \mathcal{M}(S, E)
\to {\rm Aut}(\widehat{K\CSE})$ from \S \ref{sec:3}. The following theorem is a
generalization of a part of our previous result \cite{KK1} Theorem 1.1.1 to any oriented surfaces.
It does not involve a symplectic expansion and the total Johnson map.

\begin{thm}\label{42logDT} Let $S$ be an oriented surface and $E$ a
non-empty closed subset of $S$ with the property that $E\setminus \partial S$ is
closed in $S$. Then the Dehn-Nielsen homomorphism $\widehat{\sf
DN}$ maps the right handed Dehn twist $t_C$ along a simple closed curve
$C$ in $S\setminus (E\cup\partial S)$ to 
$$
\widehat{\sf DN}(t_C) = \exp(\sigma(L(C))) \in {\rm Aut}\widehat{K\CSE}. 
$$
\end{thm}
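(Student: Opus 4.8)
The plan is to use that $t_C$ is supported in a regular neighborhood of $C$ and to combine this with the van Kampen theorem (Proposition~\ref{32vKT}), reducing the identity to an explicit computation inside an annulus. Fix a closed annular neighborhood $N$ of $C$ with $N\cap(E\cup\partial S)=\emptyset$, write $\partial N=\partial_1 N\sqcup\partial_2 N$, and choose a representative of $t_C$ supported in $N$; it then fixes $\partial N$ pointwise, so $t_C\in\mathcal{M}(S,E\cup\partial N)$ and both $\widehat{\sf DN}(t_C)$ and $\exp(\sigma(L(C)))$ lie in $A(S,E\cup\partial N)$ (for the latter by Lemmas~\ref{51ft} and \ref{51sLC} applied with $E$ replaced by $E\cup\partial N$). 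Since the forgetful homomorphism $\phi\colon A(S,E\cup\partial N)\to\ASE$ carries each of these to the corresponding automorphism of $\widehat{K\CSE}$, it suffices to prove the equality in $A(S,E\cup\partial N)$. (If $C$ bounds a disc in $S^*$ both sides are the identity; the argument below covers this case as well.)

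Both members are continuous covariant functors of $\widehat{K\CC(S,E\cup\partial N)}$, hence multiplicative; and by Proposition~\ref{32vKT}, applied to the decomposition $S=N\cup_{\partial N}\overline{S\setminus N}$, every morphism of $K\CC(S,E\cup\partial N)$ is a $K$-linear combination of products of paths each lying entirely in $N$ or entirely in $\overline{S\setminus N}$, and such morphisms are dense in $\widehat{K\CC(S,E\cup\partial N)}$. So it is enough to check that $\widehat{\sf DN}(t_C)$ and $\exp(\sigma(L(C)))$ agree on each such path. A path contained in $\overline{S\setminus N}$ is fixed by both: by $t_C$ since it is supported in $N$, and by $\exp(\sigma(L(C)))$ because such a path can be taken disjoint from $C$, whence $\sigma(L(C))$ annihilates it. A path $\delta$ in $N$ may be homotoped rel endpoints inside $N$; if its endpoints lie on the same component of $\partial N$ we push $\delta$ off $C$, and both maps fix $\delta$ (for $t_C$ because it is isotopic to the identity relative to that boundary component, for $\sigma(L(C))$ because $\delta$ misses $C$).

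The remaining case is a ``straight across'' arc $\delta$ from a point $b_1\in\partial_1 N$ to a point $b_2\in\partial_2 N$; put in minimal position it meets $C$ transversally in one point $z$, with intersection sign $\varepsilon=\varepsilon(z;C,\delta)$. Using the formula~(\ref{51LC}) for $\sigma(L(C))$ one gets
$$
\sigma(L(C))(\delta)=\varepsilon\,\delta_{b_1 z}(\log C_z)\delta_{z b_2}=\varepsilon\,\delta\,(\log\eta),
$$
where $\eta:=\delta_{z b_2}^{-1}C_z\delta_{z b_2}\in\pi_1(S^*,b_2)$ is the core of $N$ based at the terminal point of $\delta$. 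Since $\eta$ is freely homotopic to $\partial_2 N$, it has a representative disjoint from a parallel push-off of $C$; hence $\sigma(L(C))(\eta^{\pm1})=0$, so $\sigma(L(C))$ vanishes on $\log\eta$ and on all its powers. Leibniz's rule then gives $\sigma(L(C))^m(\delta)=\varepsilon^m\,\delta\,(\log\eta)^m$ for all $m\geq1$, so that
$$
\exp(\sigma(L(C)))(\delta)=\delta\sum_{m\geq0}\frac{(\varepsilon\log\eta)^m}{m!}=\delta\,\eta^{\varepsilon}.
$$
On the other hand, by the definition of the right handed Dehn twist $t_C$ drags $\delta$ once around $C$, so $t_C(\delta)=\delta\,\eta^{\varepsilon}$ as well. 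This settles the equality on every generator, and hence the theorem.

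The main obstacle is the sign and orientation bookkeeping in the last step: one must check that the power series $L(t)=\tfrac12(\log t)^2$ --- and not, say, $-\tfrac12(\log t)^2$ or a rescaling --- is exactly the one for which the intersection sign in~(\ref{51LC}) and the orientation of the inserted core loop combine to give $t_C(\delta)$ rather than $t_C^{-1}(\delta)$. A secondary point requiring care is making the normal form for arcs in $N$ precise, i.e.\ justifying the reduction to the three cases above, where Proposition~\ref{32vKT} and elementary surface topology enter.
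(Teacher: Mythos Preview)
Your proof is correct and follows essentially the same strategy as the paper: both reduce the identity to an explicit annulus computation via the van Kampen theorem (Proposition~\ref{32vKT}), observing that both automorphisms act trivially on paths in $\overline{S\setminus N}$ and agree on a transverse arc in the annulus. The only cosmetic difference is that the paper enlarges $E$ by a single point $q_i$ on each component of $\partial N$ (and first proves the annulus case with $E=\{p_0,p_1\}$ as a standalone statement), whereas you adjoin all of $\partial N$; this forces you to treat separately arcs in $N$ with both endpoints on the same boundary component, but your handling of that case is correct since $N$ deformation retracts onto either boundary circle.
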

\begin{proof}
We begin by computing $\widehat{\sf DN}(t_C)$ in the case $S$
is an annulus $\mathfrak{a} = S^1\times [0,1]$. We regard $S^1 =
[0,1]/(0\sim1)$, and define $p_0 := (0\bmod \sim, 0)$, $p_1 := (0\bmod
\sim, 1)$, $E := \{p_0, p_1\}$ and $\CC = \CC(\mathfrak{a}, E)$. 
Consider a path $\gamma_0\colon [0,1] \to \mathfrak{a}$ given by 
$t \in [0,1] \mapsto (0\bmod \sim, t)$, a based loop $x \in
\pi_1(\mathfrak{a}, p_1)$ given by $t \in [0,1] \mapsto (t\bmod \sim,
1)$, and a simple closed curve $C = \vert x^{\pm1}\vert$. We have 
$\sigma(\vert x^n\vert)(\gamma_0) = n\gamma_0x^n$ for any $n \geq 0$, 
and $\sigma(\vert x^n\vert)$ acts trivially on $K\CC(p_0, p_0)$ and 
$K\CC(p_1,p_1)$. Hence, for any formal power series $f(x) \in
\widehat{K\pi_1(\mathfrak{a}, p_1)}$ in $x-1$, the derivation 
$\sigma(f(x))$ acts trivially on $\widehat{K\CC}(p_0,p_0)$ and 
$\widehat{K\CC}(p_1,p_1)$, and $\sigma(f(x))(\gamma_0) = \gamma_0xf'(x)
\in \widehat{K\CC}(p_0,p_1)$. Since $tL'(t) = \log(t)$, 
$\sigma(L(C))(\gamma_0) = \sigma(L(x))(\gamma_0) = \gamma_0\log x$. 
Clearly $\exp(\sigma(L(C)))(x) = x = {\sf DN}(t_C)(x)$. Hence we have 
$$
\exp(\sigma(L(C)))(\gamma_0) = \gamma_0x = {\sf DN}(t_C)(\gamma_0).
$$
This proves 
\begin{equation}
\exp(\sigma(L(C))) = \widehat{\sf DN}(t_C) \in {\rm
Aut}\widehat{K\CC(\mathfrak{a},E)},
\label{42annulus}
\end{equation}
namely, the theorem in the case $S$ is an annulus. \par
Next we consider the general case. Choose a closed tubular neighborhood 
$\mathfrak{a}$ of the simple closed curve $C$ in the surface $S\setminus
(E\cup\partial S)$. The boundary $\partial\mathfrak{a}$ has two connected
components $\partial_0\mathfrak{a}$ and $\partial_1\mathfrak{a}$. 
Choose a point $q_i$ on each $\partial_i\mathfrak{a}$, $i = 0,1$. 
We define $S_1 := S\setminus {\rm int}\mathfrak{a}$, $S_2 :=
\mathfrak{a}$, $E_1 := E \cup \{q_0,q_1\}$ and $E_2 := \{q_0, q_1\}$. 
Then, in the setting of
Proposition \ref{32vKT}, we have $S_3 = S_1\cup S_2 = S$, $E_3 = E_1$ 
and $K\CC_3 = K\CC(S, E_1)$ is generated by $K\CC_1$ and $K\CC_2$. 
We may regard $L(C) \in \widehat{K\hat{\pi}}({S_3}^*)(2)$ for ${S_3}^* =
S^* \setminus \{q_0,q_1\}$. Both of $\exp(\sigma(L(C)))$ and $\widehat{\sf
DN}(t_C)$ act trivially on $K\CC_1$, and coincide with each other on
$K\CC_2$ by (\ref{42annulus}). Hence, by Proposition \ref{32vKT}, they
coincide with each other on $K\CC_3= K\CC(S, E_1)$. Since both of them are
continuous and $K\CC_3$ is dense in $\widehat{K\CC_3}$, they coincide
with each other on $\widehat{K\CC_3}$. Since
$\widehat{K\CSE}$ is a full subcategory of $\widehat{K\CC_3}$, 
they coincide with each other on $\widehat{K\CSE}$. 
This completes the proof of the theorem. 
\end{proof}

\subsection{Generalized Dehn twists and their localization}
\label{sec:5-3}

Theorem \ref{42logDT} motivates us to define
a generalization of Dehn twists for not necessarily simple
loops. Let $C$ be an unoriented free loop in $S^* \setminus \partial S$.

\begin{dfn}
The generalized Dehn twist along $C$ is defined to be
$$t_{C}:=\exp (\sigma(L(C))) \in A(S,E) \subset
{\rm Aut}\widehat{K\mathcal{C}(S,E)}.$$
\end{dfn}

The case $S=\Sigma_{g,1}$ and $E=\{ *\}$, where
$* \in \partial S$, is treated in \cite{Ku2}.

It is natural to ask whether $t_C$ is realizable as
a diffeomorphism, i.e., is in the image
of $\widehat{{\sf DN}} \colon \mathcal{M}(S,E) \to
{\rm Aut}\widehat{K\mathcal{C}(S,E)}$.
We show that if $t_C$ is realizable as a diffeomorphism, then it is localized
inside a regular neighborhood of $C$.
To restrict ourself to the case $\widehat{{\sf DN}}$ is injective,
hereafter we assume $S$ is of finite type with non-empty boundary and work
under the assumption of Theorem \ref{31DN}.

Let ${\rm End}(K\hat{\pi}(S))$ be the space of
filter-preserving endomorphisms of $K\hat{\pi}(S)$.
Also let ${\rm Aut}(K\hat{\pi}(S))$
be the group of filter-preserving $K$-linear automorphisms
of $K\hat{\pi}(S)$.

\begin{lem}
\begin{enumerate}
\item[{\rm (1)}]
Let $D\in {\rm Der}K\mathcal{C}(S,E)$.
For $a \in K\mathcal{C}(p,p)$, where $p\in E$, set $|D|(|a|):=|D(a)|$.
Then this defines a well-defined $K$-linear map
$|\ |\colon {\rm Der}K\mathcal{C}(S,E) \to
{\rm End}(K\hat{\pi}(S))$.
\item[{\rm (2)}]
Let $U\in {\rm Aut}(K\mathcal{C}(S,E))$. For $a\in K\mathcal{C}(p,p)$,
where $p\in E$, set $|U|(|a|):=|U(a)|$. Then this defines a well-defined group
homomorphism $|\ |\colon {\rm Aut}K\mathcal{C}(S,E)
\to {\rm Aut}(K\hat{\pi}(S))$.
\end{enumerate}
\end{lem}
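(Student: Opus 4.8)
The plan is to reduce both parts to two standard facts about the base\nobreakdash-point\nobreakdash-forgetting trace. For any object $p\in E$ the restriction $|\ |\colon K\CC(S,E)(p,p)=K\pi_1(S,p)\to K\hat{\pi}(S)$ is surjective (because $S$ is of finite type, hence connected, and $E\neq\emptyset$), and its kernel is exactly the $K$\nobreakdash-submodule $[K\pi_1(S,p),K\pi_1(S,p)]$ spanned by the commutators $uv-vu$, equivalently by the elements $g-hgh^{-1}$ with $g,h\in\pi_1(S,p)$ --- the usual description of $K\pi_1(S,p)/[K\pi_1(S,p),K\pi_1(S,p)]$ as the free module on conjugacy classes. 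Together with the cyclicity $|XY|=|YX|$ of $|\ |$ on composable elements $X\in K\CC(S,E)(p,q)$, $Y\in K\CC(S,E)(q,p)$, these are all the geometry one needs.

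For (1), first fix $p\in E$. A derivation $D$ of $K\CC(S,E)$ restricts to a derivation of the $K$\nobreakdash-algebra $K\pi_1(S,p)$, and Leibniz' rule gives $D(uv-vu)=[Du,v]+[u,Dv]$, which lies in $[K\pi_1(S,p),K\pi_1(S,p)]=\Ker|\ |$; hence $D$ preserves $\Ker|\ |$ and descends to a $K$\nobreakdash-linear endomorphism of $K\pi_1(S,p)/\Ker|\ |\cong K\hat{\pi}(S)$. One then checks independence of $p$: given $q\in E$, an isomorphism $\gamma\in K\CC(S,E)(p,q)$ and $a\in K\CC(S,E)(p,p)$, expand $D(\gamma^{-1}a\gamma)$ by Leibniz into three terms; the middle term $\gamma^{-1}(Da)\gamma$ has the same image under $|\ |$ as $Da$ by cyclicity, while the two outer terms, rewritten with $D(\gamma\gamma^{-1})=D(1_p)=0$ (so $\gamma D(\gamma^{-1})=-D(\gamma)\gamma^{-1}$) and cyclicity again, cancel; thus $|D(\gamma^{-1}a\gamma)|=|D(a)|$. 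Since every element of $K\hat{\pi}(S)$ equals $|a|$ for some $a\in K\CC(S,E)(p,p)$, this yields a well\nobreakdash-defined $|D|\in{\rm End}(K\hat{\pi}(S))$, $K$\nobreakdash-linear in $a$ and in $D$. Compatibility with the filtration is inherited from $\Der K\CC(S,E)=F_{-1}\Der K\CC(S,E)$ (Lemma \ref{13der}, as $K\CC(S,E)$ satisfies (C4)): from $D(1_p)=0$ and $D(I\pi_1(S,p)^n)\subset I\pi_1(S,p)^{n-1}$ we get $|D|(K\hat{\pi}(S)(n))\subset K\hat{\pi}(S)(n-1)$.

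For (2), the same two\nobreakdash-step scheme works with multiplicativity replacing Leibniz. An automorphism $U$ of the $K$\nobreakdash-SAC $K\CC(S,E)$ restricts, for each $p$, to a $K$\nobreakdash-algebra automorphism of $K\pi_1(S,p)$ fixing $1_p$; since $U(uv-vu)=U(u)U(v)-U(v)U(u)$ it preserves $\Ker|\ |$ and descends to a $K$\nobreakdash-linear endomorphism $|U|$ of $K\hat{\pi}(S)$. Independence of $p$ follows from $U(\gamma^{-1}a\gamma)=U(\gamma)^{-1}U(a)U(\gamma)$ --- valid because $U$ is a functor with $U(1_p)=1_p$, whence $U(\gamma^{-1})=U(\gamma)^{-1}$ --- together with cyclicity, giving $|U(\gamma^{-1}a\gamma)|=|U(a)|$. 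The assignment $U\mapsto|U|$ is multiplicative: $V$ preserves objects, so $V(a)\in K\CC(S,E)(p,p)$ and $|U|(|V|(|a|))=|U(V(a))|=|UV|(|a|)$, and $|{\rm id}|={\rm id}$; hence $|U|$ is invertible with inverse $|U^{-1}|$, i.e.\ lies in $\Aut(K\hat{\pi}(S))$. Filtration\nobreakdash-preservation of $|U|$ follows from that of $U$ on the $I$\nobreakdash-adic filtration, which holds for the automorphisms to which the lemma is applied (in particular those in the image of $\widehat{\sf DN}$ and the generalized Dehn twists $t_C$).

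I expect the one genuinely fiddly point to be the independence of the base object $p$ in both parts --- the bookkeeping showing that the error terms arising when one compares $D(\gamma^{-1}a\gamma)$ (resp.\ $U(\gamma^{-1}a\gamma)$) with $\gamma^{-1}D(a)\gamma$ (resp.\ $\gamma^{-1}U(a)\gamma$) die under $|\ |$; the remaining ingredients (the kernel description, Leibniz/multiplicativity, cyclicity of $|\ |$) are routine. A secondary thing to pin down is precisely what ``filter\nobreakdash-preserving'' is to mean for ${\rm End}(K\hat{\pi}(S))$ and $\Aut(K\hat{\pi}(S))$, since $\Der K\CC(S,E)=F_{-1}\Der K\CC(S,E)$ forces $|D|$ to drop the filtration index by one.
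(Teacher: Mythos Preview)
Your proof is correct and takes essentially the same approach as the paper: both reduce the well-definedness to the Leibniz expansion of $D(\gamma a\gamma^{-1})$ (resp.\ the multiplicativity $U(\gamma a\gamma^{-1})=U(\gamma)U(a)U(\gamma)^{-1}$) together with the cyclicity $|XY|=|YX|$, which makes the cross-terms cancel. Your presentation is slightly more explicit than the paper's---you separate the ``fixed $p$'' check (via $D$ preserving $[K\pi_1(S,p),K\pi_1(S,p)]=\Ker|\ |$) from the ``independence of $p$'' check, whereas the paper handles both with the single conjugation computation---and your flag about the filtration (a general $D\in F_{-1}\Der K\CC$ only gives $|D|(K\hat\pi(S)(n))\subset K\hat\pi(S)(n-1)$) is a valid observation the paper's proof passes over silently, though it is harmless in the application since $\sigma(L(C))\in F_0\Der$.
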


\begin{proof}
Let $p,q \in E$ and assume $[p]=[q]=\lambda\in \pi_0\mathcal{C}$.
Recall that $|\ |\colon K\mathcal{C}(p,p) \to K\hat{\pi}(S_{\lambda})$
is surjective for any $p\in E$. Take some $\gamma \in \Pi S(q,p)$
and let $a\in K\mathcal{C}(p,p)$.

(1) It is sufficient
to prove $|D(a)|=|D(\gamma a \gamma^{-1})|$. First of all,
since $0=D(1)=D(\gamma\gamma^{-1})=(D\gamma)\gamma^{-1}
+\gamma D(\gamma^{-1})$, we have $D(\gamma^{-1})
=-\gamma^{-1}(D\gamma)\gamma^{-1}$. We compute
$D(\gamma a \gamma^{-1})=(D\gamma)a \gamma^{-1}+
\gamma (Da)\gamma^{-1}+\gamma a D(\gamma^{-1})
=\gamma (Da)\gamma^{-1}+(D\gamma)a \gamma^{-1}
-\gamma a \gamma^{-1}(D\gamma)\gamma^{-1}$.
Notice that $|(D\gamma)a \gamma^{-1}|=
|\gamma a \gamma^{-1}(D\gamma)\gamma^{-1}|$.
Hence $|D(\gamma a \gamma^{-1})|=|\gamma (Da) \gamma^{-1}|
=|D(a)|$, as desired.

(2) This is clear from $|U(\gamma a \gamma^{-1})|=
|U(\gamma) Ua U(\gamma)^{-1}|=|U(a)|$.
\end{proof}

Any filter-preserving endomorphism (resp. automorphism)
of $K\hat{\pi}(S)$ naturally extends to an endomorphism
(resp. automorphism) of $\widehat{K\hat{\pi}}(S)$.
Consequently we have a $K$-linear map ${\rm End}(K\hat{\pi}(S))
\to {\rm End}(\widehat{K\hat{\pi}}(S))$ and a group homomorphsim
${\rm Aut}(K\hat{\pi}(S)) \to {\rm Aut}(\widehat{K\hat{\pi}}(S))$.
The diagrams
$$
\begin{CD}
{\rm Der}K\mathcal{C}(S,E) @>>> {\rm End}(K\hat{\pi}(S)) \\
@VVV @VVV \\
{\rm Der}\widehat{K\mathcal{C}(S,E)} @>>>
{\rm End}(\widehat{K\hat{\pi}}(S))
\end{CD}
{\rm \quad and\quad}
\begin{CD}
{\rm Aut}K\mathcal{C}(S,E) @>>> {\rm Aut}(K\hat{\pi}(S)) \\
@VVV @VVV \\
{\rm Aut}\widehat{K\mathcal{C}(S,E)} @>>>
{\rm Aut}(\widehat{K\hat{\pi}}(S))
\end{CD}
$$
commute.

The following theorem is a generalization of \cite{Ku2} \S 3.3.

\begin{thm}
\label{localize}
Suppose $S$ is of finite type with non-empty boundary, $E\subset \partial S$,
and any connected component of $\partial S$
has an element of $E$. Let $C$ be an unoriented immersed free loop in
$S \setminus \partial S$ and assume the generalized Dehn twist $t_C$
is in the image of $\widehat{{\sf DN}}$.
Then there is an orientation preserving diffeomorphism $\varphi$
of $S$ fixing $\partial S$ pointwise,
such that $\widehat{{\sf DN}}(\varphi)=t_{C}$ and
the support of $\varphi$ lies in a regular neighborhood of $C$.
\end{thm}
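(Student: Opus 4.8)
The plan is to start from any realization $\varphi\in\mathcal M(S,E)$ of $t_C$ — unique, since $\widehat{\sf DN}$ is injective under our hypotheses (Theorem~\ref{31DN}) — and cut it down, up to isotopy, to a diffeomorphism supported near $C$. If $C$ is null-homotopic then $L(C)=0$, so $t_C=1$ and one may take $\varphi=\mathrm{id}$; so assume $C$ is essential. First I would fix a regular neighborhood $N$ of $C$, a connected compact subsurface of $S\setminus(E\cup\partial S)$; enlarging $N$ to absorb any disk components of the complement, we may assume $\pi_1(N)\to\pi_1(S)$ is injective, which forces $N\neq D^2$ and makes every component of $\partial N$ essential in $S$. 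Write $S':=\overline{S\setminus N}$ and $\partial N=\coprod_{i=1}^{n}\partial_iN$.

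The second step is to normalize $\varphi$. Choosing disjoint representatives of $C$ and of each $\partial_iN$ gives $\sigma(L(C))|\partial_iN|=0$, hence $t_C|\partial_iN|=|\partial_iN|$ in $\widehat{K\hat{\pi}}(S)$; by Corollary~\ref{43inj} the completion map is injective modulo $K1$, so $\varphi(\partial_iN)$ is freely homotopic to $\partial_iN$ for every $i$. By the Baer--Epstein theorem \cite{EPS} and the change-of-coordinates principle I would then isotope $\varphi$ rel $\partial S$, hence rel $E$, so that $\varphi$ preserves $N$ (here one uses that $N$ is the unique complementary region of $\partial N$ whose boundary is all of $\partial N$) and is the identity near $\partial N$. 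Then $\varphi$ restricts to $\varphi_N\in\mathcal M(N,\partial N)$ and $\varphi'\in\mathcal M(S',\partial S')$, and $\varphi$ is the composition of their extensions by the identity.

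The core claim is that $\varphi'$ is a product of Dehn twists about curves parallel to the $\partial_iN$. To see it, regard $\varphi$ as an element of $\mathcal M(S,E\cup\partial N)$ and compare $\widehat{\sf DN}(\varphi)$ with $\exp\sigma(L(C))\in A(S,E\cup\partial N)$: both map to $t_C$ under the forgetful homomorphism $\phi\colon A(S,E\cup\partial N)\to A(S,E)$, so $V:=\widehat{\sf DN}(\varphi)\circ(\exp\sigma(L(C)))^{-1}$ lies in $\ker\phi$, and by Proposition~\ref{32forget} it acts by $v\mapsto\eta_{i_0,p_0}^{b_{i_0}}\,v\,(\eta_{i_1,p_1}^{b_{i_1}})^{-1}$ for some $b_i\in K$ (the $\eta$-factor on an endpoint in $E$ being absent). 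On a path $\gamma\subset S'$ between basepoints, $\exp\sigma(L(C))(\gamma)=\gamma$ by disjointness, so $\varphi'(\gamma)=\varphi(\gamma)=V(\gamma)$; via the injection $\widehat{K\CC(S',\,\cdot\,)}\hookrightarrow\widehat{K\CC(S,\,\cdot\,)}$ this identifies $\widehat{\sf DN}(\varphi')$ with the automorphism conjugating by powers $\eta_i^{b_i}$ of the boundary loops of $S'$. Since $\varphi'$ is a genuine diffeomorphism, each $b_i$ must be an integer: for a loop $\delta$ at a point of $\partial_iN$ not commuting with $\eta_i$, the element $\eta_i^{b_i}\delta\eta_i^{-b_i}=\varphi'(\delta)$ must lie in $\pi_1(S')$, and applying a group-like expansion (Proposition~\ref{32expn}, or the symplectic one when $[\eta_i]=0$) and inspecting the lowest degree in which the iterated bracket with $\log\eta_i$ is nonzero forces $b_i\in\mathbb Z$, using the integrality of $\Lambda^2H_{\mathbb Z}$ and the lower central series and Lemmas~\ref{32center1}--\ref{32center2}; components of $S'$ that are annuli, or that meet $\partial S$ only, are handled directly. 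Then $\widehat{\sf DN}(t_{c_i})$ is recognized by Theorem~\ref{42logDT}, and injectivity of $\widehat{\sf DN}$ for each component of $S'$ (Theorem~\ref{31DN}) gives $\varphi'=\prod_i t_{c_i}^{b_i}$, where $c_i\subset\operatorname{int}S'$ is parallel to $\partial_iN$ and $t_{c_i}$ is supported in a collar annulus $A_i$. (Proposition~\ref{32subs} together with (\ref{51subs}) gives, dually, the localized form of $\varphi_N$ itself, confirming that the $N$-part contributes no obstruction.)

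To conclude, set $N':=N\cup\bigcup_iA_i$, still a regular neighborhood of $C$. Extending an isotopy rel $\partial S'$ of $\varphi'$ to $\prod_i t_{c_i}^{b_i}$ by the identity over $N$, while keeping $\varphi_N$ fixed on $N$, yields an isotopy rel $\partial S$ from $\varphi$ to $\Phi:=\varphi_N\cup\prod_i t_{c_i}^{b_i}$, which is supported in $N'$; since the isotopy is rel $E$, $\widehat{\sf DN}(\Phi)=\widehat{\sf DN}(\varphi)=t_C$, so $\Phi$ is the required diffeomorphism. I expect the main obstacle to be the integrality step in the third paragraph — turning "$\varphi'$ conjugates by an a priori fractional power of a boundary loop" into an honest integer via group-like expansions; by comparison the surface-topological normalizations of the second paragraph are routine, though they must be carried out with care.
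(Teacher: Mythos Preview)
Your approach differs substantially from the paper's. The paper first strengthens your observation about $\partial_iN$ to the claim that $\varphi$ fixes up to isotopy \emph{every} proper arc and oriented loop in $S$ disjoint from $C$ (the arc case is immediate from $\sigma(L(C))\delta=0$; the loop case uses $|\sigma(u)|=\mathrm{ad}(u)$ and Corollary~\ref{43inj}, just as you argue for $\partial_iN$). It then builds, in each component $S_\lambda$ of $S\setminus\mathrm{Int}(N)$, an explicit system $\mathcal B_\lambda$ of such arcs and curves so that cutting $S_\lambda$ along $\mathcal B_\lambda$ leaves only disks, annuli, once- or twice-punctured disks and annuli, and pairs of pants. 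After isotoping $\varphi$ to be the identity on $\partial N\cup\mathcal B$ via \cite{FM}~Proposition~2.8, its restriction to each piece lies in a mapping class group already known to be generated by boundary twists, whence $\varphi|_{S\setminus\mathrm{Int}(N)}$ is a product of twists parallel to $\partial N$. No integrality argument in $\widehat{K\pi}$ is ever needed.

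Your route via Proposition~\ref{32forget} is more in keeping with the paper's algebraic machinery, and you rightly single out the integrality step as the crux; but as sketched it has a genuine gap. For a component of $S'$ meeting $\partial S$ the argument can be made clean by using an \emph{arc} $\gamma$ from $*_i$ to a point of $E$ rather than a loop: then $\varphi(\gamma)\gamma^{-1}=\eta_i^{b_i}$ is an honest element of $\pi_1(S,*_i)$, and reading off its class in $H_1(S;\mathbb Z)$ (resp.\ in $\Gamma_2/\Gamma_3$, via Proposition~\ref{32expn}(ii) when $[\eta_i]=0$) forces $b_i\in\mathbb Z$ by primitivity of $[\eta_i]$ (resp.\ of $\eta'_0$). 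But for a component $S'_\lambda$ with $\partial S'_\lambda\subset\partial N$ and $\chi(S'_\lambda)\le-2$ no such arc exists; your loop argument then needs $[\log\theta(\eta_i),[\delta]]$, or its higher-degree analogue, to be primitive in the integral free Lie algebra for some $\delta$ representable inside $S'_\lambda$, and you do not verify that such a $\delta$ is available --- ``handled directly'' does not cover these pieces. A minor separate point: the parenthetical that $N$ is the unique complementary region of $\partial N$ with full boundary $\partial N$ is false (an annulus component of $S'$ cobounded by two components of $\partial N$ is a counterexample); the correct reason $\varphi(N)=N$ after isotopy is that $\varphi$ preserves the orientation of each $\partial_iN$ and hence its two sides.
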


\begin{proof}
Take a diffeomorphism $\varphi$ such that
$\widehat{{\sf DN}}(\varphi)=t_C$.
We shall deform $\varphi$ by isotopies until it has the desired property.

We claim that if $\delta$ is a proper arc or an oriented loop in $S$ that is disjoint from $C$,
then $\varphi(\delta)$ is isotopic to $\delta$. The case $\delta$ is a proper
arc is clear from $\sigma(L(C))\delta=0$.
To prove the case $\delta$ is an oriented loop, consider the composite
$\mathcal{M}(S,E)\stackrel{\widehat{{\sf DN}}}{\to}
{\rm Aut}\widehat{K\mathcal{C}(S,E)} \stackrel{|\ |}{\to} {\rm Aut}(\widehat{K\hat{\pi}}(S))$.
As we have noted in the proof of Theorem \ref{33bracket},
$|\sigma(u)(v)|=[u,|v|]$ for $u\in K\hat{\pi}(S)$ and
$v\in K\mathcal{C}(p,p)$, where $p\in E$. This implies that
$|\sigma(\alpha)|={\rm ad}(\alpha)$ for $\alpha \in K\hat{\pi}(S)$.
Therefore $|\widehat{{\sf DN}}(\varphi)|=\exp({\rm ad}(L(C)))
\in {\rm Aut}(\widehat{K\hat{\pi}}(S))$. Since $\delta$ is disjoint
from $C$, ${\rm ad}(L(C))\delta=[L(C),\delta]=0$. Thus
$|\widehat{{\sf DN}}(\varphi)|\delta=\delta\in \widehat{K\hat{\pi}}(S)$.
By Corollary \ref{43inj}, this implies $|{\sf DN}(\varphi)|\delta-\delta
\in K1$. Since the action of $\mathcal{M}(S,E)$ on $K\hat{\pi}(S)$
preserves the augmentation $K\hat{\pi}(S) \to K, \hat{\pi}\ni x \mapsto 1$,
we conclude $|{\sf DN}(\varphi)|\delta=\delta$. The claim is proved.

Let $N=N(C)$ be a closed regular neighborhood of $C$.
The Euler characteristic of $N$ must be non-positive.
If $C$ is simple, then the assertion is clear from Theorem \ref{42logDT}.
Thus we may assume $N$ is neither diffeomorphic to a disk nor an annulus.
Let $S \setminus {\rm Int}(N) \cong \coprod_{\lambda}S_{\lambda}$
be the decomposition into connected components. Note that for any $\lambda$
we have $S_{\lambda} \cap \partial N \neq \emptyset$.
We shall take a system $\mathcal{B}_{\lambda}$ of simple closed curves
and proper arcs in $S_{\lambda}$ by the following way.
Let $\chi(S_{\lambda})$ be the Euler characteristic of $S_{\lambda}$.

{\it Case 1.} $\chi(S_{\lambda})\ge 0$. Then $S_{\lambda}$ is
one of the following: 
(a) a closed disk, (b) an annulus of which both the boundary components
are in $\partial N$, (c) an annulus of which one of the boundary component
is in $\partial N$, and the other component is in $\partial S$,
(d) a once punctured disk. In these cases we let $\mathcal{B}_{\lambda}$ to be empty.

{\it Case 2.} $\chi(S_{\lambda})=-1$. Then $S_{\lambda}$ is one
of the following: (e) a torus with one boundary component,
(f) a pair of pants of which the three boundary components
are in $\partial N$, (g) a pair of pants of which two boundary
components are in $\partial N$, and the other component is in $\partial S$,
(h) a pair of pants of which one boundary component is in $\partial N$,
and the other two components are in $\partial S$, (i) a once
punctured annulus of which both the boundary components are in $\partial N$,
(j) a once punctured annulus of which one of the boundary component
is in $\partial N$, and the other component is in $\partial S$,
(k) a twice punctured annulus.
In cases (e)(g)(h)(j), let $\mathcal{B}_{\lambda}$ be as in Figure 2.
In cases (f)(i)(k), let $\mathcal{B}_{\lambda}$ be empty.

\begin{figure}
\begin{center}
\unitlength 0.1in
\begin{picture}( 35.0000, 29.3000)(  4.0000,-32.0000)
%
{\color[named]{Black}{%
\special{pn 13}%
\special{ar 1800 1000 100 400  0.0000000 6.2831853}%
}}%
%
{\color[named]{Black}{%
\special{pn 13}%
\special{pa 1800 600}%
\special{pa 800 600}%
\special{fp}%
}}%
%
{\color[named]{Black}{%
\special{pn 13}%
\special{pa 1800 1400}%
\special{pa 800 1400}%
\special{fp}%
}}%
%
{\color[named]{Black}{%
\special{pn 13}%
\special{ar 1200 1000 100 100  0.0000000 6.2831853}%
}}%
%
{\color[named]{Black}{%
\special{pn 8}%
\special{ar 1200 1000 270 270  0.0000000 6.2831853}%
}}%
%
{\color[named]{Black}{%
\special{pn 13}%
\special{ar 800 1000 200 400  1.5707963 4.7123890}%
}}%
%
{\color[named]{Black}{%
\special{pn 8}%
\special{ar 1200 750 100 150  1.5707963 4.7123890}%
}}%
%
{\color[named]{Black}{%
\special{pn 8}%
\special{ar 1200 750 100 150  4.7123890 5.1923890}%
\special{ar 1200 750 100 150  5.4803890 5.9603890}%
\special{ar 1200 750 100 150  6.2483890 6.7283890}%
\special{ar 1200 750 100 150  7.0163890 7.4963890}%
\special{ar 1200 750 100 150  7.7843890 7.8539816}%
}}%
\put(4.0000,-4.0000){\makebox(0,0)[lb]{(e)}}%
%
{\color[named]{Black}{%
\special{pn 13}%
\special{ar 3200 1000 600 600  0.0000000 6.2831853}%
}}%
%
{\color[named]{Black}{%
\special{pn 13}%
\special{ar 2900 1000 100 100  0.0000000 6.2831853}%
}}%
%
{\color[named]{Black}{%
\special{pn 13}%
\special{ar 3500 1000 100 100  0.0000000 6.2831853}%
}}%
%
{\color[named]{Black}{%
\special{pn 4}%
\special{sh 1}%
\special{ar 3200 1600 26 26 0  6.28318530717959E+0000}%
}}%
%
{\color[named]{Black}{%
\special{pn 4}%
\special{sh 1}%
\special{ar 3200 400 26 26 0  6.28318530717959E+0000}%
}}%
%
{\color[named]{Black}{%
\special{pn 8}%
\special{pa 3200 400}%
\special{pa 3200 1600}%
\special{fp}%
}}%
\put(24.0000,-4.0000){\makebox(0,0)[lb]{(g)}}%
\put(24.7000,-14.7000){\makebox(0,0)[lb]{$\partial S$}}%
\put(28.3000,-8.4000){\makebox(0,0)[lb]{$\partial N$}}%
\put(34.3000,-8.4000){\makebox(0,0)[lb]{$\partial N$}}%
%
{\color[named]{Black}{%
\special{pn 13}%
\special{ar 1400 2600 600 600  0.0000000 6.2831853}%
}}%
%
{\color[named]{Black}{%
\special{pn 13}%
\special{ar 1100 2600 100 100  0.0000000 6.2831853}%
}}%
%
{\color[named]{Black}{%
\special{pn 13}%
\special{ar 1700 2600 100 100  0.0000000 6.2831853}%
}}%
%
{\color[named]{Black}{%
\special{pn 4}%
\special{sh 1}%
\special{ar 1200 2600 26 26 0  6.28318530717959E+0000}%
}}%
%
{\color[named]{Black}{%
\special{pn 4}%
\special{sh 1}%
\special{ar 1600 2600 26 26 0  6.28318530717959E+0000}%
}}%
%
{\color[named]{Black}{%
\special{pn 8}%
\special{pa 1600 2600}%
\special{pa 1200 2600}%
\special{fp}%
}}%
\put(10.3000,-24.4000){\makebox(0,0)[lb]{$\partial S$}}%
\put(15.1000,-24.4000){\makebox(0,0)[lb]{$\partial S$}}%
\put(6.5000,-30.7000){\makebox(0,0)[lb]{$\partial N$}}%
\put(4.0000,-20.0000){\makebox(0,0)[lb]{(h)}}%
\put(24.0000,-20.0000){\makebox(0,0)[lb]{(j)}}%
%
{\color[named]{Black}{%
\special{pn 13}%
\special{ar 3800 2600 100 400  0.0000000 6.2831853}%
}}%
%
{\color[named]{Black}{%
\special{pn 13}%
\special{pa 3800 2200}%
\special{pa 2800 2200}%
\special{fp}%
}}%
%
{\color[named]{Black}{%
\special{pn 13}%
\special{pa 3800 3000}%
\special{pa 2800 3000}%
\special{fp}%
}}%
%
{\color[named]{Black}{%
\special{pn 13}%
\special{ar 2800 2600 100 400  1.5707963 4.7123890}%
}}%
%
{\color[named]{Black}{%
\special{pn 13}%
\special{ar 2800 2600 100 400  4.7123890 4.9523890}%
\special{ar 2800 2600 100 400  5.0963890 5.3363890}%
\special{ar 2800 2600 100 400  5.4803890 5.7203890}%
\special{ar 2800 2600 100 400  5.8643890 6.1043890}%
\special{ar 2800 2600 100 400  6.2483890 6.4883890}%
\special{ar 2800 2600 100 400  6.6323890 6.8723890}%
\special{ar 2800 2600 100 400  7.0163890 7.2563890}%
\special{ar 2800 2600 100 400  7.4003890 7.6403890}%
\special{ar 2800 2600 100 400  7.7843890 7.8539816}%
}}%
%
{\color[named]{Black}{%
\special{pn 4}%
\special{sh 1}%
\special{ar 3300 2600 26 26 0  6.28318530717959E+0000}%
}}%
%
{\color[named]{Black}{%
\special{pn 4}%
\special{sh 1}%
\special{ar 3710 2800 26 26 0  6.28318530717959E+0000}%
}}%
%
{\color[named]{Black}{%
\special{pn 4}%
\special{sh 1}%
\special{ar 3710 2400 26 26 0  6.28318530717959E+0000}%
}}%
%
{\color[named]{Black}{%
\special{pn 8}%
\special{pa 3710 2400}%
\special{pa 3300 2400}%
\special{fp}%
}}%
%
{\color[named]{Black}{%
\special{pn 8}%
\special{pa 3710 2800}%
\special{pa 3300 2800}%
\special{fp}%
}}%
%
{\color[named]{Black}{%
\special{pn 8}%
\special{ar 3300 2600 200 200  1.5707963 4.7123890}%
}}%
\end{picture}%

\caption{$\mathcal{B}_{\lambda}$ for (e), (g), (h), and (j)}
\end{center}
\end{figure}
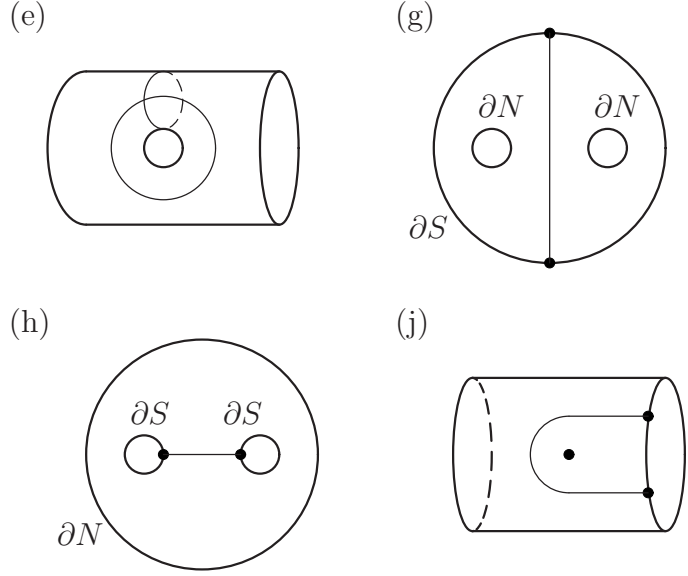

{\it Case 3.} $\chi(S_{\lambda})\le -2$. Let $r$ and $r^{\prime}$
be the cardinality of $\pi_0(\partial N \cap S_{\lambda})$ and
$\pi_0(\partial S \cap S_{\lambda})$, respectively,
and let $g$ be the genus of $S_{\lambda}$ and $n$ the number
of punctures of $S_{\lambda}$. We have $r \ge 1$ and
$2g+r+r^{\prime}+n \ge 4$. If $r^{\prime}>0$, let
$\mathcal{B}_{\lambda}$ be as in Figure 3. If $r^{\prime}=0$
and $g>0$, let $\mathcal{B}_{\lambda}$ be as in Figure 4.
If $r^{\prime}=g=0$, then $r+n \ge 4$. We let $\mathcal{B}_{\lambda}$
be as in Figure 5.

Finally we set $\mathcal{B}=\bigcup_{\lambda}\mathcal{B}_{\lambda}$.
Then $\mathcal{B}$ has the following properties.

\begin{enumerate}
\item[(1)] Any member of $\mathcal{B}$ is disjoint from $\partial N$.
\item[(2)] Any simple closed curve in $\mathcal{B}$ is not parallel to
a component of $\partial N$ and the ends of any arcs in $\mathcal{B}$ are in $\partial S$,
\item[(3)] Members of $\mathcal{B}$ are pairwise non-isotopic and
pairwise in minimal position in $S$.
\item[(4)] The surface obtained from $S \setminus {\rm Int}(N)$
by cutting along $\mathcal{B}$ is a disjoint union of surfaces of the types
(a), (b), (c), (d), (f), (i), and (k).
\end{enumerate}

It is clear that members of $\mathcal{B}_{\lambda}$ are pairwise
non-isotopic and pairwise in minimal position in $S_{\lambda}$.
The property (3) for $\mathcal{B}$ also follows since
$N$ is not a disk or an annulus, as we remarked before.

\begin{figure}
\begin{center}
\input{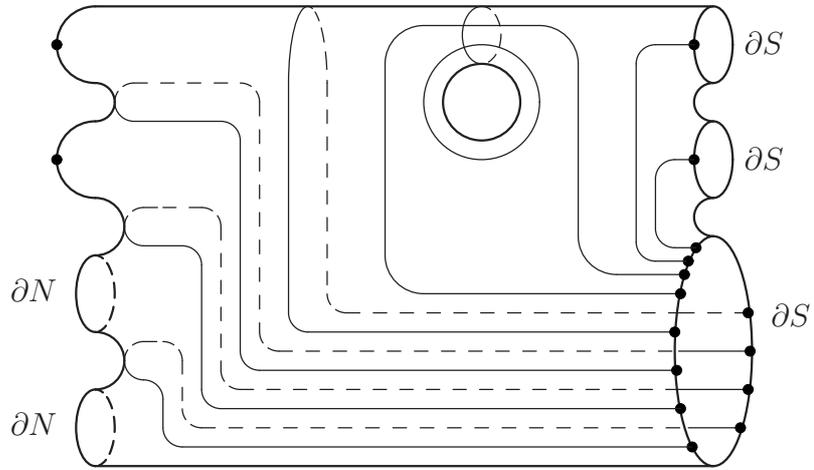}

\caption{$\mathcal{B}_{\lambda}$ for $r^{\prime}>0$}
\end{center}
\end{figure}

\begin{figure}
\begin{center}
\unitlength 0.1in
\begin{picture}( 32.0000, 16.0000)(  2.0000,-22.0000)
%
{\color[named]{Black}{%
\special{pn 13}%
\special{ar 3200 2000 100 200  0.0000000 6.2831853}%
}}%
%
{\color[named]{Black}{%
\special{pn 13}%
\special{ar 3200 1400 100 200  0.0000000 6.2831853}%
}}%
%
{\color[named]{Black}{%
\special{pn 13}%
\special{ar 3200 800 200 200  4.7123890 6.2831853}%
\special{ar 3200 800 200 200  0.0000000 1.5707963}%
}}%
%
{\color[named]{Black}{%
\special{pn 13}%
\special{pa 3200 600}%
\special{pa 1000 600}%
\special{fp}%
}}%
%
{\color[named]{Black}{%
\special{pn 13}%
\special{ar 1000 1400 800 800  1.5707963 4.7123890}%
}}%
%
{\color[named]{Black}{%
\special{pn 13}%
\special{pa 1000 2200}%
\special{pa 3200 2200}%
\special{fp}%
}}%
%
{\color[named]{Black}{%
\special{pn 13}%
\special{ar 3200 1700 100 100  1.5707963 4.7123890}%
}}%
%
{\color[named]{Black}{%
\special{pn 13}%
\special{ar 3200 1100 100 100  1.5707963 4.7123890}%
}}%
%
{\color[named]{Black}{%
\special{pn 4}%
\special{sh 1}%
\special{ar 3400 800 26 26 0  6.28318530717959E+0000}%
}}%
%
{\color[named]{Black}{%
\special{pn 13}%
\special{ar 1200 1000 200 200  0.0000000 6.2831853}%
}}%
%
{\color[named]{Black}{%
\special{pn 13}%
\special{ar 1200 1800 200 200  0.0000000 6.2831853}%
}}%
%
{\color[named]{Black}{%
\special{pn 8}%
\special{ar 1200 700 50 100  1.5707963 4.7123890}%
}}%
%
{\color[named]{Black}{%
\special{pn 8}%
\special{ar 1200 700 50 100  4.7123890 5.5123890}%
\special{ar 1200 700 50 100  5.9923890 6.7923890}%
\special{ar 1200 700 50 100  7.2723890 7.8539816}%
}}%
%
{\color[named]{Black}{%
\special{pn 8}%
\special{ar 1200 1000 300 300  0.0000000 6.2831853}%
}}%
%
{\color[named]{Black}{%
\special{pn 8}%
\special{ar 1200 1400 100 200  1.5707963 4.7123890}%
}}%
%
{\color[named]{Black}{%
\special{pn 8}%
\special{ar 1200 1400 100 200  4.7123890 5.1123890}%
\special{ar 1200 1400 100 200  5.3523890 5.7523890}%
\special{ar 1200 1400 100 200  5.9923890 6.3923890}%
\special{ar 1200 1400 100 200  6.6323890 7.0323890}%
\special{ar 1200 1400 100 200  7.2723890 7.6723890}%
}}%
%
{\color[named]{Black}{%
\special{pn 8}%
\special{ar 1200 1800 300 300  0.0000000 6.2831853}%
}}%
%
{\color[named]{Black}{%
\special{pn 8}%
\special{pa 3100 1100}%
\special{pa 3068 1104}%
\special{pa 3036 1110}%
\special{pa 3004 1114}%
\special{pa 2844 1134}%
\special{pa 2810 1138}%
\special{pa 2650 1152}%
\special{pa 2554 1158}%
\special{pa 2490 1160}%
\special{pa 2460 1160}%
\special{pa 2428 1162}%
\special{pa 2396 1160}%
\special{pa 2364 1160}%
\special{pa 2332 1160}%
\special{pa 2268 1156}%
\special{pa 2238 1154}%
\special{pa 2206 1150}%
\special{pa 2144 1142}%
\special{pa 2112 1138}%
\special{pa 2080 1132}%
\special{pa 1988 1114}%
\special{pa 1956 1108}%
\special{pa 1924 1100}%
\special{pa 1832 1076}%
\special{pa 1802 1068}%
\special{pa 1708 1042}%
\special{pa 1678 1032}%
\special{pa 1646 1022}%
\special{pa 1616 1012}%
\special{pa 1586 1002}%
\special{pa 1554 992}%
\special{pa 1524 982}%
\special{pa 1492 972}%
\special{pa 1462 962}%
\special{pa 1432 950}%
\special{pa 1400 940}%
\special{sp}%
}}%
%
{\color[named]{Black}{%
\special{pn 8}%
\special{pa 1400 940}%
\special{pa 1432 938}%
\special{pa 1466 934}%
\special{pa 1498 930}%
\special{pa 1530 926}%
\special{pa 1594 920}%
\special{pa 1626 916}%
\special{pa 1658 914}%
\special{pa 1690 912}%
\special{pa 1754 906}%
\special{pa 1788 904}%
\special{pa 1916 896}%
\special{pa 1980 894}%
\special{pa 2012 894}%
\special{pa 2044 892}%
\special{pa 2076 892}%
\special{pa 2106 894}%
\special{pa 2138 894}%
\special{pa 2170 894}%
\special{pa 2202 896}%
\special{pa 2234 898}%
\special{pa 2266 900}%
\special{pa 2298 904}%
\special{pa 2328 906}%
\special{pa 2360 910}%
\special{pa 2392 914}%
\special{pa 2422 920}%
\special{pa 2454 924}%
\special{pa 2548 942}%
\special{pa 2640 964}%
\special{pa 2672 972}%
\special{pa 2702 980}%
\special{pa 2734 988}%
\special{pa 2764 996}%
\special{pa 2858 1024}%
\special{pa 2888 1034}%
\special{pa 2920 1042}%
\special{pa 2950 1052}%
\special{pa 2980 1062}%
\special{pa 3012 1072}%
\special{pa 3074 1092}%
\special{pa 3100 1100}%
\special{sp 0.070}%
}}%
%
{\color[named]{Black}{%
\special{pn 8}%
\special{pa 3100 1700}%
\special{pa 3066 1702}%
\special{pa 3032 1706}%
\special{pa 3000 1708}%
\special{pa 2966 1710}%
\special{pa 2898 1714}%
\special{pa 2864 1716}%
\special{pa 2830 1718}%
\special{pa 2798 1720}%
\special{pa 2764 1722}%
\special{pa 2664 1724}%
\special{pa 2566 1724}%
\special{pa 2534 1724}%
\special{pa 2500 1722}%
\special{pa 2468 1722}%
\special{pa 2436 1720}%
\special{pa 2372 1714}%
\special{pa 2342 1710}%
\special{pa 2310 1706}%
\special{pa 2248 1696}%
\special{pa 2218 1690}%
\special{pa 2186 1684}%
\special{pa 2156 1678}%
\special{pa 2126 1670}%
\special{pa 2096 1660}%
\special{pa 2068 1652}%
\special{pa 2010 1632}%
\special{pa 1954 1608}%
\special{pa 1898 1580}%
\special{pa 1870 1566}%
\special{pa 1816 1534}%
\special{pa 1764 1500}%
\special{pa 1738 1482}%
\special{pa 1712 1462}%
\special{pa 1686 1444}%
\special{pa 1662 1424}%
\special{pa 1636 1404}%
\special{pa 1610 1384}%
\special{pa 1560 1342}%
\special{pa 1536 1320}%
\special{pa 1512 1298}%
\special{pa 1488 1276}%
\special{pa 1462 1254}%
\special{pa 1438 1232}%
\special{pa 1414 1210}%
\special{pa 1390 1186}%
\special{pa 1366 1164}%
\special{pa 1350 1150}%
\special{sp}%
}}%
%
{\color[named]{Black}{%
\special{pn 8}%
\special{pa 1350 1150}%
\special{pa 1412 1166}%
\special{pa 1444 1176}%
\special{pa 1536 1200}%
\special{pa 1568 1208}%
\special{pa 1598 1216}%
\special{pa 1628 1224}%
\special{pa 1660 1234}%
\special{pa 1690 1242}%
\special{pa 1722 1250}%
\special{pa 1752 1258}%
\special{pa 1784 1268}%
\special{pa 1814 1276}%
\special{pa 1844 1284}%
\special{pa 1876 1292}%
\special{pa 1968 1320}%
\special{pa 1998 1328}%
\special{pa 2060 1346}%
\special{pa 2090 1356}%
\special{pa 2152 1374}%
\special{pa 2182 1384}%
\special{pa 2214 1392}%
\special{pa 2244 1402}%
\special{pa 2274 1412}%
\special{pa 2304 1422}%
\special{pa 2336 1432}%
\special{pa 2396 1452}%
\special{pa 2426 1462}%
\special{pa 2486 1482}%
\special{pa 2518 1492}%
\special{pa 2548 1502}%
\special{pa 2578 1512}%
\special{pa 2608 1524}%
\special{pa 2638 1534}%
\special{pa 2698 1556}%
\special{pa 2728 1566}%
\special{pa 2758 1576}%
\special{pa 2790 1588}%
\special{pa 2820 1598}%
\special{pa 2850 1608}%
\special{pa 3090 1696}%
\special{pa 3100 1700}%
\special{sp 0.070}%
}}%
\put(33.8000,-14.6000){\makebox(0,0)[lb]{$\partial N$}}%
\put(33.8000,-20.6000){\makebox(0,0)[lb]{$\partial N$}}%
\end{picture}%

\caption{$\mathcal{B}_{\lambda}$ for $r^{\prime}=0$, $g>0$}
\end{center}
\end{figure}

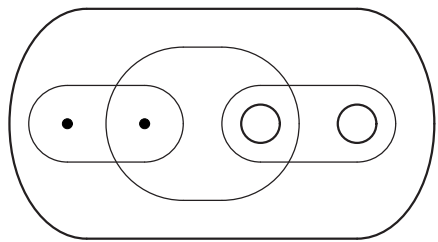
\begin{figure}
\begin{center}
\unitlength 0.1in
\begin{picture}( 22.0000, 12.0000)(  4.0000,-14.0000)
%
{\color[named]{Black}{%
\special{pn 13}%
\special{ar 800 800 400 600  1.5707963 4.7123890}%
}}%
%
{\color[named]{Black}{%
\special{pn 13}%
\special{pa 800 200}%
\special{pa 2200 200}%
\special{fp}%
}}%
%
{\color[named]{Black}{%
\special{pn 13}%
\special{ar 2200 800 400 600  4.7123890 6.2831853}%
\special{ar 2200 800 400 600  0.0000000 1.5707963}%
}}%
%
{\color[named]{Black}{%
\special{pn 13}%
\special{pa 2200 1400}%
\special{pa 800 1400}%
\special{fp}%
}}%
%
{\color[named]{Black}{%
\special{pn 4}%
\special{sh 1}%
\special{ar 700 800 26 26 0  6.28318530717959E+0000}%
}}%
%
{\color[named]{Black}{%
\special{pn 4}%
\special{sh 1}%
\special{ar 1100 800 26 26 0  6.28318530717959E+0000}%
}}%
%
{\color[named]{Black}{%
\special{pn 13}%
\special{ar 1700 800 100 100  0.0000000 6.2831853}%
}}%
%
{\color[named]{Black}{%
\special{pn 13}%
\special{ar 2200 800 100 100  0.0000000 6.2831853}%
}}%
%
{\color[named]{Black}{%
\special{pn 8}%
\special{ar 700 800 200 200  1.5707963 4.7123890}%
}}%
%
{\color[named]{Black}{%
\special{pn 8}%
\special{ar 2200 800 200 200  4.7123890 6.2831853}%
\special{ar 2200 800 200 200  0.0000000 1.5707963}%
}}%
%
{\color[named]{Black}{%
\special{pn 8}%
\special{pa 1100 600}%
\special{pa 700 600}%
\special{fp}%
}}%
%
{\color[named]{Black}{%
\special{pn 8}%
\special{pa 1100 1000}%
\special{pa 700 1000}%
\special{fp}%
}}%
%
{\color[named]{Black}{%
\special{pn 8}%
\special{ar 1100 800 200 200  4.7123890 6.2831853}%
\special{ar 1100 800 200 200  0.0000000 1.5707963}%
}}%
%
{\color[named]{Black}{%
\special{pn 8}%
\special{ar 1700 800 200 200  1.5707963 4.7123890}%
}}%
%
{\color[named]{Black}{%
\special{pn 8}%
\special{pa 1700 600}%
\special{pa 2200 600}%
\special{fp}%
}}%
%
{\color[named]{Black}{%
\special{pn 8}%
\special{pa 1700 1000}%
\special{pa 2200 1000}%
\special{fp}%
}}%
%
{\color[named]{Black}{%
\special{pn 8}%
\special{ar 1500 800 400 400  4.7123890 6.2831853}%
\special{ar 1500 800 400 400  0.0000000 1.5707963}%
}}%
%
{\color[named]{Black}{%
\special{pn 8}%
\special{ar 1300 800 400 400  1.5707963 4.7123890}%
}}%
%
{\color[named]{Black}{%
\special{pn 8}%
\special{pa 1300 400}%
\special{pa 1500 400}%
\special{fp}%
}}%
%
{\color[named]{Black}{%
\special{pn 8}%
\special{pa 1300 1200}%
\special{pa 1500 1200}%
\special{fp}%
}}%
\end{picture}%

\caption{$\mathcal{B}_{\lambda}$ for $r^{\prime}=g=0$}
\end{center}
\end{figure}

From the claim, each component of $\partial N$ and $\mathcal{B}$
is preserved by $\varphi$ up to isotopy. By \cite{FM} Proposition 2.8,
we may assume that $\varphi$ is the identity on $\partial N$ and $\mathcal{B}$.
Consider the restriction of $\varphi$ to each component
of the result of cutting $S$ along $\partial N$ and $\mathcal{B}$.
Each restriction is a self homeomorphism, and
by the property (4) for $\mathcal{B}$, is isotpic to the identity or to
a product of Dehn twists along the boundary components.
This implies that $\varphi|_{S \setminus {\rm Int}(N)}$
is isotopic to a product of Dehn twists along the boundary components of $N$.
This completes the proof.
\end{proof}

\subsection{The generalized Dehn twist along a figure eight}
\label{sec:5-4}

In this subsection we give a generalization of \cite{Ku2} Theorem 5.2.1.
We suppose $S$ is of finite type  with non-empty boundary,
$E\subset\partial S$, and any connected component of
$\partial S$ has an element of $E$.

Let $C$ be an unoriented immersed free loop in $S^* \setminus \partial S$.
We say $C$ is a {\it figure eight} if the self-intersections of $C$ consist of
a single double point and the inclusion homomorphism $\pi_1(C) \to \pi_1(S)$
is injective.

\begin{thm}\label{54eight}
Let $C$ be a figure eight on the surface $S$. 
Then $\exp(\sigma(zL(C))) \in \ASE$ is not in the image of
$\widehat{{\sf DN}}$ for any $z \in K\setminus\{0\}$. In particular
the generalized Dehn twist $t_C$ is not realizable as a diffeomorphism.
\end{thm}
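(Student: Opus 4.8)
The plan is to argue by contradiction, reducing via the localization theorem to a pair of pants and then reading off a numerical contradiction from a degree-four computation in its completed Goldman Lie algebra.

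Suppose $\exp(\sigma(zL(C)))=\widehat{\sf DN}(\varphi)$ for some $\varphi\in\mathcal{M}(S,E)$ and some $z\in K\setminus\{0\}$. Since a figure eight is the image of a circle immersed with a single transverse double point, its underlying graph is a wedge of two circles, and inspecting the cyclic order of the four half-edges at the double point (the two half-edges of each lobe must be adjacent) shows that a regular neighborhood $P=N(C)$ is \emph{always} a pair of pants. Write $\pi_1(P,*)=\langle x,y\rangle$ with $x,y$ represented by loops parallel to two of its three boundary curves; after relabeling, $|x|=[\partial_1P]$, $|y|=[\partial_2P]$, $|xy|=[\partial_0P]$, and as $C$ is non-simple its class must be the remaining one, $C=|xy^{-1}|$. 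The proof of Theorem~\ref{localize} uses only the vanishing of ${\rm ad}(L(C))$ on loops and arcs disjoint from $C$, which holds verbatim with $zL(C)$ in place of $L(C)$; so we may take $\varphi$ supported in $P$, whence $\varphi|_{P}\in\mathcal{M}(P,\partial P)$ is a product of the three boundary Dehn twists $t_{\partial_iP}$. Applying Theorem~\ref{42logDT} and Proposition~\ref{32subs} with (\ref{51subs}) to $\exp(\sigma(zL(C)))^{-1}\circ\widehat{\sf DN}(\varphi)$ (which lies in the kernel of the forgetful map $A(S,E\cup\partial P)\to A(S,E)$), and using the commutation relations $[L(C),L(\partial_iP)]=0=[L(\partial_iP),L(\partial_jP)]$ for these disjoint curves together with Proposition~\ref{13exp}(1),(3), one obtains
$$\sigma(zL(C))=\sigma\Bigl(\textstyle\sum_{i=0}^{2}a_iL(\partial_iP)\Bigr)\in\Der\widehat{K\CC(P,\partial P)}\qquad\text{for some }a_i\in K.$$

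I would then pass to the completed Goldman Lie algebra. The map $\sigma\colon\widehat{K\hat{\pi}}(P)\to\Der\widehat{K\CC(P,\partial P)}$ is injective --- its kernel lies in the centre of $\widehat{K\hat{\pi}}(P)$ (apply $|\ |$ and use $|\sigma(u)|={\rm ad}(u)$), and a nonzero central element, necessarily a power series in the classes of the boundary loops, acts nontrivially on a suitable proper arc joining two boundary components (cf.\ \cite{KK1},\cite{Ku2}) --- so the identity above upgrades to $zL(C)=\sum_i a_iL(\partial_iP)$ in $\widehat{K\hat{\pi}}(P)$. Now fix the exponential group-like expansion $\theta$ of $\langle x,y\rangle$, $\theta(x)=\exp X$, $\theta(y)=\exp Y$, for which $\lambda_\theta\colon\widehat{K\hat{\pi}}(P)\overset\cong\to N(\widehat{T}_1)$ (Corollary~\ref{43Ntheta}). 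Then $\lambda_\theta(L(\partial_1P))=X^{\otimes 2}$ and $\lambda_\theta(L(\partial_2P))=Y^{\otimes 2}$ are purely of degree~$2$, whereas $\lambda_\theta(L(\partial_0P))=\tfrac12 N(\beta^{\otimes2})$ and $\lambda_\theta(L(C))=\tfrac12 N(\gamma^{\otimes2})$, where $\beta=\log(\exp X\exp Y)$ and $\gamma=\log(\exp X\exp(-Y))$ are the two relevant Baker--Campbell--Hausdorff series. Comparing degree-$2$ parts forces $a_0=-z$ and $a_1=a_2=2z$; the degree-$3$ parts vanish on both sides, giving nothing. The decisive step is degree~$4$: using $\beta=(X+Y)+\tfrac12[X,Y]+\tfrac1{12}\bigl([X,[X,Y]]-[Y,[X,Y]]\bigr)+\cdots$ and the analogous series for $\gamma$, one verifies that the $(X,Y)$-bidegree-$(2,2)$ parts of $\lambda_\theta(L(C))$ and of $\lambda_\theta(L(\partial_0P))$ coincide and equal a \emph{nonzero} rational combination $\xi$ of the two cyclic tensors of length~$4$ and bidegree~$(2,2)$. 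As $L(\partial_1P)$ and $L(\partial_2P)$ contribute nothing in degree~$4$, the bidegree-$(2,2)$ part of $zL(C)=\sum_ia_iL(\partial_iP)$ reads $z\xi=a_0\xi=-z\xi$, i.e.\ $2z\xi=0$; since $\mathbb{Q}\subset K$ and $\xi$ is a nonzero rational multiple of basis vectors of the free $K$-module of cyclic tensors, this forces $z=0$, contradicting $z\neq0$. Taking $z=1$ gives that $t_C$ itself is not realizable.

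The main obstacle is the degree-four computation. It is genuinely subtle because $P$ is planar, so every obstruction built from an intersection pairing --- the induced action on $H_1(P)$, on $H_1(P,\partial P)$, or on ${\rm gr}_1$ --- vanishes, and the comparison of $zL(C)$ with $\sum_ia_iL(\partial_iP)$ is therefore \emph{inconclusive through degree~$3$}; one must push to degree~$4$ and carefully isolate the correct bihomogeneous component, keeping track of the cyclic-symmetrization operator $N$ on monomials (whose kernel is $K1\oplus[\widehat{T},\widehat{T}]$ by Lemma~\ref{43exact}) alongside the BCH expansion. A secondary point to treat with care is the injectivity of $\sigma$ invoked above, best established via the action of $\sigma(u)$ on proper arcs joining distinct boundary components.
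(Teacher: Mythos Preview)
Your argument is correct, but it takes a genuinely different route from the paper's.

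After the same localization to a pair of pants $N$, the paper does \emph{not} lift the relation $\sigma\bigl(zL(C)+\sum_i c_iL(\eta_i)\bigr)=0$ back to the completed Goldman Lie algebra. Instead it evaluates this derivation on a single proper arc $\gamma_1$ joining two boundary components of $N$. Using the explicit formula (\ref{51LC}) for $\sigma(L(-))$ on arcs, this yields the equation
\[
z\,X*(-Y)+bX+c\,X*Y=0
\]
in $K\langle\langle X,Y\rangle\rangle$, where $X,Y$ are the images of $\log\eta_2$ and $\log(\gamma_3^{-1}\eta_1\gamma_3)$ under a group-like expansion and $*$ is the Hausdorff product. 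Degree~$1$ forces $c=z$, $b=-2z$; degree~$2$ vanishes; and already at degree~$3$ one finds $\tfrac{z}{6}[Y,[Y,X]]=0$, a contradiction.

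Your approach instead invokes injectivity of $\sigma$ on $\widehat{K\hat\pi}(P)$ to obtain $zL(C)=\sum_i a_iL(\partial_iP)$ as an identity of cyclic tensors via $\lambda_\theta$, and then compares BCH expansions of $(X*(\pm Y))^{\otimes 2}$. Because passing to free loops applies the cyclic symmetrization $N$, the degree-$3$ obstruction the paper finds is annihilated (and likewise the $(3,1)$- and $(1,3)$-parts at degree~$4$), which is exactly why you must push to degree~$4$ and isolate the bidegree-$(2,2)$ piece. The trade-off: the paper's arc evaluation is shorter, gets the contradiction one degree earlier, and needs neither the injectivity of $\sigma$ nor Corollary~\ref{43Ntheta}; your route stays entirely inside the Goldman Lie algebra and makes transparent that the obstruction lives in the two-dimensional space of length-$4$ cyclic $(2,2)$-tensors. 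One small point: your sketch of the injectivity of $\sigma$ (``kernel central, central elements act nontrivially on arcs'') is not quite a proof as stated---simply cite Theorem~\ref{N62DN}, which gives exactly the required statement for $P=\Sigma_{0,3}$.
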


\begin{proof}
Take a regular neighborhood $N$ of $C$ in $S^*\setminus\partial S$, which
satisfies the assumptions of Proposition \ref{32subs}. Assume
$\exp(\sigma(zL(C)))\in \ASE$ is realized by a diffeomorphism $\varphi$. Then, by Theorem
\ref{localize}, we may take $\varphi$ as a diffeomorphism whose
support is included in $N$. In fact, Theorem \ref{localize} only
treat the case $z=1$, but the proof works as well as for general $z$. Then
$U := \exp(-\sigma(zL(C))) \varphi \in A(N, \partial N)$ satisfies the
condition of Proposition \ref{32subs}. Hence, by (\ref{51subs}),  we have 
$$
\varphi = \exp(\sigma(zL(C)+F^U)) \in A(N, \partial N).
$$
Here we remark $[L(C), F^U] = 0$ since $C\cap \partial N = \emptyset$. 
The surface $N$ is diffeomorphic to a pair of pants. We take $\eta_i$,
$\gamma_i$, $1 \leq i \leq 3$, as in Figure 6.
\begin{figure}
\begin{center}
\input{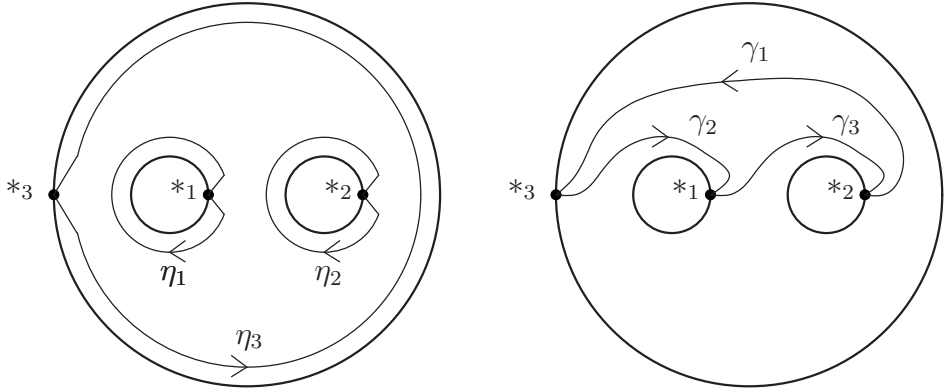}

\caption{$\eta_i$ and $\gamma_i$}
\end{center}
\end{figure}

The mapping class group of the pair of pants is free abelian
of rank 3, generated by boundary-parallel Dehn twists (see, e.g.,
\cite{FM} \S 3.6). By Theorem \ref{42logDT}, we have 
$
\varphi = \exp(\sigma(\sum^3_{i=1}a_iL(\eta_i))) \in A(N,\partial N)
$
for some $a_i \in \mathbb{Z}$. By Proposition \ref{13exp} (3) we obtain
$$
\sigma((zL(C) + \textstyle\sum^3_{i=1}(a^U_i-a_i)L(\eta_i))
= 0 \in \Der\widehat{K\CC(N,\partial N)}.
$$
On the other hand, we have $C =
\vert({\gamma_3}^{-1}\eta_1\gamma_3{\eta_2}^{-1})^{\pm1}\vert$ and 
$$
\sigma(({\gamma_3}^{-1}\eta_1\gamma_3{\eta_2}^{-1})^{m})(\gamma_1) 
= -m({\gamma_3}^{-1}\eta_1\gamma_3{\eta_2}^{-1})^{m}\gamma_1
$$
for any $m \geq 0$. Hence 
$$
\sigma(L(C))(\gamma_1) =
\log(\eta_2{\gamma_3}^{-1}{\eta_1}^{-1}\gamma_3)\gamma_1.
$$
Further there exist some $b$ and $c \in K$ such that 
\begin{eqnarray*}
&&\sigma(\textstyle\sum^3_{i=1}(a^U_i-a_i)L(\eta_i))(\gamma_1) 
= b(\log\eta_2)\gamma_1 + c\gamma_1(\log{\eta_3}^{-1})\\
&=& (b\log\eta_2 +
c\log(\eta_2{\gamma_3}^{-1}\eta_1\gamma_3))\gamma_1. 
\end{eqnarray*}
Here note that $\gamma_1{\eta_3}^{-n}=(\eta_2{\gamma_3}^{-1}\eta_1\gamma_3)^n\gamma_1$
for $n\ge 0$. Hence we obtain 
\begin{equation}
z \log(\eta_2{\gamma_3}^{-1}{\eta_1}^{-1}\gamma_3) + 
b\log\eta_2 +
c\log(\eta_2{\gamma_3}^{-1}\eta_1\gamma_3)
=0 \in \widehat{K\pi_1(N, *_2)}.
\label{54rel1}
\end{equation}
The fundamental group $\pi_1(N, *_2)$ is a free group of rank $2$, so
that there exists an isomorphism of $K$-algebras
$\theta\colon \widehat{K\pi_1(N, *_2)}\overset\cong\to K\langle\langle X,
Y\rangle\rangle$ which satisfies $\theta(\log\eta_2) = X$ and 
$\theta(\log{\gamma_3}^{-1}\eta_1\gamma_3) = Y$. Here $K\langle\langle X,
Y\rangle\rangle = \prod^\infty_{m=0}(KX\oplus KY)^{\otimes m}$ is the
ring of non-commutative formal power series in indeterminates $X$ and
$Y$. In other words, $\theta$ is a group-like expansion of the free group 
$\pi_1(N, *_2)$. Then the equation (\ref{54rel1}) is equivalent to 
\begin{equation}
z X*(-Y) + bX + cX*Y = 0 \in K\langle\langle X,
Y\rangle\rangle.
\label{54rel2}
\end{equation}
Here $X*Y$ is the Hausdorff series in $X$ and $Y$. 
The degree $1$ part of (\ref{54rel2}) is 
$z(X - Y) + bX + c(X+Y) = 0$, so that we have $c = z$ and $b = -2z$. 
The degree $2$ part is 
$
-\frac{z}{2} [X,Y] + \frac12c[X,Y] = 0.
$
Thus the degree $3$ part is 
$$
\frac{z}{6} [Y,[Y,X]] = 0.
$$
This contradicts $\frac{z}{6} [Y,[Y,X]] \neq 0$, and
proves the theorem. 
\end{proof}

\section{Kontsevich's Lie algebras and Johnson homomorphisms}
\label{sec:6}

In this section we suppose $K$ is a field of characteristic $0$. 
Let $S$ be an oriented surface, and $E$ a non-empty closed
subset of $S$ with the property
$E\setminus\partial S$ is closed in $S$. In \S \ref{sec:4-1} we introduced the Lie
algebra homomorphism
$$
\sigma\colon \widehat{K\hat{\pi}}(S^*) \to \Der(\widehat{K\CSE}),
$$
where $S^* = S\setminus (E\setminus\partial S)$, while 
$\Der_\Delta\widehat{K\GG}$ is the Lie subalgebra of $\Der\widehat{K\GG}$ 
consisting of all the continuous derivations $D$ stabilizing the coproduct $\Delta$
for any groupoid $\GG$. See \S \ref{21filtration}.
We define Lie subalgebras $L(S,E)$ and $L^+(S,E)$
of $\widehat{K\hat{\pi}}(S^*)$ by 
\begin{eqnarray*}
L(S,E) &:=& \widehat{K\hat{\pi}}(S^*)(2)\cap
\sigma^{-1}(\Der_\Delta\widehat{K\CSE})
\subset \widehat{K\hat{\pi}}(S^*), \quad\mbox{and}\\
L^+(S,E) &:=& \widehat{K\hat{\pi}}(S^*)(3)\cap L(S,E). 
\end{eqnarray*}
$L^+(S,E)$ is an ideal of $L(S,E)$. 

\subsection{Geometric interpretation of Kontsevich's `associative' and `Lie'}
\label{aglg}

First of all, we study the case $S = \Sigma_{g,1}$ and $E =\{*\} \subset
\partial \Sigma_{g,1}$. Then we prove 
$\widehat{K{\hat\pi}}(\Sigma_{g,1})(2)$ is isomorphic to 
a completion of Kontsevich's `associative' $a_g$, and 
$L(S,E) = L(\Sigma_{g,1}, \{*\})$ a completion of Kontsevich's `Lie' $l_g$
\cite{Kon}. 
These results are essentially due to our previous work \cite{KK1}. 
Anyway this means Kontsevich's `associative' and `Lie' 
are contructed in a geometric context, and
$\widehat{K{\hat\pi}}(S^*)(2)$ and $L(S,E)$ for a general $(S,E)$
can be regarded as generalizations of Kontsevich's `associative' and
`Lie', respectively.\par
To state our previous results \cite{KK1}, we need some notations. 
Adopting the notations in \S \ref{sec:3-3}
we denote $\pi:=\pi_1(\Sigma_{g,1}, \{*\})$, 
$H := H_1(\Sigma_{g,1}; K)$ and $\widehat{T} := \prod^\infty_{m=0}
H^{\otimes m}$. Let $\zeta \in \pi$ be a boundary loop in the opposite
direction, and $\{A_i, B_i\}^g_{i=1}\subset H$ a symplectic basis. 
The symplectic form $\omega:= \sum^g_{i=1}A_iB_i -B_iA_i \in H^{\otimes
2}$ is independent of the choice of a symplectic basis.

\begin{dfn}[Massuyeau \cite{Mas}]
\label{def:symp-exp}
A {\it symplectic expansion} $\theta\colon \pi \to \widehat{T}$ is a
group-like expansion which satisfies the equation $\theta(\zeta) =
\exp\omega$.
\end{dfn}

As was stated in \S \ref{sec:3-3}, the algebra $\widehat{T}$ has a
filtration defined by  the ideals $\widehat{T}_p :=
\prod^\infty_{m=p}H^{\otimes m}$, $p \geq 1$. By the Poincar\'e duality,
we identify $H$ with $H^*= \Hom(H, K)$ via the isomorphism
$H \stackrel{\cong}{\rightarrow} H^*, X\mapsto (Y\mapsto (Y\cdot X))$.
Then $\widehat{T}_1$ is identified 
with $H\otimes\widehat{T} = H^*\otimes \widehat{T} = \Der(\widehat{T})$, 
the (continuous) derivation Lie algebra of the (filtered) $K$-algebra $\widehat{T}$. 
Recall the linear map $N\colon \widehat{T} \to \widehat{T}$ introduced in \S \ref{sec:4-3}. 
Then the image $N(\widehat{T}_1) = N(\widehat{T})$ equals the Lie
subalgebra of $\widehat{T}_1 = \Der(\widehat{T})$ consisting
of symplectic derivations, namely, derivations annihilating the
symplectic form $\omega$ (see \cite{KK1} \S 2.7), which we denote $\mathfrak{a}_g^- =
\Der_\omega(\widehat{T})$. The Lie subalgebra
$\mathfrak{a}_g := N(\widehat{T}_2)$ is a completion of Kontsevich's
`associative' $a_g$\cite{Kon}. 
Our previous results are
\begin{thm}[\cite{KK1} Theorem 1.2.1]\label{121}
Let $\theta\colon \pi \to \widehat{T}$
be a symplectic expansion. Then the map 
$$
-\lambda_\theta\colon K{\hat\pi}(\Sigma_{g,1}) \to \mathfrak{a}_g^-, \quad
\lambda_\theta(\vert x\vert) := N\theta(x), \quad x \in \pi,
$$
is a Lie algebra homomorphism. The kernel is the subspace $K1$ 
spanned by the constant loop $1$, and the image is dense in
$N(\widehat{T}_1) =\mathfrak{a}_g^-$ with respect to the $\widehat{T}_1$-adic topology. 
\end{thm}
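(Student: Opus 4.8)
The plan is to reduce the statement to the machinery of \S\ref{sec:4-3} together with the fact that $\sigma$ is a homomorphism of Lie algebras. Write $\pi=\pi_1(\Sigma_{g,1},*)$ and $E=\{*\}$, so that $K\CC(\Sigma_{g,1},E)=K\pi$, and fix the algebra isomorphism $\theta\colon\widehat{K\pi}\to\widehat T$ of (\ref{expansion}). The statement about the kernel is immediate: $\lambda_\theta=\theta_*\circ\lambda$ with $\theta_*$ injective (\S\ref{sec:4-3}), and $\Ker\lambda=K1$, so $\Ker\lambda_\theta=K1$. For the density of the image, Lemma \ref{43filter} and the proof of Theorem \ref{43isom} show that $\lambda_\theta$ induces an \emph{isomorphism} $K\hat\pi(\Sigma_{g,1})/K\hat\pi(\Sigma_{g,1})(n)\overset{\cong}{\longrightarrow}N(\widehat T_1)/N(\widehat T_n)$ for every $n\ge1$; since $N$ preserves tensor degree one has $N(\widehat T_1)\cap\widehat T_n=N(\widehat T_n)$, so $\{N(\widehat T_n)\}_{n\ge1}$ is a neighbourhood basis of $0$ in $N(\widehat T_1)$, and surjectivity onto all the quotients $N(\widehat T_1)/N(\widehat T_n)$ is exactly the density of the image of $\lambda_\theta$ in $N(\widehat T_1)=\mathfrak a_g^-$.

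The bracket is the only substantial point, and the idea is to route it through the derivation action $\sigma$. The Lie algebra homomorphism $\sigma\colon K\hat\pi(\Sigma_{g,1})\to\Der K\pi$ of \S\ref{sec:4} lands in $F_{-1}\Der$ by Theorem \ref{33filt} (case $n=1$), so each $\sigma(\alpha)$ extends to a continuous derivation of $\widehat{K\pi}$; conjugating by the homeomorphism $\theta$ produces a homomorphism of Lie algebras
\[
\sigma_\theta\colon K\hat\pi(\Sigma_{g,1})\longrightarrow\Der(\widehat T),\qquad \sigma_\theta(\alpha):=\theta\circ\sigma(\alpha)\circ\theta^{-1}.
\]
A free loop representing $\alpha$ may be pushed into the interior, disjoint from a representative of the boundary loop $\zeta$, so $\sigma(\alpha)(\zeta)=0$; hence $\sigma_\theta(\alpha)(\exp\omega)=\sigma_\theta(\alpha)(\theta(\zeta))=0$, which (using $\log\exp\omega=\omega$) forces $\sigma_\theta(\alpha)(\omega)=0$, i.e.\ $\sigma_\theta$ takes values in $\Der_\omega(\widehat T)=N(\widehat T_1)=\mathfrak a_g^-$. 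The heart of the proof is the identity
\[
\sigma_\theta(|x|)=-\lambda_\theta(|x|)\in\Der(\widehat T),\qquad x\in\pi,
\]
which we call $(\star)$; here the right-hand side $\lambda_\theta(|x|)=N\theta(x)\in N(\widehat T_1)$ is read as a symplectic derivation through the identification $N(\widehat T_1)=\Der_\omega(\widehat T)$ recalled before the theorem, and the sign reflects the orientation conventions in the Goldman bracket and in the Poincar\'e-duality identification $H\cong H^{*}$. Granting $(\star)$, the bracket of $\mathfrak a_g^-$ is by definition the commutator bracket of $\Der_\omega(\widehat T)$, and $(\star)$ says that the $K$-linear map $-\lambda_\theta$ agrees with the Lie algebra homomorphism $\sigma_\theta$ on the spanning set $\{|x|:x\in\pi\}$ of $K\hat\pi(\Sigma_{g,1})$; it follows that $-\lambda_\theta$ is itself a homomorphism of Lie algebras.

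So the whole problem comes down to $(\star)$. Both sides are continuous derivations of $\widehat T$, hence determined by their values on the dense subspace $\theta(K\pi)$, i.e.\ on the group-like elements $\theta(y)$, $y\in\pi$. On the left the defining formula for $\sigma$ gives $\sigma_\theta(|x|)(\theta(y))=\theta\bigl(\sum_{p\in x\cap y}\varepsilon(p;x,y)\,y_{*p}x_py_{p*}\bigr)$, where $x$ is realized as an immersed loop in $\Sigma_{g,1}\setminus\partial\Sigma_{g,1}$ transverse to $y$; on the right one expands the action of the symplectic derivation $N\theta(x)$ on the group-like element $\theta(y)$. Equating the two is where the symplectic normalization enters: the weighted intersection contributions $\sum_p\varepsilon(p;x,y)$, organized by homology class, compute the geometric intersection pairing on $H_1(\Sigma_{g,1})$, and the condition $\theta(\zeta)=\exp\omega$ defining a symplectic expansion is precisely what identifies this pairing with the algebraic pairing $H\cong H^{*}$ through which $N\theta(x)$ is turned into a derivation. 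This is the computation of \cite{KK1} \S\S5--6 (whose lemmas remain valid over any commutative ring containing $\mathbb Q$, cf.\ the remark in \S\ref{sec:4-3}); equivalently one may argue that the isomorphism $\theta_*$ of \cite{KK1} intertwines the pairing $\widehat{\mathcal B}$ of Theorem \ref{43isom} with minus the commutator bracket of symplectic derivations.

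The main obstacle is exactly $(\star)$: translating the combinatorial, intersection-theoretic description of the derivation $\sigma(|x|)$ into the algebraic language of cyclic tensors and Hamiltonian derivations on $\widehat T$, and isolating the role of the normalization $\theta(\zeta)=\exp\omega$. Everything else --- the homomorphism property, the computation of the kernel, and the density of the image --- is a formal consequence of $(\star)$ and of the filtration estimates already in place in \S\S\ref{sec:4-1} and \ref{sec:4-3}.
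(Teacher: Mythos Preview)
The paper does not give its own proof of this theorem: it is quoted verbatim from \cite{KK1} and used as a black box (together with Theorem \ref{122}, also quoted from \cite{KK1}). So there is no ``paper's proof'' to compare against; what you have written is a derivation of Theorem \ref{121} from Theorem \ref{122} together with the \S\ref{sec:4-3} machinery.

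Your derivation is correct. The kernel and density statements hold for any group-like expansion and follow exactly as you say from $\Ker\lambda=K1$, from Lemma \ref{43filter}, and from the proof of Theorem \ref{43isom} (your check that $N(\widehat T_1)\cap\widehat T_n=N(\widehat T_n)$ is right because $N$ preserves tensor degree). For the bracket, your identity $(\star)$ is nothing other than Theorem \ref{122} rewritten after conjugation by $\theta$: with $\sigma_\theta(u):=\theta\circ\sigma(u)\circ\theta^{-1}$ one has $\sigma_\theta(u)(\theta(v))=\theta(\sigma(u)v)=-\lambda_\theta(u)\theta(v)$, and since $\theta(K\pi)$ is dense this forces $\sigma_\theta(u)=-\lambda_\theta(u)$ as continuous derivations. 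Your argument that $\sigma_\theta$ lands in $\Der_\omega(\widehat T)$ is also fine: $\sigma(\alpha)(\zeta)=0$ gives $D(e^\omega)=0$ for $D=\sigma_\theta(\alpha)$, and since $\omega=\log(e^\omega)$ with $e^\omega-1\in\widehat T_2$, applying $D$ to the logarithmic series yields $D(\omega)=0$. From $(\star)$ the Lie homomorphism property is immediate, since $\sigma$ is a Lie homomorphism and conjugation by the algebra isomorphism $\theta$ preserves commutators of derivations.

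In short: you have correctly reduced Theorem \ref{121} to Theorem \ref{122}. Since the latter is itself imported from \cite{KK1} (and you explicitly defer the verification of $(\star)$ to \cite{KK1} \S\S5--6), your argument does not make the result self-contained within this paper, but it does make transparent that the Lie-homomorphism statement is a formal consequence of the derivation formula, while the kernel and density assertions need only the group-like (not symplectic) property of $\theta$.
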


\begin{thm}[\cite{KK1} Theorem 1.2.2]\label{122}
Let $\theta$ be a symplectic expansion. Then, 
for $u \in K\hat{\pi}(\Sigma_{g,1})$ and $v \in K\pi$, 
we have the equality
$$
\theta(\sigma(u)v) = -\lambda_{\theta}(u)\theta(v).
$$
Here the right hand side means minus the action of
$\lambda_{\theta}(u) \in \mathfrak{a}_g^-$ 
on the tensor $\theta(v) \in \widehat{T}$
as a derivation.
In other words, the diagram
$$
\begin{CD}
K\hat{\pi}(\Sigma_{g,1}) \times K\pi @>{\sigma}>>
K\pi \\
@V{-\lambda_{\theta}\times \theta}VV @VV{\theta}V \\
\mathfrak{a}_g^{-} \times \widehat{T} @>>> \widehat{T},
\end{CD}
$$
where the bottom horizontal arrow means the derivation, commutes.
\end{thm}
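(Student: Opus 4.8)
The plan is to fix a symplectic expansion $\theta\colon\pi\to\widehat{T}$ once and for all, reduce the asserted identity $\theta(\sigma(u)v)=-\lambda_\theta(u)\theta(v)$ to the case where $v$ is a standard free generator of $\pi$, and then prove that case by a direct computation in which the topological intersection data of $\sigma$ is matched against the symplectic structure. The reductions are formal; the computation is the real work.

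\emph{Reductions.} Both sides of the claimed equality are $K$-linear in $u$ and in $v$ and continuous for the relevant filtrations: on the left because $\sigma(u)\in F_{-1}\Der\widehat{K\CC}$ and $\theta$ is filtration-preserving, on the right because $\lambda_\theta(u)=N\theta(x)$ is a continuous derivation of $\widehat{T}$ and $\theta$ a homomorphism. Since $K\hat\pi(\Sigma_{g,1})$ is spanned by the classes $|x|$, $x\in\pi$, and the constant loop maps to $0$ on both sides, it suffices to treat $u=|x|$ for an arbitrary fixed $x\in\pi$. For such $x$, the two maps $v\mapsto\theta(\sigma(|x|)v)$ and $v\mapsto-\lambda_\theta(|x|)\theta(v)$ are continuous $\theta$-derivations $\widehat{K\pi}\to\widehat{T}$, i.e.\ they satisfy $\delta(vw)=\delta(v)\theta(w)+\theta(v)\delta(w)$; the first because $\sigma(|x|)$ is a derivation of $K\pi$, the second because $N\theta(x)$ is a derivation of $\widehat{T}$. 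A continuous $\theta$-derivation is determined by its values on a free generating set of $\pi$ (these determine it on $\pi$ by the cocycle rule, on $K\pi$ by linearity, and on $\widehat{K\pi}$ by density), so we are reduced to verifying the identity when $v$ runs over the standard generators $\alpha_1,\beta_1,\dots,\alpha_g,\beta_g$.

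\emph{The core computation.} Realize $x$ by an immersed loop in general position with respect to the system of generating loops. Then the left-hand side for $v=\alpha_i$ is the explicit finite sum $\sum_{p\in x\cap\alpha_i}\varepsilon(p;x,\alpha_i)\,\theta\bigl((\alpha_i)_{*p}\,x_p\,(\alpha_i)_{p*}\bigr)$. For the right-hand side one unwinds the identification $N(\widehat{T}_1)=\Der_\omega(\widehat{T})$ coming from the Poincar\'e duality $H\cong H^{*}$, and works out the action of the symplectic derivation $N\theta(x)$ on $\theta(\alpha_i)$ and on the higher corrections to $1+A_i$. The essential input is the defining property $\theta(\zeta)=\exp\omega$ of a symplectic expansion: combined with the boundary relation $[\alpha_1,\beta_1]\cdots[\alpha_g,\beta_g]=\zeta$ it forces $\theta$ to carry the topological intersection pairing on based loops to the algebraic symplectic form $\omega$, and that is exactly what makes the two derivations agree. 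An efficient way to organize this is by induction on the $\widehat{T}$-adic degree, using the $\theta$-derivation property already established: writing $\Psi_x:=\theta\circ\sigma(|x|)\circ\theta^{-1}+N\theta(x)$, a derivation of $\widehat{T}$, one checks $\Psi_x\in F_{n-1}\Der\widehat{T}$ for every $n$ by showing that the degree-$n$ component of $\Psi_x$ on $H$ vanishes as a consequence of the symplectic condition; since $\bigcap_n F_n\Der\widehat{T}=0$ this yields $\Psi_x=0$, which is the assertion. Compatibility with the bracket is already recorded in Theorem~\ref{121}.

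The main obstacle is this last step: matching the local intersection signs and the conjugating arcs $(\alpha_i)_{*p}$, $(\alpha_i)_{p*}$ on the topological side with the Poincar\'e-dual contraction defining $N\theta(x)$ on the algebraic side, uniformly in all degrees. This is precisely the content of \cite{KK1} Theorem 1.2.2, which I would either invoke directly or reproduce in the present groupoid notation; the remaining reductions are routine.
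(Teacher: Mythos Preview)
The paper does not prove this statement; it is quoted from \cite{KK1} and used as a black box, so there is no in-paper argument to compare against.

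Your reductions are valid: linearity in $u$, restriction to $u=|x|$, the $\theta$-derivation property in $v$, and reduction to a free generating set of $\pi$. What remains after those reductions is precisely the content of the theorem, and your proposal does not supply it. You describe the shape of the required computation and then write that you would ``either invoke \cite{KK1} Theorem~1.2.2 directly or reproduce [it] in the present groupoid notation.'' Invoking it is circular---that \emph{is} the statement under discussion---and you do not reproduce it. The induction sketch via $\Psi_x:=\theta\circ\sigma(|x|)\circ\theta^{-1}+N\theta(x)$ is not an argument either: saying ``the degree-$n$ component of $\Psi_x$ on $H$ vanishes as a consequence of the symplectic condition'' restates the goal without indicating \emph{how} $\theta(\zeta)=\exp\omega$ forces that vanishing. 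Moreover the inductive hypothesis $\Psi_x\in F_{n-1}\Der\widehat{T}$ gives no leverage on the next degree, since a derivation of $\widehat{T}$ is already determined by its restriction to $H$; you are simply asserting, degree by degree, that $\Psi_x|_H=0$.

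To turn this into an actual proof you must carry out the comparison: compute $\theta(\sigma(|x|)(\alpha_i))$ and $\theta(\sigma(|x|)(\beta_i))$ and match them against $-N\theta(x)\cdot\theta(\alpha_i)$ and $-N\theta(x)\cdot\theta(\beta_i)$ under the Poincar\'e-duality identification $H\cong H^*$, using the boundary relation and the symplectic normalization of $\theta$. That is the substance of \cite{KK1}, and it is absent here.
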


Note that for any $p \geq 2$, 
$N(\widehat{T}_p)$ is
a Lie subalgebra of $\mathfrak{a}_g^-$ and an ideal of $\mathfrak{a}_g$. 
Immediately from Corollary \ref{43Ntheta} and Theorem
\ref{121} we obtain

\begin{thm}\label{61ag} 
Let $\theta$ be a symplectic expansion. The map
$\lambda_\theta\colon K{\hat\pi}(\Sigma_{g,1}) \to \mathfrak{a}_g^-$
induces isomorphisms of Lie algebras
$$
-\lambda_\theta\colon \widehat{K{\hat\pi}}(\Sigma_{g,1}) \overset\cong\to
\mathfrak{a}_g^-\quad\mbox{and}\quad
-\lambda_\theta\colon \widehat{K{\hat\pi}}(\Sigma_{g,1})(2) \overset\cong\to
\mathfrak{a}_g.
$$
\end{thm}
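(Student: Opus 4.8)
The plan is to read the statement directly off Corollary \ref{43Ntheta}, Theorem \ref{121}, and the filtered information contained in the proof of Theorem \ref{43isom}. First I would note that $\Sigma_{g,1}$ is a surface of finite type and not closed, so Corollary \ref{43Ntheta} applies and yields a $K$-linear isomorphism $\lambda_\theta\colon \widehat{K\hat\pi}(\Sigma_{g,1}) \overset\cong\to N(\widehat{T}_1) = \mathfrak{a}_g^-$. What then remains is to check that $-\lambda_\theta$ is compatible with the Lie brackets, and that it carries the subalgebra $\widehat{K\hat\pi}(\Sigma_{g,1})(2)$ onto $\mathfrak{a}_g = N(\widehat{T}_2)$.

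For the filtrations: the proof of Theorem \ref{43isom}, through Lemma \ref{43filter}, shows that $\lambda_\theta$ induces isomorphisms $K\hat\pi(\Sigma_{g,1})/K\hat\pi(\Sigma_{g,1})(n) \overset\cong\to N(\widehat{T}_1)/N(\widehat{T}_n)$ for every $n \geq 1$, compatibly in $n$. Passing to the inverse limit, $\lambda_\theta$ carries $\widehat{K\hat\pi}(\Sigma_{g,1})(n)$ — the kernel of the projection of $\widehat{K\hat\pi}(\Sigma_{g,1})$ onto its $n$-th quotient — exactly onto $N(\widehat{T}_n)$. Taking $n = 2$, and using that $\mathfrak{a}_g = N(\widehat{T}_2) = \prod_{m \geq 2} N(H^{\otimes m})$ is already complete for the $\widehat{T}_1$-adic filtration (being a product of finite-rank homogeneous pieces), we obtain a $K$-linear isomorphism $\lambda_\theta\colon \widehat{K\hat\pi}(\Sigma_{g,1})(2) \overset\cong\to \mathfrak{a}_g$. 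In particular $\lambda_\theta$ is a filtered isomorphism, hence a homeomorphism for the two adic topologies.

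For the bracket: Theorem \ref{121} tells us that $-\lambda_\theta$ restricted to the uncompleted Goldman Lie algebra $K\hat\pi(\Sigma_{g,1})$ is a Lie algebra homomorphism into $\mathfrak{a}_g^-$, and the image of $K\hat\pi(\Sigma_{g,1})$ is dense in $\widehat{K\hat\pi}(\Sigma_{g,1})$. The bracket of $\widehat{K\hat\pi}(\Sigma_{g,1})$ is continuous, since $[\widehat{K\hat\pi}(\Sigma_{g,1})(n_1), \widehat{K\hat\pi}(\Sigma_{g,1})(n_2)] \subset \widehat{K\hat\pi}(\Sigma_{g,1})(n_1 + n_2 - 2)$ by Theorem \ref{33bracket}; transporting it along the homeomorphism $\lambda_\theta$ gives a continuous bilinear map on $\mathfrak{a}_g^-$ which coincides with the commutator of derivations on the dense subspace $\lambda_\theta(K\hat\pi(\Sigma_{g,1}))$ by Theorem \ref{121}, hence coincides with it everywhere. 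Therefore $-\lambda_\theta\colon \widehat{K\hat\pi}(\Sigma_{g,1}) \to \mathfrak{a}_g^-$ is a Lie algebra homomorphism, and being bijective by Corollary \ref{43Ntheta} it is an isomorphism of Lie algebras; its restriction to $\widehat{K\hat\pi}(\Sigma_{g,1})(2)$, which by the previous paragraph is a linear isomorphism onto $\mathfrak{a}_g$, is then also an isomorphism of Lie algebras, since $\widehat{K\hat\pi}(\Sigma_{g,1})(2)$ and $\mathfrak{a}_g$ are Lie subalgebras of $\widehat{K\hat\pi}(\Sigma_{g,1})$ and $\mathfrak{a}_g^-$ respectively.

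The step requiring the most care — though it is not a genuine obstacle, the real content lying already in Corollary \ref{43Ntheta} and Theorem \ref{121} — is the bookkeeping with the two distinct completions involved: the completion of the Goldman Lie algebra defined by the ideals $K\hat\pi(\Sigma_{g,1})(n) = |K1 + I\pi^n|$, and the $\widehat{T}_1$-adic completion on the tensor-algebra side, together with the observation that $\mathfrak{a}_g^-$ and $\mathfrak{a}_g$ are themselves complete so that the image of the completed map lands exactly where claimed. Matching the two filtrations term by term is precisely what Lemma \ref{43filter} supplies, after which the whole argument reduces to a routine passage to inverse limits and a density argument for the bracket.
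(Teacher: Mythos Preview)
Your proof is correct and follows exactly the route the paper indicates: the paper's own argument consists of the single sentence ``Immediately from Corollary \ref{43Ntheta} and Theorem \ref{121} we obtain'', and you have faithfully unpacked what ``immediately'' means here --- the linear isomorphism and filtration matching from Corollary \ref{43Ntheta} (via Lemma \ref{43filter}), together with the bracket compatibility from Theorem \ref{121} extended by density and continuity.
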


Under the identification $\widehat{T}_1 = H\otimes\widehat{T} =
\Der(\widehat{T})$, the Lie subalgebra consisting of all (continuous) derivations 
stabilizing the coproduct $\Delta$ on $\widehat{T}$ coincides with 
$H\otimes\widehat{\mathcal{L}}$. Here 
$\widehat{\mathcal{L}}$ is the space of all Lie-like elements in the
completed tensor algebra $\widehat{T}$. 
The Lie algebra $H\otimes\widehat{\mathcal{L}}$ can be regarded as 
the (continuous) derivation Lie algebra 
of the Lie algebra $\widehat{\mathcal{L}}$. 
We define the Lie algebra  
$\mathfrak{l}_g$ by the intersection 
$$
\mathfrak{l}_g := \mathfrak{a}_g^- \cap (H\otimes\widehat{\mathcal{L}}),
$$
which is a completion of Kontsevich's `Lie' $l_g$ \cite{Kon}. 
Here it should be remarked the Lie algebra $l_g$ was introduced
earlier by Morita \cite{MoPJA} \cite{MoICM} as a target of the Johnson
homomorphisms of the (higher) Torelli groups. Anyway it is clear from the definition that $u \in \mathfrak{a}_g^-$ is in $\mathfrak{l}_g$ if and only if 
$(u\widehat{\otimes}u) \circ \Delta = \Delta \circ u$, where $\Delta$ is the coproduct of the complete Hopf algebra $\widehat{T}$. 
Moreover we define
$\mathfrak{l}_g^+$ by
$$
\mathfrak{l}_g^+ := \mathfrak{l}_g\cap \widehat{T}_3 \quad
(\subset \mathfrak{l}_g \subset \widehat{T}_1 = \Der\widehat{T}), 
$$
which is an ideal of $\mathfrak{l}_g$. 
\par

Taking the projective limits of the diagram in Theorem \ref{122}, 
we obtain the commutative diagram
\begin{equation}
\begin{CD}
\widehat{K\hat{\pi}}(\Sigma_{g,1}) \times \widehat{K\pi} @>{\sigma}>>
\widehat{K\pi} \\
@V{\cong}VV @V{\cong}VV \\
\mathfrak{a}_g^{-} \times \widehat{T} @>>> \widehat{T}.
\end{CD}
\label{61com}
\end{equation}
The isomorphism $\theta\colon \widehat{K\pi} \overset\cong\to \widehat{T}$
preserves the coproducts. From the definition
of $\mathfrak{l}_g$ and $\mathfrak{l}_g^+$, and the fact
$-\lambda_\theta\colon \widehat{K\hat{\pi}}(\Sigma_{g,1}) \overset\cong\to \mathfrak{a}_g^-$
preserves the filtration, we obtain the following theorem. 
\begin{thm}\label{61lg} Let $\theta$ be a symplectic expansion. The map
$\lambda_\theta\colon K{\hat\pi}(\Sigma_{g,1}) \to \mathfrak{a}_g^-$ induces 
isomorphisms of Lie algebras
$$
-\lambda_\theta\colon L(\Sigma_{g,1}, \{*\}) \overset\cong\to \mathfrak{l}_g, 
\quad\mbox{and}\quad 
-\lambda_\theta\colon L^+(\Sigma_{g,1}, \{*\}) \overset\cong\to \mathfrak{l}_g^+.
$$
\end{thm}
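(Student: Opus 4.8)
The plan is to derive the theorem formally from Theorem~\ref{61ag}, the commutative diagram~(\ref{61com}), and the fact that the symplectic expansion gives a filtration-preserving isomorphism of complete Hopf algebras $\theta\colon\widehat{K\pi}\overset\cong\to\widehat{T}$. First I would record the elementary identifications in this case: since $E=\{*\}\subset\partial\Sigma_{g,1}$ we have $S^*=\Sigma_{g,1}$, and the groupoid $\CC(\Sigma_{g,1},\{*\})$ has the single object $*$ with $\CC(*,*)=\pi_1(\Sigma_{g,1},*)=\pi$, so that $\widehat{K\CC(S,E)}=\widehat{K\pi}$ and $\Der_\Delta\widehat{K\CC(S,E)}=\Der_\Delta\widehat{K\pi}$. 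By Theorem~\ref{61ag}, together with the fact (coming from Lemma~\ref{43filter} via the proof of Theorem~\ref{43isom}) that $-\lambda_\theta$ is filtration-preserving with $-\lambda_\theta(\widehat{K\hat{\pi}}(\Sigma_{g,1})(n))=N(\widehat{T}_n)$ for every $n\geq1$, the map $-\lambda_\theta$ restricts to Lie algebra isomorphisms $\widehat{K\hat{\pi}}(\Sigma_{g,1})(2)\overset\cong\to\mathfrak{a}_g=N(\widehat{T}_2)$ and $\widehat{K\hat{\pi}}(\Sigma_{g,1})(3)\overset\cong\to N(\widehat{T}_3)$.

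The key step is to show that for $u\in\widehat{K\hat{\pi}}(\Sigma_{g,1})$, the derivation $\sigma(u)$ stabilizes the coproduct of $\widehat{K\pi}$ if and only if $-\lambda_\theta(u)\in\mathfrak{a}_g^-\subset\widehat{T}_1=\Der(\widehat{T})$ stabilizes the coproduct of $\widehat{T}$. By the commutative diagram~(\ref{61com}) we have $\theta\circ\sigma(u)=(-\lambda_\theta(u))\circ\theta$ as maps $\widehat{K\pi}\to\widehat{T}$, the right-hand side being the derivation action of $-\lambda_\theta(u)$ on $\widehat{T}$; and since $\theta$ is a Hopf algebra isomorphism, $\Delta\circ\theta=(\theta\widehat{\otimes}\theta)\circ\Delta$. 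Applying $\theta\widehat{\otimes}\theta$ to the defining identity $\Delta\circ\sigma(u)=(\sigma(u)\widehat{\otimes}1+1\widehat{\otimes}\sigma(u))\circ\Delta$ and using these two intertwining relations on each side, one obtains, after cancelling $\theta$ on the right, that $\Delta\circ(-\lambda_\theta(u))=((-\lambda_\theta(u))\widehat{\otimes}1+1\widehat{\otimes}(-\lambda_\theta(u)))\circ\Delta$; the implications are reversible since $\theta$ is invertible. By the identification, recalled just before the statement, of the continuous derivations of $\widehat{T}$ stabilizing $\Delta$ with $H\otimes\widehat{\mathcal{L}}$, this says $\sigma(u)\in\Der_\Delta\widehat{K\pi}$ if and only if $-\lambda_\theta(u)\in H\otimes\widehat{\mathcal{L}}$.

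Granting this, the conclusion is immediate. For $u\in\widehat{K\hat{\pi}}(\Sigma_{g,1})(2)$ we have $u\in L(\Sigma_{g,1},\{*\})$ precisely when $-\lambda_\theta(u)$ lies in $\mathfrak{a}_g\cap(H\otimes\widehat{\mathcal{L}})$; and because $H\otimes\widehat{\mathcal{L}}\subset H\otimes\widehat{T}_1=\widehat{T}_2$ while $\mathfrak{a}_g=\mathfrak{a}_g^-\cap\widehat{T}_2$ (as $N$ is homogeneous), this intersection coincides with $\mathfrak{a}_g^-\cap(H\otimes\widehat{\mathcal{L}})=\mathfrak{l}_g$. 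Hence the isomorphism $-\lambda_\theta\colon\widehat{K\hat{\pi}}(\Sigma_{g,1})(2)\overset\cong\to\mathfrak{a}_g$ restricts to a bijection $L(\Sigma_{g,1},\{*\})\overset\cong\to\mathfrak{l}_g$, and it is a homomorphism of Lie algebras since $-\lambda_\theta$ is and both sides are Lie subalgebras. For the second isomorphism, $-\lambda_\theta$ carries $\widehat{K\hat{\pi}}(\Sigma_{g,1})(3)$ onto $N(\widehat{T}_3)=\mathfrak{a}_g^-\cap\widehat{T}_3$, so, being injective, it sends $L^+(\Sigma_{g,1},\{*\})=\widehat{K\hat{\pi}}(\Sigma_{g,1})(3)\cap L(\Sigma_{g,1},\{*\})$ onto $(\mathfrak{a}_g^-\cap\widehat{T}_3)\cap\mathfrak{l}_g=\mathfrak{l}_g\cap\widehat{T}_3=\mathfrak{l}_g^+$.

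I expect the only point requiring care to be the bookkeeping in the key step: keeping straight the several identifications in play — $-\lambda_\theta(u)$ viewed as an element of $\widehat{T}_1$ acting on $\widehat{T}$ as a derivation, $\theta$ as a morphism of complete Hopf algebras, and the coproducts on the completed tensor products — and checking that the coproduct-stabilization condition transports cleanly across $\theta$. Everything else is a formal consequence of Theorems~\ref{61ag} and~\ref{122} together with the elementary descriptions of $\mathfrak{l}_g$ and $\mathfrak{l}_g^+$.
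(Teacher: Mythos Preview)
Your proposal is correct and follows essentially the same approach as the paper: the paper's argument is the terse paragraph preceding the theorem, observing that diagram~(\ref{61com}) commutes, that $\theta$ preserves the coproducts, and that $-\lambda_\theta$ preserves the filtration, and then stating that the theorem follows from the definitions of $\mathfrak{l}_g$ and $\mathfrak{l}_g^+$. You have simply unpacked this sketch in full, carrying out the transport of the coproduct-stabilization condition through $\theta$ and the filtration bookkeeping explicitly.
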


\subsection{Infinitesimal Dehn-Nielsen theorem}
\label{N62}

In \S\ref{sec:3-1} we discussed the injectivity of the Dehn-Nielsen
homomorphism ${\sf DN}\colon \mathcal{M}(S,E) \to \Aut(K\CSE)$. 
In view of the results in \S \ref{sec:5}, we may regard the Lie algebra homomorphism
$\sigma\colon \widehat{K\hat\pi}(S) \to \Der(\widehat{K\CSE})$ as an
infinitesimal analogue of the Dehn-Nielsen homomorphism. This subsection
is devoted to the proof of the following, an infinitesimal Dehn-Nielsen
theorem.

\begin{thm}\label{N62DN}
Let $S$ be a compact connected oriented surface with non-empty boundary, 
$E \subset\partial S$, and suppose any connected component of $\partial
S$ has an element of $E$. Then the homomorphism $\sigma\colon
\widehat{K\hat\pi}(S) \to \Der(\widehat{K\CSE})$ is injective. 
\end{thm}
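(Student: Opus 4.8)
The plan is to reduce the theorem to the derivation‑theoretic description of \S\ref{aglg} by means of a group‑like expansion, the decisive extra ingredient being that arcs joining distinct boundary components detect exactly the part of an element of $\widehat{K\hat\pi}(S)$ which the action on based loops alone cannot see.

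First I would normalize $(S,E)$. For $E'\subset E\subset\partial S$ the forgetful homomorphism $\widehat{K\CC(S,E)}\to\widehat{K\CC(S,E')}$ restricts derivations to derivations and intertwines the two copies of $\sigma$ (both are defined by counting intersections of immersed loops with paths, which involves $E$ only as the set of endpoints); hence injectivity of $\sigma$ for $(S,E')$ forces it for $(S,E)$, and we may assume $E=\{p_0,\dots,p_{r-1}\}$ with exactly one point $p_j$ on each component $\partial_jS$. Write $\pi:=\pi_1(S,p_0)$, a finitely generated free group, $H:=H_1(S;K)$ and $\widehat T:=\prod_{m\ge0}H^{\otimes m}$. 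Note already that, by the identity $|\sigma(u)(v)|=[u,|v|]$ used in the proof of Theorem \ref{33bracket} together with surjectivity of $|\,\cdot\,|$ for connected $S$, any $u\in\Ker\sigma$ is central in $\widehat{K\hat\pi}(S)$; but for surfaces such as the annulus this centre is large, so the based‑loop action is insufficient and one is forced to use the arcs --- this is precisely why the hypothesis that $E$ meets every boundary component is indispensable.

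Next I would fix a group‑like expansion $\theta\colon\pi\to\widehat T$ (Definition \ref{33Mas}), choose arcs $\delta_j\in\Pi S(p_0,p_j)$ with $\delta_0$ constant, and extend $\theta$ to the groupoid by sending each $\delta_j$ to $1$; this produces filtered isomorphisms $\widehat{K\CC}(p_i,p_j)\overset\cong\to\widehat T$, $\gamma\mapsto\theta(\delta_i\gamma\delta_j^{-1})$, turning $\widehat{K\CSE}$ into a $K$-SAC with $\widehat T$ in every $\Hom$-module. Suppose $\sigma(u)=0$. After transport, $\sigma(u)$ becomes a derivation of this SAC which vanishes identically; its $(p_0,p_0)$-component is a derivation $d_u$ of $\widehat T$ with $d_u=0$, and, pushing the Leibniz rule $\sigma(u)(v\delta_j)=\sigma(u)(v)\delta_j+v\,\sigma(u)(\delta_j)$ through $\theta$, the tensors $c_j:=\theta(\sigma(u)(\delta_j)\delta_j^{-1})\in\widehat T$ also vanish. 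By Corollary \ref{43Ntheta} it now suffices to show that $d_u=0$ and $c_1=\dots=c_{r-1}=0$ force $\lambda_\theta(u)=0\in N(\widehat T_1)$, where $\lambda_\theta(|x|)=N\theta(x)$.

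This last implication is the heart of the matter and, I expect, the main obstacle. It rests on a groupoid refinement of \cite{KK1} Theorem 1.2.2 (Theorem \ref{122}): $d_u$ is the Hamiltonian derivation of $\widehat T$ attached to the cyclic tensor $\lambda_\theta(u)$ through the intersection form of $S$, while each $c_j$ computes the contraction of the class $[\delta_j]$ into $\lambda_\theta(u)$. Since the intersection form on $H$ has radical of rank $r-1$, spanned by the boundary classes $[\zeta_1],\dots,[\zeta_{r-1}]$, the vanishing of $d_u$ pins $\lambda_\theta(u)$ down only modulo the contribution of that radical; but $[\delta_j]\cdot[\zeta_j]=\pm1$, so the vanishing of the $c_j$ removes the remaining ambiguity, and a degree induction then yields $\lambda_\theta(u)=0$. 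Conceptually this is the statement that, under the identification $\widehat{K\hat\pi}(S)\cong H_1(S;\widehat{\mathcal S}^c(S))$ of Theorem \ref{43isom}, the map $\sigma$ is a twisted intersection pairing against the homology of $\widehat{K\CSE}$, nondegenerate by a twisted Poincar\'e--Lefschetz duality valid exactly when $E$ meets every boundary component. Finally, in the case $r=1$ there are no arcs and the intersection form is nondegenerate: the statement is then immediate from \S\ref{aglg}, because under a symplectic expansion $\sigma$ becomes $-\lambda_\theta\colon\widehat{K\hat\pi}(\Sigma_{g,1})\overset\cong\to\mathfrak{a}_g^-$ followed by the tautological inclusion $\mathfrak{a}_g^-\subset\Der\widehat T$, which is injective.
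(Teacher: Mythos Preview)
Your strategy is different from the paper's, and the step you yourself flag as ``the main obstacle'' is where the argument is genuinely incomplete.

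The paper does not attempt an intrinsic analysis of $(S,E)$. For $S=\Sigma_{g,n+1}$ with $n\geq1$ it caps each of $\partial_1S,\dots,\partial_nS$ by a copy of $\Sigma_{1,1}$ to produce $S'=\Sigma_{g+n,1}$, where the theorem is already known from \S\ref{aglg}. The key technical input is Lemma~\ref{N62inj}: the inclusion-induced map $\widehat{K\hat\pi}(S)\to\widehat{K\hat\pi}(S')$ has kernel of dimension exactly $n$, spanned by $\{\vert\log\gamma_j\vert\}_{j=1}^n$. Given $u$ with $\sigma(u)=0$ on $\widehat{K\CSE}$, Proposition~\ref{32vKT} (van Kampen) shows $\sigma(u)=0$ on $\widehat{K\CC(S',E)}$ as well, so by the $\Sigma_{g+n,1}$ case the image of $u$ in $\widehat{K\hat\pi}(S')$ vanishes; hence $u=\sum_ja_j\vert\log\gamma_j\vert$, and the direct computation $\sigma(\vert\log\gamma_j\vert)(\delta_k)=0$ for $j\neq k$ and $=\pm\delta_j$ for $j=k$ then forces all $a_j=0$. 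So the arcs enter only at the very end, to kill a finite-dimensional remainder.

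Your route, by contrast, needs two ingredients you have not supplied: a version of Theorem~\ref{122} valid for group-like (not symplectic) expansions on $\Sigma_{g,r}$ with $r\geq2$, where the intersection form on $H$ is degenerate, and an arc formula identifying each $c_j$ with the contraction of $[\delta_j]$ against $\lambda_\theta(u)$. Even granting the first, the kernel of $N(\widehat T_1)\to\Der\widehat T$ through the degenerate form is not finite-dimensional --- it consists of all cyclic words in the boundary classes $C_1,\dots,C_{r-1}$ --- so your $r-1$ elements $c_j$ would have to do substantially more work than the arcs do in the paper's argument, and the ``degree induction'' that would organise this is not spelled out. The capping trick sidesteps all of this: it trades the degenerate form on $H_1(S)$ for the symplectic one on $H_1(S')$, at the cost only of a kernel that Lemma~\ref{N62inj} already pins down explicitly.
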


The completion maps $K\hat\pi(S)/K1 \to \widehat{K\hat\pi}(S)$ and 
$K\CSE \to \widehat{K\CSE}$ are injective from Corollary \ref{43inj} and
\cite{Bou}. Hence, as a corollary, we have 
\begin{cor}\label{N62DNa}
Under the assumption of {\rm Theorem \ref{N62DN}}, the homomorphism $\sigma\colon
K\hat\pi(S)/K1 \to \Der(K\CSE)$ is injective. 
\end{cor}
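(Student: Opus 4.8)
The plan is to reduce the statement to the case of a surface with \emph{connected} boundary, where it follows formally from the identification of $\widehat{K\hat\pi}(\Sigma_{g,1})$ with a completion of Kontsevich's Lie algebra established in \S\ref{aglg}. Since $E\subset\partial S$ we have $S^*=S$. Replacing $E$ by a subset with exactly one point $*_j$ in each boundary component $\partial_jS$ (so $\partial S=\coprod_{j=1}^{b}\partial_jS$) is harmless, because $\CC(S,E')$ is a full subcategory of $\CSE$ whenever $E'\subset E$, so $\sigma_{(S,E)}(u)=0$ implies $\sigma_{(S,E')}(u)=0$. Hence assume $E=\{*_1,\dots,*_b\}$, and set $\pi:=\pi_1(S,*_1)$, a free group of rank $2g+b-1$.

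\emph{The case $b=1$.} Here $S=\Sigma_{g,1}$ and $\CSE$ is the group $\pi$, so $\Der\widehat{K\CSE}=\Der\widehat{K\pi}$. If $g=0$ then $\widehat{K\hat\pi}(S)=0$ and there is nothing to prove. If $g\ge1$, fix a symplectic expansion $\theta$ of $\pi$. By the projective-limit form of Theorem \ref{122}, i.e.\ the commutative diagram (\ref{61com}), conjugation by the isomorphism $\theta\colon\widehat{K\pi}\xrightarrow{\sim}\widehat T$ carries $\sigma(u)$ to the derivation of $\widehat T$ given by the action of $-\lambda_\theta(u)\in\mathfrak a_g^-=\Der_\omega(\widehat T)\subset\widehat T_1=\Der(\widehat T)$. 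As $\theta$ is surjective, $\sigma(u)=0$ forces $-\lambda_\theta(u)$ to vanish as a derivation, hence to vanish in $\widehat T_1$; by Theorem \ref{61ag} this gives $u=0$. Thus $\sigma$ is injective when $b=1$.

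\emph{Filling in and conclusion.} For general $b$, let $Q$ be a disc with $b$ holes and glue its $b$ inner boundary circles to $\partial S$ by an orientation-reversing diffeomorphism sending $*_j$ to a point of the $j$-th inner circle, leaving the outer circle $\partial_0Q$ free; put $\hat S:=S\cup_\partial Q$, so that $\hat S\cong\Sigma_{g+b-1,1}$. Writing $Q$ as the union of a collar of each inner circle together with bands joining these collars along arcs, one checks that the inclusion $\partial S\hookrightarrow S$ extends over $Q$ (collars by the inclusion, each band by a path in the connected surface $S$); thus there is a retraction $\rho\colon\hat S\to S$ with $\rho|_S=\mathrm{id}$. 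It induces $\rho_*\colon\widehat{K\hat\pi}(\hat S)\to\widehat{K\hat\pi}(S)$ (it preserves the augmentation filtrations) with $\rho_*\circ\iota_*=\mathrm{id}$, where $\iota_*\colon\widehat{K\hat\pi}(S)\to\widehat{K\hat\pi}(\hat S)$ is induced by $S\hookrightarrow\hat S$; in particular $\iota_*$ is injective. Now put $\hat E:=\{*_0,*_1,\dots,*_b\}$ with $*_0\in\partial_0Q$, so $\hat E\setminus\partial\hat S=\{*_1,\dots,*_b\}$ and $\hat S\setminus(\hat E\setminus\partial\hat S)\supset S\setminus\partial S$. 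Given $u$ with $\sigma_{(S,E)}(u)=0$, represent $u$ by loops in $S\setminus\partial S$ and let $\iota_*u\in\widehat{K\hat\pi}(\hat S\setminus\{*_1,\dots,*_b\})$ be its image. By the van Kampen Proposition \ref{32vKT}, $\widehat{K\CC(\hat S,\hat E)}$ is topologically generated by $\widehat{K\CC(S,E)}$ and $\widehat{K\CC(Q,E_Q)}$, with $E_Q$ the images of the $*_j$ together with $*_0$. The continuous derivation $\sigma_{(\hat S,\hat E)}(\iota_*u)$ restricts on $\widehat{K\CC(S,E)}$ to $\sigma_{(S,E)}(u)=0$ — for a path in $S$ the relevant self-intersections all occur in $S$ — and it annihilates $\widehat{K\CC(Q,E_Q)}$ because loops in $S\setminus\partial S$ are disjoint from $Q$; by Leibniz and continuity $\sigma_{(\hat S,\hat E)}(\iota_*u)=0$. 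Its restriction to the full subcategory on $\{*_0\}$ is $\sigma_{(\hat S,\{*_0\})}(\bar u)$, where $\bar u\in\widehat{K\hat\pi}(\hat S)$ is the image of $u$; the case $b=1$ applied to $\hat S\cong\Sigma_{g+b-1,1}$ gives $\bar u=0$, and injectivity of $\iota_*$ then gives $u=0$.

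I expect the main obstacle to be the van Kampen step: this is precisely where one must pass from $\pi_1$ to the groupoid $\CSE$, and it requires some care in choosing the finite subsets of the two pieces and in checking that $\sigma$ of a class supported in $S\setminus\partial S$ annihilates everything happening in the complementary piece $Q$, as well as that the restriction to $\widehat{K\CC(S,E)}$ genuinely recovers $\sigma_{(S,E)}(u)$. The remaining ingredients — the retraction $\rho$ and the ensuing filtered injection of completed Goldman Lie algebras, and the identifications $S^*=S$ and $\hat S\setminus(\hat E\setminus\partial\hat S)\supset S\setminus\partial S$ — are routine.
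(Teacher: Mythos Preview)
Your argument is correct, but it does far more than the corollary requires. In the paper the corollary is a two-line consequence of Theorem \ref{N62DN} (already proved) together with Corollary \ref{43inj}: the completion map $K\hat\pi(S)/K1\to\widehat{K\hat\pi}(S)$ is injective, and Theorem \ref{N62DN} says $\sigma$ is injective on the completion, so $\sigma$ is injective on $K\hat\pi(S)/K1$ as well. You instead reprove the content of Theorem \ref{N62DN} from scratch.

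That said, your alternative proof of the theorem is genuinely different and in one respect simpler. The paper caps each extra boundary component with a copy of $\Sigma_{1,1}$ to obtain $S'=\Sigma_{g+n,1}$; since $S'$ does \emph{not} retract onto $S$, the inclusion $\widehat{K\hat\pi}(S)\to\widehat{K\hat\pi}(S')$ has an $n$-dimensional kernel, which the paper computes explicitly in Lemma \ref{N62inj}(2) and then eliminates via the paths $\delta_j$. You cap instead with a planar disc-with-holes $Q$, obtaining $\hat S\cong\Sigma_{g+b-1,1}$, and observe that $\hat S$ retracts onto $S$; this makes the injectivity of $\iota_*$ automatic and bypasses Lemma \ref{N62inj} entirely. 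The van Kampen step is the same in both approaches. What the paper's construction buys is that $\pi_1(S)\to\pi_1(S')$ is injective on the nose (Lemma \ref{N62inj}(1)), which your retraction argument also yields; but the paper then has to work harder at the level of $\hat\pi$, while your choice of cap makes that step trivial.
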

 
If $S=\Sigma_{g,1}$, Theorem \ref{N62DN} follows immediately from
Theorems \ref{122} and \ref{61ag}. Otherwise, we have $S =
\Sigma_{g,n+1}$ for $n \geq 2$. We number the boundary components $\partial
S = \coprod^n_{j=0}\partial_jS$. Capping each $\partial_jS$, $1 \leq j \leq n$, 
by a compact surface diffeomorphic to $\Sigma_{1,1}$, we obtain $S' =
\Sigma_{g+n,1}$ as in Figure 7. We denote by $\imath: S \hookrightarrow S'$ the inclusion map, and $S'' := \overline{S'\setminus S}$. 

\begin{figure}
\begin{center}

\unitlength 0.1in
\begin{picture}( 40.5000, 19.1000)(  2.0000,-22.0000)
%
{\color[named]{Black}{%
\special{pn 13}%
\special{ar 4000 1600 100 600  0.0000000 6.2831853}%
}}%
%
{\color[named]{Black}{%
\special{pn 13}%
\special{pa 4000 2200}%
\special{pa 800 2200}%
\special{fp}%
}}%
%
{\color[named]{Black}{%
\special{pn 13}%
\special{ar 800 1600 600 600  1.5707963 4.7123890}%
}}%
%
{\color[named]{Black}{%
\special{pn 13}%
\special{ar 600 1600 150 150  0.0000000 6.2831853}%
}}%
%
{\color[named]{Black}{%
\special{pn 13}%
\special{ar 1600 1600 150 150  0.0000000 6.2831853}%
}}%
%
{\color[named]{Black}{%
\special{pn 13}%
\special{ar 1800 900 100 100  6.2831853 6.2831853}%
\special{ar 1800 900 100 100  0.0000000 1.5707963}%
}}%
%
{\color[named]{Black}{%
\special{pn 13}%
\special{ar 2500 900 100 100  1.5707963 3.1415927}%
}}%
%
{\color[named]{Black}{%
\special{pn 13}%
\special{ar 2800 900 100 100  6.2831853 6.2831853}%
\special{ar 2800 900 100 100  0.0000000 1.5707963}%
}}%
%
{\color[named]{Black}{%
\special{pn 13}%
\special{ar 3500 900 100 100  1.5707963 3.1415927}%
}}%
%
{\color[named]{Black}{%
\special{pn 13}%
\special{pa 1900 900}%
\special{pa 1900 600}%
\special{fp}%
}}%
%
{\color[named]{Black}{%
\special{pn 13}%
\special{pa 2400 900}%
\special{pa 2400 600}%
\special{fp}%
}}%
%
{\color[named]{Black}{%
\special{pn 13}%
\special{pa 2900 900}%
\special{pa 2900 600}%
\special{fp}%
}}%
%
{\color[named]{Black}{%
\special{pn 13}%
\special{pa 3400 900}%
\special{pa 3400 600}%
\special{fp}%
}}%
%
{\color[named]{Black}{%
\special{pn 13}%
\special{ar 2150 600 250 250  3.1415927 6.2831853}%
}}%
%
{\color[named]{Black}{%
\special{pn 13}%
\special{ar 3150 600 250 250  3.1415927 6.2831853}%
}}%
%
{\color[named]{Black}{%
\special{pn 13}%
\special{pa 3500 1000}%
\special{pa 4000 1000}%
\special{fp}%
}}%
%
{\color[named]{Black}{%
\special{pn 13}%
\special{pa 2800 1000}%
\special{pa 2500 1000}%
\special{fp}%
}}%
%
{\color[named]{Black}{%
\special{pn 13}%
\special{pa 800 1000}%
\special{pa 1800 1000}%
\special{fp}%
}}%
%
{\color[named]{Black}{%
\special{pn 13}%
\special{ar 2160 700 110 110  0.0000000 6.2831853}%
}}%
%
{\color[named]{Black}{%
\special{pn 13}%
\special{ar 3160 700 110 110  0.0000000 6.2831853}%
}}%
\put(9.3000,-17.0000){\makebox(0,0)[lb]{$\cdots$}}%
%
{\color[named]{Black}{%
\special{pn 8}%
\special{ar 2150 920 250 50  6.2831853 6.2831853}%
\special{ar 2150 920 250 50  0.0000000 3.1415927}%
}}%
%
{\color[named]{Black}{%
\special{pn 8}%
\special{ar 2150 920 250 50  3.1415927 3.5415927}%
\special{ar 2150 920 250 50  3.7815927 4.1815927}%
\special{ar 2150 920 250 50  4.4215927 4.8215927}%
\special{ar 2150 920 250 50  5.0615927 5.4615927}%
\special{ar 2150 920 250 50  5.7015927 6.1015927}%
}}%
%
{\color[named]{Black}{%
\special{pn 8}%
\special{ar 3150 920 250 50  6.2831853 6.2831853}%
\special{ar 3150 920 250 50  0.0000000 3.1415927}%
}}%
%
{\color[named]{Black}{%
\special{pn 8}%
\special{ar 3150 920 250 50  3.1415927 3.5415927}%
\special{ar 3150 920 250 50  3.7815927 4.1815927}%
\special{ar 3150 920 250 50  4.4215927 4.8215927}%
\special{ar 3150 920 250 50  5.0615927 5.4615927}%
\special{ar 3150 920 250 50  5.7015927 6.1015927}%
}}%
\put(20.5000,-12.2000){\makebox(0,0)[lb]{$\partial_1S$}}%
\put(30.5000,-12.2000){\makebox(0,0)[lb]{$\partial_2S$}}%
\put(42.5000,-16.2000){\makebox(0,0)[lb]{$\partial_rS$}}%
\put(38.5000,-4.2000){\makebox(0,0)[lb]{$S^{\prime}$}}%
\put(4.1000,-9.0000){\makebox(0,0)[lb]{$\overbrace{\ \hspace{3cm}\ }$}}%
\put(10.0000,-7.0000){\makebox(0,0)[lb]{$g$}}%
\end{picture}%

\caption{capping}

\end{center}
\end{figure}
We denote $\pi:= \pi_1(S,*)$ and $\pi' := \pi_1(S,*)$ for any fixed $p \in S$. 
The groups
$\pi$ and $\pi'$ are finitely generated free groups. We can choose free
generator systems $\{\alpha_i, \beta_i\}^g_{i=1}\cup \{\gamma_j\}^{n
}_{j=1} \subset \pi$ and $\{\alpha'_i, \beta'_i\}^{g+n}_{i=1} \subset
\pi'$, such that each $\gamma_j$ is conjugate to the boundary loop of $\partial_jS$, and the inclusion homomorphism maps 
$$
\begin{cases}
\alpha_i \mapsto \alpha'_i,&(1 \leq i \leq g),\\
\beta_i \mapsto \beta'_i, &(1 \leq i \leq g),\\
\gamma_j \mapsto
\alpha'_{g+j}\beta'_{g+j}{\alpha'_{g+j}}^{-1}{\beta'_{g+j}}^{-1},
&(1 \leq j \leq n).
\end{cases}
$$
Then we have the following, which we need for the proof of Theorem \ref{N62DN}. 
\begin{lem}\label{N62inj}
\begin{enumerate}
\item The inclusion homomorphism
$
\widehat{K\pi_1(S,p)} \to \widehat{K\pi_1(S',p)}
$
is injective for any $p \in S$. 
\item The kernel of the inclusion homomorphism
$
\widehat{K\hat\pi}(S) \to \widehat{K\hat\pi}(S')
$
is $n$-dimensional, and spanned by $\{\vert\log\gamma_j\vert\}_{j=1}^n$.
\end{enumerate}
\end{lem}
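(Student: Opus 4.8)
The plan is to reduce both assertions to algebra in completed tensor algebras, via group-like expansions (Definition~\ref{33Mas}) together with the operator $N$ and the exact sequence of \S\ref{sec:4-3}, the extra ingredient being a \emph{weight filtration} trick. Fix a basepoint $*\in S$ and group-like expansions $\theta$ of $\pi:=\pi_1(S,*)$ and $\theta'$ of $\pi':=\pi_1(S',*)$, identifying $\widehat{K\pi}\cong\widehat{T}=\prod_m H_1(S;K)^{\otimes m}$ and $\widehat{K\pi'}\cong\widehat{T}'$; write $\iota\colon\widehat{T}\to\widehat{T}'$ for the map induced by $\imath$ in these coordinates. As $S$ is connected it suffices to treat one basepoint, the general case of (1) following by conjugation.

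For (1), equip $K\pi$ with the weight filtration in which each $\alpha_i,\beta_i$ has weight $1$ and each $\gamma_j$ has weight $2$. This filtration is equivalent to the $I$-adic one (each is trapped between two terms of the other), so it has the same completion $\widehat{K\pi}$, and, using the exponential expansion $\alpha_i\mapsto\exp[\alpha_i]$, $\beta_i\mapsto\exp[\beta_i]$, $\gamma_j\mapsto\exp[\gamma_j]$, its associated graded is the free associative algebra $T$ on $H_1(S;K)$ with $[\alpha_i],[\beta_i]$ in degree $1$ and $[\gamma_j]$ in degree $2$. With respect to this filtration on the source and the degree (= $I$-adic) filtration on $\widehat{T}'$, the inclusion-induced map $\widehat{K\pi}\to\widehat{K\pi'}$ is filtered: $\imath(\alpha_i)-1,\imath(\beta_i)-1$ have leading degree $1$, whereas $\imath(\gamma_j)-1=[\alpha'_{g+j},\beta'_{g+j}]-1$ has leading degree $2$. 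Its associated graded is the algebra homomorphism $T\to T'$ with $[\alpha_i]\mapsto[\alpha'_i]$, $[\beta_i]\mapsto[\beta'_i]$, $[\gamma_j]\mapsto[\alpha'_{g+j}][\beta'_{g+j}]-[\beta'_{g+j}][\alpha'_{g+j}]$, and this is injective because the subalgebra of $\widehat{T}'$ generated by $\{[\alpha'_i],[\beta'_i]:i\le g\}\cup\{[\alpha'_{g+j},\beta'_{g+j}]:1\le j\le n\}$ is free on those generators (the new generators involve pairwise disjoint pairs of variables, and a nonzero homogeneous element of a graded domain generates a free polynomial subalgebra). Since a filtered homomorphism out of a complete separated filtered module whose associated graded is injective is itself injective, $\widehat{K\pi}\to\widehat{K\pi'}$ is injective.

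For (2), Corollary~\ref{43Ntheta} and the naturality of $\lambda$ identify the inclusion $\widehat{K\hat{\pi}}(S)\to\widehat{K\hat{\pi}}(S')$ with the map $N(\widehat{T}_1)\to N(\widehat{T}'_1)$, $Nu\mapsto N(\iota u)$, which is well defined because $\iota$ is a unital algebra homomorphism and $\ker N=K1\oplus[\widehat{T},\widehat{T}]$ (Lemma~\ref{43exact}). Its kernel is therefore
$$
N\bigl(\{\,u\in\widehat{T}_1:\iota u\in[\widehat{T}',\widehat{T}']\,\}\bigr).
$$
It contains $|\log\gamma_1|,\dots,|\log\gamma_n|$: the inclusion-induced map sends $\log\gamma_j$ to $\log[\alpha'_{g+j},\beta'_{g+j}]$, which is the logarithm of a group commutator, hence a series of iterated Lie brackets whose degree-one part vanishes, so it lies in $[\widehat{T}',\widehat{T}']$; and these $n$ classes are linearly independent since their degree-one parts $[\gamma_1],\dots,[\gamma_n]$ are linearly independent in $H_1(S;K)$. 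To see there is nothing else, endow all the spaces with the weight filtration of (1) and pass to associated graded: the map becomes $Nu\mapsto N(\jmath u)$, where $\jmath\colon T\to T'$ is the substitution $[\gamma_j]\mapsto[\alpha'_{g+j},\beta'_{g+j}]$ of (1). One computes its kernel weight by weight using cyclic words: it is injective in weight $1$; in weight $2$ its kernel is spanned exactly by the one-letter classes $[\gamma_j]$ (every other weight-two cyclic word maps to a distinct nonzero cyclic word); and in weights $\ge 3$ it is injective because $\jmath$ admits a cyclic ``de-substitution'' left inverse $\Psi$ with $\Psi\jmath=2^{(\#\,\gamma\text{-letters})}\cdot{\rm id}$ on cyclic words, an identity valid for every cyclic word other than the $n$ one-letter words $[\gamma_j]$. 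Thus the graded kernel is $n$-dimensional; since the kernel of $\widehat{K\hat{\pi}}(S)\to\widehat{K\hat{\pi}}(S')$ is a closed, hence complete and separated, subspace whose associated graded embeds in this $n$-dimensional space, it has dimension at most $n$. Combined with the previous sentence, it equals the $n$-dimensional span of $\{|\log\gamma_j|\}_{j=1}^{n}$.

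The step I expect to be hardest is the weight-$\ge 3$ injectivity in (2), i.e.\ making the de-substitution $\Psi$ rigorous: one must check that in any cyclic word obtained by expanding $\jmath$ each ``high-index'' letter $[\alpha'_{g+j}]$ or $[\beta'_{g+j}]$ occurs inside an adjacent pair of matching index, and that the resulting decomposition of such a cyclic word into pairs is unambiguous modulo the cyclic symmetry — the only delicate case being words made up entirely of high-index letters, where competing parsings are cyclic rotates of one another and give the same value — so that $\Psi$ is a well-defined linear map satisfying the stated left-inverse identity. A secondary, essentially bookkeeping, point is to pin down the weight filtration on $\widehat{K\hat{\pi}}(S)$ and on $N(\widehat{T}_1)$ and verify strict compatibility of the maps with it, which is what legitimizes the chain $\dim(\ker)\le\dim({\rm gr}\,\ker)\le\dim(\ker\,{\rm gr})$ and the finiteness of these dimensions.
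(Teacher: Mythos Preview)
For part~(1), your approach is essentially the paper's: both introduce a weight grading with $[\alpha_i],[\beta_i]$ in degree $1$ and $[\gamma_j]$ in degree $2$, check that $\iota$ respects this against the ordinary degree on $\widehat{T}'$, and deduce injectivity from injectivity of the associated graded. The paper carries out the last step by decomposing $(H')^{\otimes n}$ according to the splitting $H'=H'_{(1)}\oplus H'_{(2)}$ rather than by invoking freeness of the generated subalgebra, but the content is the same.

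For part~(2), your route is genuinely different. You stay inside the combinatorics of cyclic words: identifying the map with $N(\widehat{T}_1)\to N(\widehat{T}'_1)$, $Nu\mapsto N(\iota u)$, and analyzing its associated graded via a de-substitution left inverse~$\Psi$. The paper instead uses Theorem~\ref{43isom} to interpret the map as
$H_1(S;\widehat{\mathcal{S}}^c(S))\to H_1(S';\widehat{\mathcal{S}}^c(S'))$,
factors through $H_1(S;\widehat{\mathcal{S}}^c(S'))$ (injective by part~(1)), and bounds the remaining kernel by computing
$H_2(S',S;\widehat{\mathcal{S}}^c(S'))\cong H^0(S'';\widehat{\mathcal{S}}^c(S'))\cong K^{n}$
via excision, Poincar\'e duality, and Proposition~\ref{32simple}. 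Linear independence of the $|\log\gamma_j|$ is obtained there from the action~(\ref{N62delta}) on transverse arcs $\delta_k$, rather than from degree-one parts. The trade-off: the paper's argument is short and conceptual but imports twisted homology; yours is elementary and self-contained, at the cost of the combinatorial work you flag.

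On that hardest step, your sketch is workable but should be sharpened. Two observations make it go through cleanly. First, for a cyclic word $v$ in the support of $\jmath(w)$, the boundaries of the ``high-index blocks'' are pinned down either by low-index letters or by a change of the index~$j$; within each such block the pairing is forced left-to-right, so from $v$ one recovers $w$ up to cyclic rotation and sign. Hence distinct cyclic words $w$ of weight $\geq 3$ have disjoint supports in $\jmath$, and it remains only to see each $\jmath(w)\neq 0$. Second, the sole case with genuinely ambiguous parsing is $v$ entirely in $\{A'_{g+j},B'_{g+j}\}$ with both parsing parities valid, which forces $v=[(A'_{g+j}B'_{g+j})^k]$; for odd $k$ this $v$ appears with coefficient $0$ in $\jmath([C_j^{\,k}])$ because $(ab)^k$ and $(ba)^k$ are cyclic rotates, while for even $k$ both parsings return the same cyclic word. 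With these two points the graded kernel sits entirely in weight~$2$, and your chain $\dim(\ker)=\dim\mathrm{gr}(\ker)\le\dim\ker(\mathrm{gr})=n$ finishes the argument.
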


\begin{proof}
(1) 
We write simply $H:= H_1(S; K)$, $H' := H_1(S';K)$, $A_i := [\alpha_i],
B_i := [\beta_i], C_j := [\gamma_j] \in H$, and $A'_i := [\alpha'_i],
B'_i := [\beta'_i] \in H'$. Then the sets $\{A_i, B_i\}^g_{i=1}\cup
\{C_j\}^{n}_{j=1} \subset H$ and $\{A'_i, B'_i\}^{g+n}_{i=1} \subset
H'$ are $K$-free bases of $H$ and $H'$, respectively. Let $\theta\colon \pi
\to \widehat{T}$ and $\theta'\colon \pi' \to \widehat{T}'$ be group-like
expansions, where $\widehat{T} = \prod^\infty_{m=0}H^{\otimes m}$ and
$\widehat{T}' = \prod^\infty_{m=0}(H')^{\otimes m}$. By the isomorphisms
(\ref{expansion}) $\theta\colon \widehat{K\pi} \cong \widehat{T}$ and
$\theta'\colon \widehat{K\pi'} \cong \widehat{T}'$, the inclusion homomorphism
induces an algebra homomorphism $\imath\colon \widehat{T} \to \widehat{T}'$
such that 
\begin{equation}
\begin{cases}
\imath(A_i) = A'_i + \mbox{higher terms}, &(1 \leq i \leq
g),\\
\imath(B_i) = B'_i + \mbox{higher terms}, &(1 \leq i \leq
g),\\ 
\imath(C_j) = [A'_{g+j}, B'_{g+j}] + \mbox{higher terms}, &(1 \leq j
\leq n).
\end{cases}
\label{N62iota}
\end{equation}
Hence it suffices to show any (continuous) algebra homomorphism $\imath\colon
\widehat{T} \to \widehat{T}'$ satisfying the condition (\ref{N62iota}) is
injective. \par
To prove this, we introduce some filtrations on the algebras
$\widehat{T}$ and $\widehat{T}'$. We have $H = H_{(1)}\oplus H_{(2)}$
where $H_{(1)}$ and $H_{(2)}$ are the linear spans of the sets $\{A_i,
B_i\}^g_{i=1}$ and $\{C_j\}^{n}_{j=1}$, respectively. We consider the
decreasing filtration on $H$ defined by $F_1H := H$ and $F_2H :=
H_{(2)}$. 
It induces a decreasing filtraion on the algebra $\widehat{T}$ such that 
$F_0\widehat{T}/F_1\widehat{T} = K$ and 
$$
F_n\widehat{T}/F_{n+1}\widehat{T} =
\bigoplus^n_{q=1}\bigoplus_{\delta_1+\cdots \delta_q=n}
H_{(\delta_1)}\otimes \cdots\otimes H_{(\delta_q)}
$$
for any $n\geq 1$. On $\widehat{T}'$ we introduce the filtration we
consider usually, $\widehat{T}'_n := \prod^\infty_{m=n}(H')^{\otimes m}$,
$n \geq 0$. We have 
$$
\widehat{T}'_n/\widehat{T}'_{n+1} =
\bigoplus_{\delta_1,\dots,\delta_n}
H'_{(\delta_1)}\otimes \cdots\otimes H'_{(\delta_n)},
$$
where $H'_{(1)}$ and $H'_{(2)}$ are the linear spans of the sets 
$\{A'_i, B'_i\}^g_{i=1}$ and $\{A'_{g+j}, B'_ {g+j}\}^{n}_{j=1}$, 
respectively. \par
The condition (\ref{N62iota}) implies $\imath(F_n\widehat{T})
\subset \widehat{T}'_n$ for any $n \geq 0$. The map $\imath$ induces 
an isomorphism $H_{(1)} \overset\cong\to H'_{(1)}$ and an injective map 
$H_{(2)} \to H'_{(2)}\otimes H'_{(2)}$ whose image is a direct summand of
the target. Hence the induced map 
$$
\imath_n\colon F_n\widehat{T}/F_{n+1}\widehat{T} \to
\widehat{T}'_n/\widehat{T}'_{n+1}
$$
is injective for any $n \geq 0$.\par
Now assume there exists a non-zero element $u$ of the map $\imath\colon
\widehat{T} \to \widehat{T}'$. Since $\bigcap^\infty_{n=1}F_n\widehat{T}
= 0$, we have $u \in F_n\widehat{T}\setminus F_{n+1}\widehat{T}$ for some
$n \geq 0$. On the other hand, we have $\imath_n(u\bmod
F_{n+1}\widehat{T}) = 0 \in \widehat{T}'_n/\widehat{T}'_{n+1}$. This
implies $u \in F_{n+1}\widehat{T}$, which contradicts $u \in
F_n\widehat{T}\setminus F_{n+1}\widehat{T}$. Hence the kernel of the map
$\imath\colon \widehat{T} \to \widehat{T}'$ is zeto. This proves the part
(1).\qed\par
(2) In view of Theorem \ref{43isom}, the inclusion homomorphism
$
\widehat{K\hat\pi}(S) \to \widehat{K\hat\pi}(S'),
$
is equivalent to the composite of  the
inclusion homomorphisms
\begin{eqnarray*}
&& H_1(S; \widehat{\mathcal{S}}^c(S)) \to H_1(S;
\widehat{\mathcal{S}}^c(S')), \quad \mbox{and}\\
&& H_1(S; \widehat{\mathcal{S}}^c(S')) \to 
H_1(S'; \widehat{\mathcal{S}}^c(S')).
\end{eqnarray*}
The former is injective from (1) and 
$H_2(S; \widehat{\mathcal{S}}^c(S')/\widehat{\mathcal{S}}^c(S)) = 0$. \par
In order to compute the dimension of the kernel of the latter, it suffices to show 
\begin{equation}
H_2(S',S; \widehat{\mathcal{S}}^c(S')) \cong K^{\oplus n}. 
\label{N62H2}
\end{equation}
By the excision and the Poincar\'e duality theorem, the second homology
group is isomorphic to $H^0(S''; \widehat{\mathcal{S}}^c(S'))$. Each
connected component ${S''}_j$, $1 \leq j \leq n$, of $S''$ is of genus
$1$, and so has two distinct non-separating simple closed curves. Hence,
by Proposition \ref{32simple}, we have $H^0({S''}_j;
\widehat{\mathcal{S}}^c(S')) = K[[A'_{g+j}]] \cap K[[B'_{g+j}]] = K$, 
$1 \leq j \leq n$. This means (\ref{N62H2}).\par
Choose a group-like expansion $\theta': \pi_1(S', *) \to \widehat{T}'$. 
Then $N\theta'(\imath\log\gamma_j) = 0$ for any $1 \leq j \leq n$, since $\theta'(\imath\log\gamma_j) = \theta'(\log\imath\gamma_j)$ is 
a Lie-like element of $\widehat{T}'$, and $\imath\gamma_j = \alpha'_{g+j}\beta'_{g+j}{\alpha'_{g+j}}^{-1}{\beta'_{g+j}}^{-1}$ is null-homologous in the surface $S'$. From Corollary \ref{43Ntheta} this means that the set $\{\vert\log\gamma_j\vert\}_{j=1}^n$ is included in the kernel of the inclusion homomorphism. \par
Now choose points  $*_j \in \partial_jS$ for $0 \leq j \leq n$, and simple paths $\delta_j$ on $S$ from $*_0$ to $*_j$ for $1 \leq j \leq n$. By a similar computation to that in the proof of Theorem \ref{42logDT}, we have 
\begin{equation}
\sigma(\vert\log\gamma_j\vert)(\delta_k) =
\begin{cases} 
\pm \delta_j, &\mbox{if $j=k$}\\
0, &\mbox{otherwise.}
\end{cases}
\label{N62delta}
\end{equation}
In particular, the set $\{\vert\log\gamma_j\vert\}_{j=1}^n$ is $K$-linear independent. This proves the part (2).
\end{proof}

\begin{proof}[Proof of Theorem \ref{N62DN}] Recall $S'' =
\overline{S'\setminus S}$. We denote $E'' := E\setminus\partial_rS\subset S''$. 
By the assumption on $E$, each connected component of $S\cap S''$ has some
point in $E''$. Hence, from Proposition \ref{32vKT}, $K\CC(S',E)$ is
generated by $K\CSE$ and $K\CC(S'', E'')$. \par
Let $u \in \widehat{K\hat\pi}(S)$ satisfy $\sigma(u) = 0 \in 
\Der(\widehat{K\CSE})$. Clearly $\sigma(u) = 0 \in
\Der(\widehat{K\CC(S'', E'')})$. Hence we have $\sigma(u) = 0 \in
\Der(\widehat{K\CC(S',E)})$. By the injectivity of $\sigma$ for $S' =
\Sigma_{g+n,1}$, we have $u=0 \in \widehat{K\hat\pi}(S')$. Using Lemma
\ref{N62inj}(2), we find $u \in \widehat{K\hat\pi}(S)$ is a linear combination of $\{\vert\log\gamma_j\vert\}_{j=1}^n$. From (\ref{N62delta}) and the assumption $\sigma(u) = 0$, we have $u = 0 \in \widehat{K\hat\pi}(S)$. 
This proves completes the proof of Theorem \ref{N62DN}. 
\end{proof}

\subsection{The geometric Johnson homomorphism}
\label{tau}

Let $S$ be an oriented surface, and $E$ a non-empty closed
subset of $S$ with the property
$E\setminus\partial S$ is closed in $S$. 
The group $\mathcal{M}(S,E)$ acts on the $K$-SAC 
$K\CSE/F_{2}K\CSE$ in an obvious way.
We define {\it the Torelli group} $\ISE$ of the pair $(S,E)$ to be the
kernel of this action 
$$
\ISE := \Ker(\mathcal{M}(S,E)\to \Aut K\CC/F_{2}K\CC),
$$
which is independent of the choice of $K$, a field of characteristic $0$. 
When $S = \Sigma_{g,1}$ and $E \subset \partial\Sigma_{g,1}$,
$\ISE = \mathcal{I}_{g,1}$, the Torelli group of genus $g$ with $1$
boundary component. But, in general, as will be shown later, 
$\ISE$ is the ``smallest'' Torelli group in the sense of Putman \cite{P}.
It was studied also in \cite{JII} and \cite{vB}. 
In order to obtain Putman's Torelli group \cite{P} of other kinds, it
seems to be needed to change the filtration on $K\CSE$ to that induced
from a capping by surfaces of positive genus. 
\par
From Lemma \ref{13conv}, the
exponential $\exp(D)$ converges as an element in $\ASE$ for any 
derivation $D \in  \sigma(L^+(S,E))$.
Clearly the image $\sigma(L^+(S,E))$ is a Lie subalgebra of 
$\Der\widehat{K\CSE}$. 
\begin{lem}\label{62exp} 
The exponential 
$$
\exp\colon \sigma(L^+(S,E)) \to \ASE
$$
is injective, and its image is a subgroup of $\ASE$. 
\end{lem}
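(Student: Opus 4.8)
The plan is to get injectivity straight from Proposition \ref{13exp}(3) and to establish the subgroup property through the Baker--Campbell--Hausdorff (BCH) series, run on the Lie algebra $L^+(S,E)$ and then pushed down by $\sigma$.

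First I would observe that every $D\in\sigma(L^+(S,E))$ lies in $F_1\Der\widehat{K\CSE}$: by definition $L^+(S,E)\subset\widehat{K\hat\pi}(S^*)(3)$, and $\sigma(\widehat{K\hat\pi}(S^*)(3))\subset F_1\Der\widehat{K\CSE}$ by Theorem \ref{33filt} together with Lemma \ref{13der}. Hence each such $D$ satisfies conditions (i)--(iii) of Lemma \ref{13conv} (the nilpotency of $D$ on ${\rm gr}_1\widehat{K\CSE}$ being automatic, with $\nu=1$, since $D(F_1)\subset F_2$), and $\exp(D)\in\ASE$, as already recorded before the lemma. Injectivity then follows at once: if $D,D'\in\sigma(L^+(S,E))$ and $\exp(D)=\exp(D')$, Proposition \ref{13exp}(3) gives $D=D'$.

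For the subgroup claim, the identity is $\exp(0)=\exp(\sigma(0))$, and for $D=\sigma(u)$ with $u\in L^+(S,E)$ we have $-D=\sigma(-u)\in\sigma(L^+(S,E))$ and $[D,-D]=0$, so Proposition \ref{13exp}(1) yields $\exp(D)^{-1}=\exp(-D)$, again in the image. The remaining point is closure under composition, and here I would argue as follows. The set $L^+(S,E)$ is a closed Lie subalgebra of $\widehat{K\hat\pi}(S^*)$: it is a Lie subalgebra as the intersection of $\widehat{K\hat\pi}(S^*)(3)$ (a Lie subalgebra by Theorem \ref{33bracket}) with $\sigma^{-1}(\Der_\Delta\widehat{K\CSE})$ (a Lie subalgebra since $\sigma$ is a Lie algebra homomorphism and $\Der_\Delta\widehat{K\CSE}$ a Lie subalgebra); and it is closed because $\widehat{K\hat\pi}(S^*)(3)$ is closed in the complete separated filtered space $\widehat{K\hat\pi}(S^*)$ while $\sigma^{-1}(\Der_\Delta\widehat{K\CSE})$ is the preimage of a closed subspace under the continuous map $\sigma$. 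Given $u,v\in L^+(S,E)$, Theorem \ref{33bracket} shows that the part of the BCH series $u*v=u+v+\tfrac12[u,v]+\cdots$ that is homogeneous of degree $m$ in $u,v$ lies in $\widehat{K\hat\pi}(S^*)(m+2)$, so the partial sums lie in $L^+(S,E)$ and are Cauchy, whence $u*v\in L^+(S,E)$. Finally, $\sigma$ being a continuous Lie algebra homomorphism, $\sigma(u*v)=\sigma(u)*\sigma(v)$, and in the complete separated filtered associative algebra ${\rm End}(\widehat{K\CSE}(p_0,p_1))$, filtered by $F_n:=\{\varphi\mid\varphi(F_l\widehat{K\CSE}(p_0,p_1))\subset F_{l+n}\widehat{K\CSE}(p_0,p_1)\ \forall l\geq 0\}$, the standard pro-nilpotent identity gives $\exp(\sigma(u))\exp(\sigma(v))=\exp(\sigma(u)*\sigma(v))=\exp(\sigma(u*v))$. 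Since $\sigma(u*v)\in\sigma(L^+(S,E))$, the image is closed under composition, hence is a subgroup of $\ASE$.

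The part I expect to need the most care is the topological bookkeeping around the BCH argument: confirming that $\Der_\Delta\widehat{K\CSE}$ is a closed subspace of $\Der\widehat{K\CSE}$ and that $\sigma$ is continuous for the relevant filtrations, so that $L^+(S,E)$ is genuinely closed and the BCH limit stays inside it, and checking that ${\rm End}(\widehat{K\CSE}(p_0,p_1))$ together with the filtration above really is a complete separated filtered algebra in which $\exp$, $\log$ and the BCH identity behave as expected. The rest is a routine application of Proposition \ref{13exp} and Theorems \ref{33filt}, \ref{33bracket}.
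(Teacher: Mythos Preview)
Your proof is correct and follows essentially the same route as the paper's: injectivity via Proposition~\ref{13exp}(3), and the subgroup property via convergence of the Baker--Campbell--Hausdorff series in $L^+(S,E)$, pushed down by $\sigma$. The paper's proof is terser---it simply asserts that the Hausdorff series of $u,u'\in L^+(S,E)$ converges in $L^+(S,E)$ once $\widehat{K\hat\pi}(S^*)$ is complete---whereas you spell out why $L^+(S,E)$ is closed and why the degree-$m$ part of the BCH series lands in $\widehat{K\hat\pi}(S^*)(m+2)$; these are precisely the details the paper leaves implicit, so your extra care is well placed rather than a deviation.
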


\begin{proof} The injectivity follows from Proposition \ref{13exp} (3). 
As was shown in Proposition \ref{13exp}, $(\exp D)^{-1} = \exp(-D) \in
\ASE$ for $D \in \sigma(L^+(S,E))$. From the definition,
$\sigma(L^+(S,E)) \subset F_1\Der\widehat{K\CC}$. Moreover
$\widehat{K{\hat\pi}}(S^*)$ is complete 
with respect to the filtration
$\{\widehat{K{\hat\pi}}(S^*)(n)\}_{n\geq1}$. 
Hence the Hausdorff series $u*u'$ of $u$ and $u'
\in L^+(S,E)$ converges as an element of $L^+(S,E)$. 
This implies
$\exp\sigma(L^+(S,E))$ is a subgroup of $\ASE$. 
\end{proof}
Thus the exponential $\exp$ induces a group structure on the set
$\sigma(L^+(S,E))$, which is a pro-nilpotent group. \par

For the rest of this subsection, 
we suppose $S$ is compact with non-empty bundary, 
$E \subset \partial S$, and each component of $\partial S$ has an 
element of $E$. From Theorems \ref{31DN} and \ref{N62DN}, 
both of the Dehn-Nielsen homomorphism $\widehat{\sf DN}\colon 
\mathcal{M}(S,E)\to \ASE$ and the homomorphism $\sigma\colon L^+(S,E) 
\to \Der(\widehat{K\CSE})$ are injective. In particular, the Lie algebra 
$L^+(S,E)$ has the structure of a pro-nilpotent group. \par
Assume the inclusion
\begin{equation}
\widehat{\sf DN}(\ISE) \subset \exp\sigma(L^+(S,E))
\label{62incl}
\end{equation}
holds. 
Then we have a unique injective map
$$
\tau\colon \ISE \to L^+(S,E)
$$
such that $\widehat{\sf DN}\vert_\ISE = \exp\circ\sigma\circ\tau\colon
\ISE\to\ASE$, 
which is a group homomorphism with respect to the pro-nilpotent group
structure on $L^+(S,E)$. As we will see later, the homomorphism $\tau$ is a 
generalization of the higher Johnson homomorphism of $\mathcal{I}_{g,1}$ 
introduced by Johnson \cite{JS} and improved by Morita \cite{MoAQ}. 
So we call it {\it the geometric Johnson homomorphism}
of the Torelli group $\mathcal{I}(S, E)$.
At present we have no suitable presentation of the Lie algebra
$L^+(S,E)$. In order to obtain it, we need to generalize 
Magnus expansions \cite{Ka} \cite{Mas} of free groups to those for
groupoids. \par

The Zariski closure of the image of $\tau$ does not equal the whole 
$L^+(S,E)$. This fact was discovered by Morita \cite{MoAQ}. For 
recent progress on this problem, see \cite{ES} and its reference. From
Theorem \ref{54eight}, 
the generalized Dehn twist along a null-homologous
figure eight is in the complement of the closure. 
In our forthcoming paper \cite{KK4} we will give a candidate for the
defining equation of the Zariski closure. 
\par

Now suppose $S = \Sigma_{g,r}$, $r\geq 1$, and each component of
$\partial S$ has a unique point of $E$. 
Then Putman \cite{P} gave an explicit
generator system of $\ISE$ for this $(S, E)$.  
Using this remarkable theorem together with our formula for Dehn twists
(Theorem \ref{42logDT}), we prove the inclusion (\ref{62incl}) in this
case. \par

To state Putman's theorem, we number the boundary components
as $\partial S = \coprod^r_{i=1}\partial_iS$, and define 
a partition $P$ of $\pi_0(\partial S)$ by $P =
\{\{1,\dots, r\}\}$. Our $\ISE$ is identified 
with Putman's $\mathcal{I}(S,P)$ for this $P$, as follows. 
For this partition $P$, Putman's $\partial H^P_1(S)$
vanishes,  because $\sum^r_{i=1}[\partial_iS] = 0 \in H_1(S;
\mathbb{Z})$. Hence we have $H^P_1(S) = H_1(S, E; \mathbb{Z})$. By
\cite{P} Theorem 3.3, the group $\mathcal{I}(S,P)$ is exactly the
subgroup of $\mathcal{M}(S,E)$ which acts trivially on $H_1(S,E;
\mathbb{Z})$. On the other hand, if
$*_0\neq *_1 \in E$, we have $\CSE^\abel(*_0,*_1) = \mathbb{Z} \oplus
H_1(S,\{*_0,*_1\}; \mathbb{Z})$ as $\mathcal{M}(S,E)$-modules.
Thus a
mapping class $\varphi \in \mathcal{M}(S,E)$ acts trivially on
$K\CC/F_2K\CC$ if and only if it acts trivially on $H_1(S, E;
\mathbb{Z})$. In other words, our $\ISE$ equals Putman's
$\mathcal{I}(S,P)$ with $P = \{\{1,\dots,r\}\}$. Putman proved the following theorems.
\begin{thm}[Putman \cite{P} Theorem 1.3]
\label{putman}
For any partitioned surface $(\Sigma_{g,r}, P)$ with $g \geq 1$, the
group $\mathcal{I}(\Sigma_{g,r}, P)$ is generated by twists about
$P$-separating curves and twists about $P$-bounding pairs. 
\end{thm}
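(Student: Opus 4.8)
This is Putman's theorem \cite{P}, so the plan is to reproduce the line of argument one would use to prove it: an induction on the complexity of the partitioned surface $(\Sigma_{g,r},P)$ (say on $r$, or on $3g-3+r$, ordered so that cutting decreases it), anchored by Johnson's generation theorems for the classical Torelli groups. The plan relies on two structural ingredients: a Birman-type exact sequence relating $\mathcal{I}(\Sigma_{g,r},P)$ to the Torelli group of a simpler partitioned surface together with a point-pushing subgroup, and the connectivity of an auxiliary complex of simple closed curves adapted to $P$, which will let me patch local generation statements into a global one.

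For the base case I would take $r\le 1$, where the partition is trivial and $\mathcal{I}(\Sigma_{g,r},P)$ is the ordinary Torelli group $\mathcal{I}_g$ or $\mathcal{I}_{g,1}$: for $g\ge 3$ Johnson's theorem \cite{JS} gives generation by bounding-pair maps, for $g=2$ I would invoke generation by separating twists (Mess, Powell), and for $g=1$ the group is generated by the boundary Dehn twist; on a surface with at most one boundary component every such curve or pair is automatically a $P$-separating curve or a $P$-bounding pair. For the inductive step, with $r\ge 2$, I would use $g\ge 1$ to choose a non-separating simple closed curve $\gamma$ (or a $P$-separating curve where the partition forces it), cut $\Sigma_{g,r}$ along $\gamma$, and feed the resulting surface — of strictly smaller complexity, carrying the induced partition — into the inductive hypothesis. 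The Birman exact sequence then lets me assume, after multiplying a given $f\in\mathcal{I}(\Sigma_{g,r},P)$ by a product of twists supported on the cut surface (which are handled by induction), that $f$ stabilizes $\gamma$ and acts near $\gamma$ as a point-push.

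The remaining point-pushes I would handle by Johnson's classical computation: a push of a boundary component (or of a genus-one handle) around a loop $\ell$ lies in the Torelli group exactly when $\ell$ vanishes in the partitioned first homology, and in that case lantern and chain relations express the push explicitly as a product of inverses of bounding-pair maps and separating twists whose curves are again $P$-bounding pairs and $P$-separating curves. The hard part will not be this group-theoretic core but the surrounding bookkeeping: running the induction uniformly over all partitions $P$ forces one to keep track of the partitioned structure on every piece obtained by cutting — in particular to retain genus $\ge 1$ on the pieces that carry the inductive hypothesis, and to treat the genus-zero pieces, where the relevant Torelli group is of braid type and its generators must themselves be recognized as products of the allowed twists — and separately to prove the connectivity of the $P$-adapted curve complex that glues the local analyses together. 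I expect the connectivity statement and the partition bookkeeping to be where essentially all the work lies, as they are in \cite{P}.
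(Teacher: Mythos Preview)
The paper does not prove this statement at all: it is quoted verbatim as Putman's Theorem 1.3 from \cite{P} and used as a black box to establish the inclusion (\ref{62incl}). So there is no ``paper's own proof'' to compare against; the intended answer here is simply a citation.

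Your sketch is a plausible outline of how one might approach Putman's result --- induction on complexity, Birman exact sequences, Johnson's generation theorems for the base case, and connectivity of an adapted curve complex --- and indeed Putman's actual argument in \cite{P} does hinge on the connectivity of certain complexes of $P$-separating curves together with stabilizer computations. But in the context of the present paper no such argument is expected: the theorem is an imported result, and the correct ``proof'' is the reference to \cite{P}.
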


\begin{thm}[Putman \cite{P} Theorem 1.5]
\label{putman0}
For any genus $0$ partitioned surface $(\Sigma_{0,r}, P)$,
the group $\mathcal{I}(\Sigma_{0,r}, P)$ is generated
by twists about $P$-separating curves, twists about $P$-bounding pairs,
and commutators of simply intersecting pairs.
\end{thm}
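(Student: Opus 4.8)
Theorem \ref{putman0} is quoted verbatim from \cite{P}, so in the present paper it is invoked as a black box; the following is merely a sketch of how one would prove it. The plan is to argue by induction on the complexity of the genus $0$ partitioned surface $(\Sigma_{0,r},P)$, measured by $r$ together with the coarseness of $P$. The base cases are the surfaces with few boundary components: for $r\leq 3$ the group $\mathcal{I}(\Sigma_{0,r},P)$ is trivial or free abelian generated by boundary-parallel data, and in the first genuinely nontrivial cases one computes $\mathcal{I}(\Sigma_{0,r},P)$ directly and checks that commutators of simply intersecting pairs actually occur and are not products of twists about $P$-separating curves and $P$-bounding pairs alone --- this is the phenomenon absent in the positive-genus statement of Theorem \ref{putman}.

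For the inductive step I would cap one boundary component of $\Sigma_{0,r}$ by a once-punctured disk and then forget the puncture, which produces a partitioned surface of strictly smaller complexity together with the two associated Birman-type exact sequences: the first has central kernel generated by the boundary twist, the second has point-pushing kernel isomorphic to the fundamental group of the capped-off surface. Given $\varphi\in\mathcal{I}(\Sigma_{0,r},P)$, one first uses the claimed generators to reduce to the case where the image of $\varphi$ in the smaller partitioned Torelli group is trivial --- this step needs the verification that capping sends each of the three generator families to products of generators of the same three types, together with the inductive hypothesis. One is then left to show that every element of the point-pushing kernel that lies in $\mathcal{I}(\Sigma_{0,r},P)$, and likewise the generator of the central kernel, is a product of twists about $P$-separating curves, twists about $P$-bounding pairs, and simply-intersecting-pair commutators.

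The main obstacle is this point-pushing analysis, and it is exactly where the genus $0$ case departs from positive genus: in positive genus there is enough room to trade a simply-intersecting-pair commutator for a product of bounding-pair twists, while in genus $0$ such commutators are independent and must be retained. Making the step rigorous calls for a normal form for the point-pushing mapping classes that are Torelli, plus careful bookkeeping of how the partition $P$ restricts under the capping, so that one can identify precisely which point-pushing classes arise and exhibit each as an explicit word in the three generator families; once that dictionary is established, the induction closes by the standard diagram chase through the Birman sequences.
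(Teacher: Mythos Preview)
You correctly identify that the paper does not prove Theorem~\ref{putman0}: it is quoted from \cite{P} and used as a black box, so there is no proof in the paper to compare against. Your supplementary sketch of an inductive argument via capping and the Birman exact sequences is a reasonable outline of how such a result is established, but it lies outside the scope of the present paper.
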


For our $P$, a $P$-separating curve is a simple closed curve $C$ with
$[C] = 0 \in H_1(S; \mathbb{Z})$, and a $P$-bounding pair is a pair of
disjoint, non-isotopic simple closed curves $C_1$ and $C_2$ with
$\pm[C_1] = \pm[C_2] \in H_1(S; \mathbb{Z})$. 
The main theorem in this subsection is 

\begin{thm}
The inclusion {\rm (\ref{62incl})} holds for any $g \geq 0$ and $r \geq 1$.
\end{thm}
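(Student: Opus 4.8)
The plan is to combine Putman's description of the generators of $\ISE=\mathcal{I}(S,P)$ for $P=\{\{1,\dots,r\}\}$ (Theorems \ref{putman} and \ref{putman0}) with our formula for Dehn twists (Theorem \ref{42logDT}), using that $\exp\sigma(L^+(S,E))$ is a subgroup of $\ASE$ (Lemma \ref{62exp}): it then suffices to show that $\widehat{\sf DN}$ carries each Putman generator into $\exp\sigma(L^+(S,E))$. Note that $S^*=S$ here because $E\subset\partial S$. I would first record the elementary fact that, since $L(t)=\tfrac12(t-1)^2+O((t-1)^3)$, the degree-$2$ symbol of $L(C)$ in ${\rm gr}_2\widehat{K\hat{\pi}}(S)$ equals $[C]^{\otimes 2}$; combined with Lemma \ref{51sLC} (which gives $\sigma(L(C))\in\Der_\Delta\widehat{K\CSE}$ for every immersed loop $C$), this shows $L(C)\in L^+(S,E)$ whenever $[C]=0\in H_1(S;\mathbb{Z})$, and more generally that $L(C_1)-L(C_2)\in L^+(S,E)$ whenever $[C_1]=\pm[C_2]$ (squaring kills the sign).

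For a $P$-separating curve $C$ (i.e.\ $[C]=0$), Theorem \ref{42logDT} gives $\widehat{\sf DN}(t_C)=\exp(\sigma(L(C)))$ with $L(C)\in L^+(S,E)$, so $\widehat{\sf DN}(t_C)\in\exp\sigma(L^+(S,E))$. For a $P$-bounding pair $\{C_1,C_2\}$ the representatives are disjoint, so the Goldman bracket $[L(C_1),L(C_2)]$ vanishes in $\widehat{K\hat{\pi}}(S)$; since $\sigma(L(C_1))$ and $-\sigma(L(C_2))$ satisfy the hypotheses of Lemma \ref{13conv} by Lemma \ref{51ft}(3), Proposition \ref{13exp}(1) together with Theorem \ref{42logDT} yields
\[
\widehat{\sf DN}(t_{C_1}t_{C_2}^{-1})=\exp(\sigma(L(C_1)))\,\exp(-\sigma(L(C_2)))=\exp(\sigma(L(C_1)-L(C_2))),
\]
and $L(C_1)-L(C_2)\in L^+(S,E)$ because $[C_1]=\pm[C_2]$; hence $\widehat{\sf DN}(t_{C_1}t_{C_2}^{-1})\in\exp\sigma(L^+(S,E))$.

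The remaining generators, commutators $[t_{C_1},t_{C_2}]$ of simply intersecting pairs, occur only when $g=0$ and constitute the main difficulty. Writing $C_2':=t_{C_1}(C_2)$, the relation $t_{C_1}t_{C_2}t_{C_1}^{-1}=t_{C_2'}$ and Theorem \ref{42logDT} turn the group commutator into
\[
\widehat{\sf DN}([t_{C_1},t_{C_2}])=\exp(\sigma(L(C_2')))\,\exp(-\sigma(L(C_2))),
\]
and the simply-intersecting-pair hypothesis $C_1\cdot C_2=0$ forces $[C_2']=[C_2]$. Hence $L(C_2')-L(C_2)\in\widehat{K\hat{\pi}}(S)(3)$ (the degree-$2$ symbols coincide) and, by Theorem \ref{33bracket}, also $[L(C_2'),L(C_2)]\in\widehat{K\hat{\pi}}(S)(3)$ (its degree-$2$ symbol is the graded Goldman bracket of $[C_2']^{\otimes2}$ with the equal tensor $[C_2]^{\otimes2}$, hence $0$); consequently every term of the Baker--Campbell--Hausdorff series computing the logarithm of the product above is an iterated Goldman bracket lying in $\widehat{K\hat{\pi}}(S)(3)$. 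To see that this series converges there, I would use that ${\rm ad}(\sigma(L(C_2')))$ is topologically nilpotent on $\Der\widehat{K\CSE}$: its degree-$0$ symbol acts on ${\rm gr}\,\widehat{K\CSE}$ through ${\rm ad}(\overline{\sigma}([C_2']^{\otimes2}))$, the derivation $\overline{\sigma}([C_2']^{\otimes2})$ being square-zero on ${\rm gr}_1\widehat{K\CSE}$ by Lemma \ref{34square}(2) and therefore nilpotent on each ${\rm gr}_n$ by condition (C2), so that ${\rm ad}(\sigma(L(C_2')))^k$ eventually pushes any element into arbitrarily deep filtration. The resulting element $z\in\widehat{K\hat{\pi}}(S)(3)$ satisfies $\sigma(z)\in\Der_\Delta\widehat{K\CSE}$ (a closed Lie subalgebra), so $z\in L^+(S,E)$ and $\widehat{\sf DN}([t_{C_1},t_{C_2}])=\exp(\sigma(z))\in\exp\sigma(L^+(S,E))$.

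Combining the three cases, $\widehat{\sf DN}$ maps every Putman generator of $\ISE$ into the group $\exp\sigma(L^+(S,E))$, which proves the inclusion. The delicate point is entirely the $g=0$ case of simply intersecting pairs: there one must control an infinite Baker--Campbell--Hausdorff series, for which the topological nilpotency of ${\rm ad}(\sigma(L(C_i)))$ and the behaviour of the associated graded of the Goldman bracket are the essential inputs; for $g\ge1$ Putman's theorem uses only separating twists and bounding-pair maps, and the argument is correspondingly short.
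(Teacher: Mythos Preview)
Your proof is correct and follows the same overall architecture as the paper: reduce to Putman's generators, invoke Theorem~\ref{42logDT}, and use that $\exp\sigma(L^+(S,E))$ is a group. Your treatment of $P$-separating curves and $P$-bounding pairs is essentially identical to the paper's.

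The genuine difference is in the simply intersecting pair case. The paper works directly with the four-fold Hausdorff product $L(C_1)\ast L(C_2)\ast(-L(C_1))\ast(-L(C_2))$ and proves by an explicit intersection-point computation (its Lemma~\ref{63pair}) that $[L(C_1),L(C_2)]\in\widehat{K\hat\pi}(S)(3)$. You instead rewrite $[t_{C_1},t_{C_2}]=t_{C_2'}t_{C_2}^{-1}$ with $C_2'=t_{C_1}(C_2)$ and work with the two-fold product $L(C_2')\ast(-L(C_2))$; the key filtration facts then drop out of the homological identity $[C_2']=[C_2]$ (namely $L(C_2')-L(C_2)\in\widehat{K\hat\pi}(S)(3)$, whence $[L(C_2'),L(C_2)]=[L(C_2')-L(C_2),L(C_2)]\in\widehat{K\hat\pi}(S)(3)$ by Theorem~\ref{33bracket}). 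This is a pleasant shortcut: it replaces the hands-on computation of Lemma~\ref{63pair} by a one-line degree-$2$ symbol argument, at the cost of introducing the auxiliary (but still simple) curve $C_2'$.

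The convergence step is where both arguments need the same extra input, and your sketch should be tightened slightly. Since $L(C_2'),L(C_2)\in\widehat{K\hat\pi}(S)(2)$ and brackets of level-$2$ elements only stay in level $2$, knowing $[L(C_2'),L(C_2)]\in\widehat{K\hat\pi}(S)(3)$ alone does not force higher BCH terms to go to zero. What is actually used (just as in the paper) is that for each simple $C$ one has ${\rm ad}(L(C))^{n+1}(\widehat{K\hat\pi}(S)(n))\subset\widehat{K\hat\pi}(S)(n+1)$; this follows from Lemma~\ref{51ft}(3), the estimate~(\ref{13increase}), and the identity $[u,|v|]=|\sigma(u)(v)|$, and it applies equally to $C_2'$ since $C_2'$ is simple. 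Combining this nilpotency with the commutator estimate (so that, on each graded piece, ${\rm ad}(L(C_2'))$ and ${\rm ad}(L(C_2))$ induce commuting nilpotent operators) gives that any word of length $2n+1$ in these adjoints maps $\widehat{K\hat\pi}(S)(n)$ into $\widehat{K\hat\pi}(S)(n+1)$, exactly as in the paper's endgame. Your phrasing in terms of ${\rm ad}(\sigma(L(C_2')))$ acting on $\Der\widehat{K\CSE}$ is a little oblique; the estimate you need is on $\widehat{K\hat\pi}(S)$ itself, via the relation $[u,|v|]=|\sigma(u)(v)|$.
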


\begin{proof} In view of Putman's theorems, it suffices to show the
$\widehat{\sf DN}$-images of twists about $P$-separating curves, 
twists about $P$-bounding pairs 
and commutators of simply intersecting pairs 
are in $\exp(\sigma(L^+(S,E)))$. \par
Fix an element $* \in E$. 
Let $C$ be a $P$-separating curve represented by some $x \in \pi_1(S,
*)$. Since $\pm[x] = \pm[C] = 0 \in H_1(S; \mathbb{Z})$, $x-1 \in
I\pi_1(S,*)^2$. Hence we have $L(x) \in \widehat{I\pi_1(S,*)^4}$, and
so $L(C) \in \widehat{K{\hat\pi}}(S)(4)$. It follows from Theorem
\ref{42logDT} that $\widehat{\sf DN}(t_C) = \exp(\sigma(L(C))) \in
\exp(\sigma(L^+(S,E)))$. \par

Let $C_1$ and $C_2$ form a $P$-bounding pair. Choose $x_1$ and $x_2 \in
\pi_1(S,*)$ suth that $C_1 = \vert {x_1}^{\pm1}\vert$, $C_2 = \vert
{x_2}^{\pm1}\vert$ and $[x_1] = [x_2] \in H_1(S; \mathbb{Z})$. We have 
$x_2 = x_1z$ for some $z \in [\pi_1(S,*), \pi_1(S,*)]$. Since $z-1
\in I\pi_1(S,*)^2$, we have $(x_2-1)^2 - (x_1-1)^2 = x_1(z-1)x_1(z-1) +
x_1(z-1)(x_1-1) + (x_1-1)x_1(z-1) \in I\pi_1(S,*)^3$, while $L(x_1) -
\frac12(x_1-1)^2$ and $L(x_2) -\frac12(x_2-1)^2$ are in
$\widehat{I\pi_1(S,*)^3}$. Hence $L(x_2) - L(x_1) \in
\widehat{I\pi_1(S,*)^3}$, 
and so $L(C_2) - L(C_1) \in \widehat{K{\hat\pi}}(S)(3)$.
Since $C_1\cap C_2= \emptyset$, we have $[L(C_1), L(C_2)] = 0$. 
It follows from Theorem
\ref{42logDT} $\widehat{\sf DN}({t_{C_1}}^{-1}t_{C_2}) =
\exp(\sigma(L(C_2)-L(C_1))) \in\exp(\sigma(L^+(S,E)))$. \par

Finally let $\{C_1, C_2\}$ be a pair of simple closed curves 
in $S$ whose geometric intersection number is $2$ and 
whose algebraic intersection number is $0$. 
Then

\begin{lem}\label{63pair} The Lie bracket of $L(C_1)$ and 
$L(C_2)$ is in $\widehat{K\hat\pi}(3)$,
$$
[L(C_1), L(C_2)] \in \widehat{K\hat\pi}(3).
$$
\end{lem}
\begin{proof}
Choose oriented simple free loops $\gamma$ and $\delta$ 
with minimal intersections such that
$C_1 = \gamma^{\pm1}$ and $C_2 = \delta^{\pm1}$. 
Since $L(C_1) - \frac12 (\gamma-1)^2$ and 
$L(C_2) - \frac12(\delta-1)^2 \in \widehat{K\hat\pi}(3)$, 
it suffices to show 
$$
[(\gamma-1)^2, (\delta-1)^2] \in K\hat\pi(3).
$$
Let $p$ and $q$ be the positive and the negative 
intersection points of $\gamma$ and $\delta$, respectively.
We remark $\gamma_q = {\gamma_{pq}}^{-1}\gamma_p\gamma_{pq}$ 
and $\delta_q = {\delta_{pq}}^{-1}\delta_p\delta_{pq}$, and denote 
$z_p := \gamma_{pq}{\delta_{pq}}^{-1} \in \pi_1(S, p)$. Then we have 
\begin{eqnarray*}
&& [(\gamma-1)^2, (\delta-1)^2] = [\gamma^2 - 2\gamma, \delta^2 - 2\delta]\\
&=& 4 \vert{\gamma_p}^2{\delta_p}^2 - {\gamma_p}{\delta_p}^2 - {\gamma_p}^2{\delta_p} + {\gamma_p}{\delta_p}\vert 
- 4 \vert{\gamma_q}^2{\delta_q}^2 - {\gamma_q}{\delta_q}^2 - {\gamma_q}^2{\delta_q} + {\gamma_q}{\delta_q}\vert \\
&=& 4 \vert{\gamma_p}({\gamma_p}-1){\delta_p}({\delta_p}-1)\vert 
- 4 \vert{\gamma_q}({\gamma_q}-1){\delta_q}({\delta_q}-1)\vert  \\
&=& 4 \vert{\gamma_p}({\gamma_p}-1){\delta_p}({\delta_p}-1)\vert 
- 4 \vert{\gamma_{pq}}^{-1}{\gamma_p}({\gamma_p}-1)\gamma_{pq}{\delta_{pq}}^{-1}{\delta_p}({\delta_p}-1)\delta_{pq}\vert  \\
&=& 4 \vert{\gamma_p}({\gamma_p}-1)\{{\delta_p}({\delta_p}-1) 
- z_p\delta_p(\delta_p-1){z_p}^{-1}\}\vert  \\
&=& 4 \vert{\gamma_p}({\gamma_p}-1)(1-z_p){\delta_p}({\delta_p}-1)\vert 
+ 4 \vert{\gamma_p}({\gamma_p}-1)z_p{\delta_p}({\delta_p}-1)(1-{z_p}^{-1}) 
\vert ,
\end{eqnarray*} 
which is clearly an element of $K\hat\pi(3)$. This proves the lemma. 
\end{proof}
Now we denote by $\ast$ the Hausdorff series. 
We will prove the power series 
$$
P:= L(C_1)\ast L(C_2)\ast(-L(C_1))\ast(-L(C_2))
$$
converges to an element of $\widehat{K\hat\pi}(3)$. 
From the Baker-Campbell-Hausdorff formula, each term of the series $P$ 
contains the bracket $[L(C_1), L(C_2)]$, which is an element of 
$\widehat{K\hat\pi}(3)$, as was shown in Lemma \ref{63pair}. 
This also implies 
\begin{equation}
[{\rm ad}(L(C_1)), {\rm ad}(L(C_2))](\widehat{K\hat\pi}(n)) 
={\rm ad}([L(C_1), L(C_2)])(\widehat{K\hat\pi}(n)) 
\subset \widehat{K\hat\pi}(n+1)
\label{63increase1}
\end{equation}
for any $n \geq 2$.
On the other hand, as was shown in Lemma \ref{51ft} (3) and the proof of 
Lemma \ref{13conv}, we have $\sigma(L(C_j))^{n+1}((I\pi)^n) \subset (I\pi)^{n+1}$ 
for any $j = 1,2$ and $n \geq 1$, so that 
\begin{equation}
{\rm ad}(L(C_j))^{n+1}(\widehat{K\hat\pi}(n)) \subset \widehat{K\hat\pi}(n+1).
\label{63increase2}
\end{equation}
Hence, from (\ref{63increase1}) and (\ref{63increase2}), we have 
$$
{\rm ad}(Z_1)\cdots{\rm ad}(Z_{2n+1})(\widehat{K\hat\pi}(n)) \subset \widehat{K\hat\pi}(n+1)
$$
for any $Z_i \in \{L(C_1), L(C_2)\}$, $1 \leq i \leq 2n+1$.
This means that any bracket of $2+ \sum^{n-1}_{k=3}(2k+1) = n^2-7$ 
copies of $L(C_1)$ and $L(C_2)$ is in $\widehat{K\hat\pi}(n)$
for any $n \geq 2$, so that the series $P$ converges to an element of 
$\widehat{K\hat\pi}(3)$. Since $L(C_1)$ and $L(C_2)$ stabilize 
the coproduct $\Delta$, the element $P$ is also in $L^+(S, E)$. 
Thus $\widehat{\sf DN}({t_{C_1}}{t_{C_2}}{t_{C_1}}^{-1}{t_{C_2}}^{-1}) = \exp(\sigma(P))$ 
is in $\exp(\sigma(L^+(S,E)))$.    
\par
This completes the proof of the theorem.
\end{proof}

Consequently we obtain the geometric Johnson homomorphism
$$
\tau\colon \mathcal{I}(\Sigma_{g,r}, E) \to L^+(\Sigma_{g,r}, E),
$$
which is an embedding of the
Torelli group $\mathcal{I}(\Sigma_{g,r}, E)$ into the pro-nilpotent group 
$L^+(\Sigma_{g,r}, E)$. 
This homomorphism is natural in the following sense. \par 

Now we consider the case $S = \Sigma_{g,1}$ and $E = \{*\}$. 
Recall the notation in the previous subsections. 
In our situation the Torelli group $\mathcal{I}(\Sigma_{g,1}, \{*\})$ is classically denoted by $\mathcal{I}_{g,1}$. By definition, 
the group $A(\Sigma_{g,1}, \{*\})$ is exactly the topological automorphism group of the complete Hopf algebra $\widehat{K\pi}$, $A(\Sigma_{g,1}, \{*\}) = {\rm Aut}(\widehat{K\pi}, \Delta)$. A symplectic expansion $\theta$ induces an isomorphism of complete Hopf algebras $\theta: \widehat{K\pi} \cong \widehat{T}$, and  
an isomorphism of the automorphism groups $\theta_*: A(\Sigma_{g,1}, \{*\}) = {\rm Aut}(\widehat{K\pi}, \Delta) \cong {\rm Aut}(\widehat{T}, \Delta)$. 
Massuyeau's improvement $\rho^\theta$ of the total Johnson map is defined so that the diagram 
$$
\begin{CD}
\mathcal{I}_{g,1} @>{\rho^\theta}>> \mathfrak{l}_g^+\\
@| @V{\exp}VV\\
\mathcal{I}_{g,1} @>{\theta_*\circ\widehat{\sf DN}}>> {\rm Aut}(\widehat{T}, \Delta)
\end{CD}
$$
commutes. Hence we obtain the following. 

\begin{cor} For any symplectic expansion $\theta$, the diagram
$$
\begin{CD}
\mathcal{I}(\Sigma_{g,1}, \{*\}) @>{\tau}>> L(\Sigma_{g,1}, \{*\})\\
@| @V{-\lambda_\theta}VV\\
\mathcal{I}_{g,1} @>{\rho^\theta}>> \mathfrak{l}_g^+
\end{CD}
$$
commutes. In other words, the geometric Johnson homomorphism $\tau$ for $(\Sigma_{g,1}, \{*\})$ is essentially the same as Massuyeau's map $\rho^\theta$, 
since $-\lambda_\theta$ is an isomorphism of Lie algebras. 
\end{cor}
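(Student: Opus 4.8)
The plan is to deduce the commutativity from the two defining properties of $\tau$ and $\rho^\theta$ as ``logarithms'' of the Dehn--Nielsen homomorphism, together with the compatibility (\ref{61com}) between $\sigma$ and the derivation action on $\widehat T$ recorded just above the statement. Fix $\varphi \in \mathcal{I}(\Sigma_{g,1},\{*\}) = \mathcal{I}_{g,1}$. By construction of $\tau$ we have $\widehat{\sf DN}(\varphi) = \exp(\sigma(\tau(\varphi)))$ in $A(\Sigma_{g,1},\{*\}) = \mathrm{Aut}(\widehat{K\pi},\Delta)$, with $\tau(\varphi)\in L^+(\Sigma_{g,1},\{*\})$; by construction of Massuyeau's map we have $\theta_*\widehat{\sf DN}(\varphi) = \exp(\rho^\theta(\varphi))$ in $\mathrm{Aut}(\widehat T,\Delta)$, with $\rho^\theta(\varphi)\in\mathfrak{l}_g^+$. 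Since $-\lambda_\theta$ restricts to the isomorphism $L^+(\Sigma_{g,1},\{*\})\overset\cong\to\mathfrak{l}_g^+$ of Theorem \ref{61lg}, proving the corollary reduces to checking $-\lambda_\theta(\tau(\varphi)) = \rho^\theta(\varphi)$ for every such $\varphi$.

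First I would transport the identity $\widehat{\sf DN}(\varphi) = \exp(\sigma(\tau(\varphi)))$ across the isomorphism $\theta$. The induced map $\theta_*$ is conjugation by the filtration-preserving Hopf-algebra isomorphism $\theta\colon\widehat{K\pi}\overset\cong\to\widehat T$, so $\theta_*\widehat{\sf DN}(\varphi) = \theta\circ\exp(\sigma(\tau(\varphi)))\circ\theta^{-1} = \exp(\theta\circ\sigma(\tau(\varphi))\circ\theta^{-1})$, the last step because a filter-preserving conjugation commutes with the convergent exponential series. Next I would invoke the commutative diagram (\ref{61com}), the projective limit of Theorem \ref{122}: it says exactly that for $u\in\widehat{K\hat\pi}(\Sigma_{g,1})$ the endomorphism $\theta\circ\sigma(u)\circ\theta^{-1}$ of $\widehat T$ equals the derivation $-\lambda_\theta(u)\in\mathfrak{a}_g^-$. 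Taking $u=\tau(\varphi)$ and combining the two displays yields $\exp(\rho^\theta(\varphi)) = \exp(-\lambda_\theta(\tau(\varphi)))$ inside $\mathrm{Aut}(\widehat T,\Delta)$.

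To finish, I would appeal to injectivity of the exponential on $\mathfrak{l}_g^+$: both $\rho^\theta(\varphi)$ and $-\lambda_\theta(\tau(\varphi))$ lie in $\mathfrak{l}_g^+\subset\widehat T_3$, hence are symplectic derivations of $\widehat T$ raising degree, so they satisfy conditions (i)--(iii) of Lemma \ref{13conv}, and Proposition \ref{13exp}(3), applied to $\widehat T$ viewed as a one-object filtered $K$-SAC (which satisfies (C2) and (C3)), forces $\rho^\theta(\varphi) = -\lambda_\theta(\tau(\varphi))$ — the asserted commutativity. The only non-formal input is the appeal to (\ref{61com}); the remaining delicate points — that $\theta_*$ is genuinely conjugation by $\theta$, that being filter-preserving lets one slide $\theta$ through $\exp$, and that Proposition \ref{13exp}(3) applies verbatim to $\widehat T$ — are bookkeeping already implicit in \S\ref{aglg}, and I expect no real obstacle beyond making sure all exponentials are read inside the same group $\mathrm{Aut}(\widehat T,\Delta)$.
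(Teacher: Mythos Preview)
Your proof is correct and follows exactly the approach the paper intends: the corollary is stated immediately after the defining diagram for $\rho^\theta$ with no further proof beyond ``Hence we obtain the following,'' and your argument is precisely the unpacking of that ``hence'' --- transporting $\widehat{\sf DN}(\varphi)=\exp(\sigma(\tau(\varphi)))$ through $\theta$ via (\ref{61com}), comparing with the defining relation for $\rho^\theta$, and cancelling the exponentials by Proposition \ref{13exp}(3).
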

The graded quotients of the total Johnson map with respect to the Johnson filtration on $\mathcal{I}_{g,1}$ and the filtration on ${\rm Aut}(\widehat{T}, \Delta)$ induced by that on $\widehat{T}$ coincides with the classical Johnson homomorphisms of all degrees \cite{Ka}. See also \cite{Mas}. Hence, so are the graded quotients of the geometric Johnson homomorphism $\tau$. 
\par
Finally we discuss the naturality of the geometric Johnson homomorphism with respect to an embedding. Let $S$ and
$S'$ be compact oriented surfaces with non-empty boundary, and $i\colon S
\hookrightarrow S'$ an embedding. 
Choose a single point on each boundary component of these
surfaces, and let $E\subset \partial S$ and $E' \subset \partial S'$
be the sets of these points. Then we have the natural homomorphism
$$
\imath\colon \mathcal{M}(S,E) \to \mathcal{M}(S',E')
$$
extending diffeomorphisms by the identity on the complement $S\setminus
S'$. Then 
\begin{prop}
\label{tau-compat}
There exists a Lie algebra homomorphism
$i\colon L^+(S,E) \to L^+(S^{\prime},E^{\prime})$, which
depends on the embedding $i\colon S \hookrightarrow S^{\prime}$,
such that the diagram 
$$
\begin{CD}
\ISE @>{\imath}>> \mathcal{I}(S',E')\\
@V{\tau}VV @V{\tau}VV \\
L^+(S,E) @>{i}>> L^+(S',E')
\end{CD}
$$
commutes.
\end{prop}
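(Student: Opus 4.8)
The plan is to build $i\colon L^+(S,E)\to L^+(S',E')$ out of the map on Goldman Lie algebras induced by the inclusion of free loop spaces, and then to prove commutativity by comparing the two sides of the square on the generators of $K\CC(S',\widetilde E)$ furnished by the van Kampen theorem, for an auxiliary object set $\widetilde E$.

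\emph{Step 1: the map on Goldman Lie algebras.} After an isotopy we may assume that $S'':=\overline{S'\setminus i(S)}$ is a compact subsurface meeting $i(S)$ along a disjoint union of simple closed curves (the gluing curves), and that for each component $\partial_jS$ of $\partial S$ whose image lies in ${\rm int}\,S'$ the point $i(*_j)$ lies on the corresponding gluing curve. The inclusion of free loop sets $\hat\pi(S)=\hat\pi(i(S))\hookrightarrow\hat\pi(S')$ extends linearly to a map of Goldman Lie algebras; it is a Lie algebra homomorphism because the Goldman bracket, as well as the action $\sigma$ of \S\ref{sec:4}, is computed from transverse intersections of representatives that may all be taken inside $i(S)$. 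Since the inclusion carries $(I\pi_1(S,q))^n$ into $(I\pi_1(S',q))^n$, it sends $K\hat{\pi}(S)(n)$ into $K\hat{\pi}(S')(n)$, so on completions we obtain a filtered Lie algebra homomorphism $i\colon\widehat{K\hat{\pi}}(S)\to\widehat{K\hat{\pi}}(S')$ with $i(\widehat{K\hat{\pi}}(S)(n))\subset\widehat{K\hat{\pi}}(S')(n)$; in particular it respects the Hausdorff series and hence the pro-nilpotent group structures. Naturality of the coproduct, $\Delta(i\gamma)=(i\widehat{\otimes}i)\Delta\gamma$, is immediate.

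\emph{Step 2: $i$ preserves $L^+$.} Put $\widetilde E:=E'\cup i(E)\subset S'$, a closed subset with $\widetilde E\setminus\partial S'$ finite, such that each gluing curve carries a point of $\widetilde E$; by Proposition \ref{32vKT}, $K\CC(S',\widetilde E)$ is generated by $K\CC(i(S),i(E))=K\CC(S,E)$ and $K\CC(S'',\widetilde E\cap S'')$. For $u\in\widehat{K\hat{\pi}}(S)$, locality of the defining formula for $\sigma$ gives $\sigma(i(u))(i\gamma)=i(\sigma(u)\gamma)$ for $\gamma\in\CC(S,E)$, while $\sigma(i(u))$ annihilates $\CC(S'',\widetilde E\cap S'')$, since a representative of $i(u)$ can be pushed into ${\rm int}\,i(S)$, disjoint from $S''$. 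Hence, if $u\in L^+(S,E)$, so that $\sigma(u)\in\Der_\Delta\widehat{K\CC(S,E)}$, the derivation $\sigma(i(u))$ of $\widehat{K\CC(S',\widetilde E)}$ stabilizes $\Delta$ on both families of generators, therefore, by continuity and Proposition \ref{32vKT}, on all of $\widehat{K\CC(S',\widetilde E)}$, and a fortiori on the full subcategory $\widehat{K\CC(S',E')}$. Combined with $i(u)\in\widehat{K\hat{\pi}}(S')(3)$ this yields $i(u)\in L^+(S',E')$, and Step 1 makes $i\colon L^+(S,E)\to L^+(S',E')$ a homomorphism of pro-nilpotent groups depending on the embedding.

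\emph{Step 3: commutativity.} Since $\exp\circ\sigma$ is injective on $L^+(S',E')$ (Theorems \ref{31DN} and \ref{N62DN}, and Proposition \ref{13exp}(3)), it suffices to show $\widehat{\sf DN}(\imath\varphi)=\exp(\sigma(i(\tau\varphi)))$ in $A(S',E')$ for $\varphi\in\ISE$. Both sides extend to automorphisms of $\widehat{K\CC(S',\widetilde E)}$ (note $\imath\varphi$ fixes $\widetilde E$ pointwise): on the $\CC(S,E)$-generators, $\widehat{\sf DN}(\imath\varphi)$ equals $i\circ\widehat{\sf DN}(\varphi)=i\circ\exp(\sigma(\tau\varphi))=\exp(\sigma(i(\tau\varphi)))$ by Step 2 together with the fact that $\imath\varphi$ extends $\varphi$ by the identity; on the $\CC(S'',\widetilde E\cap S'')$-generators both equal the identity because $\imath\varphi$ is supported in $i(S)$. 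By Proposition \ref{32vKT} and continuity they agree on $\widehat{K\CC(S',\widetilde E)}$, hence on $\widehat{K\CC(S',E')}$. This shows at once that $\imath\varphi\in\mathcal{I}(S',E')$ (because $\sigma(i(\tau\varphi))\in F_1\Der\widehat{K\CC}$ by Theorem \ref{33filt}, so $\widehat{\sf DN}(\imath\varphi)$ is trivial modulo $F_2$) and that $\tau(\imath\varphi)=i(\tau\varphi)$, which is the asserted commutativity.

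I expect the main obstacle to be Step 2: the surface geometry enters exactly there, through the locality of $\sigma$ together with the van Kampen decomposition of $S'$ along $\partial i(S)$, and one must be careful with the bookkeeping of basepoints on the gluing curves (which are interior points of $S'$) and with the passage from the auxiliary object set $\widetilde E$ back to $E'$.
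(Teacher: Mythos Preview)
Your construction of $i$ via the inclusion of Goldman Lie algebras together with the van Kampen decomposition (your Steps 1 and 2) is essentially the paper's construction, stated a bit more directly: the paper defines $i$ on the level of $\sigma$-images by extending derivations by zero on $S''$, then uses the injectivity of $\sigma$ (Theorem \ref{N62DN}) to regard this as a map $L^+(S,E)\to L^+(S',E')$; you work directly in $\widehat{K\hat\pi}$ and then verify the $\Delta$-compatibility on van Kampen generators. These amount to the same thing.

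The genuine difference is in the commutativity argument. The paper invokes Putman's generation theorems (Theorems \ref{putman} and \ref{putman0}) and checks the square on twists about $P$-separating curves, $P$-bounding pairs, and commutators of simply intersecting pairs, where the commutativity is visibly inherited from Theorem \ref{42logDT}. You instead prove commutativity for an arbitrary $\varphi\in\ISE$ in one stroke: using the same van Kampen decomposition as in Step~2, you compare the automorphisms $\widehat{\sf DN}(\imath\varphi)$ and $\exp(\sigma(i(\tau\varphi)))$ of $\widehat{K\CC(S',\widetilde E)}$ on the two families of generators and conclude by continuity. This is correct and more economical, since it does not require reinvoking Putman's generators. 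As a bonus, your argument yields $\imath(\ISE)\subset\mathcal{I}(S',E')$ directly from $i(\tau\varphi)\in\widehat{K\hat\pi}(S')(3)$, whereas the paper establishes this separately by a homology argument with $H_1(S,E;\mathbb{Z})$ and $H_1(S',E\cup E';\mathbb{Z})$. One small remark: your citation of Theorem \ref{31DN} for the injectivity of $\exp\circ\sigma$ is misplaced; what you actually use is Theorem \ref{N62DN} together with Proposition \ref{13exp}(3).
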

\begin{proof}
First we check the top horizontal arrow is well-defined.
Denote $S_1 := S$, $E_1 := E$, $S_2 := \overline{S'\setminus S}$, and 
$E_2 := (E\cup E')\cap S_2$. Then the inclusion homomorphism 
$H_1(S,E;\mathbb{Z}) \oplus H_1(S_2, E_2;\mathbb{Z}) \to H_1(S',E\cup
E';\mathbb{Z})$ is surjective, while the inclusion homomorphism $H_1(S',
E'; \mathbb{Z}) \to H_1(S',E\cup E';\mathbb{Z})$ is injective. 
Hence the homomorphism $\imath$ maps $\ISE$ into $\mathcal{I}(S',E')$. \par
Moreover the pairs $(S_1,E_1)$ and $(S_2,E_2)$
satisfy the assumption of the van Kampen theorem (Proposition
\ref{32vKT}). Hence $K\CC_3 = K\CC(S', E \cup E')$ is generated by 
$K\CSE$ and $K\CC(S_2,E_2)$. Hence, if $u \in \widehat{K{\hat\pi}}(S^*)$
satisfies $\sigma(u) = 0 \in \Der \widehat{K\CSE}$, then we have
$\sigma(i(u)) = 0 \in \Der K\CC(S', E \cup E')$. Here $i\colon
\widehat{K{\hat\pi}}(S^*) \to \widehat{K{\hat\pi}}({S'}^*)$ is the
inclusion homomorphism. This implies 
the homomorphism 
$$
i\colon  \sigma(\widehat{K{\hat\pi}}(S^*)) \to 
\sigma(\widehat{K{\hat\pi}}({S'}^*)) \subset 
\Der K\CC(S', E \cup E')
$$
extending derivations by $0$ on $S_2$ is well-defined.
Post-composing the forgetful map $\phi \colon \Der K\CC(S^{\prime},E \cup E^{\prime})
\to \Der K\CC(S^{\prime},E^{\prime})$ and restricting $\phi \circ i$ to
$L^+(S,E)$, we obtain a Lie algebra homomorphism 
$i\colon L^+(S,E) \to L^+(S',E')$. \par
From Theorems \ref{putman} and \ref{putman0},
the group $\ISE$ is generated by twists about
$P$-separating curves, twists about $P$-bounding pairs
and commutators of simply intersecting pairs.
It is clear that the diagram commutes on these mapping classes.
This proves the proposition.
\end{proof}

Recently Church \cite{Chu} introduced the first Johnson homomorphism for
all kinds of Putman's partitioned Torelli groups. It would be very
interesting to describe an explicit relation between Church's
homomorphisms and ours.

\noindent \textsc{Nariya Kawazumi\\
Department of Mathematical Sciences,\\
University of Tokyo,\\
3-8-1 Komaba Meguro-ku Tokyo 153-8914 JAPAN}\\
\noindent \texttt{E-mail address: kawazumi@ms.u-tokyo.ac.jp}

\vspace{0.5cm}

\noindent \textsc{Yusuke Kuno\\
Department of Mathematics,\\
Tsuda College,\\
2-1-1 Tsuda-Machi, Kodaira-shi, Tokyo 187-8577, JAPAN}\\
\noindent \texttt{E-mail address: kunotti@tsuda.ac.jp}

\end{document}